\tikzset{every loop/.style={}}
\tikzset{
    labl/.style={anchor=south, rotate=90, inner sep=.5mm}
}
\title{
Pullbacks and intersections\\in categories of graphs of groups
}
\author{
    Jordi Delgado, \small{\url{jorge.delgado@upc.edu}}\\
    \small{Universitat Politècnica de Catalunya - BarcelonaTech (UPC), Barcelona, Spain}\\
    \small{ORCID: \url{https://orcid.org/0000-0002-8365-8929}}
    \and
    Marco Linton, \small{\url{marco.linton@icmat.es}}\\
    \small{Instituto de Ciencias Matemáticas (ICMAT), Madrid, Spain}\\
    \small{ORCID: \url{https://orcid.org/0000-0002-1081-5268}}
    \and
    Jone Lopez de Gamiz Zearra, \small{\url{jone.lopezdegamiz@ehu.eus}}\\
    \small{University of the Basque Country - EHU, Bilbao, Spain}\\
    \small{ORCID: \url{https://orcid.org/0000-0002-3725-9740}}
    \and
    Mallika Roy, \small{\url{mallikaroy75@gmail.com}}\\
    \small{Harish-Chandra Research Institute, A CI of HBNI, India}\\
    \small{ORCID: \url{https://orcid.org/0000-0002-9730-5980}}
    \and
    Pascal Weil, \small{\url{pascal.weil@cnrs.fr}}\\
    \small{CNRS, ReLaX, IRL 2000, Siruseri, India}\\
    \small{CNRS, Univ. Sorbonne Paris Nord, LIPN, UMR 7030, F-93430 Villetaneuse, France}\\
    \small{ORCID: \url{https://orcid.org/0000-0003-2039-5460}}
}
\date{\today}
\begin{document}
\maketitle

\begin{abstract}
We develop a categorical framework for studying graphs of groups and their morphisms, with emphasis on pullbacks. More precisely, building on classical work by Serre and Bass, we give an explicit construction of the so-called \emph{$\AA$-product} of two morphisms into a graph of groups $\AA$ --- a graph of groups which, within the appropriate categorical setting, captures the intersection of subgroups of the fundamental group of $\AA$. We show that, in the category of pointed graphs of groups, pullbacks always exist and correspond precisely to pointed $\AA$-products. In contrast, pullbacks do not always exist in the category of unpointed graphs of groups. However, when they do exist, and we show that it is the case, in particular, under certain acylindricity conditions, they are again closely related to $\AA$-products. We trace, all along, the parallels with Stallings' classical theory of graph immersions and coverings, in relation to the study of the subgroups of free groups. Our results are useful for studying intersections of subgroups of groups that arise as fundamental groups of graphs of groups. As an example, we carry out an explicit computation of a pullback which exhibits two finitely generated subgroups of a Baumslag--Solitar group intersecting in a non-finitely generated subgroup.
\end{abstract}

\vspace{15pt}
\noindent\textbf{Keywords:} pullbacks; graphs of groups; Bass--Serre theory; subgroup intersections; immersions and coverings.

\bigskip
\noindent\textbf{MSC (2020):} 20F65 (primary); 20E06, 20E07, 20E08, 18A30.

\vspace{10pt}

\tableofcontents

\section*{Introduction}
\addcontentsline{toc}{section}{Introduction}

Building on the idea that the study of actions on trees can reveal the internal structure of groups, Bass--Serre theory --- initially developed by Serre and Bass \cite{ser80,
bas93} --- extends the covering space intuition for free groups to a much more general framework. Important special cases of the theory are the older constructions of amalgamated free products and HNN-extensions which arise from actions on trees with a single orbit of edges. Just as free groups can be understood as fundamental groups of quotient graphs of the trees on which they act, Bass--Serre theory identifies groups acting on trees as fundamental groups of certain combinatorial objects called \emph{graphs of groups}, for which a generalised covering theory can be developed to describe and analyse subgroup properties \cite{bas93}.

\medskip

In the classical setting, the main insight is that the free group $F_n$ (of rank $n$) can be realised as the fundamental group of a certain graph (a bouquet of $n$ circles). This immediately provides a natural bijection between the subgroups of the free group and the covering spaces of the associated graph, or equivalently, with a well-defined family of pointed, labeled graphs --- called \emph{Stallings' graphs} --- in which the elements of the subgroup correspond to the (reduced) labels of the closed paths at a distinguished vertex.
This correspondence not only transparently reveals the structure of the subgroups (\emph{Nielsen--Schreier theorem}) but also offers a visual and combinatorial method for investigating the lattice of subgroups of $F_n$.

Moreover, under this bijection, immersions of finite graphs correspond exactly to finitely generated subgroups and can be easily computed from any finite generating set 
by repeatedly identifying edges with the same image --- a transformation now known as a \emph{Stallings' folding}. As a consequence, the theory is particularly well-suited for algorithmic questions, and has been immensely fruitful in both producing new results and providing more intuitive proofs for classical ones.
See \parencite{sta83}
for the original
article by Stallings, and
\parencite{
km02,
dv24} for later expositions in a more combinatorial, graph-theoretic language.

\medskip

One well-known application of Stallings' graphs is the study of subgroup intersections in free groups, a topic that originated in the mid-20th century with Howson’s proof of the \emph{finitely generated intersection property} (\fgip): the intersection of two finitely generated subgroups of a free group is itself finitely generated.
Stallings' interpretation of subgroups as graphs provides a very natural description of the intersection of two subgroups $H,K \leqslant F_n$: since the elements of each subgroup are given by labels of closed paths in the corresponding Stallings' graphs $\Gamma_H$ and~$\Gamma_K$, the elements of the intersection $H \cap K$ are given by labels of closed paths readable in both $\Gamma_H$ and $\Gamma_K$; that is, readable in the pullback $\Gamma_H \times \Gamma_K$. 
So, pullbacks essentially encode subgroup intersection; concretely, the Stallings' graph of the intersection $H \cap K$ is precisely the pointed core of the pullback $\Gamma_H \times \Gamma_K$.
Since the pullback of two finite graphs is again finite, Howson's result follows immediately.

\medskip

The idea of realizing groups as fundamental groups --- essentially sets of closed paths with concatenation --- of suitably chosen objects has been extremely useful and it has been extended far beyond the realm of free groups. Families of groups in which Stallings' theory has been successfully extended include
free products~\parencite{iva99},
amalgams of finite groups~\parencite{mar07},
groups acting freely on $\ZZ^n$-trees~\parencite{ns12}, 
virtually free groups~\parencite{ssv16}, quasi-convex subgroups of automatic groups \parencite{kmw17},
relatively quasi-convex
subgroups of finitely presented relatively hyperbolic groups
\parencite{kw20}, free-abelian by free groups~\parencite{del17}, right-angled Coxeter groups and groups acting on 
CAT(0) cube complexes~\parencite{bl18,dl21,bkl22}
and, quite extensively,
non-free actions of groups on graphs
\parencite{sta91,
bf91,
dun97,
dun98,
gui98,
dd99,
bow01,
kwm05,
Ar06%
}. 

\medskip

While much of this work focuses on algorithmic aspects --- typically through the development of suitable versions of Stallings’ folding process --- the interpretation of subgroups as covering spaces (or as fundamental groups of appropriate subobjects) provides deep insight into subgroup structure and behaviour, independent of algorithmic considerations. 
Intersections, for example, admit a natural description along the same lines as in free groups: once the elements of subgroups are seen as closed paths in certain objects, the elements of their intersection correspond precisely to those paths readable as closed paths in both, that is, in their pullback. 
Thus, pullbacks --- defined in a suitable category --- naturally arise as candidates for representing intersections of subgroups.

\medskip

Despite the broad scope of applications of Stallings graphs variations, results regarding intersections remain limited beyond free groups. A significant contribution in this direction was made by Ivanov, who, in a series of papers
\parencite{iva99,
iva01,
iva18}
develops a generalisation of Stallings' graphs (and the corresponding idea of pullback) to describe intersections within free products and uses it to derive a `geometric proof'
of B.\ Baumslag's generalisation of Howson's theorem for free products \cite{bau66}, 
among other results regarding intersections. 
In a different direction, another variation of Stallings' graphs allowed for a complete description of intersections within free-abelian times free groups \parencite{dv13, dv22}, a well-known family of groups that does not have the \fgip\
Again, the idea is to enrich the original Stallings' graphs to represent subgroups in this family and derive the natural version of pullback in this context. The combination of these two descriptions enabled the study of intersections in larger families, such as the family of Droms right-angled Artin groups \parencite{dvz18}, which can be iteratively described as a combination of the two.

\bigskip

The aim of this work is to extend to graphs of groups the pullback constructions which allowed to describe subgroup intersections in the previous contexts. \emph{Graphs of groups} 
are combinatorial structures that assign a group to each vertex and edge of an underlying graph, along with monomorphisms from each edge group to the groups at its endpoints, capturing how vertex groups are `glued together' along the graph.
If a base vertex of the underlying graph is designated, we talk of \emph{pointed graphs of groups}. Bass \cite{bas93} gave a natural bijective correspondence between these objects (and morphisms between them) and groups acting on trees, see Appendix \ref{appendix}.

Natural notions of $\AA$-paths and their (homotopy-type) congruences lead to the definition of the fundamental group of a graph of groups $\AA$ at a vertex $u$, denoted by~$\pi_1(\AA, u)$.
Graphs of groups constitute a wide-reaching generalisation of two ubiquitous constructions in group theory, namely HNN extensions ${A\ast_\psi = \langle A, t\mid t^{-1} c t = \psi(c),\ c\in C \rangle}$ and amalgamated free products $A \ast_{C =\psi(C)} B = \langle A, B \mid c = \psi(c),\ c\in C \rangle$, which can be recovered as the fundamental groups of the two kinds of 1-edge  graphs of groups in Figure~\ref{fig: HNN and amalgameted free product} (here $C$ is a subgroup of $A$ in the left diagram, and of $B$ in right diagram, and $\psi$ is a monomorphism from $C$ to $A$).
\begin{figure}[H]
\centering
\begin{tikzpicture}[shorten >=3pt, node distance=2cm and 1.75cm, on grid,auto,-latex]
\begin{scope}
\node[state] (vA) {$A$};
    \node[] (Aab) [above = 0.25 of vA]{};
    \node[] (eA) [right = of vA]{$C$};
    \node[] (Aac) [above = 0.25 of eA]{};
    \path[->] (vA) edge[loop right,min distance=19mm,in=335, out=25]
    node[pos=0.85,below] {$\psi$}
    node[pos=0.15,above] {$\id_{C}$}
            (vA);
\end{scope}

\begin{scope}[xshift = 4cm]
    \node[state] (vA) {$A$};
    \node[state] (vB) [right = 3 of vA]{$B$};
    \node (eC) [above right = 0.2 and 1.5 of vA]{$C$};

    \path[->] (vA) edge[]
    node[pos=0.8,above] {$\psi$}
    node[pos=0.2,above] {$\id_{C}$} (vB);
\end{scope}
\end{tikzpicture}
\vspace{-15pt}
\caption{HNN extensions (left) and amalgamated free products (right), as graphs of groups}
\label{fig: HNN and amalgameted free product}
\end{figure}
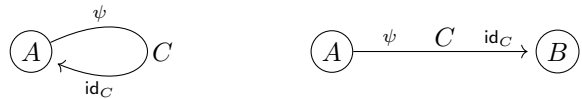

In order to describe pullbacks of graphs of groups, we first give a precise definition of suitable categories whose objects are graphs of groups and whose arrows are classes of morphisms under certain equivalence relations. We study both the category $\GrGp$ of graphs of groups and the category $\GrGp^*$ of connected pointed graphs of groups. In this paper,  we focus on the existence, construction, and structural properties of pullbacks in these categories, leaving to the separate paper \cite{dllrw_fgip} most of the applications to the study of intersections and the important finitely generated intersection property.

\medskip

Our main results, to be presented in more detail below, are the following.
\begin{itemize}
\item Pullbacks exist in the category $\GrGp^*$ of pointed graphs of groups, and we give an explicit construction for them (Theorem \ref{thm A}).

This generalises what can be naturally deduced from the Bass--Serre theory, for so-called core graphs of groups. We do not require coreness or, even, the immersion hypothesis. In addition, the explicit description of pullbacks we give is key to applications developed in~\cite{dllrw_fgip}.

\item Pullbacks do not always exist in $\GrGp$, the category of unpointed graphs of groups. However, they do exist under certain acylindricity conditions (Theorems~\ref{thm B} and \ref{thm E}).

\item Pullbacks are closely related to a construction called the \emph{$\AA$-product} of  graphs of groups: The $\AA$-product of two graphs of groups $\BB$ and $\CC$ (or, more precisely, two morphisms $\mu^B\colon \BB \to \AA$ and $\mu^C \colon \CC \to \AA$) is a graph of groups $\BB\wtimes_\AA\CC$, together with projection morphisms $\rho^B$ and $\rho^C$ to $\BB$ and $\CC$, respectively, satisfying certain desirable lifting properties in the spirit of the definition of pullbacks.

\item If the morphisms $\mu^B$ and $\mu^C$ are between connected, pointed graphs of groups, we identify a vertex $x$ in $\BB\wtimes_\AA\CC$ such that the connected component of the $\AA$-product containing $x$ is  the pullback of our two morphisms. This is similar to the classical construction of pullbacks of Stallings' graphs.

\item In further similarity, we relate the different connected components of $\BB\wtimes_\AA\CC$ to intersections of conjugates of the images of the fundamental groups of $\BB$ and $\CC$ under $\mu^B$ and $\mu^C$, and to certain double cosets of these subgroups (Theorems~\ref{thm C} and \ref{thm D}).
\end{itemize}

\medskip

The paper is organised as follows.
In Section~\ref{sec: graphs of groups}, after briefly reviewing relevant
notions from graph and group theory, we present the basic theory of graphs of groups and their morphisms, largely following the approach of 
\cite{ser80} 
and \cite{bas93}, fixing some terminology and notational preferences along the way.

In Section~\ref{ssec: morphisms}
we focus on morphisms of graphs of groups, for which we slightly adapt the formalisms in \cite{bas93} and \cite{kwm05}.
If $\mu\colon \BB \to \AA$ is a morphism of graphs of groups, and $v$ is a vertex of $\BB$, we say that $\mu\colon (\BB,v) \to (\AA,\mu(v))$ is a morphism of pointed graphs of groups.
Then $\mu$ induces a morphism $\mu_*\colon \pi_1(\BB,v) \to \pi_1(\AA,\mu(v))$ between the corresponding fundamental groups. Special attention is given to the notion of an immersion of graphs of groups, which generalises the topological notion of the same name and, together with coreness, yields a natural description of subgroups of fundamental groups --- extending those given by Stallings' graphs and subsequent constructions.

\medskip 

In order to define the categories $\GrGp$ and $\GrGp^*$,
morphisms of graphs of groups need to be considered up to equivalence. These equivalence relations are introduced in Section~\ref{sec: GrGp}: one equivalence relation between unpointed morphisms, denoted by~$\approx$, and one between pointed morphisms, denoted by~$\sim$. The definitions of both kinds of equivalence involve the existence of certain families of parameters, whose behaviour --- crucial in the description of pullbacks --- is studied in Section~\ref{ssec: parameters}. The category $\GrGp$ is then defined as the category with objects graphs of groups and with arrows $\approx$-equivalence classes of morphisms. Similarly, the category $\GrGp^*$ is defined as the category with objects pointed graphs of groups and with arrows $\sim$-equivalence classes of morphisms.

\medskip 

In Section~\ref{sec: pullbacks}, we establish our first main theorems, on pullbacks in $\GrGp^*$ and $\GrGp$. More precisely, in Section~\ref{ssec: A-product},  we develop the main construction on which our description of pullbacks is based, the \emph{$\AA$-product} of two 
morphisms of graphs of groups $\mu^B \colon \BB \to \AA$ and $\mu^C \colon \CC \to \AA$ (or of the graphs of groups $\BB$ and $\CC$, if the morphisms $\mu^B$ and $\mu^C$ are understood),
which we denote by $\BB \wtimes_\AA \CC$.
Although the full definition is quite technical, we outline its structure below. The $\AA$-product $\BB \wtimes_\AA \CC$ is a graph of groups built on top of the pullback $\gr{B} \times_{\gr{A}} \gr{C}$ of the underlying graphs for $\BB$ and~$\CC$. Each vertex and edge of $\gr{B} \times_{\gr{A}} \gr{C}$ gives rise to a set of double cosets --- of the images of the vertex and edge groups of $\BB$ and $\CC$ in those of $\AA$, under the morphisms
$\mu^B$ and $\mu^C$. These double cosets are taken to be the vertices and edges of the product $\BB \wtimes_\AA \CC$. The corresponding vertex and edge groups are given by certain twisted pullbacks of the images of the vertex and edge groups of $\BB$ and $\CC$ under $\mu^B$ and $\mu^C$. The missing ingredients of the definition of $\BB\wtimes_\AA\CC$, namely the adjacency maps and the projection morphisms $\rho^B$ and $\rho^C$, from the  $\AA$-product to $\BB$ and $\CC$, respectively, are defined naturally.
A pointed version of the $\AA$-product, based on morphisms of pointed graphs of groups, arises seamlessly by selecting the natural basepoint in $\BB \wtimes_\AA \CC$. 

Although the definition of $\AA$-products depends on certain choices of parameters, we prove that the resulting objects are independent of the defining parameters (up to isomorphism in $\GrGp^*$ in the pointed case, and in $\GrGp$ in the unpointed case). Technical lifting lemmas implicit in the definition of pullbacks are established in Section~\ref{sec: lifts}. More precisely, we show that, if $\sigma^B\colon \DD \to \BB$ and $\sigma^C\colon \DD \to \CC$ are morphisms such that $\mu^B \circ \sigma^B$ and $\mu^C \circ \sigma^C$ are equivalent, there exists at least one morphism (a lift) $\sigma$ from $\DD$ to $\BB\wtimes_\AA\CC$ such that $\sigma^B$ (resp. $\sigma^C$) is equivalent to $\rho^B \circ \sigma$ (resp. $\rho^C \circ \sigma$),
and we describe all possible such lifts.

Since the existence of pullbacks is determined by the existence and uniqueness of such lifts, this leads to the following theorem.

\begin{maintheorem}[Theorem~\ref{thm: pullbacks in GrGp*}]
\label{thm A}
Pullbacks exist in the category $\GrGp^*$. More precisely,
the pullback of two morphisms between connected pointed graphs of groups is their pointed $\AA$-product.
\end{maintheorem}
A variant of this result restricted to immersions easily follows and provides a relevant description of subgroup intersection in terms of the $\AA$-product (Corollary~\ref{cor: intersection of subgroups}).
In Section~\ref{sec: worked out example} we carry out an explicit computation of the pullback of two graphs of groups immersing into the standard graph of groups of a Baumslag--Solitar group. We use this to give an explicit example of two finitely generated subgroups of $\bs(m, n)$ (with $|m|, |n|>1$) that intersect in a non-finitely generated subgroup. The existence of such subgroups was already known by work of Paramantzoglou \cite{pa12}.
In \cite{dllrw_fgip}, this approach is exploited further to derive the \fgip\ and related properties from the obtained pullback description in broader classes of groups.

\medskip

On the other side, we show that, somewhat surprisingly, pullbacks do not always exist in the category $\GrGp$ of unpointed graphs of groups (see Example~\ref{ex: pullbacks may not exist}).

\begin{maintheorem}[Example~\ref{ex: pullbacks may not exist} and Theorem~\ref{thm: pullbacks_exist_sometimes}]
\label{thm B}
Pullbacks do not always exist in the category~$\GrGp$. However, when they do, they are subgraphs of groups of the $\AA$-product of the involved morphisms.
\end{maintheorem}

In Section \ref{sec: subcategory of core gog}, we restrict our attention to morphisms of graphs of groups that uniquely describe subgroups
--- much in the spirit of how Stallings' graphs describe subgroups of the free group ---
and apply our previous results on pullbacks to derive information about intersections.
Suitable adaptations of the notions of immersion and covering to graphs of groups, due to Bass \cite{bas93}, play an analogous role in describing subgroups within this broader setting. As in the classical case, the bottom line is that coverings entail a locally bijective behaviour whereas immersions entail only a locally injective behaviour. 

In particular, we recall the graphical notions of pointed and unpointed core, and their extension to the realm of graphs of groups (denoted by $\core(\AA,u)$ and $\core(\AA)$, respectively), see Section~\ref{sec: core gog and pullbacks}. It turns out that, as in the graphical case, $\core(\AA,u)$
is a minimal subgraph of groups of $\AA$ 
which encapsulates 
the fundamental group of~$\AA$ at $u$, that is $\pi_1(\AA,u) = \pi_1(\core(\AA,u),u)$.

After reformulating Bass' definition of coverings of a graph of groups in our language, in Section~\ref{sec: covers} we establish the main results one may expect from the topological setting: lifting results for coverings of graphs of groups in $\GrGp^*$ and $\GrGp$ (Theorems \ref{thm: lifting_unpointed_morphisms} and \ref{thm: lifting_pointed_morphisms}), and bijective correspondences between the $\sim$-classes of pointed coverings of $(\AA,u)$ and subgroups of $\pi_1(\AA,u)$, and  between $\approx$-classes of coverings of $\AA$ and conjugacy classes of subgroups of $\pi_1(\AA,u)$.

In Section~\ref{sec: double cosets} we investigate the connected components of the $\AA$-product $\BB\wtimes_\AA\CC$ of immersions in terms of certain double cosets of the form $B\,a\,C$ in the fundamental group~$A = \pi_1(\AA, u)$, where $B$ and $C$ are the images in $A$ of the fundamental groups of $\BB$ and $\CC$ at vertices that map onto $u$. This correspondence is sharper when one of the two morphisms is a covering. More precisely, we prove the following theorems, using the above notation. 

\begin{maintheorem}[Theorem~\ref{thm: localy_elliptic_double_cosets}]
\label{thm C}
    Let $\mu^B\colon \BB \to \AA$ and $\mu^C\colon \CC\to \AA$ be immersions of graphs of groups and let $\DD = \BB\wtimes_{\AA}\CC$ be the corresponding $\AA$-product. Then, there exists a bijection between the set of connected components in $\core(\DD)$ and the set of double cosets $B\,g\,C$ such that $B^g\cap C$ is not locally elliptic. 
\end{maintheorem}

\begin{maintheorem}[Theorem~\ref{thm: double_cosets}]
\label{thm D}
    Let $\mu^B\colon \BB\to \AA$ be a covering and let $\mu^C\colon \CC\to \AA$ be an immersion of graphs of groups. Then, there exists a bijection between the set of components of the $\AA$-product $\BB\wtimes_{\AA}\CC$ with non-trivial fundamental group and the set of double cosets $B\,g\,C$ such that $B^g\cap C$ is not trivial.
\end{maintheorem}

Note that, in the case of Stallings' graphs for subgroups of free groups, local ellipticity is equivalent to triviality, and hence there is no distinction between Theorems~C and~D.

Finally, in Section \ref{sec: acylindrical gog}, we leverage some of our previous results to obtain specific conclusions regarding pullbacks and intersections within the family of acylindrical graphs of groups.

\begin{maintheorem}[Corollary~\ref{cor: acylindrical pullback}]
\label{thm E}
    If $\AA$ is an acylindrical graph of groups, then, within the category of core graphs of groups with immersions, pullbacks of morphisms to $\AA$ exist.
\end{maintheorem}

 
\section{Graphs of groups and their morphisms}\label{sec: graphs of groups}

The interplay between graphs and groups is at the center of this paper. Background facts about graphs and groups are discussed in Section~\ref{sec: graphs and groups}. 
We then introduce standard material about graphs of groups, in the spirit of \cite{ser80} 
and \cite{bas93}, establishing along the way the notation we will use throughout. The formal definition of graphs of groups is given in Section~\ref{sec: define gog}, where we also walk the reader through the central notions of $\AA$-paths and the fundamental group of a graph of groups.
In Section~\ref{ssec: morphisms}, we give a careful discussion of morphisms between graphs of groups, largely following \cite{bas93} and \cite{kwm05}.

\subsection{Graphs and groups}\label{sec: graphs and groups}

Most of the general notation and terminology for groups used in this paper is standard. We outline here some specific conventions adopted.
If $G$ is a group and $S$ a subset of $G$, $\langle S\rangle$ denotes the subgroup of $G$ generated by $S$. If $G = \langle S\rangle$ for some finite set $S$, we say that $G$ is \emph{finitely generated}.

If $H$ is a subgroup of $G$, we write $H \leqslant G$.
If $H \leqslant G$, a \emph{left coset} (\resp \emph{right coset}) of $H$ is a set of the form $g H = \{gh \st h \in H \}$ (\resp $H g = \{hg \st h \in H \}$), where $g\in G$. We denote by $G/H$ (\resp $H\backslash G$) the set of left (right) cosets of $H$ in $G$.

Similarly, if $H,K \leqslant G$, an \emph{$(H,K)$-double coset} is a set of the form $HgK
=
\{ hgk \st h \in H \text{ and } k \in K\}$, where $g\in G$; and we denote by $\dblcoset{H}{G}{K}$ the set of $(H,K)$-double cosets in $G$. 

If $x,g\in G$, we write $x^g$ for the conjugate $g\inv xg$, and we denote by $\gamma_g$ the inner automorphism $x \mapsto x^g$ of $G$.

In this paper, by \emph{graph} we mean a \emph{directed graph with a fixpoint-free involution on edges}, sometimes known as a \emph{Serre graph}.
Formally, a \emph{graph} $\gr{A}$ consists of a set of vertices $V(\gr{A})$, a set of edges $E(\gr{A})$, origin and target maps $o, t\colon E(\gr{A})\to V(\gr{A})$ and a fixpoint-free involution $e \mapsto e\inv$  on $E(\gr{A})$, such that $t(e\inv) = o(e)$ (and $o(e\inv) = t(e)$) for every edge $e$.

If $k\ge 1$, a \emph{path of length $k$} in a graph $\gr{A}$ is a sequence $(e_1,\dots, e_k)$ of consecutive edges, that is, such that $t(e_i) = o(e_{i+1})$ for each $i \in [1,k-1]$. We say that $o(e_1)$ and $t(e_k)$ are the \emph{initial} and \emph{terminal} vertices of the path, or that it is a path \emph{from $o(e_1)$ to $t(e_k)$}. By convention, we also consider that there exists a \emph{trivial path} (of length 0) at every vertex of $\gr{A}$. If $(e_1,\dots, e_k)$ and $(e_{k+1},\dots, e_\ell)$ are consecutive paths (that is, if $t(e_k) = o(e_{k+1})$), then the \emph{concatenation} of these two paths is the path $(e_1,\dots, e_k, e_{k+1},\dots, e_\ell)$, of length $k+\ell$. Trivial paths are neutral with respect to concatenation in the natural way. The \defin{inverse} of a path $(e_1,\ldots,e_k)$ is the path $(e_k^{-1},\ldots,e_1^{-1})$.

A (sub)path of the form $(e,e^{-1})$ is called a \emph{backtracking} (at $t(e)$). A path without backtracking is called \emph{reduced}.
Two paths in a graph $\gr{A}$ are \emph{equivalent} (or rather, \emph{path homotopic}) if one can be obtained from the other through a finite sequence of backtracking insertions and removals. Such paths are necessarily \emph{coterminal}, that is, they have the same initial and terminal vertices. 
Each equivalence class of paths contains a unique reduced path. Equivalence is clearly compatible with concatenation, endowing the set of reduced paths in $\gr{A}$ with the structure of a groupoid, called the \emph{fundamental groupoid} of the graph $\gr{A}$, denoted by $\pi_1(\gr{A})$.

A path $p = (e_1,\dots, e_k)$ is said to be \emph{closed} (or a \emph{circuit}) at $u \in V (\gr{A})$, if its initial and terminal vertices are equal to $u$, that is, if $o(e_1) = t(e_k) = u$.  For every vertex $u \in V(\gr{A})$, the restriction of $\pi_1(\gr{A})$ to reduced circuits at $u$ is a group called the \defin{fundamental group} of $\gr{A}$ at $u$, denoted by $\pi_1(\gr{A},u)$.

A graph $\gr{A}$ is said to be \emph{core with respect to a vertex} $u \in V(\gr{A})$ if every vertex in $\gr{A}$ appears in some reduced circuit at $u$.
A non-trivial circuit $(e_1,\dots, e_k)$ is said to be \emph{cyclically reduced} if it is reduced and $e_k \neq e_1^{-1}$.
The graph $\gr A$ is said to be \emph{core} if every one of its vertices appears in some cyclically reduced circuit. Note that a graph that is core with respect to a vertex is always connected, while a core graph is not necessarily connected. The \emph{core of a graph $\gr{A}$ with respect to a vertex $u$}, denoted by $\core(\gr{A}, u)$, is the union of all reduced circuits based at $u$. The \emph{core of $\gr{A}$}, denoted by $\core(\gr{A})$, is the union of all cyclically reduced circuits.

All the maps between graphs considered in this paper are morphisms of graphs, that is, they map vertices to vertices and edges to edges, and they preserve the origin and target maps.

\paragraph{Notational convention}
In the sequel, we use \texttt{sans-serif} typeface ($\gr{A},\gr{B},\gr{C}, \ldots$) to denote graphs.

\subsection{Graphs of groups and their fundamental group}\label{sec: define gog}\label{sec: fundamental group}

A \emph{graph of groups} $\AA = (\gr{A}, \{A_u\}, \{A_e\}, \{\alpha_e, \omega_e\})$ consists of
\begin{itemize}
\item an underlying graph $\gr{A}$;
\item a collection of groups $\{A_u\}$ indexed by $V(\gr{A})$, called the \emph{vertex groups};
\item a collection of groups $\{A_e\}$ indexed by $E(\gr{A})$, such that $A_e = A_{e^{-1}}$, called the \emph{edge groups};
\item monomorphisms $\alpha_e\colon A_e\to A_{o(e)}$, $\omega_e\colon A_e\to A_{t(e)}$ called the \emph{edge maps}, satisfying $\alpha_{e} = \omega_{e^{-1}}$ and $\omega_e = \alpha_{e^{-1}}$. 
\end{itemize}
A \emph{pointed graph of groups} is a pair $(\AA,u)$ where $\AA$ is a graph of groups and $u$ is a distinguished vertex of the underlying graph, usually called the \emph{basepoint}.

\paragraph{Notational convention}
We are going to deal with a number of graphs of groups in this paper. To lighten notation, we implicitly keep the following convention: the underlying graphs of $\AA, \BB, \CC, \DD$ are $\gr A, \gr B, \gr C, \gr D$, respectively. The names of vertices of $\AA$ involve the letter $u$, e.g. $u, u', u_0,\dots$, and those of $\BB$, $\CC$ and $\DD$ involve the letters $v$, $w$ and $x$, respectively. Similarly the names of edges of $\AA$, $\BB$, $\CC$, $\DD$ involve the letters $e$, $f$, $g$ and $h$, respectively.

\medskip

Let $\AA = (\gr{A}, \{A_u\}, \{A_e\}, \{\alpha_e, \omega_e\})$ be a graph of groups. An \emph{$\AA$-path} is a sequence of the form
\[p \,=\, (a_0, e_1, a_1, \dots, e_k, a_k),\]
where $(e_1,\dots, e_k)$ is a path in $\gr{A}$ (called the \emph{underlying path of $p$}), $a_{i-1} \in A_{o(e_i)}$ for all $i \in [1,k]$, and $a_k \in A_{t(e_k)}$. $\AA$-paths inherit terminology and notation from their underlying paths in the natural way. For example, the \emph{length} of an $\AA$-path, and its initial and terminal vertices, are those of the underlying path. An $\AA$-path is \emph{closed} (an \emph{$\AA$-circuit}) if its underlying path is closed. If $u \in V(\gr{A})$, we say that the $\AA$-path $(1_{A_u})$, of length 0, is the \emph{trivial $\AA$-path at $u$}. Note that there are $\AA$-paths of length 0 which are not trivial: if $a\in A_u \setminus \{1_{A_u}\}$, then $(a)$ is a non-trivial $\AA$-path whose underlying path is the trivial path at $u$.

Concatenation of paths has a natural counterpart in $\AA$-paths: if $p = (a_0, e_1, a_1, \dots, e_k, a_k)$ is an $\AA$-path  from $u$ to $u'$ and $q = (a'_0, e'_1, a'_1, \dots, e'_{k'}, a'_{k'})$ is an $\AA$-path from $u'$ to $u''$, then the \textit{concatenation} of $p$ and $q$ is the $\AA$-path $p\,q = (a_0, e_1, a_1, \dots, e_k, a_ka'_0, e'_1, a'_1, \dots, e'_{k'}, a'_{k'})$ from $u$ to $u''$. Note in particular that if $e\in E(\gr{A})$ is an edge from $u$ to $u'$, $a\in A_u$ and $a'\in A_{u'}$, then $(a,e,a') = (a)\ (1,e,1)\ (a')$. More generally, every $\AA$-path is a product of $\AA$-paths of length 0 (that is, of elements of vertex groups), and $\AA$-paths of length 1, of the form $(1,e,1)$.

The concatenation operation endows the set of all $\AA$-paths with the structure of a category, where objects are the vertices of $\gr{A}$ and arrows are $\AA$-paths. We introduce a first set of important definitions.

\begin{defn}\label{def: simAA and =AA, reduced, inverse}
Let $\AA$ be a graph of groups and let $p = (a_0,e_1,a_1,\dots, e_k,a_k)$ be an $\AA$-path.
\begin{enumerate}[(1)]
\item The congruence on $\AA$-paths generated by the pairs $\left((\alpha_e(x),e,\omega_e(x)\inv),\ (1,e,1)\right)$ ($e\in E(\gr A)$, $x\in A_e$) is denoted by $\sim_\AA$.

\item The congruence on $\AA$-paths generated by the pairs $\left((1,e,\omega_e(x),e\inv,1),\ (\alpha_e(x))\right)$ ($e\in E(\gr A)$, $x\in A_e$) is denoted by $=_\AA$.

\item The $\AA$-path $p$ is said to be \emph{reduced} if, for every $i \in [1,k-1]$ such that $e_{i+1} = e_i\inv$, the group element $a_i$ is not in $\omega_{e_i}(A_{e_i})$.

\item The \defin{inverse ($\AA$-path)} of $p$ is
$p\inv = (a_k\inv,e_k\inv,\dots, a_1\inv,e_1\inv,a_0\inv)$. Note that $pp^{-1} =_{\AA} p^{-1}p =_{\AA} (1)$, justifying the notation $p^{-1}$.
\end{enumerate}
\end{defn}

We record the following useful facts, several of which are elementary. Statement~\eqref{eq: for reduced, sim is =} is  \cite[Corollary 1.10]{bas93}.

\begin{prop}\label{prop: sim vs equiv}
Let $\AA$ be a graph of groups, and let $p, q$ be $\AA$-paths.
\begin{enumerate}[(1)]
\item If $p\sim_\AA q$ or $p =_\AA q$, then $p$ and $q$ are coterminal. If $p\sim_\AA q$, then $p$ and $q$ have the same length. If $p =_\AA q$, then the lengths of $p$ and $q$ are congruent mod 2.\label{eq: length preserved}

\item There is a reduced $\AA$-path $p'$ such that $p =_{\AA} p'$. \label{eq: reduce path}

\item If $p\sim_\AA q$, then $p =_\AA q$. \label{eq: 1e1}

\item If $p$ and $q$ are reduced $\AA$-paths, we have $p =_\AA q$ if and only if $p\sim_\AA q$. \label{eq: for reduced, sim is =}

\item An $\AA$-path is reduced if and only if it is a shortest representative of its $=_\AA$-class. \label{eq: shortest is reduced}
\end{enumerate}
\end{prop}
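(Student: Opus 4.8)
The plan is to establish the five assertions in order of logical dependence, proving the elementary ones by hand and invoking Bass for the one substantial ingredient: assertion~\eqref{eq: for reduced, sim is =}, the coincidence of $\sim_\AA$ and $=_\AA$ on reduced paths, is exactly \cite[Corollary~1.10]{bas93}, and I would simply quote it. For \eqref{eq: length preserved} I would unwind the meaning of a congruence generated by a set of pairs: $p$ and $q$ are equivalent precisely when there is a finite chain $p = r_0, r_1, \dots, r_n = q$ in which each $r_{j+1}$ arises from $r_j$ by substituting, between two fixed flanking factors, one side of a generating pair for the other. Both sides of every $\sim_\AA$-generator, namely $(\alpha_e(x),e,\omega_e(x)\inv)$ and $(1,e,1)$, run from $o(e)$ to $t(e)$ with length $1$; both sides of every $=_\AA$-generator, namely $(1,e,\omega_e(x),e\inv,1)$ and $(\alpha_e(x))$, are circuits at $o(e)$, of lengths $2$ and $0$. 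Since replacing a coterminal factor preserves the endpoints of the ambient path, and replacing a factor by one of equal length (resp. of length congruent mod $2$) preserves the total length (resp. its parity), an induction on the chain length $n$ delivers all three statements at once.

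For \eqref{eq: reduce path} I would use a single length-reducing move. If $p = (a_0,e_1,\dots,e_k,a_k)$ is not reduced, choose $i$ with $e_{i+1}=e_i\inv$ and $a_i = \omega_{e_i}(x) \in \omega_{e_i}(A_{e_i})$, and factor $p = p_1\cdot(1,e_i,\omega_{e_i}(x),e_i\inv,1)\cdot p_2$ with $p_1 = (a_0,\dots,a_{i-1})$ and $p_2 = (a_{i+1},\dots,a_k)$. Applying the generating relation of $=_\AA$ to the middle factor rewrites it as $(\alpha_{e_i}(x))$, so that $p =_\AA (a_0,\dots,a_{i-1}\,\alpha_{e_i}(x)\,a_{i+1},\dots,a_k)$, a path of length $k-2$. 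Each such move strictly shortens the path, so iterating must terminate, and it can only terminate at a path admitting no move, that is, a reduced one.

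The only step calling for a genuine computation is \eqref{eq: 1e1}, the inclusion $\sim_\AA\ \subseteq\ =_\AA$. Because $=_\AA$ is a congruence and $\sim_\AA$ is the least congruence containing its own generators, it suffices to check that each $\sim_\AA$-generator already lies in $=_\AA$, that is, $(\alpha_e(x),e,\omega_e(x)\inv) =_\AA (1,e,1)$. Writing $a = \alpha_e(x)$ and $b = \omega_e(x)$, I would first specialise the $=_\AA$-generator to the edge $e\inv$ (using $\omega_{e\inv}=\alpha_e$ and $\alpha_{e\inv}=\omega_e$) to obtain $(1,e\inv,a,e,1) =_\AA (b)$; concatenating $(b\inv)$ on the right then gives $(1,e\inv,1)\cdot(a,e,b\inv) =_\AA (1)$. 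Pre-multiplying by $(1,e,1)$ and collapsing $(1,e,1)\cdot(1,e\inv,1) =_\AA (1)$ via the $e$-generator with $x=1$ yields $(a,e,b\inv) =_\AA (1,e,1)$. The main obstacle here is purely organisational: one must keep the flanking factors and the basepoints straight so that each rewrite is a bona fide application of a single generator; once the $e\inv$-instance of the generator is isolated, the derivation is short.

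Finally, \eqref{eq: shortest is reduced} follows formally from the preceding pieces. If $p$ is not reduced, the move of \eqref{eq: reduce path} produces a strictly shorter $=_\AA$-equivalent path, so $p$ is not a shortest representative; this gives one implication by contraposition. Conversely, suppose $p$ is reduced and let $q =_\AA p$ be arbitrary. By \eqref{eq: reduce path} there is a reduced $q'$ with $q =_\AA q'$ and $|q'|\le|q|$. Since $p$ and $q'$ are reduced and $=_\AA$-equivalent, \eqref{eq: for reduced, sim is =} gives $p \sim_\AA q'$, whence $|q'| = |p|$ by \eqref{eq: length preserved}; therefore $|q|\ge|q'| = |p|$, and $p$ is indeed a shortest representative of its $=_\AA$-class.
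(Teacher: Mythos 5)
Your proposal is correct and follows essentially the same route as the paper: (1) and (2) are read off from the definitions, (3) is a direct verification that each $\sim_\AA$-generator lies in $=_\AA$ (your manipulation via the $e\inv$-instance of the generator is the same computation as the paper's, merely organised in a different order), (4) is quoted from Bass, and (5) is deduced from (4) together with the fact that $\sim_\AA$ preserves length.
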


\begin{proof}
Statements \eqref{eq: length preserved} and \eqref{eq: reduce path} are immediate from the definitions.
To establish Statement~\eqref{eq: 1e1}, we note that 
\begin{align*}
(1,e,1) &\enspace=\enspace (\alpha_e(x))\ (\alpha_e(x\inv))\ (1,e,1) \\
&\enspace=_\AA\enspace (\alpha_e(x))\ (1,e,\omega_e(x\inv),e^{-1},1)\ (1,e,1) \\
&\enspace=_\AA\enspace (\alpha_e(x),e,\omega_e(x)\inv)\ (1,e\inv,1,e,1) \\
&\enspace=_\AA\enspace (\alpha_e(x),e,\omega_e(x)\inv)\ (1),
\end{align*}
that is,  $(1,e,1) =_\AA (\alpha_e(x),e,\omega_e(x)\inv)$. Statement~\eqref{eq: for reduced, sim is =} is proved in \cite[Corollary 1.10]{bas93}. Statement~\eqref{eq: shortest is reduced} follows from \eqref{eq: for reduced, sim is =} (using the fact that $\sim_\AA$-equivalent $\AA$-paths have the same length).
\end{proof}

The following fact will be used extensively throughout the article. It follows from the definition of the $\sim_{\AA}$-congruence.

\begin{lem}
\label{lem: equivalent A-paths} 
If $\AA$ is a graph of groups and if $p$ and $q$ are co-terminal $\AA$-paths, say, $p = (a_0,e_1,a_1,\dots, e_k,a_k)$ and $q = (b_0,f_1,b_1,\dots, f_\ell,b_\ell)$, then $p \sim_\AA q$ if and only if $k = \ell$, $e_i = f_i$ for each $i\in [1,k]$, and there exist elements $x_i\in A_{e_i}$($i\in [1,k]$) such that
\begin{align*}
a_0\,\alpha_{e_1}(x_1) & = b_0, \\
\omega_{e_i}(x_i)^{-1} a_i\,\alpha_{e_{i+1}}(x_{i+1}) & = b_i \quad \text{for } i \in [1,k-1], \text{and}\\
\omega_{e_k}(x_k)^{-1}\,a_k & = b_k.
\end{align*}

\end{lem}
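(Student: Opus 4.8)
The plan is to prove the two implications separately, relying throughout on the concrete effect of a single $\sim_\AA$-move. First I would record how one application of a generating pair acts on an $\AA$-path: writing $p = (a_0, e_1, a_1, \dots, e_k, a_k)$ as the concatenation $(a_0)(1,e_1,1)(a_1)\cdots(1,e_k,1)(a_k)$ and replacing the factor $(1,e_i,1)$ by $(\alpha_{e_i}(x), e_i, \omega_{e_i}(x)^{-1})$ for some $x \in A_{e_i}$ transforms $p$ into $(a_0, \dots, a_{i-1}\alpha_{e_i}(x), e_i, \omega_{e_i}(x)^{-1} a_i, \dots, a_k)$. Thus an elementary move at position $i$ leaves the underlying path untouched and merely multiplies $a_{i-1}$ on the right by $\alpha_{e_i}(x)$ and $a_i$ on the left by $\omega_{e_i}(x)^{-1}$.

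For the backward implication, assuming the $x_i$ exist, I would apply these elementary moves at positions $1, 2, \dots, k$ in turn. The move at position $i$ (with parameter $x_i$) turns the current value $\omega_{e_{i-1}}(x_{i-1})^{-1} a_{i-1}$ of the $(i-1)$-th vertex element into $\omega_{e_{i-1}}(x_{i-1})^{-1} a_{i-1}\alpha_{e_i}(x_i) = b_{i-1}$ by hypothesis, and passes the factor $\omega_{e_i}(x_i)^{-1}$ on to the next position; the last move produces $\omega_{e_k}(x_k)^{-1} a_k = b_k$. Hence $p \sim_\AA q$.

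For the forward implication I would introduce the relation $R$ defined by exactly the stated condition (equal underlying paths together with the existence of the $x_i$), so that the backward implication just proved reads $R \subseteq {\sim_\AA}$. It then suffices to show that $R$ is a congruence on the category of $\AA$-paths containing every generating pair, since $\sim_\AA$ is by definition the smallest such congruence. Containment of the generators is immediate (take $x_1 = x$). Reflexivity uses all $x_i = 1$; symmetry replaces each $x_i$ by $x_i^{-1}$; transitivity multiplies the two parameter families, $x_i \mapsto x_i x_i'$, using that $\alpha_{e_i}$ and $\omega_{e_i}$ are homomorphisms; and compatibility with concatenation follows by juxtaposing the two families, the only new relation being at the junction, where multiplying the last equation for the first path by the first equation for the second yields $\omega_{e_k}(x_k)^{-1}(a_k a_0')\alpha_{e_1'}(x_1') = b_k b_0'$, which is precisely the middle relation at the junction vertex. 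Minimality then gives ${\sim_\AA} \subseteq R$, completing the proof.

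The verifications are all short group-element identities, so there is no serious computational obstacle; the only point requiring care is conceptual. A naive proof of the forward direction would unfold $p \sim_\AA q$ as an arbitrary finite chain of elementary moves applied at interleaved positions and try to compose their effects directly, which is awkward because moves at adjacent positions interact at the shared vertex element. Packaging the claim as ``$R$ is a congruence'' sidesteps this bookkeeping entirely and is the cleanest route; the equalities $k = \ell$ and $e_i = f_i$ then come for free, since every elementary move fixes the underlying path (cf. the length statement of Proposition~\ref{prop: sim vs equiv}).
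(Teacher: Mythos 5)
Your proof is correct. The paper offers no argument for this lemma --- it is stated as following directly from the definition of $\sim_\AA$ --- and your two-step argument (computing the effect of a single generating move, then verifying that the explicit condition defines a congruence containing the generating pairs, so that minimality of $\sim_\AA$ gives the reverse inclusion) is exactly the right way to make that precise. The only point worth adding is the degenerate case $k=0$, where the system of equations should be read as the single condition $a_0=b_0$; this is what reflexivity of your relation $R$ on length-zero paths requires, and it is consistent with the fact that no generating move applies to a path of length zero.
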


We conclude this section by introducing the essential notions of fundamental groupoid and fundamental group of a graph of groups.

\begin{defn}
The quotient $\pi_1(\AA)$ of the category of $\AA$-paths by $=_\AA$ is a groupoid, called the \emph{fundamental groupoid} of $\AA$. If $u\in V(\gr{A})$, the \emph{fundamental group of $\AA$ at $u$}, written $\pi_1(\AA,u)$, is the local group of $\pi_1(\AA)$ at $u$, that is, the set of arrows (i.e., $=_\AA$-classes of $\AA$-paths) of $\pi_1(\AA)$ from $u$ to $u$, with the induced product and group structure.
\end{defn}

\subsection{Morphisms between graphs of groups}\label{ssec: morphisms}

In this section, we reformulate facts and definitions from \cite{bas93} and \cite{kwm05}. Formal differences are discussed in Remark~\ref{rem: differences with KMW}. 

\begin{defn}\label{def: morphism}
Let $\AA = (\gr{A}, \{A_u\}, \{A_e\}, \{\alpha_e, \omega_e\})$ and $\BB = (\gr{B}, \{B_v\}, \{B_f\}, \{\alpha_f, \omega_f\})$ be graphs of groups. A \emph{morphism of graphs of groups} $\mu\colon\BB\to \AA$ consists of
\begin{itemize}
\item a morphism of underlying graphs $[.]\colon \gr{B}\to \gr{A}$ (sometimes also written $\mu$);
\item for every $v\in V(\gr{B})$, a monomorphism $\mu_v\colon B_v\to A_{[v]}$;
\item for every $f\in E(\gr{B})$, a monomorphism $\mu_f\colon B_f\to A_{[f]}$; and
\item for every edge $f\in E(\gr{B})$ from $v$ to $v'$, elements $f_{\alpha}\in A_{[v]}$, $f_{\omega}\in A_{[v']}$, called the \emph{twisting elements for $f$}, such that $(f\inv)_{\alpha} = f_{\omega}$ and $(f\inv)_\omega = f_\alpha$,
\end{itemize}
satisfying the following property: for each edge $e\in E(\gr{A})$, each edge $f\in E(\gr{B})$ from $v$ to $v'$ such that $[f] = e$, we have
\begin{align} 
\label{eq: twisted commutation alpha}
\alpha_e\circ\mu_{f} &= \inn{f_{\alpha}}\! \circ \mu_{v} \circ \alpha_{f}\\
\label{eq: twisted commutation omega}
\omega_e\circ\mu_f &= \inn{f_{w}} \! \circ \mu_{v'} \circ \omega_{f}.
\end{align}
See Figure~\ref{fig: hom GGs} for a visual summary.

If $(\BB, v)$ and $(\AA, u)$ are pointed graphs of groups and $\mu\colon\BB\to\AA$ is a morphism of graphs of groups, we say that $\mu\colon(\BB,v)\to(\AA,u)$ is a \emph{morphism of pointed graphs of groups} (or, sometimes, a \emph{pointed morphism of graphs of groups}) if $[v] = u$. 
\end{defn}
\begin{figure}[H]
\centering
\begin{tikzpicture}[shorten >=3pt, node distance=3cm and 2cm, on grid,auto,-latex]

    \node[] (B) {$\gr{B}$};
    \node[] (v) [right = 2 of B]{$v$};
    \node[] (v') [right = 4 of v]{$v'$};
    \node[] (f) [above right = 0.25 and 2 of v]{$f$};
    
    \node[] (Bv) [below = 0.75 of v]{$B_v$};
    \node[] (Bf) [below right = 0.75 and 2 of v]{$B_f$};
    \node[] (Bv') [below = 0.75 of v']{$B_{v'}$};

    \node[] (Au) [below = 1.5 of Bv]{$A_u$};
    \node[] (Au2) [below = 1.5 of Au]{$A_u$};
    \node[] (Ae) [right = 2 of Au2]{$A_e$};
    \node[] (Au') [below = 1.5 of Bv']{$A_{u'}$};
    \node[] (Au'2) [below = 1.5 of Au']{$A_{u'}$};
    
    \node[] (u) [below = 0.75 of Au2]{$u$};
    \node[] (uu) [left = 0.75 of u]{$\mu(v) = $};
    \node[] (u') [below = 0.75 of Au'2]{$u'$};
    \node[] (uu') [right = 0.75 of u']{$ = \mu(v')$};
    \node[] (e) [below right = 0.25 and 2 of u]{$e = \mu(f)$};
    \node[] (A) [left = 2 of u] {$\gr{A}$};

    \path[->] (B) edge[]
    node[pos=0.5,left]{$\normalsize{\mu}$}
    (A);

    \path[-{Latex}] (v) edge[] 
    node[pos=0.1]{${f_\alpha}$}
    node[pos=0.85]{${f_\omega}$}
    (v');
    \path[-{Latex}] (u) edge[] 
    node[pos=0.1]{${f_\alpha \atop \longrightarrow}$}
    node[pos=0.85]{${f_\omega \atop \longleftarrow}$}
    (u');

    \path[>->] (Bf) edge[] 
    node[pos=0.5,above]{$\alpha_{f}$}
    (Bv);
    \path[>->] (Bf) edge[] 
    node[pos=0.5]{$\omega_{f}$}
    (Bv');
    \path[>->] (Ae) edge[] 
    node[pos=0.5]{$\alpha_{e}$}
    (Au2);
    \path[>->] (Ae) edge[] 
    node[pos=0.5,below]{$\omega_{e}$}
    (Au'2);

    \path[>->] (Bv) edge[] 
    node[pos=0.5,left]{$\mu_{v}$}
    (Au);
    \path[>->>] (Au) edge[] 
    node[pos=0.5,left]{$\gamma_{f_\alpha}$}
    (Au2);
    \path[>->] (Bf) edge[] 
    node[pos=0.5,left]{$\mu_{f}$}
    (Ae);
    \path[>->] (Bv') edge[] 
    node[pos=0.5]{$\mu_{v'}$}
    (Au');
    \path[>->>] (Au') edge[] 
    node[pos=0.5,right]{$\gamma_{f_\omega}$}
    (Au'2);
        
\end{tikzpicture}
\caption{Scheme of a morphism  $\mu\colon \BB \to \AA$ of graphs of groups}
\label{fig: hom GGs}
\end{figure}

\begin{rem}\label{rem: Hom GGs}
The twisting elements can be seen as labels ``dropped'' by edge $f$ onto edge $e$, activated when $f$ is traversed, with $f_\alpha$ near the origin of $e$, pointing towards the middle of the edge, and $f_\omega$ near the end of $e$, pointing towards the middle of the edge as well, see Figure~\ref{fig: hom GGs}.
\end{rem}

\begin{rem}
If $\mu \colon \BB \to \AA$ is a morphism of graphs of groups, then, for all $f \in E(\gr{B})$, $\mu_f =\mu_{f^{-1}}$.
This follows easily from Equations~\eqref{eq: twisted commutation alpha} and~\eqref{eq: twisted commutation omega} taking into account that
for every $e \in E(\gr{A})$,
    $\alpha_{e^{-1}} = \omega_{e}$; and
    for every $f \in E(\gr{B})$,
    $\alpha_{f^{-1}} = \omega_{f}$ and
    $(f^{-1})_{\alpha} = f_{\omega}$.
\end{rem}

\begin{defn} \label{rem: morphisms on A-paths}
A morphism of graphs of groups $\mu \colon \BB \to \AA$ extends to a map from $\BB$-paths to $\AA$-paths as follows: if
\[
p = (b_0, f_1, b_1, \dots, f_k, b_k)
\]
is a $\BB$-path, with $f_i$ an edge from $v_{i-1}$ to $v_i$ and $e_i = \mu(f_i) \in E(\gr{A})$, then:
\[
\mu(p) = \left(\mu_{v_0}(b_0)(f_1)_\alpha, e_1, (f_1)\inv_\omega\mu_{v_1}(b_1)(f_2)_\alpha, \dots, e_k, (f_k)_\omega\inv\mu_{v_k}(b_k)\right).
\]
It is worth noting that, if $f\in E(\gr{B})$, then the $\mu$-image of the length 1 $\BB$-path $(1,f,1)$ is $(f_\alpha, \mu(f), f_\omega\inv)$. 
\end{defn}

\begin{prop}\label{prop: preservation of sim and equiv}
Let $\mu\colon \BB \rightarrow \AA$ be a morphism of graphs of groups as above, and let $p, q$ be $\BB$-paths.
\begin{enumerate}[(1)]
\item If $p$ and $q$ are consecutive $\BB$-paths, then $\mu(p\,q) = \mu(p)\,\mu(q)$.\label{mu preserves products}

\item If $p =_\BB q$, then $\mu(p) =_\AA \mu(q)$. \label{mu preserves equivalence}
\end{enumerate}
\end{prop}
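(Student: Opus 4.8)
The plan is to prove the two statements in order, using the first to bootstrap the second. Both are ultimately computations governed by the defining data of $\mu$, and the only genuine content lies in tracking the twisting elements.

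For part~\eqref{mu preserves products}, I would write the concatenation $p\,q$ out explicitly and compare $\mu(p\,q)$ term by term with $\mu(p)\,\mu(q)$. Writing $p = (b_0, f_1, \dots, f_k, b_k)$ from $v$ to $v'$ and $q = (b'_0, f'_1, \dots, f'_\ell, b'_\ell)$ from $v'$ to $v''$, every entry of $\mu(p\,q)$ away from the junction vertex $v'$ agrees verbatim with the corresponding entry of $\mu(p)$ or of $\mu(q)$, directly from the definition of $\mu$ on $\BB$-paths. The only entry needing attention is the one produced at the junction. In $\mu(p)\,\mu(q)$, concatenation merges the last entry $(f_k)_\omega\inv\mu_{v'}(b_k)$ of $\mu(p)$ with the first entry $\mu_{v'}(b'_0)(f'_1)_\alpha$ of $\mu(q)$, giving $(f_k)_\omega\inv\mu_{v'}(b_k)\,\mu_{v'}(b'_0)(f'_1)_\alpha$; whereas in $\mu(p\,q)$ the corresponding entry is $(f_k)_\omega\inv\mu_{v'}(b_kb'_0)(f'_1)_\alpha$, since $p\,q$ carries the merged element $b_kb'_0 \in B_{v'}$ at that position. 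These coincide precisely because $\mu_{v'}$ is a homomorphism. This is the only nontrivial point.

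For part~\eqref{mu preserves equivalence}, the strategy is to reduce to the generators of $=_\BB$. Since $=_\AA$ is a congruence and, by part~\eqref{mu preserves products}, $\mu$ respects concatenation, it suffices to check that $\mu$ sends each defining pair of $=_\BB$ to a pair of $=_\AA$-equivalent $\AA$-paths: a single rewrite inside a context $r_1\cdot(\,\cdot\,)\cdot r_2$ becomes, after applying $\mu$ and multiplicativity, $\mu(r_1)\,\mu(s)\,\mu(r_2) =_\AA \mu(r_1)\,\mu(t)\,\mu(r_2)$, which holds as soon as $\mu(s) =_\AA \mu(t)$, and a finite chain of rewrites then gives the general case. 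Fixing $f\in E(\gr{B})$ from $v$ to $v'$ and $x\in B_f$, and setting $e=\mu(f)$ and $y=\mu_f(x)\in A_e$, I would apply the definition of $\mu$ to $(1,f,\omega_f(x),f\inv,1)$, using $(f\inv)_\alpha=f_\omega$ and $(f\inv)_\omega=f_\alpha$, to obtain $(f_\alpha,\,e,\,f_\omega\inv\mu_{v'}(\omega_f(x))f_\omega,\,e\inv,\,f_\alpha\inv)$. The middle entry $f_\omega\inv\mu_{v'}(\omega_f(x))f_\omega$ is exactly $\omega_e(y)$ by the second relation in~\eqref{eq: twisted commutation}, so the image equals $(f_\alpha)\,(1,e,\omega_e(y),e\inv,1)\,(f_\alpha\inv)$. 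On the other side $\mu\big((\alpha_f(x))\big) = (\mu_v(\alpha_f(x)))$, and the first relation in~\eqref{eq: twisted commutation} yields $\mu_v(\alpha_f(x)) = f_\alpha\,\alpha_e(y)\,f_\alpha\inv$, so $\mu\big((\alpha_f(x))\big) = (f_\alpha)\,(\alpha_e(y))\,(f_\alpha\inv)$. The defining relation of $=_\AA$ gives $(1,e,\omega_e(y),e\inv,1) =_\AA (\alpha_e(y))$, and multiplying both sides on the left by $(f_\alpha)$ and on the right by $(f_\alpha\inv)$ --- legitimate since $=_\AA$ is a congruence --- delivers $\mu\big((1,f,\omega_f(x),f\inv,1)\big) =_\AA \mu\big((\alpha_f(x))\big)$, as required.

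The main obstacle, as the generator computation reveals, is bookkeeping: correctly propagating the twisting elements $f_\alpha,f_\omega$ through the definition of $\mu$ and matching them, via the conjugations $\inn{f_\alpha}$ and $\inn{f_\omega}$, against the edge maps $\alpha_e,\omega_e$ in the twisted commutation relations~\eqref{eq: twisted commutation}. Once this single identity on a generator is established cleanly, everything else --- the junction check in part~\eqref{mu preserves products} and the passage from generators to arbitrary $=_\BB$-equivalences --- is purely formal.
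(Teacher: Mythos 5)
Your proposal is correct and follows essentially the same route as the paper: part~(1) is the direct junction-vertex check using that $\mu_{v'}$ is a homomorphism, and part~(2) reduces to the generating pairs of $=_\BB$ and then performs exactly the computation the paper does, invoking the two twisted commutation relations at the same points. The only cosmetic difference is that you factor the conjugating twisting element out as length-$0$ $\AA$-paths and appeal to the congruence property, where the paper manipulates the $5$-tuple directly; the content is identical.
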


\begin{proof}
The verification of Statement~\eqref{mu preserves products} is immediate.

To establish Statement~\eqref{mu preserves equivalence}, we only need to show that, for each edge $f\in E(\gr{B})$ and each $x\in B_f$, we have $\mu((1,f,\omega_f(x),f^{-1},1)) =_\AA \mu((\alpha_f(x)))$. Indeed, letting
$v = o(f)$, $v'=t(f)$ and $e = \mu(f)$,
we have
\begin{align*}
\mu((1,f,\omega_f(x),f^{-1},1)) &= (f_\alpha,e,f_\omega\inv\,\mu_{v'}(\omega_f(x))\,(f\inv)_\alpha, e\inv, (f\inv)_\omega\inv) \\
&= (f_\alpha,e,f_\omega\inv\,\mu_{v'}(\omega_f(x))\,f_\omega, e\inv, f_\alpha\inv) \\
&= (f_\alpha,e,\omega_e(\mu_f(x)), e\inv, f_\alpha\inv)\quad\text{by Condition~\eqref{eq: twisted commutation alpha}} \\
&=_\AA (f_\alpha\,\alpha_e(\mu_f(x))\,f_\alpha\inv)\\
&=_\AA (\mu_v(\alpha_f(x))) = \mu((\alpha_f(x)))\quad\text{by Condition~\eqref{eq: twisted commutation alpha} again,}
\end{align*}
which concludes the proof.
\end{proof}

The following is an immediate corollary of Proposition~\ref{prop: preservation of sim and equiv}.

\begin{cor}\label{cor: mu* is a group morphism}
A morphism of graphs of groups $\mu\colon \BB \to \AA$ induces a morphism of groupoids $\mu_*\colon \pi_1(\BB) \to \pi_1(\AA)$.

If $v \in V(\gr{B})$ and $u = [v] \in V(\gr{A})$, the restriction of $\mu_*$ to $\pi_1(\BB,v)$ is a group morphism $\mu_*\colon \pi_1(\BB,v) \to \pi_1(\AA,u)$.
\end{cor}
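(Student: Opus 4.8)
The plan is to check that the path-level map $\mu$ of Definition~\ref{rem: morphisms on A-paths} descends to the quotient groupoids, with Proposition~\ref{prop: preservation of sim and equiv} supplying all the substantive content; what remains is the routine bookkeeping of passing from paths to their $=$-classes and of tracking basepoints.

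First I would verify that $\mu_*$ is well-defined on arrows. By definition, an arrow of $\pi_1(\BB)$ is a $=_\BB$-class of $\BB$-paths, and I would set $\mu_*([p]) = [\mu(p)]$. Statement~\eqref{mu preserves equivalence} of Proposition~\ref{prop: preservation of sim and equiv} says precisely that $p =_\BB q$ implies $\mu(p) =_\AA \mu(q)$, so the $=_\AA$-class of $\mu(p)$ depends only on the $=_\BB$-class of $p$, and the assignment is well-defined. On objects I would let $\mu_*$ act by the underlying graph morphism $[.]\colon V(\gr{B})\to V(\gr{A})$; since $\mu(p)$ is coterminal with the $[.]$-images of the endpoints of $p$, this is compatible with the source and target maps of the two groupoids.

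Second, I would check functoriality. Statement~\eqref{mu preserves products} gives $\mu(p\,q) = \mu(p)\,\mu(q)$ for consecutive $\BB$-paths, and passing to classes yields $\mu_*([p\,q]) = \mu_*([p])\,\mu_*([q])$, so composition is respected. Identities are preserved because the trivial $\BB$-path $(1_{B_v})$ at $v$ maps, by Definition~\ref{rem: morphisms on A-paths}, to $(\mu_v(1_{B_v})) = (1_{A_{[v]}})$, the trivial $\AA$-path at $[v]$. Since $\pi_1(\BB)$ and $\pi_1(\AA)$ are groupoids (every arrow is invertible, by the relation $p\,p^{-1} =_\AA (1)$ noted in Definition~\ref{def: simAA and =AA, reduced, inverse}), a functor between them automatically respects inverses, so $\mu_*$ is a morphism of groupoids.

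Finally, for the group statement I would restrict $\mu_*$ to the local group at $v$. A $\BB$-circuit at $v$ is a $\BB$-path from $v$ to $v$, so its image is an $\AA$-path from $[v] = u$ to $u$, that is, an $\AA$-circuit at $u$; hence $\mu_*$ carries $\pi_1(\BB,v)$ into $\pi_1(\AA,u)$, and the functoriality established above makes this restriction a group homomorphism. I do not expect any genuine obstacle here: the only content is Proposition~\ref{prop: preservation of sim and equiv}, and the sole points requiring a moment's care are the preservation of identities and the basepoint tracking $[v]=u$ that sends circuits at $v$ to circuits at $u$.
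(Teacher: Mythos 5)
Your proposal is correct and follows exactly the route the paper intends: the paper gives no written proof, declaring the corollary an immediate consequence of Proposition~\ref{prop: preservation of sim and equiv}, and your argument is precisely the routine verification (well-definedness from statement~(2), compatibility with concatenation from statement~(1), identity and basepoint bookkeeping) that the authors leave implicit.
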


It is easy to see that the morphic image of a non-reduced
$\BB$-path is never reduced, but the converse is not necessarily true. This justifies the introduction of the important notion of an \emph{immersion}; namely, those for which this converse also holds, see Proposition~\ref{folded}.

\begin{defn}\label{def: folded}
Let $\AA$ and $\BB$ be graphs of groups and let $\mu\colon \BB \to \AA$ be a morphism of graphs of groups. We say that $\mu$ is an \emph{immersion} if the following holds:
\begin{enumerate}[(1)]
\item if $f$ and $f'$ are edges of $\gr{B}$ with the same image $e = \mu(f) = \mu(f') \in E(\gr{A})$ and the same initial vertex $v = o(f) = o(f')$, then $f = f'$ if and only if $\mu_v(B_v)f_{\alpha}\alpha_e(A_e) = \mu_v(B_v)f_{\alpha}'\alpha_e(A_e)$;

\item if $f$ is an edge of $\gr B$, $e = \mu(f)$ and $v = o(f)$, then $(\alpha_e\circ\mu_f)(B_f) = \mu_v(B_v)^{f_{\alpha}}\cap \alpha_e(A_e)$.
\end{enumerate}
\end{defn}

\begin{rem}
\label{rem: folded}
    Bass \cite{bas93} defines an immersion of graphs of groups to be a morphism $\mu\colon \BB\to \AA$ such that for each vertex $v\in V(\gr{B})$, the following map
    \begin{align*}
        \bigsqcup_{f\in \Star(v)}B_v/\alpha_f(B_f) &\to \bigsqcup_{e\in\Star(\mu(v))}A_{\mu(v)}/\alpha_{e}(A_{e})\\
        b\alpha_f(B_f) &\mapsto \mu_v(b)f_{\alpha}\alpha_{\mu(f)}(A_{\mu(f)})
    \end{align*}
    is injective. Recall that the \emph{star}, $\Star(v)$, of a vertex $v$ is defined to be the collection of edges with origin $v$. Bass' definition can be seen to be equivalent to Definition~\ref{def: folded}.
\end{rem}

The announced characterisation of immersions follows, see \cite[Lemma 4.2]{kwm05} for a proof. Note that there the authors use the notion of \emph{folded $\AA$-graph} to refer to an immersion of graphs of groups, see Remark~\ref{rem: differences with KMW}.

\begin{prop}\label{folded}
A morphism $\mu\colon \BB\to \AA$ of graphs of groups is an immersion if and only if it sends reduced $\BB$-paths to reduced $\AA$-paths.
\end{prop}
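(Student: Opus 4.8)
The plan is to prove both implications simultaneously by \emph{localizing} the reducedness condition to individual turns and then translating it through Bass's reformulation of immersions recalled in Remark~\ref{rem: folded}, namely that $\mu$ is an immersion if and only if, for every $v\in V(\gr B)$, the star map
\[
\Phi_v\colon \bigsqcup_{h\in\Star(v)}B_v/\alpha_h(B_h)\;\longrightarrow\;\bigsqcup_{e\in\Star(\mu(v))}A_{\mu(v)}/\alpha_e(A_e),\qquad b'\alpha_h(B_h)\mapsto \mu_v(b')\,h_\alpha\,\alpha_{\mu(h)}(A_{\mu(h)}),
\]
is injective. The first observation is that the condition defining a reduced $\AA$-path (resp. $\BB$-path) is a conjunction of one condition per internal index $i$, and that the index-$i$ condition for $\mu(p)$ involves only the local data $f_i,b_i,f_{i+1}$, through the middle element $a_i=(f_i)_\omega^{-1}\mu_{v_i}(b_i)(f_{i+1})_\alpha$ of $\mu(p)$ given in Definition~\ref{rem: morphisms on A-paths}. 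It therefore suffices to analyse length-$2$ paths (``turns'') $q=(1,f,b,f',1)$ with $t(f)=o(f')=v$, so that $f^{-1},f'\in\Star(v)$, and to show that $q$ is reduced if and only if $\mu(q)$ is.

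Next I would encode reducedness as distinctness of two elements in the star disjoint union. To the turn $q$ I associate $\xi=(f^{-1},\alpha_{f^{-1}}(B_{f^{-1}}))$ and $\eta=(f',b\,\alpha_{f'}(B_{f'}))$ in $\bigsqcup_{h\in\Star(v)}B_v/\alpha_h(B_h)$, and check directly that $q$ is reduced if and only if $\xi\neq\eta$: if $f'\neq f^{-1}$ the two lie in different components (and $q$ has no backtracking), while if $f'=f^{-1}$ they lie in the same component and differ exactly when $b\notin\omega_f(B_f)=\alpha_{f^{-1}}(B_{f^{-1}})$. Then, using the explicit form $\mu(q)=\big(f_\alpha,\mu(f),f_\omega^{-1}\mu_v(b)f'_\alpha,\mu(f'),(f')_\omega^{-1}\big)$ together with the identities $(f^{-1})_\alpha=f_\omega$, $\mu_{f^{-1}}=\mu_f$ and $\alpha_{\mu(f^{-1})}=\omega_{\mu(f)}$, I would verify that $\mu(q)$ is reduced if and only if $\Phi_v(\xi)\neq\Phi_v(\eta)$. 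Concretely, $\Phi_v(\xi)=(\mu(f)^{-1},f_\omega\,\omega_{\mu(f)}(A_{\mu(f)}))$ and $\Phi_v(\eta)=(\mu(f'),\mu_v(b)f'_\alpha\,\alpha_{\mu(f')}(A_{\mu(f')}))$; these sit in the same component precisely when $\mu(f')=\mu(f)^{-1}$ (the only case in which $\mu(q)$ can fail to be reduced), and in that case the backtracking inequality $f_\omega^{-1}\mu_v(b)f'_\alpha\notin\omega_{\mu(f)}(A_{\mu(f)})$ is equivalent to the coset inequality $\mu_v(b)f'_\alpha\,\omega_{\mu(f)}(A_{\mu(f)})\neq f_\omega\,\omega_{\mu(f)}(A_{\mu(f)})$, i.e. to $\Phi_v(\xi)\neq\Phi_v(\eta)$.

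With this dictionary the conclusion is immediate. If $\mu$ is an immersion, then each $\Phi_v$ is injective, so at every internal position $\xi\neq\eta$ forces $\Phi_v(\xi)\neq\Phi_v(\eta)$; hence every reduced $\BB$-path has reduced image. Conversely, assuming $\mu$ preserves reducedness, fix $v$ and distinct elements $\zeta_1\neq\zeta_2$ of the domain; after left-translating both cosets by a representative of $\zeta_1$ (which moves each $\Phi_v$-image by a single left factor $\mu_v(\cdot)$ and so preserves distinctness) I may assume $\zeta_1$ has trivial coset, realize the pair $(\zeta_1,\zeta_2)$ by a turn $q$ that is reduced since $\zeta_1\neq\zeta_2$, and apply the hypothesis to get $\Phi_v(\zeta_1)\neq\Phi_v(\zeta_2)$; thus $\Phi_v$ is injective and $\mu$ is an immersion by Remark~\ref{rem: folded}.

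The main obstacle is the bookkeeping in the second paragraph: correctly matching the backtracking inequality defining reducedness of $\mu(q)$ with coset-distinctness in the codomain, while keeping track of the twisting elements and of the involution identities $(f^{-1})_\alpha=f_\omega$, $\omega_e=\alpha_{e^{-1}}$ and $\mu_f=\mu_{f^{-1}}$ (the twisted commutation relations of Definition~\ref{def: morphism} are what make the coset translation go through). A secondary subtlety lies in the converse: one must justify the left-translation normalization, so that an \emph{arbitrary} pair of distinct domain elements is realised by a genuine reduced turn and the hypothesis can be applied.
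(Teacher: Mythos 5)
Your proof is correct. A point of comparison first: the paper does not actually prove Proposition~\ref{folded} --- it defers to \cite[Lemma 4.2]{kwm05} --- so what you have written is a self-contained replacement for that citation rather than an alternative to an in-paper argument. Your route (localize reducedness to turns, then translate each turn into a pair of elements of Bass's star disjoint union) is essentially the standard one and it checks out: reducedness of $p$ and of $\mu(p)$ are conjunctions of conditions over internal positions, and the position-$i$ condition for $\mu(p)$ depends only on $f_i, b_i, f_{i+1}$ via the middle letter $(f_i)_\omega^{-1}\mu_{v_i}(b_i)(f_{i+1})_\alpha$ from Definition~\ref{rem: morphisms on A-paths}, so the reduction to turns $(1,f,b,f',1)$ is legitimate in both directions. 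The two dictionaries ($q$ reduced iff $\xi\neq\eta$, and $\mu(q)$ reduced iff $\Phi_v(\xi)\neq\Phi_v(\eta)$) are verified correctly using $(f^{-1})_\alpha=f_\omega$, $\alpha_{e^{-1}}=\omega_e$ and $B_{f^{-1}}=B_f$, and the left-translation normalization in the converse is harmless because $\Phi_v$ intertwines left multiplication by $b$ on a component of the domain with left multiplication by $\mu_v(b)$ on the corresponding component of the codomain, both bijections, so distinctness is preserved on each side.

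One dependency is worth flagging explicitly: you establish injectivity of Bass's star map and then invoke Remark~\ref{rem: folded} to conclude that $\mu$ is an immersion in the sense of Definition~\ref{def: folded}. The paper asserts that equivalence without proof, so your argument is only as self-contained as that remark. This is acceptable given that the paper states the equivalence as a fact, but if you want to remove the dependency, observe that your turn analysis already delivers Definition~\ref{def: folded} directly: injectivity of $\Phi_v$ across two distinct components $f\neq f'$ with $\mu(f)=\mu(f')=e$ is exactly the disjointness, hence distinctness, of the double cosets $\mu_v(B_v)f_\alpha\alpha_e(A_e)$ and $\mu_v(B_v)f'_\alpha\alpha_e(A_e)$ (condition (1)), while injectivity within a single component, combined with the containment $(\alpha_e\circ\mu_f)(B_f)\subseteq\mu_v(B_v)^{f_\alpha}\cap\alpha_e(A_e)$ that follows from the twisted commutation relations \eqref{eq: twisted commutation}, gives condition (2).
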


\begin{cor}\label{cor: composition of folded}
Let $\mu \colon \AA\to \BB$ and $\nu\colon \BB\to \CC$ be morphisms of graphs of groups. If $\mu$ and $\nu$ are immersions, so is $\nu\circ\mu$. Also, if $\nu$ and $\nu\circ\mu$ are immersions, so is $\mu$.
\end{cor}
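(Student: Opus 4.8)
The plan is to reduce both assertions to the path-theoretic characterisation of immersions supplied by \cref{folded}: a morphism is an immersion precisely when it carries reduced paths to reduced paths. The only additional ingredient needed is the \emph{functoriality of the extension to paths}, namely that $(\nu\circ\mu)(p) =_\CC \nu(\mu(p))$ for every $\AA$-path $p$ (ideally an on-the-nose equality, but an $=_\CC$-equality would suffice after combining with \cref{prop: sim vs equiv}(5)). This compatibility follows by unwinding \cref{rem: morphisms on A-paths} and tracking how the twisting elements of $\nu\circ\mu$ arise from $\nu(\mu_f)_\alpha$-type data; I would record it as a short lemma (or appeal to wherever composition of morphisms of graphs of groups is set up) before proving the corollary.

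For the first assertion, suppose $\mu$ and $\nu$ are immersions and let $p$ be a reduced $\AA$-path. By \cref{folded} applied to $\mu$, the image $\mu(p)$ is a reduced $\BB$-path; applying \cref{folded} to $\nu$, the image $\nu(\mu(p))$ is a reduced $\CC$-path. By functoriality $(\nu\circ\mu)(p) = \nu(\mu(p))$ is reduced, so $\nu\circ\mu$ sends reduced paths to reduced paths and is therefore an immersion, again by \cref{folded}.

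For the second assertion, suppose $\nu$ and $\nu\circ\mu$ are immersions; I would argue by contraposition at the level of a single path. Let $p$ be a reduced $\AA$-path and suppose, for contradiction, that $\mu(p)$ is \emph{not} reduced. The remark preceding \cref{def: folded} — that the morphic image of a non-reduced path is never reduced — applied to $\nu$ shows that $\nu(\mu(p))$ is non-reduced as well. But $\nu(\mu(p)) = (\nu\circ\mu)(p)$, and since $\nu\circ\mu$ is an immersion and $p$ is reduced, \cref{folded} forces $(\nu\circ\mu)(p)$ to be reduced, a contradiction. Hence $\mu(p)$ is reduced for every reduced $\AA$-path $p$, and \cref{folded} yields that $\mu$ is an immersion.

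The routine chase in the two assertions is essentially immediate once \cref{folded} is in hand; the genuine work, and the main obstacle, is the functoriality statement $(\nu\circ\mu)(p) = \nu(\mu(p))$. Establishing it cleanly requires having composition of morphisms of graphs of groups defined so that the twisting elements compose correctly, and then verifying that the vertex-group corrections $\mu_{v_i}$ and the twisting factors $(f_i)_\alpha,(f_i)_\omega$ recombine exactly as prescribed by \cref{rem: morphisms on A-paths} for the composite. This is a bookkeeping computation with the twisting data rather than a conceptual difficulty, but it is where care is needed to avoid an off-by-a-conjugation error.
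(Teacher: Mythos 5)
Your proof is correct and follows essentially the same route as the paper's: both assertions are reduced to the characterisation of immersions in Proposition~\ref{folded} together with the observation that any morphism of graphs of groups sends non-reduced paths to non-reduced paths. The functoriality $(\nu\circ\mu)(p) = \nu(\mu(p))$ that you flag as the main technical point is indeed needed, but the paper treats it as implicit in the definition of composition of morphisms and does not spell it out either.
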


\begin{proof}
This is a direct consequence of the characterisation of immersions in Proposition~\ref{folded} and of the observation that any morphism of graphs of groups maps non-reduced paths to non-reduced paths.
\end{proof}

This leads to the following important property of immersions.

\begin{cor}\label{cor: folded morphisms}
Let $\mu\colon \BB \to \AA$ be an immersion of graphs of groups.
\begin{enumerate}[(1)]
\item The morphism $\mu_*\colon \pi_1(\BB) \to \pi_1(\AA)$ is injective in the following sense: if $p, q$ are coterminal $\BB$-paths and $\mu(p) =_\AA \mu(q)$, then $p =_\BB q$. \label{folded preserves cong}

\item If $v\in V(\gr{B})$ and $u = [v] \in V(\gr{A})$, then the morphism $\mu_*\colon \pi_1(\BB,v) \to \pi_1(\AA,u)$ is injective. \label{folded implies injective}
\end{enumerate}
\end{cor}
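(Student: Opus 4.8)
The plan is to reduce both statements to a single \emph{triviality-reflecting} property of immersions, namely: if $\mu\colon\BB\to\AA$ is an immersion and $r$ is a $\BB$-path with $\mu(r) =_\AA (1)$, then $r =_\BB (1)$. Granting this, Statement~\eqref{folded preserves cong} follows by a standard groupoid manipulation. Given coterminal $\BB$-paths $p,q$ with $\mu(p) =_\AA \mu(q)$, consider the $\BB$-circuit $r = p\,q^{-1}$ at their common initial vertex $v_0$; this is well defined precisely because $p$ and $q$ are coterminal. Since $\mu_*$ is a morphism of groupoids (Corollary~\ref{cor: mu* is a group morphism}) and the classes of $\mu(p)$ and $\mu(q)$ agree in $\pi_1(\AA)$, we obtain $\mu(p\,q^{-1}) =_\AA (1)$; the reflecting property then yields $p\,q^{-1} =_\BB (1)$, that is, $p =_\BB q$. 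Statement~\eqref{folded implies injective} is the special case $q = (1_v)$, equivalently the assertion that the kernel of the group morphism $\mu_*\colon\pi_1(\BB,v)\to\pi_1(\AA,u)$ is trivial.

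To prove the reflecting property, I would first replace $r$ by a reduced $\BB$-path $r'$ with $r =_\BB r'$, which exists by Proposition~\ref{prop: sim vs equiv}\eqref{eq: reduce path} and is coterminal with $r$ by Proposition~\ref{prop: sim vs equiv}\eqref{eq: length preserved}. By Proposition~\ref{prop: preservation of sim and equiv}\eqref{mu preserves equivalence} we still have $\mu(r') =_\AA (1)$. The crucial step is to invoke the immersion hypothesis through Proposition~\ref{folded}: since $r'$ is reduced, $\mu(r')$ is a reduced $\AA$-path. By Proposition~\ref{prop: sim vs equiv}\eqref{eq: shortest is reduced}, $\mu(r')$ is then a shortest representative of its $=_\AA$-class; as the trivial path $(1)$ lies in that class and has length $0$, the path $\mu(r')$ must itself have length $0$. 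Because a morphism of graphs of groups preserves the length of underlying paths, $r'$ also has length $0$, say $r' = (b)$ with $b\in B_{v_0}$. Finally, $(\mu_{v_0}(b)) = \mu(r') =_\AA (1)$ forces $\mu_{v_0}(b) = 1$: two reduced length-$0$ paths that are $=_\AA$-equivalent are $\sim_\AA$-equivalent by Proposition~\ref{prop: sim vs equiv}\eqref{eq: for reduced, sim is =}, hence equal by Lemma~\ref{lem: equivalent A-paths}. Injectivity of $\mu_{v_0}$ then gives $b = 1$, so $r' = (1)$ and $r =_\BB (1)$.

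The main obstacle is exactly the reflecting step, and it is the only place where the immersion hypothesis is genuinely used: it guarantees, via Proposition~\ref{folded}, that a reduced $\BB$-path maps to a reduced $\AA$-path, so that the comparison of lengths against the trivial path goes through. Everything else is formal bookkeeping with the congruences $=_\AA$, $=_\BB$ and the groupoid morphism $\mu_*$. I would stress that routing the argument through the trivial path, rather than directly matching two reduced images edge by edge, is what keeps it clean: a direct comparison would require tracking the twisting elements $f_\alpha, f_\omega$ through Lemma~\ref{lem: equivalent A-paths}, whereas comparison with $(1)$ collapses all of that to a single length count.
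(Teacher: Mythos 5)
Your proof is correct and follows essentially the same route as the paper's: form the circuit $p\,q^{-1}$, pass to a reduced representative, use Proposition~\ref{folded} to conclude its image is reduced and hence coincides with the trivial path, and then pull triviality back through the length count and the injectivity of $\mu_{v_0}$. The only difference is that you spell out the final step (which the paper compresses into ``by definition of $\mu$''), and you isolate the triviality-reflecting property as a named intermediate claim; the substance is identical.
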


\begin{proof}
Let $p,q$ be coterminal $\BB$-paths such that $\mu(p) =_\AA \mu(q)$. Then the concatenation $r = p\,q\inv$ is well-defined. Let $v = o(p)$ and $u = [v]$. Let also $r'$ be a shortest representative of the $=_\BB$-class of $r$; in particular, $r'$ is a reduced $\BB$-path by Proposition~\ref{prop: sim vs equiv}~\eqref{eq: shortest is reduced}.
Then $\mu(r') =_\AA \mu(r) =_\AA 1_u$ 
(the trivial $\AA$-path at vertex $u$). By Proposition~\ref{folded}, $\mu(r')$ is reduced and hence $\mu(r') = 1_u$. By definition of $\mu$, it follows that $r' = 1_v$ and hence, $p =_\BB rq =_\BB r'q =_\BB q$. This concludes the proof of Statement~\eqref{folded preserves cong}.
Statement~\eqref{folded implies injective} follows immediately.
\end{proof}

We also note the following technical result, which will be useful later.

\begin{lem}
\label{lem:immersion_inclusion}
    Let $\mu\colon (\BB, v_0)\to (\AA, u_0)$ be an immersion of connected graphs of groups. If $\mu_*$ is an isomorphism, then $\mu$ restricts to an inclusion on the underlying graphs and to isomorphisms on the vertex and edge groups.
\end{lem}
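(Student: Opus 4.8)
The plan is to prove three things: the graph map $[.]\colon\gr B\to\gr A$ is injective, each $\mu_v$ is onto, and each $\mu_f$ is onto (all these maps are monomorphisms to begin with, so ``onto'' upgrades to ``isomorphism''). Injectivity of $\mu_*$ is free by Corollary~\ref{cor: folded morphisms}, so the whole weight of the hypothesis lies in the \emph{surjectivity} of $\mu_*$. I will use it through the following lifting principle: if $\gamma$ is a reduced $\AA$-circuit at $u_0$, surjectivity gives a $\BB$-circuit $p$ at $v_0$ with $\mu(p)=_\AA\gamma$; taking $p$ to be a shortest representative of its class, it is reduced by Proposition~\ref{prop: sim vs equiv}\eqref{eq: shortest is reduced}, so $\mu(p)$ is reduced by Proposition~\ref{folded} and $\mu(p)\sim_\AA\gamma$ by Proposition~\ref{prop: sim vs equiv}\eqref{eq: for reduced, sim is =}; as morphisms preserve length, $p$ and $\gamma$ then have matching underlying paths by Lemma~\ref{lem: equivalent A-paths}.

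For the vertex groups, fix $v$ with $u=[v]$ and $a\in A_u$. Choosing a reduced $\BB$-path $s$ from $v_0$ to $v$ (here $\gr B$ is connected) and lifting the class of $\mu(s)\,(a)\,\mu(s)\inv$, I obtain a $\BB$-circuit $p$ at $v_0$ with $\mu(s\inv p\,s)=_\AA(a)$, using Proposition~\ref{prop: preservation of sim and equiv} and Corollary~\ref{cor: mu* is a group morphism}. A shortest representative $q$ of the $v$-circuit $s\inv p\,s$ is reduced, so $\mu(q)$ is reduced and $=_\AA(a)$; since $(a)$ is reduced of length $0$, $\mu(q)$ has length $0$, hence so does $q$, say $q=(b)$ with $b\in B_v$, and $(\mu_v(b))\sim_\AA(a)$ forces $\mu_v(b)=a$ by Lemma~\ref{lem: equivalent A-paths}. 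Thus $\mu_v$ is onto.

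For graph injectivity, note first that once every $\mu_v$ is onto, two distinct edges with a common origin $v$ and a common image $e$ are impossible: condition~(1) of Definition~\ref{def: folded} would force the double cosets $\mu_v(B_v)f_\alpha\alpha_e(A_e)$ and $\mu_v(B_v)f'_\alpha\alpha_e(A_e)$ to differ, but both collapse to $A_u$ because $\mu_v(B_v)=A_u$ contains $f_\alpha,f'_\alpha$. So edge-injectivity reduces to vertex-injectivity. For the latter, suppose $[v]=[v']$, pick reduced paths $s\colon v_0\to v$ and $s'\colon v_0\to v'$, lift $\mu(s')\mu(s)\inv$ to a $\BB$-circuit $r$ at $v_0$, and let $P$ be a reduced form of $r\,s$ and $Q=s'$. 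Then $\mu(P)\sim_\AA\mu(Q)$, so $P$ and $Q$ have the same underlying $\AA$-path; the unique path-lifting property of immersions --- proved by induction on length from the local injectivity encoded in condition~(1) of Definition~\ref{def: folded} --- then gives $P$ and $Q$ the same underlying $\BB$-path, in particular the same endpoint, so $v=v'$.

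The edge groups are the delicate point, and I expect this to be the main obstacle. Surjectivity of $\mu_*$ is of no direct use here: edge-group elements are absorbed into the vertex groups modulo $=_\AA$ and are invisible to $\pi_1$, so they cannot be lifted the way vertex-group elements were. Instead, surjectivity of $\mu_f$ has to be extracted from condition~(2) of Definition~\ref{def: folded}: with $\mu_v(B_v)=A_{[v]}$ already known, that condition determines $\mu_v(\alpha_f(B_f))$ as a full conjugate of $\alpha_e(A_e)$, and plugging this into the twisted-commutation identity~\eqref{eq: twisted commutation} --- which links $\mu_v\circ\alpha_f$ and $\alpha_e\circ\mu_f$ via conjugation by the twisting element $f_\alpha$ --- the conjugations by $f_\alpha$ cancel and yield $\alpha_e(\mu_f(B_f))=\alpha_e(A_e)$, whence $\mu_f$ is onto. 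The careful bookkeeping of $f_\alpha$ against the associated subgroup $\alpha_e(A_e)$ is exactly where the argument is subtle, and it is precisely the fact that $\mu_v(B_v)$ is the \emph{whole} vertex group (hence conjugation-invariant) that makes it go through. Conceptually, all three statements are transparent from the Bass--Serre correspondence of Appendix~\ref{appendix}: the immersion lifts to a locally injective, hence injective, $\mu_*$-equivariant map of Bass--Serre trees, and surjectivity of $\mu_*$ forces this map to identify vertex and edge stabilizers, giving simultaneously the injectivity on underlying graphs and the isomorphisms on vertex and edge groups.
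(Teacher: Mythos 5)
Your proof is correct, and its backbone is the paper's: to show $\mu_v$ is onto, lift $a\in A_{[v]}$ through the surjectivity of the fundamental-group map to a reduced $\BB$-circuit, note that its image is a reduced $\AA$-path $=_\AA$-equivalent to $(a)$ and hence of length $0$, and conclude the circuit lies in $B_v$ (the paper changes basepoint to $v$, observing that $\mu^v_*$ is again an isomorphism, where you conjugate by a connecting path — same computation). The edge-group step is also the paper's one-liner: once $\mu_v(B_v)=A_{[v]}$, the right-hand side of condition~(2) of Definition~\ref{def: folded} is all of $\alpha_e(A_e)$, so $\mu_f$ is onto; this is far less delicate than you anticipated, and no careful bookkeeping of $f_\alpha$ is needed precisely because the whole vertex group is conjugation-invariant, as you note. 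Where you genuinely diverge is the injectivity on the underlying graph: the paper proves it \emph{first}, directly from surjectivity of $\mu_*$ (a reduced $\BB$-path joining two identified vertices maps, by Proposition~\ref{folded}, to a reduced loop downstairs that is not hit by the fundamental group upstairs; two identified edges give a reduced loop mapping to a backtracking), whereas you prove vertex-group surjectivity first and then observe that $\mu_v(B_v)=A_{[v]}$ collapses the double cosets in condition~(1), making the underlying graph map injective on stars and yielding uniqueness of lifts of underlying paths. Your ordering arguably gives a cleaner treatment of the identified-edges case. One caution: unique path-lifting is \emph{not} a property of immersions of graphs of groups in general — condition~(1) explicitly permits distinct edges in a star to share an image $e$ as long as the double cosets $\mu_v(B_v)f_\alpha\alpha_e(A_e)$ differ — so your induction is only valid after every $\mu_v$ is known to be onto, which is indeed the order in which you use it; you should state that dependency explicitly rather than attributing the lifting property to immersions per se.
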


\begin{proof}
    Let us first remark that if $\mu_*$ is an isomorphism, then $\mu^v_*$ is also an isomorphism for all $v\in V(\gr{B})$, where here $\mu^v\colon (\BB, v) \to (\AA, \mu(v))$ denotes the morphism obtained from $\mu$ by changing basepoints.
    
    If $\mu$ did not restrict to an inclusion of the underlying graphs, then either a pair of distinct vertices $v_1, v_2$ are identified in $\gr{A}$, or no vertices are identified and a pair of distinct edges $f_1, f_2$ are identified in $\gr{A}$. In the first case, there would be a reduced path $b$ in $\gr{B}$ connecting $v_1$ with $v_2$ which maps to a reduced loop in $\gr{A}$ at $\mu(v_1)$ by Proposition~\ref{folded}. In particular, this reduced loop would be a non-trivial element in $\pi_1(\AA, \mu(v_1))$, which is not the image of any element in $\pi_1(\BB, v_1)$ under $\mu^{v_1}$. This contradicts the assumption that $\mu_*$ was an isomorphism. In the second case, the loop $f_1f_2^{-1}$ is reduced in $\gr{B}$ but maps to a non-reduced loop in $\gr{A}$, contradicting Proposition~\ref{folded}. Hence, we may conclude that $\mu$ restricts to an inclusion on the underlying graph.
    
    Now let $v\in V(\gr{B})$ and let $a\in A_{\mu(v)}$ be an arbitrary element. Since $\mu^v_*$ is an isomorphism, there is a reduced $\BB$-circuit $b$ at $v$ so that $\mu(b) =_{\AA} a$. By Proposition~\ref{folded}, $\mu(b)$ is a reduced $\AA$-circuit, and we must have $\mu(b)\sim_{\AA} a$. It follows that $b\in B_v$. Thus, $\mu$ is an isomorphism on each vertex group. It is an isomorphism on each edge group by definition of an immersion.
\end{proof}

\begin{rem}\label{rem: differences with KMW}
There are formal differences between our presentation and the definitions in the paper \cite{kwm05} by Kapovich, Weidmann and Myasnikov. Let $\AA$ be a graph of groups. These authors define the notions of an $\AA$-graph $\calB$, of the graph of groups $\BB$ defined by $\calB$, and finally of a folded $\AA$-graph. In the section above, we start from a graph of groups $\BB$.
Our definition of an immersion of graphs of groups $\mu\colon \BB \to \AA$ exactly coincides with their notion of a folded $\AA$-graph.
We also made the following change in the notation of the twisting elements of an edge $f$: what is denoted by $f_\omega$ in \cite{kwm05} is what we denote by $(f_\omega)\inv$.
\end{rem}

\paragraph{Notational Convention}
Since we shall often deal with multiple morphisms of graphs of groups, it will be useful to set out some notational simplifications for ease of reading. Our general setup will be to consider two morphisms from pointed graphs of groups $(\BB,v)$ and $(\CC,w)$ to a common pointed graph of groups $(\AA,u)$, which we will denote by $\mu$ with a superscript to denote their different provenance, e.g., $\mu^B\colon (\BB, v)\to (\AA, u)$ and $\mu^C\colon (\CC, w)\to (\AA, u)$.

\section{The categories of graphs of groups, pointed and unpointed} \label{sec: GrGp}

In this section we define the categories $\GrGp$ and $\GrGp^*$, of graphs of groups and pointed graphs of groups.

\subsection{Equivalence of morphisms and the categories $\GrGp$ and $\GrGp^*$}\label{sec: equivalence and categories}

The definition of these categories depends on equivalence relations between morphisms of pointed and unpointed graphs of groups, which we now define. We opted for ``internal'' definitions, strictly in terms of the parameters of morphisms. A useful characterisation is given in Proposition~\ref{prop: change of data}.

\begin{defn}\label{def: mor_sim}
Let $\mu^1,\mu^2$ be morphisms of graphs of groups from $\BB$ to $\AA$. We say that $\mu^1$ and $\mu^2$ are \emph{$\approx$-equivalent}, written $\mu^1\approx\mu^2$, if $\mu^1$ and $\mu^2$ agree on the underlying graph $\gr B$ and there exist elements $a_f\in A_{[f]}$ for each edge $f\in E(\gr{B})$ and $a_v\in A_{[v]}$ for each vertex $v\in V(\gr{B})$ such that the following holds:
\begin{enumerate}[(1)]
        \item\label{item: inverse of a f} for every edge $f\in E(\gr{B})$,  $a_{f\inv} = a_f$;

        \item\label{item: locally at v} for every vertex $v\in V(\gr{B})$, we have $\mu^2_v = \gamma_{a_v}\circ\mu_v^1$;
        \item\label{item: a v versus a f} for every vertex $v\in V(\gr{B})$ and every edge $f$ with $o(f) = v$, we have $a_v = f_\alpha^1\ \alpha_{[f]}(a_f)\,(f_\alpha^2)\inv$, where $f_\alpha^1$ and $f_\alpha^2$ are the twisting elements associated to $f$ in $\mu^1$ and $\mu^2$, respectively.
\end{enumerate}
If instead $\mu^1, \mu^2\colon (\BB, v_0)\to (\AA, u_0)$ are morphisms of connected pointed graphs of groups, then they are \emph{$\sim$-equivalent}, written $\mu^1\sim \mu^2$, if the above holds with the additional condition that $a_{v_0} = 1$. We call the tuples $(a_v)_{v\in V(\gr B)}$ and $(a_f)_{f\in E(\gr B)}$ \emph{vertex and edge parameters associated to the equivalence} $\mu^1\approx \mu^2$ (or $\mu^1\sim\mu^2$). Note that $(a_v)_v$ and $(a_f)_f$ determine each other, see \eqref{item: a v versus a f}.
\end{defn}

\begin{rem}
    Readers should be aware of the slight abuse of notation: even though $\approx$ is a symmetric relation, the parameters for the equivalences $\mu^1\approx \mu^2$ and $\mu^2 \approx \mu^1$ are not the same. More precisely, it is immediately verified from the definition that if $(a_v)_v$ and $(a_f)_f$ are parameters for the equivalence $\mu^1 \approx \mu^2$, then $(a_v\inv)_v$ and $(a_f\inv)_f$ are parameters for $\mu^2 \approx \mu^1$.
\end{rem}

\begin{rem}
In the language of \cite{kwm05}, two morphisms of graphs of groups (resp. pointed graphs of groups) are $\approx$-equivalent (resp. $\sim$-equivalent) if and only if one can be obtained from the other by performing (possibly infinitely many) $A0$- and $A1$-moves. In the case of morphisms of pointed graphs of groups, $A0$-moves at the basepoint are forbidden.
\end{rem}

A statement analogous to Condition~\eqref{item: locally at v} in Definition~\ref{def: mor_sim} holds for the edge parameters.

\begin{lem}
\label{rem:edge_equivalence}
If $\mu^1\approx \mu^2$ (resp. $\mu^1\sim \mu^2$) and $(a_v)_v$ and $(a_f)_f$ are tuples of parameters of the equivalence, then, for each edge $f\in E(\gr{B})$, we have $\mu^2_f = \gamma_{a_f}\circ\mu^1_f$.
\end{lem}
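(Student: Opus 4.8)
The plan is to exploit the fact that each edge map $\alpha_e$ is a monomorphism, so that the desired identity $\mu^2_f = \gamma_{a_f}\circ\mu^1_f$ can be verified after post-composing with $\alpha_e$, where $e = [f]$. This is the natural move because the twisting data of a morphism only ever interacts with the edge maps $\mu_f$ through the relation \eqref{eq: twisted commutation}, which precisely expresses $\alpha_e\circ\mu_f$ in terms of $\mu_v\circ\alpha_f$. So I would fix an edge $f$ of $\gr{B}$ with $o(f)=v$, write $e=[f]$, and let $f_\alpha^1,f_\alpha^2\in A_e$ be the twisting elements coming from $\mu^1$ and $\mu^2$. The goal then reduces to showing $\alpha_e\circ\mu^2_f = \alpha_e\circ\gamma_{a_f}\circ\mu^1_f$.

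First I would rewrite the left-hand side: applying \eqref{eq: twisted commutation} to $\mu^2$ gives $\alpha_e\circ\mu^2_f = \gamma_{f_\alpha^2}\circ\mu^2_v\circ\alpha_f$, and then condition \eqref{item: locally at v} of Definition~\ref{def: mor_sim}, namely $\mu^2_v=\gamma_{a_v}\circ\mu^1_v$, together with the composition law $\gamma_g\circ\gamma_h=\gamma_{hg}$ for inner automorphisms, turns this into $\gamma_{a_v f_\alpha^2}\circ\mu^1_v\circ\alpha_f$. For the right-hand side I would push $\gamma_{a_f}$ through the homomorphism $\alpha_e$ via the elementary identity $\alpha_e\circ\gamma_{a_f}=\gamma_{\alpha_e(a_f)}\circ\alpha_e$ (valid since $\alpha_e$ is a group homomorphism), and then apply \eqref{eq: twisted commutation} to $\mu^1$; this yields $\alpha_e\circ\gamma_{a_f}\circ\mu^1_f = \gamma_{f_\alpha^1\,\alpha_e(a_f)}\circ\mu^1_v\circ\alpha_f$.

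At this point both sides have the shape $\gamma_{(\cdot)}\circ\mu^1_v\circ\alpha_f$, so it suffices to match the two conjugating elements $a_v f_\alpha^2$ and $f_\alpha^1\,\alpha_e(a_f)$. This is exactly what condition \eqref{item: a v versus a f} of Definition~\ref{def: mor_sim} provides: the relation $a_v = f_\alpha^1\,\alpha_{[f]}(a_f)\,(f_\alpha^2)\inv$ rearranges to $a_v f_\alpha^2 = f_\alpha^1\,\alpha_e(a_f)$. Hence the two displayed maps coincide, and injectivity of $\alpha_e$ gives $\mu^2_f=\gamma_{a_f}\circ\mu^1_f$. The pointed case $\mu^1\sim\mu^2$ requires no separate argument, since it merely adds the constraint $a_{v_0}=1$ while using the same parameters. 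I do not expect a genuine obstacle here: the single point demanding care is keeping the inner-automorphism bookkeeping consistent---in particular using $\gamma_g\circ\gamma_h=\gamma_{hg}$ and $\alpha_e\circ\gamma_{a_f}=\gamma_{\alpha_e(a_f)}\circ\alpha_e$ under the paper's convention $\gamma_g(x)=g\inv x g$---so that the element read off from \eqref{item: a v versus a f} lands on the correct side of each conjugation.
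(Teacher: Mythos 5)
Your proposal is correct and follows essentially the same route as the paper's proof: post-compose with the injective edge map $\alpha_e$, apply \eqref{eq: twisted commutation} to both $\mu^1$ and $\mu^2$, invoke conditions \eqref{item: locally at v} and \eqref{item: a v versus a f} of Definition~\ref{def: mor_sim} to match the conjugating elements, and conclude by injectivity of $\alpha_e$. The bookkeeping with $\gamma_g\circ\gamma_h=\gamma_{hg}$ and $\alpha_e\circ\gamma_{a_f}=\gamma_{\alpha_e(a_f)}\circ\alpha_e$ is handled correctly under the paper's convention $\gamma_g(x)=g\inv xg$.
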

\begin{proof}
Let $f \in E(\gr B)$ and $v = o(f)$. We have
\begin{align*}
\alpha_e\circ\mu^2_f &= \gamma_{f_{\alpha}^2}\circ\mu^2_v\circ\alpha_f \\
					&= \gamma_{f_{\alpha}^2}\circ\gamma_{a_v}\circ\mu^1_v\circ\alpha_f \\
					&= \gamma_{f_{\alpha}^2}\circ\gamma_{a_v}\circ\gamma_{(f_{\alpha}^1)^{-1}}\circ\alpha_e\circ\mu^1_f\\
					&= \gamma_{(f_{\alpha}^1)^{-1}a_vf_{\alpha}^2}\circ \alpha_e\circ\mu^1_f\\
					&= \gamma_{\alpha_e(a_f)}\circ \alpha_e\circ\mu^1_f\\
					&= \alpha_e\circ \gamma_{a_f}\circ\mu^1_f.
\end{align*}
The result follows since $\alpha_e$ is injective.
\end{proof}

The following characterisations of the $\sim$- and $\approx$-equivalences will be useful in the sequel.

\begin{prop}
\label{prop: change of data}
    Let $\mu^1, \mu^2\colon \BB\to\AA$ be morphisms of graphs of groups. Then $\mu^1\approx \mu^2$ if and only if for each vertex $v\in V(\gr{B})$, there is an element $a_v\in A_{\mu^2(v)}$ such that
    \[
    \mu^2(p) \sim_\AA a_{o(p)}^{-1}\,\mu^1(p)\, a_{t(p)}
    \]
    for all $\BB$-paths $p$. Furthermore, $(a_v)_v$ is a tuple of vertex parameters for the equivalence $\mu^1\approx\mu^2$.
    
    Let $\mu^1, \mu^2\colon (\BB, v_0)\to(\AA, u_0)$ be two morphisms of connected pointed graphs of groups. Then $\mu^1\sim \mu^2$ if and only if for each vertex $v\in V(\gr{B})$ there is an element $a_v\in A_{\mu^2(v)}$, with $a_{v_0} = 1$, such that
    \[
    \mu^2(p) \sim_\AA \mu^1(p)\, a_{t(p)}
    \]
    for all $\BB$-paths $p$ originating at $v_0$. Furthermore, $(a_v)_v$ is a tuple of vertex parameters
    for the equivalence $\mu^1\sim\mu^2$.
\end{prop}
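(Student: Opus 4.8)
The plan is to prove each of the two biconditionals separately, establishing the ``only if'' directions by induction on path length (exploiting that $\sim_\AA$ is a congruence) and the ``if'' directions by extracting the defining data of Definition~\ref{def: mor_sim} from the displayed $\sim_\AA$-relation via Lemma~\ref{lem: equivalent A-paths}. For the unpointed ``only if'' direction, suppose $\mu^1\approx\mu^2$ with vertex and edge parameters $(a_v)_v,(a_f)_f$. Since every $\BB$-path factors as a concatenation of length-$0$ paths $(b)$ and length-$1$ paths $(1,f,1)$, since both $\mu^2$ and the assignment $p\mapsto a_{o(p)}^{-1}\mu^1(p)a_{t(p)}$ respect concatenation (the latter telescopes because the factors $a_{v'}$ and $a_{v'}^{-1}$ cancel at each shared intermediate vertex $v'$, using Proposition~\ref{prop: preservation of sim and equiv}), and since $\sim_\AA$ is a congruence, it suffices to check the relation on these two generator types. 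The length-$0$ case is immediate from Condition~(2), where both sides are literally the length-$0$ path $(a_v^{-1}\mu^1_v(b)a_v)$. The length-$1$ case reduces, via Lemma~\ref{lem: equivalent A-paths}, to exhibiting a witness $x\in A_e$ with $f_\alpha^2\,\alpha_e(x)=a_v^{-1}f_\alpha^1$ and $\omega_e(x)^{-1}(f_\omega^2)^{-1}=(f_\omega^1)^{-1}a_{v'}$; I expect $x=a_f^{-1}$ to work, using Condition~(3) for $f$ at $v$ for the first equation and Condition~(3) for $f^{-1}$ at $v'$ (together with $a_{f^{-1}}=a_f$, $\alpha_{e^{-1}}=\omega_e$) for the second.

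Conversely, suppose the displayed relation holds with some family $a_v\in A_{\mu^2(v)}$. Well-definedness of the right-hand side already presupposes that $\mu^1$ and $\mu^2$ agree on vertices; applying the relation to $(1,f,1)$ and comparing underlying paths (which $\sim_\AA$ preserves) shows they agree on edges too, hence on $\gr{B}$. Testing on a length-$0$ path $(b)$, both sides are length-$0$ $\AA$-paths at $\mu^2(v)$, so $\sim_\AA$ forces equality of their group elements, giving Condition~(2). Testing on $(1,f,1)$ and invoking Lemma~\ref{lem: equivalent A-paths} produces a witness $x\in A_e$ satisfying the two equations above; since $\alpha_e$ is injective, the first equation determines $x$ uniquely, so I set $a_f:=x^{-1}$, and rearranging the first equation gives Condition~(3). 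For Condition~(1) I would compare the witnesses for $f$ and $f^{-1}$: the second equation for $f$ rewrites (via $\omega_e=\alpha_{e^{-1}}$) as $\alpha_{e^{-1}}(x)=(f_\omega^2)^{-1}a_{v'}^{-1}f_\omega^1$, which is exactly the defining equation for the witness of $f^{-1}$, so injectivity of $\alpha_{e^{-1}}$ yields $a_{f^{-1}}=a_f$. This exhibits $\mu^1\approx\mu^2$ with $(a_v)_v$ as vertex parameters, proving the furthermore clause.

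For the pointed statement, the ``only if'' direction follows at once from the unpointed one: restricting the relation to paths $p$ starting at $v_0$ and using $a_{v_0}=1$ drops the left factor. For the ``if'' direction I would again verify Conditions~(2) and~(3) directly, but now the relation is assumed only for paths emanating from $v_0$, so I must transport local information to an arbitrary vertex $v$ along a path $s$ from $v_0$ to $v$ (which exists since $\BB$ is connected). The device is to apply the hypothesis to $s$ and to the longer path $s\cdot(b)$ (resp. $s\cdot(1,f,1)$): right-concatenating the relation for $s$ with the appropriate length-$0$ (resp. length-$1$) $\AA$-path and comparing with the relation for the longer path yields a $\sim_\AA$-equivalence between two $\AA$-paths that share the same underlying path and the same group elements except in the last one or two slots. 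Feeding this into Lemma~\ref{lem: equivalent A-paths} and propagating from the first edge forces every intervening witness to be trivial, so the comparison pins down exactly the local data: the last-slot comparison gives Condition~(2), and the final two-slot comparison reproduces verbatim the equations of the unpointed case, from which $a_f$ and Conditions~(3) and~(1) are recovered. Since $a_{v_0}=1$ is part of the hypothesis, this yields $\mu^1\sim\mu^2$.

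The main obstacle is the reverse direction, and specifically the cancellation issue in the pointed case: because $\sim_\AA$ preserves length, it does not allow one to freely invert and cancel a prefix $\mu^2(s)$, so one cannot simply ``divide'' the relation for $s\cdot q$ by the relation for $s$. The resolution is the rigidity observation that two coterminal $\AA$-paths with identical underlying path and identical group elements in all but their final slots can be $\sim_\AA$-equivalent only if the remaining slots also agree, all intervening witnesses in Lemma~\ref{lem: equivalent A-paths} being forced to be trivial by injectivity of the edge maps. This is precisely what converts the global homotopy relation into the pointwise algebraic identities defining $\approx$ and $\sim$; once it is in hand, checking Conditions~(1)--(3) is routine bookkeeping with \eqref{eq: twisted commutation} and the identities $\alpha_{e^{-1}}=\omega_e$, $\omega_{e^{-1}}=\alpha_e$.
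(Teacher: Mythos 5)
Your proof is correct and follows essentially the same route as the paper's: reduce the forward direction to length-$0$ and length-$1$ generators using that $\sim_\AA$ is a congruence and the conjugated assignment telescopes, and extract the parameters in the converse by applying Lemma~\ref{lem: equivalent A-paths} to short test paths. The only cosmetic difference is in the pointed converse, where the paper reads the relevant first-slot equation off a path running from $v$ to the basepoint $v_0$, while you propagate trivial witnesses along a path leading out of $v_0$; both amount to the same use of Lemma~\ref{lem: equivalent A-paths} (and you are in fact more explicit than the paper about verifying $a_{f^{-1}}=a_f$).
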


\begin{proof}
Suppose that $\mu^1\approx \mu^2$ and let $(a_v)_v$ and $(a_f)_f$ be parameters for this equivalence. By Definition~\ref{def: mor_sim},
if $f \in E(\gr B)$ is an edge from $v_1$ to $v_2$ with image $e$ in $E(\gr{A})$, then its twisting elements satisfy
\begin{align*}
f_{\alpha}^2 &= a_{v_1}\inv\ f_{\alpha}^1\ \alpha_e(a_f) \text { and} \\
f_{\omega}^2 = (f\inv)_{\alpha}^2 &= a_{v_2}\inv\ (f\inv)_{\alpha}^1\ \alpha_{e\inv}(a_{f\inv}) = a_{v_2}\inv\ f_{\omega}^1\ \omega_{e}(a_f).
\end{align*}
It follows that
\begin{align*}
\mu^2(1,\,f,\,1) &= (f^2_\alpha,\,e,\,(f^2_\omega)\inv) \\
&= (a_{v_1}\inv\,f_{\alpha}^1\,\alpha_e(a_f),\ e,\ \omega_{e}(a_f)\inv\,(f^1_\omega)\inv\, a_{v_2}) \\
&\sim_\AA a_{v_1}\inv\,\mu^1(1,\,f,\,1)\, a_{v_2}.
\end{align*}
 Condition~\eqref{item: locally at v} in Definition~\ref{def: mor_sim} also shows that if $v \in V(\gr{B})$ and $b \in B_v$, then $\mu^2_v(b) = a_v\inv \, \mu^1_v(b) \, a_v$. Note that every $\BB$-path is a product of length 0 $\BB$-paths (i.e. elements of vertex groups) and of length 1 $\BB$-paths of the form $(1,\, f,\, 1)$. As a result, if $p$ is a $\BB$-path from vertex $v$ to vertex $w$, then $\mu^2 (p) \sim_{\AA} a_v\inv \, \mu^1(p) \, a_w$. Thus for both the unpointed and pointed cases, one implication is proved.
We now turn to the converse implications, handling first the unpointed case.

Suppose that elements $a_v\in A_{\mu^2(v)}$ exist for each vertex $v\in V(\gr{B})$, satisfying the hypothesis in the statement.
Let $p$ be a $\BB$-path from $v$ to $w$. The fact that $\mu^2(p) \sim_\AA a_{v}^{-1}\,\mu^1(p)\, a_w$ implies that $\mu^1(p)$ and $\mu^2(p)$ have the same underlying path: in particular, $\mu^1$ and $\mu^2$ coincide on each edge of $p$. Since every edge of $\gr{B}$ sits in a $\BB$-path, $\mu^1$ and $\mu^2$ coincide on $E(\gr{B})$ and, consequently, on $V(\gr{B})$.

Let now $f\in E(\gr{B})$ be an edge from $v$ to $w$ and let $b\in B_v$. Since $a_v^{-1}\,\mu^1(b, f, 1)\,a_w \sim_\AA \mu^2(b, f, 1)$, by Lemma \ref{lem: equivalent A-paths} there exists an element $a_f\in A_{[f]}$ such that 
\[
a_v^{-1}\,\mu^1_v(b)\,f_{\alpha}^1\,\alpha_{[f]}(a_f)  = \mu^2_v(b)\,f_{\alpha}^2.
\]
Rearranging, we have
\[
\mu^2_v(b) = a_v^{-1}\,\mu_v^1(b)\, f_{\alpha}^1\,\alpha_{[f]}(a_f)\,(f_{\alpha}^2)^{-1}.
\]
Setting $b = 1$ we see that $a_v = f^1_{\alpha}\,\alpha_{[f]}(a_f)\,(f^2_{\alpha})\inv$ and so
\[
\mu^2_v = \gamma_{a_v}\circ\mu^1_v.
\]
Since $v$ and $f$ were chosen arbitrarily, we have established that $\mu^1\approx\mu^2$, with $(a_v)_v$ and $(a_f)_f$ parameters for the equivalence.

The proof of the pointed version is almost identical. As above, $\mu^1$ and $\mu^2$ coincide on each edge of any $\BB$-path originating at $v_0$. Since $\BB$ is connected, every edge of $\gr{B}$ sits in such a $\BB$-path and hence $\mu^1$ and $\mu^2$ coincide on $E(\gr{B})$ and $V(\gr{B})$.

Let $f\in E(\gr{B})$ be an edge from $v$ to $w$, let $b\in B_v$ and let $p = (b, f, 1)\,p_1$ be a $\BB$-path from $v$ to $v_0$. Since $a_v^{-1}\,\mu^1((b, f, 1)\,p_1) \sim_\AA \mu^2((b, f, 1)\,p_1)$, there exists an element $a_f\in A_{[f]}$ such that 
\[
a_v^{-1}\,\mu^1_v(b)\,f_{\alpha}^1\,\alpha_{[f]}(a_f)  = \mu^2_v(b)\,f_{\alpha}^2.
\]
Rearranging, we have
\[
\mu^2_v(b) = a_v^{-1}\,\mu_v^1(b)\, f_{\alpha}^1\, \alpha_{[f]}(a_f)\,(f_{\alpha}^2)^{-1}.
\]
Setting $b = 1$ shows that $a_v = f^1_{\alpha}\, \alpha_{[f]}(a_f)\, (f^2_{\alpha})\inv$ and so
\[
\mu^2_v = \gamma_{a_v}\circ\mu^1_v.
\]
Since $\BB$ is connected and $v$ and $f$ were chosen arbitrarily, this shows that $\mu^1\sim \mu^2$, with $(a_v)_v$ and $(a_f)_f$ parameters for the equivalence.
\end{proof}

For further reference, we record the following observation.

\begin{rem}
\label{rem: B-paths equivalence}
    Within the proof of Proposition \ref{prop: change of data}, we showed that if the equivalence $\mu^1\approx \mu^2$ has parameters $(a_v)_v$ and $(a_f)_f$, then the following holds: for each $\BB$-path $p = (b_0, f_1, b_1, \ldots, f_n, b_n)$ visiting vertices $v_0, v_1,\dots, v_n$, we have
    \begin{align*}
        (a_{v_0})^{-1}\mu^1_{v_0}(b_0)(f_1)^1_{\alpha}\alpha_{[f_1]}(a_{f_1}) &= \mu^2_{v_0}(b_0)(f_1)^2_{\alpha}\\
        \omega_{[f_i]}(a_{f_i})^{-1}((f_i)^1_{\omega})^{-1}\mu^1_{v_i}(b_i)(f_{i+1})^1_{\alpha}\alpha_{[f_{i+1}]}(a_{f_{i+1}}) &= ((f_i)^2_{\omega})^{-1}\mu^2_{v_i}(b_i)(f_{i+1})^2_{\alpha}\\
        \omega_{[f_n]}(a_{f_n})^{-1}((f_n)^1_{\omega})^{-1}\mu^1_{v_n}(b_n)(a_{v_n}) &= ((f_n)^2_{\omega})^{-1}\mu^2_{v_n}(b_n).
    \end{align*}
\end{rem}

Proposition~\ref{prop: change of data} leads to the proof that the $\sim$ and $\approx$ relations are preserved under the composition of morphisms.

\begin{cor}
\label{cor: composition}
    Let $\mu^1, \mu^2\colon \CC\to \BB$ and $\nu^1, \nu^2\colon \BB\to \AA$ be morphisms of graphs of groups. If $\mu^1\approx\mu^2$ and $\nu^1\approx\nu^2$, then $\nu^1\circ\mu^1\approx\nu^2\circ\mu^2$.

    The same holds for pointed morphisms, replacing $\approx$ with $\sim$.
\end{cor}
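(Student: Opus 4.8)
The plan is to reduce the statement to the characterisation of $\approx$ (resp. $\sim$) provided by Proposition~\ref{prop: change of data}, which recasts equivalence of morphisms as a statement about their action on $\BB$-paths up to $\sim_\AA$ and a family of vertex parameters. This is cleaner than manipulating the defining data (vertex maps, edge maps, twisting elements) directly, since composition of morphisms interacts messily with twisting elements but transparently with the induced maps on paths.

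First I would fix the unpointed case. Let $(b_w)_{w\in V(\gr C)}$, with $b_w\in B_{\mu^2(w)}$, be vertex parameters witnessing $\mu^1\approx\mu^2$, and let $(a_v)_{v\in V(\gr B)}$, with $a_v\in A_{\nu^2(v)}$, witness $\nu^1\approx\nu^2$; by Proposition~\ref{prop: change of data} these satisfy $\mu^2(p)\sim_\BB b_{o(p)}^{-1}\,\mu^1(p)\,b_{t(p)}$ for all $\CC$-paths $p$, and $\nu^2(q)\sim_\AA a_{o(q)}^{-1}\,\nu^1(q)\,a_{t(q)}$ for all $\BB$-paths $q$. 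The goal is to produce vertex parameters $(c_w)_{w\in V(\gr C)}$, with $c_w\in A_{\nu^2\mu^2(w)}$, witnessing $\nu^1\circ\mu^1\approx\nu^2\circ\mu^2$ via the same criterion. Applying $\nu^2$ to the first relation and using that $\nu^2$ preserves $\sim_\BB$ up to $=_\BB$ — more precisely, that a morphism sends $\BB$-paths differing by $\sim_\BB$ to $\AA$-paths differing by $\sim_\AA$, together with Proposition~\ref{prop: preservation of sim and equiv} — I would compute
\begin{align*}
(\nu^2\circ\mu^2)(p) &\sim_\AA \nu^2\!\left(b_{o(p)}^{-1}\,\mu^1(p)\,b_{t(p)}\right)\\
&\sim_\AA \nu^2_{\mu^2(o(p))}(b_{o(p)})^{-1}\;\nu^2(\mu^1(p))\;\nu^2_{\mu^2(t(p))}(b_{t(p)}),
\end{align*}
where the second step uses that $\nu^2$ sends a length-$0$ $\BB$-path $(b)$ to $(\nu^2_w(b))$ and is multiplicative on concatenations. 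Then I would apply the parameter relation for $\nu^1\approx\nu^2$ to the middle factor $\nu^2(\mu^1(p))$, obtaining $\nu^2(\mu^1(p))\sim_\AA a_{\mu^1(o(p))}^{-1}\,\nu^1(\mu^1(p))\,a_{\mu^1(t(p))}$. Since $\mu^1$ and $\mu^2$ agree on the underlying graph of $\CC$, the vertices $\mu^1(w)=\mu^2(w)$ coincide, so setting $c_w := a_{\mu^2(w)}^{-1}\,\nu^2_{\mu^2(w)}(b_w)$ and collecting terms yields $(\nu^2\circ\mu^2)(p)\sim_\AA c_{o(p)}^{-1}\,(\nu^1\circ\mu^1)(p)\,c_{t(p)}$ for every $\CC$-path $p$, which is exactly the criterion of Proposition~\ref{prop: change of data} for $\nu^1\circ\mu^1\approx\nu^2\circ\mu^2$.

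For the pointed case the argument is identical but restricted to $\CC$-paths $p$ originating at the basepoint $w_0$, using the pointed criterion $\mu^2(p)\sim_\BB\mu^1(p)\,b_{t(p)}$ with $b_{w_0}=1$ and $\nu^2(q)\sim_\AA\nu^1(q)\,a_{t(q)}$ with $a_{v_0}=1$; the only extra point to check is that the composite basepoint parameter is trivial, i.e. $c_{w_0}=a_{\mu^2(w_0)}^{-1}\,\nu^2_{\mu^2(w_0)}(b_{w_0})=a_{v_0}^{-1}\cdot 1=1$, which holds because $\mu^2(w_0)=v_0$ is the basepoint of $\BB$. The main obstacle, such as it is, lies in keeping the bookkeeping of which ambient vertex group each parameter lives in straight — in particular verifying $c_w\in A_{\nu^2\mu^2(w)}$ and that the identifications $\mu^1(w)=\mu^2(w)$, $\nu^1(v)=\nu^2(v)$ on underlying graphs legitimately let me compare $a_{\mu^1(\cdot)}$ with $a_{\mu^2(\cdot)}$; once these coincidences are recorded, the computation is a routine concatenation of the two path-level relations.
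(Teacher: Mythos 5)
Your proposal is correct and follows essentially the same route as the paper: both invoke Proposition~\ref{prop: change of data} to convert the two equivalences into path-level relations, apply $\nu^2$ to the first, splice in the relation for $\nu^1\approx\nu^2$, and read off the composite vertex parameters. The only slip is in your explicit formula for those parameters: collecting the terms as you describe gives $c_w = a_{\mu^1(w)}\,\nu^2_{\mu^2(w)}(b_w)$ rather than $a_{\mu^2(w)}^{-1}\,\nu^2_{\mu^2(w)}(b_w)$ (the $a$-factor should not be inverted), a bookkeeping error that does not affect the argument or the pointed case, since $a_{v_0}=1$ either way.
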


\begin{proof}
    By Proposition~\ref{prop: change of data}, there exist tuples of vertex parameters $(b_w)_{w\in V(\gr C)}$ and $(a_v)_{v \in V(\gr B)}$ for the equivalences $\mu^1\approx\mu^2$ and $\nu^1\approx\nu^2$, respectively, such that the following holds: for every $\CC$-path $q$ from $w_1$ to $w_2$, we have $\mu^1(q) \sim_\BB  b_{w_1}^{-1}\ \mu^2(q)\ b_{w_2}$; and for every $\BB$-path $p$ from $v_1$ to $v_2$, we have $\nu^1(p) \sim_\AA  a_{v_1}^{-1}\ \nu^2(q)\ a_{v_2}$.
    As a result,
    $$\nu^1(\mu^1(q)) \sim_\AA a_{\mu^1(w_1)}\inv\ \nu^2(b_{w_1}\inv\ \mu^2(q)\ b_{w_2})\ a_{\mu^1(w_2)} = (\nu^2(b_{w_1})\,a_{\mu^1(w_1)})\inv\ \nu^2(\mu^2(q))\ (\nu^2(b_{w_2})\,a_{\mu^1(w_2)}).$$
    Proposition~\ref{prop: change of data} again shows that $\nu^1\circ\mu^1 \approx \nu^2\circ\mu^2$.
    
    If $\mu^1$, $\mu^2$, $\nu^1$ and $\nu^2$ are morphisms of pointed graphs of groups between $(\BB, v_0)$, $(\CC, w_0)$ and $(\AA,u_0)$, and if $\mu^1\sim \mu^2$ and $\nu^1 \sim \nu^2$, then $a_{v_0}$ and $b_{w_0}$ are trivial, and hence $\nu^2_{v_0}(b_{w_0})\, a_{v_0} = 1$. It follows that $\nu^2\circ\mu^2 \sim \nu^1\circ\mu^1$.
\end{proof}

Corollary~\ref{cor: composition}, in turn, allows us to define our two categories.

\begin{defn}
    The \emph{category of graphs of groups}, denoted by $\GrGp$, is the category with:
    \begin{itemize}
        \item \textbf{Objects:} graphs of groups.
        \item \textbf{Arrows:} morphisms of graphs of groups, considered up to $\approx$-equivalence.
    \end{itemize}
\end{defn}

\begin{defn}
    The \emph{category of pointed graphs of groups}, denoted by $\GrGp^*$, is the category with:
    \begin{itemize}
        \item \textbf{Objects:} pointed graphs of groups with connected underlying graph.
        \item \textbf{Arrows:} pointed morphisms of graphs of groups, considered up to $\sim$-equivalence.
    \end{itemize}
\end{defn}

The categories $\GrGp$ and $\GrGp^*$ are well-defined since the composition of $\approx$-equivalent (resp.~$\sim$-equivalent) morphisms yields $\approx$-equivalent (resp. $\sim$-equivalent) morphisms by Corollary \ref{cor: composition}. Let us point out that, in $\GrGp$, graphs of groups $\AA$ and $\BB$ are \emph{isomorphic} if there exist morphisms $\mu\colon \AA \to \BB$ and $\nu\colon \BB\to \AA$ such that $\mu\circ\nu$ is $\approx$-equivalent to the identity morphism $1_{\BB}\colon\BB\to\BB$ and $\nu\circ\mu$ is $\approx$-equivalent to the identity morphism $1_{\AA}\colon \AA\to\AA$. The analogous statement, replacing $\approx$ by $\sim$, holds for $\GrGp^*$.

\subsection{Parameters for the $\approx$- and $\sim$-equivalences}\label{ssec: parameters}

In this section we record some facts about the parameters for $\approx$- and $\sim$-equivalences of morphisms of graphs of groups. These statements will be indispensable for the technical Section \ref{sec: lifts}.
We start with investigating the behaviour of tuples of parameters under certain operations: the transitivity of the equivalence relations $\sim$ and $\approx$ and the composition of morphisms.

\begin{lem}
\label{lem: double equivalence elements}
    Let $\mu, \tau, \nu\colon (\BB, v_0)\to (\AA, u_0)$ be morphisms of graphs of groups with $\mu\sim \tau$ and $\tau\sim \nu$. If $(b_v)_v$ and $(b_f)_f$ are parameters for the equivalence $\mu\sim \tau$, and $(b_v')_v$ and $(b_f')_f$ are parameters for the equivalence $\tau\sim \nu$, then $(b_vb_v')_v$ and $(b_fb_f')_f$ are parameters for the equivalence $\mu\sim \nu$.

    The same statement holds for morphisms $\mu, \tau, \nu\colon \BB\to \AA$ with $\mu\approx\tau$ and $\tau\approx\nu$.
\end{lem}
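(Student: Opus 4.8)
The plan is to verify directly that the product tuples $(b_vb_v')_v$ and $(b_fb_f')_f$ satisfy the three defining conditions of Definition~\ref{def: mor_sim} for the pair $(\mu,\nu)$, reading off each condition from the corresponding conditions for the two given equivalences $\mu\sim\tau$ and $\tau\sim\nu$. First I would observe that, since $\mu$ and $\tau$ agree on the underlying graph $\gr B$ and $\tau$ and $\nu$ do as well, $\mu$ and $\nu$ agree on $\gr B$; in particular $b_vb_v'\in A_{[v]}$ and $b_fb_f'\in A_{[f]}$ lie in the correct groups, so they are legitimate candidate parameters.

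Condition~\eqref{item: inverse of a f} is immediate: $b_{f^{-1}}b_{f^{-1}}'=b_fb_f'$ follows from $b_{f^{-1}}=b_f$ and $b_{f^{-1}}'=b_f'$. For condition~\eqref{item: locally at v} I would chain the equalities $\nu_v=\gamma_{b_v'}\circ\tau_v=\gamma_{b_v'}\circ\gamma_{b_v}\circ\mu_v$. The one point that requires care is the composition of inner automorphisms: with the convention $\gamma_g(x)=x^g=g^{-1}xg$ fixed in Section~\ref{sec: graphs and groups}, one has $\gamma_g\circ\gamma_h=\gamma_{hg}$, so that $\gamma_{b_v'}\circ\gamma_{b_v}=\gamma_{b_vb_v'}$ --- precisely the ordering of the product asserted in the statement --- which gives $\nu_v=\gamma_{b_vb_v'}\circ\mu_v$.

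Condition~\eqref{item: a v versus a f} is then a short telescoping computation. Writing $f_\alpha^\mu$, $f_\alpha^\tau$, $f_\alpha^\nu$ for the twisting elements of an edge $f$ in $\mu,\tau,\nu$, I would substitute $b_v=f_\alpha^\mu\,\alpha_{[f]}(b_f)\,(f_\alpha^\tau)^{-1}$ and $b_v'=f_\alpha^\tau\,\alpha_{[f]}(b_f')\,(f_\alpha^\nu)^{-1}$; the adjacent factors $(f_\alpha^\tau)^{-1}$ and $f_\alpha^\tau$ cancel, and since $\alpha_{[f]}$ is a homomorphism this collapses to $b_vb_v'=f_\alpha^\mu\,\alpha_{[f]}(b_fb_f')\,(f_\alpha^\nu)^{-1}$, which is exactly condition~\eqref{item: a v versus a f} for $(\mu,\nu)$. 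For the pointed statement, the additional normalisation $b_{v_0}b_{v_0}'=1\cdot 1=1$ holds at once, and the unpointed case ($\approx$) is verbatim the same argument with the basepoint clause omitted.

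Finally I would note that there is no genuine obstacle here: the argument is a routine check. The only place where one can go wrong is the order of multiplication forced by the inner-automorphism convention, which is what decides that the correct combined vertex parameter is $b_vb_v'$ rather than $b_v'b_v$; getting this backwards would be inconsistent with the telescoping in condition~\eqref{item: a v versus a f}, so the two computations serve as a mutual consistency check.
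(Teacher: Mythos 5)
Your proof is correct and follows essentially the same route as the paper: the decisive computation is the same telescoping of twisting elements $b_v\,b_v' = f_\alpha^\mu\,\alpha_{[f]}(b_f\,b_f')\,(f_\alpha^\nu)^{-1}$, and your convention check $\gamma_{b_v'}\circ\gamma_{b_v}=\gamma_{b_vb_v'}$ is right. The only cosmetic difference is that the paper verifies the vertex condition via the path-level characterisation of Proposition~\ref{prop: change of data} ($\nu(p)\sim_\AA\mu(p)\,(b_vb_v')$) rather than by composing inner automorphisms directly in Definition~\ref{def: mor_sim}; both amount to the same thing.
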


\begin{proof}
    By Proposition \ref{prop: change of data}, for all $\BB$-paths $p$ from $v_0$ to some vertex $v$, we have $\tau(p) \sim_{\AA} \mu(p)\ b_v$ and $\nu(p) \sim_{\AA} \tau(p)\ b_v'$. Hence $\nu(p) \sim_{\AA}\mu(p)\ (b_vb_v')$. Moreover, by Definition~\ref{def: mor_sim}, for each edge $f\in E(\gr{B})$ with origin $v$, we have $b_v = f_{\alpha}^{\mu}\,\alpha_e(b_f)\,(f_{\alpha}^{\tau})^{-1}$ and $b_v' = f_{\alpha}^{\tau}\,\alpha_e(b_f')\,(f_{\alpha}^{\nu})^{-1}$. It follows that $b_v\,b_v' = f_{\alpha}^{\mu}\,\alpha_e(b_f\,b_f')\,(f_{\alpha}^{\nu})^{-1}$. Thus, by definition, the tuples $(b_v\,b_v')_v$, $(b_f\,b_f')_f$ are parameters for the equivalence $\mu\sim \nu$.

    The proof of the statement for the $\approx$-equivalence is exactly analogous.
\end{proof}

\begin{lem}
\label{cor: composition associated elements}
    Let $\mu, \nu\colon \BB\to \AA$ be morphisms such that $\mu\approx\nu$, and let $(b_v)_v$ and $(b_f)_f$ be parameters for this equivalence. If $\tau\colon \CC\to \BB$ is another morphism, then $\mu\circ\tau \approx \nu\circ\tau$ with associated parameters $(c_w)_{w\in V(\gr C)}$ and $(c_g)_{g\in E(\gr C)}$, where
    \[
    c_w = b_{\tau(w)} \quad\textrm{and}\quad c_{g} = b_{\tau(g)}
    \]
    for all $w\in V(\gr C)$ and $g\in E(\gr C)$.
\end{lem}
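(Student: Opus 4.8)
The plan is to verify directly that the proposed tuples $(c_w)_w = (b_{\tau(w)})_w$ and $(c_g)_g = (b_{\tau(g)})_g$ satisfy the three defining conditions of Definition~\ref{def: mor_sim} for the pair $\mu\circ\tau,\ \nu\circ\tau\colon \CC\to\AA$. First I would record the elementary preliminaries. Since $\mu\approx\nu$ forces $\mu$ and $\nu$ to agree on $\gr{B}$, the composites $\mu\circ\tau$ and $\nu\circ\tau$ agree on $\gr{C}$; moreover $[g]^{\mu\circ\tau} = \mu(\tau(g))$, so $c_g = b_{\tau(g)}$ indeed lies in the correct group $A_{[g]^{\mu\circ\tau}}$, matching the type of an edge parameter. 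I would also note the composition of the local data, namely $(\mu\circ\tau)_w = \mu_{\tau(w)}\circ\tau_w$ on each vertex group.

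Conditions~\eqref{item: inverse of a f} and~\eqref{item: locally at v} are then immediate. For~\eqref{item: inverse of a f}, since $\tau(g^{-1}) = \tau(g)^{-1}$ and $(b_f)_f$ already satisfies $b_{f^{-1}} = b_f$, we obtain $c_{g^{-1}} = b_{\tau(g^{-1})} = b_{\tau(g)^{-1}} = b_{\tau(g)} = c_g$. For~\eqref{item: locally at v}, I would precompose condition~\eqref{item: locally at v} for $\mu\approx\nu$ (namely $\nu_{\tau(w)} = \gamma_{b_{\tau(w)}}\circ\mu_{\tau(w)}$) with $\tau_w$, giving $(\nu\circ\tau)_w = \nu_{\tau(w)}\circ\tau_w = \gamma_{b_{\tau(w)}}\circ\mu_{\tau(w)}\circ\tau_w = \gamma_{c_w}\circ(\mu\circ\tau)_w$.

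The heart of the argument — and the step I expect to be the main obstacle — is condition~\eqref{item: a v versus a f}, because it involves the twisting elements of the \emph{composite} morphisms, which are not part of the given data. The first task is therefore to establish the composition law for twisting elements: for an edge $g$ of $\gr{C}$ with $o(g)=w$ and $f = \tau(g)$, one has $g_\alpha^{\mu\circ\tau} = \mu_{\tau(w)}(g_\alpha^\tau)\,f_\alpha^\mu$ (and the analogous identity for $\nu\circ\tau$). I would prove this by computing $\mu(\tau(1,g,1))$ through Definition~\ref{rem: morphisms on A-paths} — using $\tau(1,g,1) = (g_\alpha^\tau, f, (g_\omega^\tau)^{-1})$ — and comparing, via functoriality of the path extension, with the expression $(g_\alpha^{\mu\circ\tau}, e, (g_\omega^{\mu\circ\tau})^{-1})$ for $(\mu\circ\tau)(1,g,1)$, where $e = \mu(f)$. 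Substituting these formulas into the right-hand side of~\eqref{item: a v versus a f} for $\mu\circ\tau\approx\nu\circ\tau$, the middle factor $f_\alpha^\mu\,\alpha_e(b_f)\,(f_\alpha^\nu)^{-1}$ collapses to $b_{\tau(w)}$ by condition~\eqref{item: a v versus a f} applied to $\mu\approx\nu$ at the vertex $\tau(w) = o(f)$; what remains is $\mu_{\tau(w)}(g_\alpha^\tau)\,b_{\tau(w)}\,(\nu_{\tau(w)}(g_\alpha^\tau))^{-1}$, which reduces to $b_{\tau(w)} = c_w$ after rewriting $\nu_{\tau(w)}(g_\alpha^\tau) = \gamma_{b_{\tau(w)}}(\mu_{\tau(w)}(g_\alpha^\tau))$ via condition~\eqref{item: locally at v} for $\mu\approx\nu$ and cancelling. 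This establishes~\eqref{item: a v versus a f} and completes the proof.

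Alternatively, one could obtain the equivalence and the vertex parameters more quickly through Proposition~\ref{prop: change of data}: for any $\CC$-path $q$ one has $(\nu\circ\tau)(q) = \nu(\tau(q)) \sim_\AA b_{\tau(o(q))}^{-1}\,\mu(\tau(q))\,b_{\tau(t(q))} = c_{o(q)}^{-1}\,(\mu\circ\tau)(q)\,c_{t(q)}$, which yields $\mu\circ\tau\approx\nu\circ\tau$ with $(c_w)_w$ as vertex parameters. However, identifying the edge parameters as $b_{\tau(g)}$ still requires the twisting-element computation above, so this route does not circumvent the main obstacle; I would therefore present the direct verification as the primary argument.
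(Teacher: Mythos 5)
Your proof is correct. The paper takes the route you relegate to your final paragraph: it first obtains the equivalence $\mu\circ\tau\approx\nu\circ\tau$ with vertex parameters $(b_{\tau(w)})_w$ by applying Proposition~\ref{prop: change of data} twice, and then, using the fact that the edge parameters are determined by the vertex parameters through condition~\eqref{item: a v versus a f}, solves the resulting equation $\alpha_e(c_g) = (\mu(g_\alpha^\tau)\,f_\alpha^\mu)^{-1}\,b_{\tau(w)}\,(\nu(g_\alpha^\tau)\,f_\alpha^\nu)$ for $c_g$, using the commutation $\mu(g_\alpha^\tau)\,b_{\tau(w)} = b_{\tau(w)}\,\nu(g_\alpha^\tau)$ extracted from Remark~\ref{rem: B-paths equivalence}. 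Your primary argument instead verifies the three clauses of Definition~\ref{def: mor_sim} directly, but the computation at the heart of condition~\eqref{item: a v versus a f} is the same in both: everything rests on the composition law $g_\alpha^{\mu\circ\tau} = \mu(g_\alpha^\tau)\,f_\alpha^\mu$ for twisting elements (which the paper uses without comment and which you rightly single out as the step needing justification) together with the commutation relation supplied by condition~\eqref{item: locally at v}. So the two proofs differ essentially only in direction --- the paper back-solves for $c_g$ after establishing the equivalence, you verify that the proposed $c_g$ works --- and your explicit derivation of the twisting-element composition law from the path-extension formula of Definition~\ref{rem: morphisms on A-paths} makes explicit a step the paper leaves implicit.
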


\begin{proof}
    If $p$ is a $\BB$-path from $v_1$ to $v_2$, we have $\nu(p) \sim_{\AA} b_{v_1}^{-1}\, \mu(p)\, b_{v_2}$ by Proposition \ref{prop: change of data}. Thus, if $p$ is a $\CC$-path from $w_1$ to $w_2$, we have $\nu(\tau(p))\sim_{\AA} b_{\tau(w_1)}^{-1}\, \mu(\tau(p))\, b_{\tau(w_2)}$. By Proposition \ref{prop: change of data} again, this implies that $(c_w)_w = (b_{\tau(w)})_w$ is a tuple of vertex parameters for the equivalence $\mu\circ\tau\approx\nu\circ\tau$.

    By Definition \ref{def: mor_sim}, the corresponding edge parameters $(c_g)_{g\in E(\gr C)}$ satisfy the following: if $g\in E(\gr C)$, $f = \tau(g)$, $e = \mu(f)$, $w = o(g)$ and $v = \mu(w)$, then
    \[
    \alpha_{e}(c_{g}) = (\mu(g_{\alpha})\,f_{\alpha}^{\mu})^{-1}\, c_w\, (\nu(g_{\alpha})\,f_{\alpha}^{\nu})  = (\mu(g_{\alpha})\,f_{\alpha}^{\mu})^{-1}\, b_v\, (\nu(g_{\alpha})\,f_{\alpha}^{\nu}).
    \]
    By Remark \ref{rem: B-paths equivalence}, we have $b_v = \mu(b)f_{\alpha}^{\mu}\alpha_e(b_f)(f_{\alpha}^{\nu})^{-1}\nu(b)^{-1}$ for all $b\in B_v$. Thus, $\mu(g_{\alpha})\,b_v = b_v\, \nu(g_{\alpha})$, it follows that 
    \[
    \alpha_{e}(c_{g}) = (f_{\alpha}^{\mu})\inv\, b_v\, f_{\alpha}^{\mu} = \alpha_e(b_f),
    \]
    and hence $c_g = b_f = b_{\tau(g)}$.
\end{proof}

\begin{lem}
\label{cor: composition associated elements 2}
       Let $\mu, \nu\colon \CC\to \BB$ be morphisms such that $\mu\approx\nu$, with parameters $(c_w)_{w\in V(\gr C)}$ and $(c_g)_{g\in E(\gr C)}$. If $\tau\colon \BB\to \AA$ is another morphism, then $\tau\circ\mu\approx\tau\circ\nu$ and this equivalence has parameters $(\tau(c_w))_w$ and $(\tau(c_g))_g$.
\end{lem}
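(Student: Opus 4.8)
The plan is to verify the three defining conditions of $\approx$-equivalence from Definition~\ref{def: mor_sim} directly, for the morphisms $\tau\circ\mu$ and $\tau\circ\nu$, using the candidate vertex parameters $\tau(c_w) := \tau_{\mu(w)}(c_w)\in A_{(\tau\circ\mu)(w)}$ and edge parameters $\tau(c_g) := \tau_{\mu(g)}(c_g)\in A_{(\tau\circ\mu)(g)}$; these have the correct types precisely because $\mu$ and $\nu$ agree on $\gr{C}$. The preliminary ingredient I would record is the behaviour of the composites on vertex groups and their twisting elements: on a vertex group one has $(\tau\circ\mu)_w=\tau_{\mu(w)}\circ\mu_w$, and for an edge $g$ with $f=\mu(g)=\nu(g)$ the twisting elements of the composites are $(g)^{\tau\circ\mu}_\alpha=\tau(g_\alpha^{\mu})\,f_\alpha^{\tau}$ and $(g)^{\tau\circ\nu}_\alpha=\tau(g_\alpha^{\nu})\,f_\alpha^{\tau}$, where $f_\alpha^{\tau}$ is the twisting element of $f$ under $\tau$. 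These are exactly the composite twisting elements used implicitly in Lemma~\ref{cor: composition associated elements} and follow from the twisted-commutation relation~\eqref{eq: twisted commutation}.

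Conditions~\eqref{item: inverse of a f} and~\eqref{item: locally at v} are immediate. The first follows from $c_{g^{-1}}=c_g$ together with $\tau_{f^{-1}}=\tau_f$, which give $\tau(c_{g^{-1}})=\tau(c_g)$. For the second, I start from $\nu_w=\gamma_{c_w}\circ\mu_w$ and use that the homomorphism $\tau_{\mu(w)}$ intertwines conjugation, i.e. $\tau_{\mu(w)}\circ\gamma_{c_w}=\gamma_{\tau(c_w)}\circ\tau_{\mu(w)}$, whence $(\tau\circ\nu)_w=\tau_{\mu(w)}\circ\nu_w=\gamma_{\tau(c_w)}\circ(\tau\circ\mu)_w$, as required.

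The substance of the argument is Condition~\eqref{item: a v versus a f}. Fixing a vertex $w$, an edge $g$ with $o(g)=w$, and writing $f=\mu(g)$, $e=(\tau\circ\mu)(g)$, I must show
\[
\tau(c_w)=(g)^{\tau\circ\mu}_\alpha\,\alpha_e(\tau(c_g))\,\big((g)^{\tau\circ\nu}_\alpha\big)^{-1}.
\]
Substituting the composite twisting elements turns the right-hand side into $\tau(g_\alpha^{\mu})\,f_\alpha^{\tau}\,\alpha_e(\tau_f(c_g))\,(f_\alpha^{\tau})^{-1}\,\tau(g_\alpha^{\nu})^{-1}$. The decisive step is to apply the twisted-commutation identity for $\tau$ at $f$, namely $\alpha_e\circ\tau_f=\gamma_{f_\alpha^{\tau}}\circ\tau_{o(f)}\circ\alpha_f$, which collapses the middle block $f_\alpha^{\tau}\,\alpha_e(\tau_f(c_g))\,(f_\alpha^{\tau})^{-1}$ to $\tau_{o(f)}(\alpha_f(c_g))$, thereby cancelling the twisting factors. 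Since $g_\alpha^{\mu}$, $\alpha_f(c_g)$ and $g_\alpha^{\nu}$ all lie in $B_{\mu(w)}$ and $\tau_{\mu(w)}$ is a homomorphism, the right-hand side becomes $\tau_{\mu(w)}\big(g_\alpha^{\mu}\,\alpha_f(c_g)\,(g_\alpha^{\nu})^{-1}\big)$; Condition~\eqref{item: a v versus a f} for the equivalence $\mu\approx\nu$ identifies the inner expression with $c_w$, so the right-hand side equals $\tau_{\mu(w)}(c_w)=\tau(c_w)$.

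I expect Condition~\eqref{item: a v versus a f} to be the only delicate point, since it is the single place where the twisting elements of the two composites interact: the computation closes only once one correctly records the composite twisting elements and then uses twisted commutation to absorb the factors $f_\alpha^{\tau}$ before collapsing the remaining product under the single homomorphism $\tau_{\mu(w)}$. Conditions~\eqref{item: inverse of a f} and~\eqref{item: locally at v} are routine, and the pointed variant, should one want it, follows identically because $\tau_{\mu(w_0)}(1)=1$ preserves the normalisation $c_{w_0}=1$.
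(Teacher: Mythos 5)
Your proof is correct, but it takes a different route from the paper's. The paper first invokes Proposition~\ref{prop: change of data} to convert the hypothesis $\mu\approx\nu$ into the path-level statement $\nu(p)\sim_{\BB} c_{w_1}^{-1}\,\mu(p)\,c_{w_2}$, applies $\tau$ to deduce the analogous statement for $\tau\circ\mu$ and $\tau\circ\nu$, reads off the vertex parameters $(\tau(c_w))_w$ from Proposition~\ref{prop: change of data} again, and only then identifies the edge parameters by solving the relation in Definition~\ref{def: mor_sim}~\eqref{item: a v versus a f} and using injectivity of $\alpha_e$. You instead verify the three clauses of Definition~\ref{def: mor_sim} directly for the candidate tuples $(\tau(c_w))_w$ and $(\tau(c_g))_g$, with the composite twisting elements $(g)^{\tau\circ\mu}_\alpha=\tau(g_\alpha^{\mu})\,f_\alpha^{\tau}$ made explicit and the twisted-commutation relation~\eqref{eq: twisted commutation} used to absorb the factors $f_\alpha^{\tau}$. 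The core algebraic step is the same in both arguments (your verification of clause~\eqref{item: a v versus a f} is essentially the paper's edge-parameter computation read in the opposite direction), but your version is more self-contained, since it does not route through Proposition~\ref{prop: change of data}, at the cost of having to write down the composite twisting elements explicitly; the paper's version gets the vertex parameters almost for free from machinery already established. Both are complete and correct.
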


\begin{proof}
    For every $\CC$-path $p$ from $w_1$ to $w_2$, we have $\nu(p) \sim_{\AA} c_{w_1}^{-1}\ \mu(p)\ c_{w_2}$ (Proposition \ref{prop: change of data}). If $g\in E(\gr C)$, $f = \mu(g)$ and $w = o(g)$, we have $c_w = g_{\alpha}^{\mu}\, \alpha_f(c_g)\, (g_{\alpha}^{\nu})^{-1}$. Thus, if $p$ is a $\CC$-path from $w_1$ to $w_2$, then $\tau(\nu(p))\sim_{\AA} \tau(c_{w_1})^{-1}\, \tau(\mu(p))\, \tau(c_{w_2})$. By Proposition \ref{prop: change of data} this implies that the vertex parameters for the equivalence $\tau\circ\mu\approx\tau\circ\nu$ are $(\tau(c_w))_w$. By Definition \ref{def: mor_sim}, it follows that the edge parameters $(c_g)_g$ satisfy the following: if $g\in E(\gr C)$ and $w = o(g)$, then
    \begin{align*}
    \alpha_{\tau(\mu(g))}(c_{g}) &= (\tau(g^{\mu}_{\alpha})\tau(g)_{\alpha}^{\tau})^{-1}\tau(c_w)(\tau(g^{\nu}_{\alpha})\tau(g)_{\alpha}^{\tau})\\
        &= (\tau(g)_{\alpha}^{\tau})^{-1}\tau(\alpha_{\mu(g)}(c_{g}))\tau(g)_{\alpha}^{\tau} \\
        &= \alpha_{\tau(\mu(g))}(\tau(c_g))\,,
    \end{align*}
    and hence $c_g = \tau(c_g)$ as claimed.
\end{proof}

The vertex parameters of the trivial equivalence $\mu\approx \mu$ play a special role in the sequel.

\begin{defn}
    Let $\mu\colon \BB\to \AA$ be a morphism of graphs of groups with $\gr{B}$ connected. The \emph{centraliser of $\mu$}, written $C(\mu)$, is the set of tuples $(d_v)_{v\in V(\gr B)}$, where $d_v \in A_{\mu(v)}$ for each $v$, such that, for each $\BB$-path $p$ from $v$ to $w$, we have
    \[
    d_v\, \mu(p) \sim_{\AA} \, \mu(p) \, d_w.
    \]
    Equivalently (in view of Proposition~\ref{prop: change of data}), the elements of $C(\mu)$ are exactly the tuples of vertex parameters for the equivalence $\mu \approx \mu$.
    
    Note that $C(\mu)$ is a subgroup of $\prod_vA_{\mu(v)}$ by Lemma~\ref{lem: double equivalence elements}.
\end{defn}

We record the following technical lemma.

\begin{lem}\label{lem: trivial element of centraliser}
    Let $\mu\colon \BB \to \AA$ be a morphism of graphs of groups such that $\gr B$ is connected. Let $d\in C(\mu)$ and $v_0\in V(\gr B)$. If $d_{v_0} = 1$, then $d_v = 1$ for every $v\in V(\gr B)$.
\end{lem}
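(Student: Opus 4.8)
The plan is to exploit the connectedness of $\gr B$ together with the defining relation of $C(\mu)$ in order to propagate the triviality of $d_{v_0}$ along paths. Fix an arbitrary vertex $v \in V(\gr B)$; I want to show $d_v = 1$. Since $\gr B$ is connected, I would choose a $\BB$-path $p$ from $v_0$ to $v$. If $v = v_0$ the claim is immediate, so I may assume $p$ has length $k \ge 1$. By the definition of $C(\mu)$ (equivalently, by the characterisation in Proposition~\ref{prop: change of data}), the tuple $d$ satisfies $d_{v_0}\,\mu(p) \sim_\AA \mu(p)\,d_v$; since $d_{v_0} = 1$, this reduces to $\mu(p) \sim_\AA \mu(p)\,d_v$.

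Next I would write $\mu(p) = (a_0, e_1, a_1, \dots, e_k, a_k)$, so that $\mu(p)\,d_v = (a_0, e_1, a_1, \dots, e_k, a_k d_v)$, an $\AA$-path differing from $\mu(p)$ only in its final coordinate. The two are coterminal and share the same underlying path, so Lemma~\ref{lem: equivalent A-paths} applies and produces elements $x_i \in A_{e_i}$ (for $i \in [1,k]$) witnessing the equivalence.

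The core of the argument is then a short induction establishing that every $x_i$ is trivial. The first witness equation reads $a_0\,\alpha_{e_1}(x_1) = a_0$, whence $\alpha_{e_1}(x_1) = 1$ and $x_1 = 1$ because the edge map $\alpha_{e_1}$ is a monomorphism. Assuming $x_i = 1$, the $i$-th intermediate equation $\omega_{e_i}(x_i)^{-1} a_i\,\alpha_{e_{i+1}}(x_{i+1}) = a_i$ collapses to $\alpha_{e_{i+1}}(x_{i+1}) = 1$, giving $x_{i+1} = 1$. Finally, the terminal witness equation $\omega_{e_k}(x_k)^{-1} a_k = a_k d_v$ becomes $a_k = a_k d_v$, and hence $d_v = 1$.

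The only delicate point is the comparison of the two $\AA$-paths: one must be certain that the discrepancy $d_v$ genuinely sits in the terminal vertex group and cannot be absorbed into an image of some edge map as one slides the $x_i$ along $p$. This is precisely what Lemma~\ref{lem: equivalent A-paths} controls, and the injectivity of the edge maps $\alpha_{e_i}$ forces each $x_i$ to vanish rather than merely lie in a kernel. Since $v$ was arbitrary, this yields $d_v = 1$ for all $v \in V(\gr B)$, completing the proof.
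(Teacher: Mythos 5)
Your proof is correct, but it takes a slightly different formal route from the paper's. The paper works edge-locally: it views $d$ as a tuple of vertex parameters for the equivalence $\mu \approx \mu$, extracts the associated edge parameters $(d_f)_f$, and uses Condition~(3) of Definition~\ref{def: mor_sim} at a single edge $f$ with $o(f)=v_0$ to get $d_{v_0} = f_\alpha\,\alpha_e(d_f)\,f_\alpha^{-1}$, whence $d_f = 1$ by injectivity of $\alpha_e$, then $d_{f^{-1}}=1$ and $d_{t(f)}=1$; connectedness finishes the induction on distance from $v_0$. You instead work path-globally: you apply the defining relation of $C(\mu)$ to a whole path $p$ from $v_0$ to $v$, reduce to $\mu(p) \sim_\AA \mu(p)\,d_v$, and extract the witnesses $x_i$ via Lemma~\ref{lem: equivalent A-paths}, which you then kill one by one using injectivity of the edge maps until the terminal equation forces $d_v = 1$. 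The two arguments are the same in spirit --- triviality is propagated along the graph using the fact that the $\alpha_{e}$ are monomorphisms --- and your $x_i$ play essentially the role of the paper's edge parameters along $p$. What your version buys is that it avoids invoking the vertex/edge parameter machinery of Definition~\ref{def: mor_sim} and relies only on the path-level definition of $C(\mu)$ together with Lemma~\ref{lem: equivalent A-paths}; the paper's version is marginally shorter because a single edge step plus induction on distance suffices. Your handling of the one delicate point is also sound: the witness equations of Lemma~\ref{lem: equivalent A-paths} pin $d_v$ to the terminal vertex group, and injectivity of the edge maps rules out any absorption of the $x_i$ into kernels.
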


\begin{proof}
    Since $d = (d_v)_{v\in V(\gr B)}\in C(\mu)$, $d$ is a tuple of vertex parameters for the equivalence $\mu \approx \mu$, and we let $(d_f)_{f\in E(\gr B)}$ be the corresponding tuple of edge parameters.
    
    Let $f\in E(\gr B)$ be an edge such that $o(f) = v_0$ and let $e = \mu(f)$ and $v = t(f)$. By Definition~\ref{def: mor_sim}, we have $d_{v_0} = f_\alpha\, \alpha_e(d_f)\, f_\alpha\inv$, and hence, $d_f = 1$. It follows that $d_{f\inv} = 1$ as well, and so $d_v = f_\omega\, \omega_e(d_{f\inv})\, f_\omega\inv = 1$. Thus, if $v$ is at distance 1 from $v_0$ in $\gr B$, then $d_v = 1$. We conclude using the connectedness of $\gr B$.
\end{proof}

We can now characterise tuples of parameters for the same $\approx$-equivalence.

\begin{lem}
\label{lem: almost uniqueness of associated elements}
    Let $\mu, \nu\colon \BB\to \AA$ be morphisms of graphs of groups with $\gr{B}$ connected such that $\mu\approx\nu$. If $(b_v)_v$ and $(b'_v)_v$ are tuples of vertex parameters for the equivalence $\mu\approx\nu$, then there exists a tuple $(d_v)_v \in C(\mu)$ such that $b'_v = d_v\,b_v$ for each $v\in V(\gr{B})$.
\end{lem}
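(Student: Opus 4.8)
The plan is to produce the required tuple explicitly by setting $d_v = b_v'\,b_v^{-1}$ for each $v\in V(\gr B)$, and then to verify that $(d_v)_v$ lies in $C(\mu)$. First I would note that since $\mu\approx\nu$, the two morphisms agree on the underlying graph, so $\mu(v) = \nu(v)$ and both $b_v$ and $b_v'$ lie in $A_{\nu(v)} = A_{\mu(v)}$; hence each $d_v$ is a well-defined element of $A_{\mu(v)}$, and clearly $b_v' = d_v\,b_v$, which is the asserted relation between the two tuples.

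The core of the argument is to translate both tuples of parameters into path equations via Proposition~\ref{prop: change of data}. For every $\BB$-path $p$ from a vertex $v$ to a vertex $w$, that proposition gives
\[
\nu(p)\ \sim_\AA\ b_v^{-1}\,\mu(p)\,b_w
\qquad\text{and}\qquad
\nu(p)\ \sim_\AA\ (b_v')^{-1}\,\mu(p)\,b_w'.
\]
Comparing the two right-hand sides yields $b_v^{-1}\,\mu(p)\,b_w \sim_\AA (b_v')^{-1}\,\mu(p)\,b_w'$. Since $\sim_\AA$ is a congruence on $\AA$-paths, and in particular is compatible with left and right concatenation by the length-$0$ paths $(b_v')$ and $(b_w')^{-1}$, I would multiply on the left by $b_v'$ and on the right by $(b_w')^{-1}$ to obtain $d_v\,\mu(p)\,d_w^{-1} \sim_\AA \mu(p)$, that is,
\[
d_v\,\mu(p)\ \sim_\AA\ \mu(p)\,d_w.
\]
This is precisely the defining condition for $(d_v)_v$ to belong to $C(\mu)$, which completes the verification.

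I do not expect a serious obstacle: the statement is essentially a ``difference of two parameter tuples'' computation. The only point requiring care is the manipulation of the relation $\sim_\AA$ --- one must invoke that it is a congruence (and that vertex-group elements are $\AA$-paths of length $0$) to justify cancelling the common factor $\nu(p)$ and rearranging into the symmetric centraliser form $d_v\,\mu(p) \sim_\AA \mu(p)\,d_w$. Finally, since the lemma concerns only vertex parameters, and $C(\mu)$ is by definition a set of vertex-parameter tuples, there is no need to track the associated edge parameters separately; they are determined by the $(d_v)_v$ through Definition~\ref{def: mor_sim}.
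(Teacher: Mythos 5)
Your proof is correct and takes essentially the same approach as the paper: both arguments set $d_v = b'_v\,b_v^{-1}$ and show that this tuple lies in $C(\mu)$. The paper reaches the conclusion slightly more quickly by applying Lemma~\ref{lem: double equivalence elements} to the chain $\mu\approx\nu\approx\mu$ (with parameters $(b'_v)_v$ and $(b_v^{-1})_v$), whereas you unpack the same computation directly through Proposition~\ref{prop: change of data}; the content is identical.
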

\begin{proof}
    The equivalence $\mu \approx \mu$ is witnessed by the sequence $\mu \approx \nu \approx \mu$. The first $\approx$-equivalence has parameters $(b'_v)_v$ and the second one parameters $(b_v\inv)_v$. Lemma~\ref{lem: double equivalence elements} then shows that $(b'_v\,b_v\inv) \in C(\mu)$. The announced result follows directly.
\end{proof}

This yields a uniqueness statement for $\sim$-equivalences.

\begin{cor}
    Let $\mu, \nu\colon (\BB, v_0)\to (\AA, u_0)$ be morphisms of pointed graphs of groups, where $\BB$ is connected and such that $\mu\sim\nu$. If $(b_v)_v$ and $(b_f)_f$ are parameters for the equivalence $\mu\sim\nu$, and if $(b_v')_v$ and $(b_f')_f$ are also parameters for the same equivalence, then $b_v = b_v'$ and $b_f = b_f'$ for each $v\in V(\gr{B})$ and each $f\in E(\gr{B})$.
\end{cor}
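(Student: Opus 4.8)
The plan is to deduce everything from the machinery already assembled for $\approx$-equivalences, reducing the uniqueness claim to the triviality lemma for centralisers. Observe first that a $\sim$-equivalence is in particular an $\approx$-equivalence (it is the definition of $\approx$ together with the extra normalisation $b_{v_0}=1$), so both $(b_v)_v$ and $(b'_v)_v$ are tuples of vertex parameters for the $\approx$-equivalence $\mu\approx\nu$. I would therefore begin by invoking Lemma~\ref{lem: almost uniqueness of associated elements}, which yields a tuple $(d_v)_v\in C(\mu)$ with $b'_v = d_v\,b_v$ for every $v\in V(\gr{B})$.

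The crucial use of the pointed hypothesis comes next. Since both tuples are parameters for the \emph{pointed} equivalence $\mu\sim\nu$, Definition~\ref{def: mor_sim} forces $b_{v_0}=1$ and $b'_{v_0}=1$. Hence $d_{v_0} = b'_{v_0}\,b_{v_0}^{-1} = 1$. Because $\gr{B}$ is connected and $(d_v)_v\in C(\mu)$, Lemma~\ref{lem: trivial element of centraliser} propagates this triviality along edges to conclude $d_v = 1$ for every $v\in V(\gr{B})$. Therefore $b'_v = b_v$ for all $v$, settling the vertex parameters.

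It then remains to recover the edge parameters from the vertex parameters. For this I would use Condition~\eqref{item: a v versus a f} of Definition~\ref{def: mor_sim}: for each edge $f$ with $o(f)=v$, writing $f_\alpha^\mu, f_\alpha^\nu$ for the (fixed) twisting elements of $\mu$ and $\nu$, the two sets of parameters satisfy $b_v = f_\alpha^\mu\,\alpha_{[f]}(b_f)\,(f_\alpha^\nu)^{-1}$ and $b'_v = f_\alpha^\mu\,\alpha_{[f]}(b'_f)\,(f_\alpha^\nu)^{-1}$. Since $b_v = b'_v$ and the twisting elements do not depend on the choice of parameters, cancelling them gives $\alpha_{[f]}(b_f) = \alpha_{[f]}(b'_f)$, and the injectivity of the edge monomorphism $\alpha_{[f]}$ yields $b_f = b'_f$. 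As $f$ was arbitrary, this completes the argument.

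I do not anticipate a genuine obstacle here: all the substance lives in the two lemmas already proved, and the proof is essentially a bookkeeping exercise. The only point requiring care is the explicit verification that the $\sim$-equivalence qualifies as an $\approx$-equivalence (so that Lemma~\ref{lem: almost uniqueness of associated elements} is applicable) and that the normalisation $b_{v_0}=b'_{v_0}=1$ is correctly transported into the triviality hypothesis $d_{v_0}=1$ of Lemma~\ref{lem: trivial element of centraliser}; everything else follows formally.
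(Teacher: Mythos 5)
Your proof is correct and follows essentially the same route as the paper's: reduce to the $\approx$-equivalence, apply Lemma~\ref{lem: almost uniqueness of associated elements} to get $(d_v)_v\in C(\mu)$, use $d_{v_0}=1$ together with Lemma~\ref{lem: trivial element of centraliser} to kill $(d_v)_v$, and recover the edge parameters from the vertex parameters via Condition~(3) of Definition~\ref{def: mor_sim}. The only difference is that you spell out the cancellation and injectivity of $\alpha_{[f]}$ explicitly where the paper simply cites the observation in Definition~\ref{def: mor_sim} that the edge parameters are determined by the vertex parameters.
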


\begin{proof}
    By definition, $\mu \approx \nu$ with parameters $(b_v)_v$ and $(b_f)_f$, and $b_{v_0} = 1$. Similarly, $\mu \approx \nu$ with parameters $(b'_v)_v$ and $(b'_f)_f$, and $b'_{v_0} = 1$. As observed in Definition~\ref{def: mor_sim}, $(b_f)_f$ and $(b'_f)_f$ are uniquely determined by $(b_v)_v$ and $(b'_v)_v$, respectively. Thus it suffices to establish that $(b_v)_v = (b'_v)_v$. By Lemma~\ref{lem: almost uniqueness of associated elements}, if we let $d_v = b'_v\,b_v\inv$ for each vertex $v$, then $d = (d_v)_v \in C(\mu)$. Since $d_{v_0} = b'_{v_0}\,b_{v_0}\inv = 1$, Lemma~\ref{lem: trivial element of centraliser} shows that $d_v = 1$, and hence $b_v = b'_v$ for every vertex $v$.
\end{proof}

Finally, we characterise the situations when an equivalence of the form $\tau\circ\mu\approx\tau\circ\nu$ can be used to deduce $\mu\approx\nu$.

\begin{lem}
\label{lem: pulling back equivalence}
    Let $\mu, \nu\colon \CC\to \BB$ and $\tau\colon \BB\to \AA$ be morphisms of graphs of groups such that $\tau\circ\mu\approx\tau\circ\nu$, with vertex parameters $(c_w)_{w\in V(\gr C)}$. Then $\mu\approx\nu$ if and only if $\mu$ and $\nu$ agree on the underlying graph $\gr C$ and there exist $(d_w)_w \in C(\tau\circ\mu)$ and $(b_w)_w \in \prod_w B_{\mu(w)}$ such that $(d_w\,c_w)_w = (\tau(b_w))_w$. Moreover, in that case, $(b_w)_w$ is a tuple of parameters for the equivalence $\mu\approx\nu$.

    If instead $\mu$, $\nu$ and $\tau$ are morphisms between pointed graphs of groups between $(\CC,w_0)$, $(\BB,v_0)$ and $(\AA,u_0)$ and if $\tau\circ\mu\sim\tau\circ\nu$ with vertex parameters $(c_w)_w$, then $\mu\sim \nu$ if and only if $\mu$ and $\nu$ agree on the underlying graph $\gr C$ and there exists $(b_w)_w \in \prod_wB_{\mu(w)}$ such that $b_{w_0} = 1$ and $(c_w)_w = (\tau(b_w))_w$. In that case, $(b_w)_w$ is a tuple of parameters for the equivalence $\mu\sim \nu$.
\end{lem}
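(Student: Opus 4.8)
The plan is to prove both implications, using Proposition~\ref{prop: change of data} as the bridge between parameter data and path-level equivalences. For the forward implication, I would assume $\mu\approx\nu$ and fix parameters $(b_w)_w,(b_g)_g$. Since $\approx$-equivalent morphisms agree on $\gr C$ by Definition~\ref{def: mor_sim}, the graph condition is immediate. By Lemma~\ref{cor: composition associated elements 2}, $\tau\circ\mu\approx\tau\circ\nu$ with parameters $(\tau(b_w))_w$ and $(\tau(b_g))_g$; as $(c_w)_w$ is also a tuple of vertex parameters for the very same equivalence, Lemma~\ref{lem: almost uniqueness of associated elements} produces $(d_w)_w\in C(\tau\circ\mu)$ with $\tau(b_w)=d_w\,c_w$ for all $w$, and, reading off edge parameters, $\tau(b_g)=d_g\,c_g$ as well. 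This yields the required tuples.

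For the converse I would aim to show that $(b_w)_w$ is a tuple of vertex parameters for $\mu\approx\nu$; by Proposition~\ref{prop: change of data} it is enough to establish $\nu(p)\sim_\BB b_{o(p)}^{-1}\,\mu(p)\,b_{t(p)}$ for every $\CC$-path $p$, and since both sides are multiplicative it suffices to check length-$0$ and length-$1$ paths. First I would dispatch the vertices: for $x\in C_w$ the desired identity is $\nu_w(x)=b_w^{-1}\mu_w(x)\,b_w$, and applying the injective map $\tau_{\mu(w)}$ reduces it, via the relation $(\tau\circ\nu)_w=\gamma_{c_w}\circ(\tau\circ\mu)_w$ and the fact that an element of $C(\tau\circ\mu)$ commutes with the vertex-group image $(\tau\circ\mu)_w(C_w)$, to the hypothesis $\tau(b_w)=d_w c_w$. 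This gives Condition~\eqref{item: locally at v}.

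The hard part will be the length-$1$ case, i.e. producing the edge parameters. For an edge $g$ from $w$ to $w'$ with $f=\mu(g)=\nu(g)$ and $e=\tau(f)$, Lemma~\ref{lem: equivalent A-paths} shows that $\nu(1,g,1)\sim_\BB b_w^{-1}\,\mu(1,g,1)\,b_{w'}$ holds exactly when the element $t_g:=(g_\alpha^\mu)^{-1}b_w\,g_\alpha^\nu\in B_{o(f)}$ lies in $\alpha_f^\BB(B_f)$, with a matching condition for $s_g:=(g_\omega^\mu)^{-1}b_{w'}\,g_\omega^\nu$ at the other end. Using \eqref{eq: twisted commutation} for $\tau$, the composition rule $g_\alpha^{\tau\circ\mu}=\tau_{\mu(w)}(g_\alpha^\mu)\,f_\alpha^\tau$, and Lemma~\ref{lem: double equivalence elements} (which identifies $(d_g c_g)_g$ as the edge parameters of the combined tuple $(d_w c_w)_w$), a short computation gives $\tau_{\mu(w)}(t_g)=f_\alpha^\tau\,\alpha_e(d_g c_g)\,(f_\alpha^\tau)^{-1}$; hence $t_g\in\alpha_f^\BB(B_f)$ is \emph{equivalent} to $d_g c_g\in\tau_f(B_f)$. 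This membership is precisely the main obstacle, since the vertex identities alone do not force it. I would resolve it by invoking the edge-level parameter equality $\tau(b_g)=d_g c_g$ with $b_g\in B_f$ (the datum that the forward computation shows must accompany the vertex equality, and which I would read into the tuple equality $(d_w c_w)_w=(\tau(b_w))_w$): then $d_g c_g=\tau_f(b_g)\in\tau_f(B_f)$ directly, and injectivity of $\tau_f$ together with the injectivity of the vertex maps of $\tau$ forces $\alpha_f^\BB(b_g)=t_g$ and $\omega_f^\BB(b_g)=s_g$, so the single element $b_g$ witnesses the equivalence at both endpoints. This is exactly Condition~\eqref{item: a v versus a f}, and symmetry of $g\mapsto g^{-1}$ gives $b_{g^{-1}}=b_g$.

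Assembling the vertex and edge verifications, Proposition~\ref{prop: change of data} then concludes that $\mu\approx\nu$ with parameters $(b_w)_w,(b_g)_g$, as claimed. The pointed case I would run in parallel, but it is lighter: parameters of $\sim$-equivalences are unique, so no centraliser correction $(d_w)_w$ is needed and the hypothesis reads $c_w=\tau(b_w)$ with $b_{w_0}=1$; connectedness of $\gr C$ lets me anchor at the basepoint, and the normalisation $a_{v_0}=1$ required in Definition~\ref{def: mor_sim} is furnished by $b_{w_0}=1$. The only genuinely delicate step is again the edge-membership $c_g\in\tau_f(B_f)$, handled exactly as above.
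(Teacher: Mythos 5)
Your forward implication is correct and is the paper's own argument: agreement on $\gr C$ is part of Definition~\ref{def: mor_sim}, Lemma~\ref{cor: composition associated elements 2} makes $(\tau(b_w))_w$ a tuple of vertex parameters for $\tau\circ\mu\approx\tau\circ\nu$, and Lemma~\ref{lem: almost uniqueness of associated elements} produces the required $(d_w)_w\in C(\tau\circ\mu)$.

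The converse, however, contains a genuine gap, and you have put your finger on it yourself. The crux is Condition~\eqref{item: a v versus a f} of Definition~\ref{def: mor_sim}, i.e.\ the membership $t_g=(g_\alpha^{\mu})^{-1}b_w\,g_\alpha^{\nu}\in\alpha_f(B_f)$, equivalently $d_g\,c_g\in\tau_f(B_f)$. You propose to obtain this by ``reading the edge-level equality $\tau(b_g)=d_gc_g$ into'' the hypothesis $(d_wc_w)_w=(\tau(b_w))_w$, on the grounds that the forward computation shows it must accompany the vertex equality. That is circular: the forward computation assumes $\mu\approx\nu$, which is precisely what the converse has to establish, and the lemma's hypothesis really is only about vertex-indexed tuples (both $(c_w)_w$ and the elements of $C(\tau\circ\mu)$ are indexed by $V(\gr C)$). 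As written, you have proved a weaker statement in which edge-level liftability has been added to the hypothesis. The paper's proof instead \emph{derives} the existence of $b_g$ from the vertex equality (``using the definition of morphisms'', via $\alpha_e(d_gc_g)=(f_\alpha^{\tau})^{-1}\,\tau_v(t_g)\,f_\alpha^{\tau}$) and then substitutes $\tau(b_w)$ and $\tau(b_g)$ throughout the relations of Remark~\ref{rem: B-paths equivalence}, cancelling $\tau$ by injectivity on vertex and edge groups --- which is the rest of your computation. So the one missing ingredient in your write-up is a proof that $\tau_v(t_g)^{f_\alpha^{\tau}}$ lies in $\alpha_e(\tau_f(B_f))$ and not merely in $\alpha_e(A_e)$; note that this does not follow formally from \eqref{eq: twisted commutation} alone (it is an immersion-type condition on $\tau$ at the edge $f$), so you should either reconstruct the authors' justification for this step or identify exactly what additional input it requires.
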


\begin{proof}
We first prove the statement relative to $\approx$-equivalence.
By Lemma \ref{lem: almost uniqueness of associated elements}, the set of tuples that are parameters for the equivalence $\tau\circ\mu\approx\tau\circ\nu$ is precisely $C(\tau\circ\mu)\ (c_w)_w$.
    
Suppose that $\mu\approx\nu$ and let $(b_w)_w$ be the corresponding parameters. By definition, $\mu$ and $\nu$ agree on $\gr C$. By Lemma \ref{cor: composition associated elements 2}, $(\tau(b_w))_w$ is a tuple of parameters for the equivalence $\tau\circ\mu\approx\tau\circ\nu$, so that $(\tau(b_w))_w \in C(\tau\circ\mu)\ (c_w)_w$ as announced.

Suppose conversely that $\mu$ and $\nu$ agree on the underlying graph $\gr C$ and that there exist $(d_w)_w\in C(\tau\circ\mu)$ and $(b_w)_w \in \prod_wB_{\mu(w)}$ such that $(d_w\,c_w)_w = (\tau(b_w))_w$. Since $(c_w)_w$ and $(d_w\,c_w)_w$ are parameters for the equivalence $\tau\circ\mu\approx\tau\circ\nu$, then there exist tuples $(c_g)_g$ and $(d_g)_g$ such that the following holds: for each $g\in E(\gr{C})$ with $w = o(g)$, $f = \mu(g) = \nu(g)$ and with twisting elements $g^{\mu}_{\alpha}$ (resp. $g^{\nu}_{\alpha}$) for the morphism $\mu$ (resp. $\nu$),  we have
    \begin{align*}
    c_w &= \tau(g^{\mu}_{\alpha})\,f_{\alpha}\, \alpha_e(c_g)\, (\tau(g^{\nu}_{\alpha})\, f_{\alpha})^{-1}\\
    d_w\, c_w &= \tau(g^{\mu}_{\alpha})\, f_{\alpha}\, \alpha_e(d_g\, c_g)\, (\tau(g^{\nu}_{\alpha})\,f_{\alpha})^{-1}.
    \end{align*}
Using the definition of morphisms, we note that if $d_w\,c_w =  \tau(b_w)$ for some $b_w \in B_{\mu(w)}$, then for each edge $g\in E(\gr C)$ with origin $w$, there exists $b_g\in B_{\mu(g)}$ such that $d_g\,c_g = \tau(b_g)$.  Now let $p = (c_0, g_1, c_1, \ldots, g_n, c_n)$ be an arbitrary $\CC$-path, visiting vertices $w_0, w_1, \dots, w_n$, and for each $i$, let $f_i = \mu(g_i) = \nu(g_i)$ and $e_i = \tau(f_i)$. By Remark \ref{rem: B-paths equivalence} we have:
    \begin{align*}
        (d_{w_0}\,c_{w_0})\,(\tau\circ\mu(c_0))\, &\tau((g_1)_{\alpha}^{\mu})\, (f_1)_{\alpha}\, \alpha_{e_1}(d_{g_1}\,c_{g_1})^{-1}\\
        &=(\tau\circ\nu(c_0))\, \tau((g_1)_{\alpha}^{\nu})\, (f_1)_{\alpha}\\
        \omega_{e_i}(d_{g_i}\,c_{g_i})\, (f_i)_{\omega}^{-1}\, \tau((g_i)_{\omega}^{\mu})^{-1}\, &(\tau\circ\mu(c_i))\, \tau((g_{i+1})_{\alpha}^{\mu})\, f_{i+1})_{\alpha}\, \alpha_{e_{i+1}}(d_{g_{i+1}}\,c_{g_{i+1}})^{-1}\\
        &=(f_i)_{\omega}^{-1}\, \tau((g_i)_{\omega}^{\nu})^{-1}\, \tau\circ\nu(c_i)\, \tau((g_{i+1})_{\alpha}^{\nu})\, (f_{i+1})_{\alpha}\\
        \omega_{e_n}(d_{g_n}\,c_{g_n})\, &(f_n)_{\omega}^{-1}\, \tau((g_n)_{\omega}^{\mu})^{-1}\, (\tau\circ\mu(c_n)) \\ &=(f_n)_{\omega}^{-1}\, \tau((g_n)_{\omega}^{\nu})^{-1}\, (\tau\circ\nu(c_n))\, (d_{w_n}\,c_{w_n}).
    \end{align*}
    Now substituting $\tau(b_{w_i})$ for $d_{w_i}\,c_{w_i}$ and  $\tau(b_{g_i})$ for $d_{g_i}\,c_{g_i}$, we obtain:
    \begin{align*}
        \tau\left(b_{w_0}\, \mu(c_0)\, (g_1)_{\alpha}^{\mu}\, \alpha_{f_1}(b_{g_1})^{-1}\right) &= \tau\left(\nu(c_0)\,(g_1)_{\alpha}^{\nu}\right)\\
        \tau\left(\omega_{f_i}(b_{g_i})\, ((g_i)_{\omega}^{\mu})^{-1}\, \mu(c_i)\, (g_{i+1})_{\alpha}^{\mu}\, \alpha_{f_{i+1}}\, (b_{g_{i+1}})^{-1}\right) &= \tau\left(((g_i)_{\omega}^{\nu})^{-1}\, \nu(c_i)\, (g_{i+1})_{\alpha}^{\nu}\right)\\
        \tau\left(\omega_{f_n}(b_{g_n})\, ((g_n)_{\omega}^{\mu})^{-1}\, \mu(c_n)\right) &= \tau\left(((g_n)_{\omega}^{\nu})^{-1}\, \nu(c_n)\, (b_{w_n})\right).
    \end{align*}
    Since morphisms are injective on vertex and edge groups, this yields:
    \begin{align*}
        (b_{w_0})\, \mu(c_0)\, (g_1)_{\alpha}^{\mu}\, \alpha_{f_1}(b_{g_1})^{-1} &= \nu(c_0)\, (g_1)_{\alpha}^{\nu}\\
        \omega_{f_i}(b_{g_i})\, ((g_i)_{\omega}^{\mu})^{-1}\, \mu(c_i)\, (g_{i+1})_{\alpha}^{\mu}\, \alpha_{f_{i+1}}(b_{g_{i+1}})^{-1} &= ((g_i)_{\omega}^{\nu})^{-1}\, \nu(c_i)\, (g_{i+1})_{\alpha}^{\nu}\\
        \omega_{f_n}(b_{g_n})\, ((g_n)_{\omega}^{\mu})^{-1}\, \mu(c_n) &= ((g_n)_{\omega}^{\nu})^{-1}\, \nu(c_n)\, (b_{w_n}).
    \end{align*}
In particular, we have $b_{w_1}\, \mu(p) \sim_{\BB} \nu(p) \, b_{w_2}$ for all $\CC$-paths $p$ from vertex $w_1$ to vertex $w_2$. By Proposition \ref{prop: change of data}, this implies that $\mu\approx\nu$, with parameters $(b_w)_w$.

The converse for the pointed case follows directly from the converse proved above.
\end{proof}

\section{Products, pullbacks, and intersections}\label{sec: pullbacks}

In this section, we develop the essential construction
aimed at describing intersections in the fundamental group of a graph of groups and, fundamentally, describing pullbacks in the appropriate category.

We first recall the well-known concept of pullback in a general category: given two morphisms $\mu^B\colon B \to A$ and $\mu^C\colon C \to A$ with a common codomain, the \emph{pullback of $\mu^B$ and $\mu^C$},
denoted by $\mu^B \times_A \mu^C = (P,\rho^B,\rho^C)$,
consists of an object, usually denoted by $P = B \times_A C$, equipped with two morphisms $\rho^B \colon P \to B$ and $\rho^C \colon P \to C$ such that $\mu^B \circ \rho^B = \mu^C \circ \rho^C$, and such that for any other pair of morphisms $\sigma^B \colon P' \to B$ and $\sigma^C \colon P' \to C$ satisfying $\mu^B \circ \sigma^B = \mu^C \circ \sigma^C$, there exists a unique morphism $\sigma \colon P' \to P$ such that $\sigma^B = \rho^B \circ \sigma$ and $\sigma^C = \rho^C \circ \sigma$.
This situation is summarised in Figure~\ref{fig:enter-label}:

\begin{figure}[H]
    \centering
    \begin{tikzcd}
        P' \arrow[rd, "\exists !\,\sigma", dashed] \arrow[rdd, "\sigma^B"', bend right] \arrow[rrd, "\sigma^C", bend left] \\
        & P \arrow[d, "\rho^B"'] \arrow[r, "\rho^C"] & C \arrow[d, "\mu^C"]\\
        & B \arrow[r, "\mu^B"'] & A
    \end{tikzcd}
    \caption{The pullback $(P,\rho^B,\rho^C)$ of $\mu^B$ and $\mu^C$}
    \label{fig:enter-label}
\end{figure}
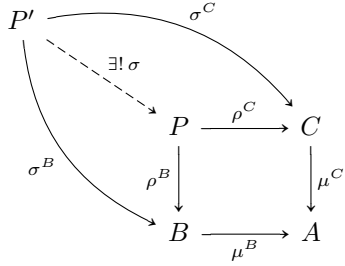

It is clear that the pullback, if it exists, is unique up to isomorphism in the considered category. We sometimes abuse terminology (tacitly assuming the morphisms $\mu^{B}$ and $\mu^{C}$) and refer to the pullback object $B \times_A C$ as the \emph{pullback of $B$ and $C$}.

Two important and classical examples of pullbacks are the following. In the category of sets (respectively, groups), if $\mu^B \colon B \to A$ and $\mu^C \colon C \to A$ are maps (respectively, morphisms), the pullback is the triple $(B \times_A C,\rho^B, \rho^C)$ where $B \times_A C$ is the set (respectively, subgroup) consisting of all pairs $(b,c)$ in the Cartesian product $B \times C$ such that $\mu^B(b) = \mu^C(c)$, and $\rho^B$ and $\rho^C$ are the natural projections.

In the category of (directed) graphs, the pullback of the morphisms
$\mu^{\gr{B}} \colon \gr{B} \to \gr{A}$ and $\mu^{\gr{C}} \colon \gr{C} \to \gr{A}$ is the triple $(\gr{B} \times_{\gr{A}} \gr{C},\rho^{\gr{B}},\rho^ {\gr{C}})$, where $\gr{B} \times_{\gr{A}} \gr{C}$ is the graph with
vertex and edge sets the set-categoric pullbacks $V(\gr B)\times_{V(\gr A)} V(\gr C)$ and $E(\gr B)\times_{E(\gr A)} E(\gr C)$, with the natural incidence functions, and where $\rho^B$ and $\rho^C$ are the natural projections.

In this section, we give an explicit construction for pullbacks in the category $\GrGp^*$ (Section~\ref{sec: final pointed pullbacks}). Pullbacks can be used to represent intersections of subgroups, see Corollary~\ref{cor: intersection of subgroups}. We give a detailed example of an application of these results in Section~\ref{sec: worked out example}: we show that Baumslag--Solitar groups not of the form $\bs(1,n)$ do not have the finitely generated intersection property\footnote{That is, they contain finitely generated subgroups whose intersection is not finitely generated.} by explicitly computing a pullback and showing that its fundamental group is not finitely generated. Since the groups of the form $\bs(1,n)$ are well-known to have the finitely generated intersection property (see \cite{mol68}), this completes the characterization of the Baumslag--Solitar groups with the \fgip, recovering a result of Paramantzoglou in~\cite{pa12}.

Pullbacks in $\GrGp$ are discussed in Section~\ref{sec: final unpointed pullback}. We show that they do not always exist in Example \ref{ex: pullbacks may not exist}, and in case they do exist, we derive information about their construction (see Theorem \ref{thm: pullbacks_exist_sometimes}).

These results depend on the construction of a particular product of $\BB$ and $\CC$ fibered over $\AA$, denoted by $\BB\wtimes_\AA\CC$, which depends on the morphisms $\mu^B$ and $\mu^C$, and which we introduce in Section~\ref{ssec: A-product}.

\subsection{A product construction}\label{ssec: A-product}

Let $\AA$, $\BB$, $\CC$ be
graphs of groups, with respective underlying graphs $\gr{A}$, $\gr{B}$, $\gr{C}$, and let ${\mu^B\colon \BB \to \AA}$ and $\mu^C\colon \CC\to \AA$ be morphisms of graphs of groups. In this section, we define a graph of groups called the \mbox{\emph{$\AA$-product}} of $\BB$ and $\CC$ (more precisely, of $\mu^B$ and $\mu^C$), denoted by 
\[
\DD = \BB \wtimes_\AA \CC
\]
and two morphisms $\rho^B\colon \DD\to\BB$ and $\rho^C\colon \DD\to\CC$.

We start with the description of its underlying graph.

\begin{defn}[Underlying graph]
\label{def: underlying graph A-product}
The underlying graph $\gr{D}$ of $\DD = \BB\wtimes_\AA\CC$ is defined as follows. For each $(v,w) \in V(\gr{B} \times_\gr{A} \gr{C})$ and $(f,g)\in E(\gr{B} \times_\gr{A} \gr{C})$, if $u = [v] = [w]$ and $e = [f] = [g]$, we let
\begin{equation}
\begin{aligned}  \label{eq: V(D) and E(D)}
    V_{v,w}(\gr{D}) &= \dblcoset{\mu_v^B(B_v)}{A_u}{\mu_w^C(C_w)} = \{ \mu_v^B(B_v)\,a\,\mu_w^C(C_w) \mid a\in A_u \},\\
    E_{f,g}(\gr{D}) &=  \dblcoset{\mu_f^B(B_f)}{A_e}{\mu_g^C(C_g)} = \{ \mu_f^B(B_f)\,a\,\mu_g^C(C_g) \mid a\in A_e \},
\end{aligned}
\end{equation}
be the \emph{$(v, w)$-vertices} and \emph{$(f, g)$-edges} of $\gr{D}$, respectively. We then define the vertex set and  edge set of $\gr{D}$ to be the disjoint unions:
\begin{align*}
V(\gr{D}) &= \bigsqcup_{(v,w)\,\in\, V(\gr{B} \times_\gr{A} \gr{C})} V_{v,w}(\gr{D}) \quad\textrm{ and}\\
E(\gr{D}) &= \bigsqcup_{(f,g)\,\in\, E(\gr{B} \times_\gr{A} \gr{C})} E_{f,g}(\gr{D}).
\end{align*}
The incidence maps are the following. If $e \in E(\gr{A})$ and $(f,g) \in E(\gr{B} \times_\gr{A} \gr{C})$ is an edge from $(v,w)$ to $(v',w')$ such that $[f] = [g] = e$ and $a\in A_e$, then the origin and terminal vertices of the $(f,g)$-edge $\mu_f^B(B_f)\,a\,\mu_g^C(C_g)$ are given by
\begin{equation} \label{eq: incidence in D}
\begin{aligned} 
o\left(\mu_f^B(B_f)\,a\,\mu_g^C(C_g)\right) &= \mu_v^B(B_v)\,(f_{\alpha}\,\alpha_e(a)\,g_{\alpha}^{-1})\,\mu_w^C(C_w),\\
t\left(\mu_f^B(B_f)\,a\,\mu_g^C(C_g)\right) &= \mu_{v'}^B(B_{v'})\,(f_{\omega}\,\omega_e(a)\,g_\omega^{-1})\,\mu_{w'}^C(C_{w'}).
\end{aligned}
\end{equation}
It is directly verified (using Equations~\eqref{eq: twisted commutation alpha} and~\eqref{eq: twisted commutation omega} in the definition of morphisms) that, if $h \in E(\gr D)$, the definition of $o(h)$ and $t(h)$ does not depend on the representative $a$ of the $(\mu_v^B(B_v),\mu_w^C(C_w))$-double coset which defines $h$.

The constructed graph $\gr{D}$ is naturally equipped with graph morphisms $\rho^B\colon \gr{D} \to \gr{B}$ and $\rho^C\colon \gr{D} \to \gr{C}$:
if $x \in V_{v,w}(\gr{D})$, then $\rho^B(x) = v$ and $\rho^C(x) = w$; and if $h\in E_{f,g}(\gr{D})$, then $\rho^B(h) = f$ and $\rho^C(h) = g$.

If $\mu^B$ and $\mu^C$ are in fact morphisms of pointed graphs of groups, say, $\mu^B\colon (\BB,v_0)\to (\AA, u_0)$ and $\mu^C\colon (\CC, w_0)\to (\AA, u_0)$, then $\gr{D}$ is made into a pointed graph by selecting the following $(v_0, w_0)$-vertex as the basepoint 
\begin{equation}\label{eq: basepoint in pointed}
x_0 = \mu_{v_0}^B(B_{v_0})\,\mu^C_{w_0}(C_{w_0}).
\end{equation}
The morphisms $\rho^B\colon (\gr{D}, x_0)\to (\gr{B}, v_0)$ and $\rho^C\colon (\gr{D}, x_0)\to (\gr{C}, w_0)$ are then morphisms of pointed graphs.
\end{defn}

\begin{rem}\label{rk: multiple double cosets}
The vertex and edge sets of $\gr D$ are defined as disjoint unions. Formally, this means that vertices and edges in $\gr{D}$ can be considered as objects of the form $(v,w,\mu_v^B(B_v)\,a\,\mu_w^C(C_w))$ with $a\in A_u$, and
$(f,g,\mu_f^B(B_f)\,a\,\mu_g^C(C_g))$ with $a\in A_e$, respectively; to lighten notation
we abuse language and assume that
the double cosets specified in \eqref{eq: V(D) and E(D)} carry (say, in the subscripts) the information about the vertices and edges in $\gr{B} \times_{\gr{A}} \gr{C}$ from which they originate. In particular, as sets of double cosets, $E_{f,g}(\gr{D})$ and $E_{f\inv,g\inv}(\gr{D})$ are always equal (since $\mu^B_{f^{-1}} = \mu^B_{f}$, $\mu^C_{g^{-1}} = \mu^C_{g}$, $B_{f\inv} = B_f$, and $C_{g\inv} = C_g$), but they describe different sets of edges in $\gr{D}$ (since $f\ne f\inv$ and $g\ne g\inv$). For instance, every double coset of the form $\mu_f^B(B_f)\,a\,\mu_g^C(C_g)$ ($a\in A_e$) occurs at least twice in $E(\gr{D})$: as an $(f,g)$-edge and as an $(f\inv,g\inv)$-edge. The fixed point-free involution on $E(\gr{D})$ is the following: if $a\in A_e$ and $h = \mu_f^B(B_f)\,a\,\mu_g^C(C_g)$ is an element of $E_{f,g}$, then $h\inv$ is the element $\mu_{f\inv}^B(B_{f\inv})\,a\,\mu_{g\inv}^C(C_{g\inv}) = \mu_f^B(B_f)\,a\,\mu_g^C(C_g)$ in $E_{f\inv,g\inv}(\gr{D})$.
\end{rem}

Next, we fix representatives of the double cosets associated to the vertices and edges of $\gr{D}$. If $x \in V(\gr{D})$, we choose an element $\tilde x$ of the double coset $x$; and if $h \in E(\gr{D})$, we choose an element $\tilde h$ of the double coset $h$. If $\mu^B$ and $\mu^C$ are morphisms of \emph{pointed} graphs of groups, the representative for the basepoint $x_0$ (see Equation~\eqref{eq: basepoint in pointed})
is always taken to be $1$. There are no such restrictions when working with unpointed graphs of groups, that is, in $\GrGp$. Note that if $x$ is a $(v,w)$-vertex and $u = [v] = [w] \in V(\gr{A})$, then $\tilde x \in A_u$. Similarly, if $h$ is an $(f,g)$-edge and $e = [f] = [g] \in E(\gr{A})$, then $\tilde h \in A_e$.
As observed in Remark~\ref{rk: multiple double cosets}, $h$ and $h\inv$ are, set-theoretically, the same double coset, and we choose $\widetilde{h\inv}$ to be equal to $\tilde h$.

In particular, if $h\in E_{f,g}(\gr{D})$, $e = [f] = [g] \in E(\gr{A})$ and $x = o(h) \in V_{v,w}(\gr{D})$, then $h = \mu_f^B(B_f)\,\tilde h\,\mu_g^C(C_g)$, and, in view of the definition of the incidence relations \eqref{eq: incidence in D} in $\gr{D}$, we have
$x = \mu_v^B(B_v)\,(f_\alpha\,\alpha_e(\tilde h)\,g_\alpha\inv)\,\mu_w^C(C_w)$. It follows that there exist $b_h\in B_v$ and $c_h\in C_w$ (not necessarily unique) such that
\begin{equation}\label{eq: old x tilde}
\tilde{x} = \mu_v^B(b_h)\,f_{\alpha}\,\alpha_e(\tilde{h})\,g_{\alpha}^{-1}\,\mu_w^C(c_h)^{-1}. 
\end{equation}
Our Definition~\ref{defn: projection morphisms} (of the graph of groups data) below a priori depends on our choices above. However, in Lemma \ref{lem: uniqueness up to isomorphism} we show that different choices lead to isomorphic graphs of groups.

Recall that if $\phi\colon B\to A$ and $\psi\colon C \to A$ are group morphisms, then the pullback of $\phi$ and $\psi$ is the group
\[
B\times_AC = \{(b, c) \in B\times C \mid \phi(b) = \psi(c)\} \leqslant B\times C
\]
together with the natural projection morphisms $p^{B}$ and $p^{C}$ induced by the projection morphisms from $B\times C$ to $B$ and $C$. We adopt the following notational convention.

\paragraph{Notational convention}
Let $\phi\colon B\to A$ and $\psi\colon C \to A$ be group morphisms and let $a \in A$. When $\phi$ and $\psi$ are understood, in order to lighten notation, we denote by $\twpb BaAC$ the pullback of $\gamma_a\circ\phi$ and $\psi$, that is, the set of pairs $(b,c) \in B\times C$ such that $a\inv\,\phi(b)\, a = \psi(c)$. 

\begin{lem}\label{lem: DD is well defined new}
Let $h \in E(\gr D)$ and let $e$, $f$, $g$ be its images in $E(\gr A)$, $E(\gr B)$, $E(\gr C)$, respectively. Let also $x = o(h)$ and $u$, $v$, $w$ be its images in $V(\gr A)$, $V(\gr B)$, $V(\gr C)$. Then the morphism
\[
(\gamma_{b_h^{-1}}\circ\alpha_f, \gamma_{c_h^{-1}}\circ\alpha_g)\colon \twpb{B_f}{\tilde h}{A_e}{C_g} \to B_v\times C_w
\]
has image in $\twpb{B_v}{\tilde x}{A_u}{C_w}$.
\end{lem}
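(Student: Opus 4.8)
The plan is to unwind both twisted pullbacks into their defining equations and then verify, by a single direct conjugation computation, that the defining equation of the target pullback follows from that of the source. First I would fix a pair $(b,c)\in\twpb{B_f}{\tilde h}{A_e}{C_g}$. By the notational convention for the twisted pullback (reading $\gamma_{\tilde h}\circ\mu^B_f$ as conjugation by $\tilde h$), this means precisely that
\[
\tilde h^{-1}\,\mu^B_f(b)\,\tilde h \;=\; \mu^C_g(c) \quad\text{in } A_e.
\]
The image of $(b,c)$ under the stated map is $\bigl(b_h\,\alpha_f(b)\,b_h^{-1},\ c_h\,\alpha_g(c)\,c_h^{-1}\bigr)$, which genuinely lies in $B_v\times C_w$ since $\alpha_f(b)\in B_v$, $\alpha_g(c)\in C_w$, and $b_h\in B_v$, $c_h\in C_w$. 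The goal is to show this pair satisfies the defining relation of $\twpb{B_v}{\tilde x}{A_u}{C_w}$, namely $\tilde x^{-1}\,\mu^B_v(b_h\,\alpha_f(b)\,b_h^{-1})\,\tilde x = \mu^C_w(c_h\,\alpha_g(c)\,c_h^{-1})$.

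The two ingredients I would feed into the verification are the explicit expression for $\tilde x$ from \eqref{eq: old x tilde}, namely $\tilde x = \mu_v^B(b_h)\,f_\alpha\,\alpha_e(\tilde h)\,g_\alpha^{-1}\,\mu_w^C(c_h)^{-1}$, and the twisted commutation relations \eqref{eq: twisted commutation}, which I would rewrite as $\mu^B_v(\alpha_f(b)) = f_\alpha\,\alpha_e(\mu^B_f(b))\,f_\alpha^{-1}$ and $\mu^C_w(\alpha_g(c)) = g_\alpha\,\alpha_e(\mu^C_g(c))\,g_\alpha^{-1}$. Writing $\tilde x = P\,\alpha_e(\tilde h)\,Q$ with $P=\mu^B_v(b_h)\,f_\alpha$ and $Q=g_\alpha^{-1}\,\mu^C_w(c_h)^{-1}$, the first commutation relation gives $\mu^B_v(b_h\,\alpha_f(b)\,b_h^{-1}) = P\,\alpha_e(\mu^B_f(b))\,P^{-1}$. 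Conjugating by $\tilde x$ then cancels the factors $P,P^{-1}$ and leaves $Q^{-1}\,\alpha_e\!\bigl(\tilde h^{-1}\,\mu^B_f(b)\,\tilde h\bigr)\,Q$, where I have used that $\alpha_e$ is a homomorphism to pass the $\tilde h$-conjugation inside. Applying the source relation to replace $\tilde h^{-1}\mu^B_f(b)\tilde h$ by $\mu^C_g(c)$, and then the second commutation relation to fold $Q^{-1}\,\alpha_e(\mu^C_g(c))\,Q$ back into $\mu^C_w(c_h\,\alpha_g(c)\,c_h^{-1})$, yields exactly the target equation.

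The computation is entirely mechanical, so I do not anticipate a conceptual obstacle; the only place demanding care is the bookkeeping of conventions. Specifically, one must recall that $\gamma_a(y)=a^{-1}ya$, so that the map component $\gamma_{b_h^{-1}}(\alpha_f(b))$ is $b_h\,\alpha_f(b)\,b_h^{-1}$ (and likewise on the $\CC$ side), and one must apply \eqref{eq: twisted commutation} in the direction that makes the twisting elements $f_\alpha$ and $g_\alpha$ align with those occurring in the formula for $\tilde x$. Keeping these orientations consistent is what makes the $P$, $P^{-1}$ and $g_\alpha$, $g_\alpha^{-1}$ factors telescope cleanly, and it is the step where a sign or side error would most easily creep in.
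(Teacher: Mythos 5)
Your proposal is correct and follows essentially the same route as the paper: both unwind the defining relation of the source pullback, apply the twisted commutation relations \eqref{eq: twisted commutation} together with the expression for $\tilde x$ from \eqref{eq: old x tilde}, and verify the target relation by a direct telescoping conjugation. The orientation conventions you flag ($\gamma_a(y)=a^{-1}ya$ and the direction of \eqref{eq: twisted commutation}) are handled exactly as in the paper's computation.
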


\begin{proof}
Let $z = (b, c)\in \twpb{B_f}{\tilde h}{A_e}{C_g}$. By definition, $\mu^B_f(b)^{\tilde h} = \mu_g^C(c)$ and we have 
\[
(\gamma_{b_h^{-1}}\circ\alpha_f, \gamma_{c_h^{-1}}\circ\alpha_g)(b, c) = (b_h\, \alpha_f(b)\, b_h^{-1}, c_h\,\alpha_g(c)\,c_h\inv).
\]
Since
\begin{align*}
\mu^B(b_h\, \alpha_f(b)\, b_h^{-1})^{\tilde{x}} &= \left(\mu^B(b_h)\, f_{\alpha}\, \alpha_e(\mu_f^B(b))\, f_{\alpha}^{-1}\, \mu^B(b_h)^{-1}\right)^{\tilde{x}} \textrm{ by definition of morphisms}\\
            &= \left(\mu^B(b_h)\, f_{\alpha}\, \alpha_e(\tilde{h}\, \mu_g^C(c)\, \tilde{h}^{-1})\, f_{\alpha}^{-1}\, \mu^B(b_h)^{-1}\right)^{\tilde{x}}\\
            &= \mu^C(c_h)\, g_{\alpha}\, \alpha_e(\mu^C_g(c))\, g_{\alpha}^{-1}\, \mu^C(c_h)^{-1} \textrm{ by Equation~\eqref{eq: old x tilde}}\\
            &= \mu^C(c_h\, \alpha_g(c)\, c_h^{-1})
\end{align*}
we certainly have that the image of $(\gamma_{b_h^{-1}}\circ\alpha_f, \gamma_{c_h^{-1}}\circ\alpha_g)$ lies in $\twpb{B_v}{\tilde x}{A_u}{C_w}$.
\end{proof}

\begin{defn}[Graph of groups data and projection morphisms]\label{defn: projection morphisms}
The graph of groups $\DD = (\gr{D}, \{D_x\}, \{D_h\}, \{\alpha_h, \omega_h\})$ and its projections $\rho^B$ and $\rho^C$ to $\BB$ and $\CC$ are
given by the following data:
\begin{enumerate}[(1)]
\item The underlying graph $\gr{D}$ and the projection graph morphisms $\gr{D}\to \gr{B}$, $\gr{D}\to \gr{C}$ specified in Definition \ref{def: underlying graph A-product}.
\item If $x\in V_{v, w}(\gr{D})$, the corresponding vertex group is
    \[
    D_x = \twpb{B_v}{\tilde x}{A_u}{C_w},
    \]
    a subgroup of $B_v \times C_w$. We then let $\rho_x^B\colon D_x\to B_v$ and $\rho_x^C\colon D_x\to C_w$ be the first and second coordinate projections.
\item Similarly, if $h\in E_{f, g}(\gr{D})$, the corresponding edge group is 
    \[
    D_h = \twpb{B_f}{\tilde h}{A_e}{C_g},
    \]
    and we let $\rho_h^B\colon D_h\to B_f$ and $\rho_h^C\colon D_h\to C_g$ be the first and second coordinate projections.
    
    \item If $h\in E_{f, g}(\gr{D})$ with origin $x\in V_{v, w}(\gr{D})$ and target $x'\in V_{v', w'}(\gr{D})$, then the corresponding edge maps $\alpha_h \colon D_h \to D_{o(h)} = D_x$ and $\omega_h \colon D_h \to D_{t(h)} = D_{x'}$ are given by
    \begin{align*}
        \alpha_h &= (\gamma_{b_h^{-1}}\circ\alpha_f, \gamma_{c_h^{-1}}\circ\alpha_g)\\
        \omega_h &= (\gamma_{b_{h^{-1}}^{-1}}\circ\omega_f, \gamma_{c_{h^{-1}}^{-1}}\circ\omega_g).
    \end{align*}
    Note that this is well-defined by Lemma \ref{lem: DD is well defined new}.
    \item The twisting elements for an edge $h$ relative to $\rho^B$ and $\rho^C$ are
    $$h_\alpha^B = b_h\textrm{ and }h_\alpha^C = c_h,\textrm{ and hence }h_\omega^B = b_{h\inv}\textrm{ and }h_\omega^C = c_{h\inv}.$$
\end{enumerate}
\end{defn}

A priori, our definition of $\DD$ and the maps $\rho^B$ and $\rho^C$ depend on the choices of representatives $\tilde{x}$, $\tilde{h}$, $b_h$ and $c_h$, for $x\in V(\gr D)$ and $h\in E(\gr D)$. We first examine the impact of different choices.

\begin{lem}\label{lem: uniqueness up to isomorphism}
Let $\mu^B\colon \BB\to \AA$ and $\mu^C\colon \CC\to \AA$ be morphisms of graphs of groups. Let $\DD$ and $\DD'$ be $\AA$-products over $\mu^B$ and $\mu^C$, with projection morphisms $\rho^B$ and $\rho^C$, and $\rho'^B$ and $\rho'^C$, respectively, determined by different choices of representatives. Then there exists an isomorphism (in the category $\GrGp$) $\sigma\colon \DD\to \DD'$ of graphs of groups such that $\rho'^B\circ\sigma\approx\rho^B$ and $\rho'^C\circ\sigma\approx\rho^C$.

The same statement holds in the case of morphisms of pointed graphs of groups, replacing isomorphisms in $\GrGp$ by isomorphisms in $\GrGp^*$ and each occurrence of $\approx$ with $\sim$.
\end{lem}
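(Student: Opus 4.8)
The plan is to realise $\sigma$ as a morphism whose underlying graph map is the \emph{identity}. By Definition~\ref{def: underlying graph A-product} the vertices and edges of $\gr D$ are the double cosets $\dblcoset{\mu_v^B(B_v)}{A_u}{\mu_w^C(C_w)}$ and $\dblcoset{\mu_f^B(B_f)}{A_e}{\mu_g^C(C_g)}$, which depend only on $\mu^B$ and $\mu^C$ and not on any choice of representatives; hence $\gr D=\gr D'$ and $\sigma$ need only supply group data. First I would record the change of representatives: since $\tilde x$ and $\tilde x'$ lie in the same double coset, write $\tilde x'=\mu_v^B(\beta_x)\,\tilde x\,\mu_w^C(\gamma_x)\inv$ for some $\beta_x\in B_v$, $\gamma_x\in C_w$, and likewise $\tilde h'=\mu_f^B(\beta_h)\,\tilde h\,\mu_g^C(\gamma_h)\inv$, making the symmetric choice $\beta_{h\inv}=\beta_h$, $\gamma_{h\inv}=\gamma_h$ (legitimate since $\widetilde{h\inv}=\tilde h$). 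A direct computation from the definition of the twisted pullbacks $D_x=\twpb{B_v}{\tilde x}{A_u}{C_w}$ and $D_x'=\twpb{B_v}{\tilde x'}{A_u}{C_w}$ then shows that the coordinate-wise conjugation $\sigma_x\colon(b,c)\mapsto(\beta_x b\beta_x\inv,\gamma_x c\gamma_x\inv)$ restricts to a group isomorphism $D_x\to D_x'$, and similarly defines $\sigma_h\colon D_h\to D_h'$.

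The heart of the argument is the choice of twisting elements for $\sigma$ and the proof that $\sigma$ is a morphism. Imposing the twisted commutation relation \eqref{eq: twisted commutation} for $\sigma$ coordinate-wise forces, for each edge $h$ with $o(h)=x$,
\[
h_\alpha^\sigma \;=\; \bigl(\beta_x\,b_h\,\alpha_f(\beta_h)\inv\,(b_h')\inv,\ \gamma_x\,c_h\,\alpha_g(\gamma_h)\inv\,(c_h')\inv\bigr),
\]
with $h_\omega^\sigma$ given by the symmetric formula applied to $h\inv$. The key point, and what I expect to be the main obstacle, is to verify that this pair really lies in the vertex group $D_x'=\twpb{B_v}{\tilde x'}{A_u}{C_w}$. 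To this end I would first use the relations \eqref{eq: twisted commutation} for $\mu^B$ and $\mu^C$ to rewrite the change of edge representative as
\[
f_\alpha\,\alpha_e(\tilde h')\,g_\alpha\inv \;=\; \mu_v^B(\alpha_f(\beta_h))\,\bigl(f_\alpha\,\alpha_e(\tilde h)\,g_\alpha\inv\bigr)\,\mu_w^C(\alpha_g(\gamma_h))\inv,
\]
and then feed this into the two instances of \eqref{eq: old x tilde}, one for $\DD$ and one for $\DD'$, to obtain a membership relation in $D_x$; conjugating this relation by $\tilde x'$ yields the desired membership in $D_x'$, in the spirit of the computation underlying Lemma~\ref{lem: DD is well defined new}. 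With $h_\alpha^\sigma$ in place, checking \eqref{eq: twisted commutation} for $\sigma$ reduces to the injectivity of $\alpha_f$ and $\alpha_g$, and the involution consistency $(h\inv)_\alpha^\sigma=h_\omega^\sigma$ follows from the symmetric choices. This step is pure bookkeeping of the elements $f_\alpha,g_\alpha,b_h,c_h,\beta_x,\gamma_x,\beta_h,\gamma_h$, but it is where all the definitions must mesh.

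Once $\sigma$ is known to be a morphism, its inverse is obtained from the symmetric data (interchanging the roles of $\DD$ and $\DD'$); since $\sigma$ is the identity on $\gr D$ and a group isomorphism on every vertex and edge group, the two composites are $\approx$-equivalent to the respective identity morphisms, so $\sigma$ is an isomorphism in $\GrGp$. For compatibility with the projections, the fact that $\rho^B$ and $\rho'^B$ are the first-coordinate projections (Definition~\ref{defn: projection morphisms}) gives $(\rho'^B\circ\sigma)_x=\gamma_{\beta_x\inv}\circ\rho_x^B$, while a short computation of the twisting element of the composite $\rho'^B\circ\sigma$ at $h$ yields $\beta_x\,b_h\,\alpha_f(\beta_h)\inv$. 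By Definition~\ref{def: mor_sim} (equivalently, Proposition~\ref{prop: change of data}), these are exactly the data witnessing $\rho'^B\circ\sigma\approx\rho^B$, with vertex parameters $(\beta_x\inv)_x$ and edge parameters $(\beta_h\inv)_h$; symmetrically $\rho'^C\circ\sigma\approx\rho^C$ with parameters $(\gamma_x\inv)_x$ and $(\gamma_h\inv)_h$.

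Finally, for the pointed case I would exploit that both basepoint representatives are taken to be $1$, so one may choose $\beta_{x_0}=\gamma_{x_0}=1$. Then $\sigma_{x_0}=\id$, so $\sigma$ is a morphism of pointed graphs of groups fixing the basepoint, and the parameters found above satisfy $a_{x_0}=\beta_{x_0}\inv=1$ and $\gamma_{x_0}\inv=1$; hence $\rho'^B\circ\sigma\sim\rho^B$ and $\rho'^C\circ\sigma\sim\rho^C$, and $\sigma$ is an isomorphism in $\GrGp^*$, as required.
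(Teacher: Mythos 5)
Your proposal is correct and follows essentially the same route as the paper's proof: the identity on the underlying graph, coordinate-wise conjugation by the change-of-representative elements on vertex and edge groups, the same twisting elements $(\beta_x b_h\alpha_f(\beta_h)^{-1}(b'_h)^{-1},\,\gamma_x c_h\alpha_g(\gamma_h)^{-1}(c'_h)^{-1})$, and the same parameters witnessing $\rho'^B\circ\sigma\approx\rho^B$ and $\rho'^C\circ\sigma\approx\rho^C$ (up to the inversion convention of Definition~\ref{def: mor_sim}, the paper records them as $(\hat b_x)_x,(\hat b_h)_h$ rather than their inverses, which is immaterial since $\approx$ is symmetric and the basepoint parameter is trivial either way). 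You even flag and address the one verification the paper leaves implicit, namely that the proposed twisting element lies in $D'_x$.
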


\begin{proof}
    Note first that $\gr D = \gr D'$. Let $\tilde{x}$, $\tilde{h}$, $b_h$ and $c_h$ (resp. ${\tilde x}'$, ${\tilde h}'$, $b_h'$ and $c_h'$) be the representatives, for all $x\in V(\gr D)$ and $h \in E(\gr D)$, yielding $\DD$, $\rho^B$ and $\rho^C$ (resp. $\DD'$, $\rho'^B$ and $\rho'^C$). Since $\mu_v^B(B_v)\,\tilde{x}\,\mu_w^C(C_w) = \mu_v^B(B_v)\,\tilde{x}'\,\mu_w^C(C_w)$ and $\mu_f^B(B_f)\,\tilde{h}\,\mu_g^C(C_g) = \mu_f^B(B_f)\,\tilde{h}'\,\mu_g^C(C_g)$, there exist elements $\hat{b}_x\in B_v$, $\hat{c}_x\in C_w$, $\hat{b}_h\in B_f$ and $\hat{c}_h\in C_g$ such that
    \begin{align*}
        \tilde{x}' &= \mu^B(\hat{b}_x)\,\tilde{x}\,\mu^C(\hat{c}_x)^{-1},\\
        \tilde{h}' &= \mu^B(\hat{b}_h)\,\tilde{h}\,\mu^C(\hat{c}_h)^{-1}.
    \end{align*}
    Note that if $\tilde{x} = \tilde{x}' = 1$, then we may take $\hat{b}_x = 1$, $\hat{c}_x = 1$.
    We thus have
    \begin{align*}
    \twpb{B_v}{\tilde x}{A_u}{C_w} &= \{(b, c)\in B\times C \mid \tilde{x}^{-1}\mu^B(b)\tilde{x} = \mu^C(c)\}\\
                                    &= \{(b, c)\in B\times C \mid \tilde{x}'^{-1}\mu^B(\hat{b}_xb\hat{b}_x^{-1})\tilde{x}' = \mu^C(\hat{c}_xc\hat{c}_x^{-1})\}.
    \end{align*}
    In view of the above, we  may define  isomorphisms
    \begin{align*}
    \sigma_x\colon D_x = \twpb{B_v}{\tilde x}{A_u}{C_w} &\to \twpb{B_v}{\tilde x'}{A_u}{C_w} = D'_x\\
    (b, c) &\mapsto (\hat{b}_xb\hat{b}_x^{-1}, \hat{c}_xc\hat{c}_x^{-1})
    \end{align*}
    on the vertex groups. An identical argument yields isomorphisms:
    \begin{align*}
    \sigma_h\colon D_h = \twpb{B_f}{\tilde h}{A_e}{C_g} &\to \twpb{B_f}{\tilde h'}{A_e}{C_g} = D'_h\\
    (b, c) &\mapsto (\hat{b}_hb\hat{b}_h^{-1}, \hat{c}_hc\hat{c}_h^{-1})
    \end{align*}
    on the edge groups. We note that
    \begin{align*}
    \alpha_h\circ\sigma_h(b, c) &= \alpha_h(\hat{b}_hb\hat{b}_h^{-1}, \hat{c}_hc\hat{c}_h^{-1})\\
                                &= (b_h'\alpha_f(\hat{b}_hb\hat{b}_h^{-1})b_h'^{-1}, c_h'\alpha_g(\hat{c}_hc\hat{c}_h^{-1})c_h'^{-1}),\\
    \sigma_x\circ\alpha_h(b, c) &= \sigma_x(b_h\alpha_f(b)b_h^{-1}, c_h\alpha_g(c)c_h^{-1})\\
                                &= (\hat{b}_xb_h\alpha_f(b)b_h^{-1}\hat{b}_x^{-1}, \hat{c}_xc_h\alpha_g(c)c_h^{-1}\hat{c}_x^{-1}),
    \end{align*}
    and so, letting
    \[
    (\hat{h}_{\alpha}^B, \hat{h}_{\alpha}^C) = (\hat{b}_xb_h\alpha_f(\hat{b}_h)^{-1}b_h'^{-1}, \hat{c}_xc_h\alpha_g(\hat{c}_h)^{-1}c_h'^{-1}),
    \]
    we have
    \[
    \alpha_h\circ\sigma_h = \gamma_{(\hat{h}^B_{\alpha}, \hat{h}_{\alpha}^C)}\circ\sigma_x\circ\alpha_h.
    \]
    Hence, letting $(\hat{h}_{\alpha}^B, \hat{h}_{\alpha}^C)$ be the twisting element for $\sigma$ associated with the edge $h$, we see that $\sigma$ is a well-defined morphism of graphs of groups. Since it is an isomorphism on the underlying graph and each vertex and edge morphism is an isomorphism, $\sigma$ is an isomorphism.
    
    A direct computation, together with Proposition \ref{prop: change of data}, shows that $\rho'^B\circ\sigma \approx\rho^B$ with parameters $(\hat{b}_x)_x, (\hat{b}_h)_h$, and $\rho'^C\circ\sigma\approx\rho^C$ with parameters $(\hat{c}_x)_x, (\hat{c}_h)_h$.  In the pointed case, since $\hat{b}_x = 1$ and $\hat{c}_x = 1$ when $x$ is the basepoint, Proposition \ref{prop: change of data} implies that $\rho'^B\circ\sigma\sim \rho^B$ and $\rho'^C\circ\sigma\sim \rho^C$.
\end{proof}

Lemma~\ref{lem: uniqueness up to isomorphism} states that different choices of representatives $\tilde{x}$, $\tilde{h}$, $b_h$ and $c_h$ ($x\in V(\gr D)$, $h \in E(\gr D)$) yield triples $(\DD, \rho^B, \rho^C)$ which are isomorphic in $\GrGp$. As a result, we will freely write $\BB\wtimes_\AA\CC$, $\rho^B$ and $\rho^C$ to denote any one of these isomorphic objects or arrows of $\GrGp$. Similarly, if $\mu^B$ and $\mu^C$ are morphisms of pointed graphs of groups, different choices of representatives (satisfying $\tilde x_0 = 1$) yield tuples $(\DD, x_0, \rho^B, \rho^C)$ which are isomorphic in $\GrGp^*$, and we will freely write $(\BB\wtimes_\AA\CC, x_0)$, $\rho^B$ and $\rho^C$ to denote any one of these isomorphic objects or arrows of $\GrGp^*$.

We now verify that $\mu^B$ and $\mu^C$ can be replaced by equivalent morphisms without affecting the $\AA$-product.

\begin{lem}
\label{lem: uniqueness up to isomorphism_2}
Let $\mu^B, \nu^B\colon \BB\to \AA$ and $\mu^C, \nu^C\colon \CC\to \AA$ be morphisms of graphs of groups such that $\mu^B\approx\nu^B$ and $\mu^C\approx\nu^C$. If $\DD$ and $\DD'$ are the $\AA$-products over $\mu^B$ and $\mu^C$, and over $\nu^B$ and $\nu^C$, respectively, then there exists an isomorphism $\sigma\colon \DD\to\DD'$ of graphs of groups such that $\nu^B\circ\rho'^B\circ\sigma\approx \mu^B\circ\rho^B$ and $\nu^C\circ\rho'^C\circ\sigma\approx\mu^C\circ\rho^C$. See the commutative diagram in Figure~\ref{fig: AA-product for L36}.

If $\mu^B, \nu^B, \mu^C, \nu^C$ are morphisms of pointed graphs of groups such that $\mu^B\sim\nu^B$, $\mu^C\sim\nu^C$ and if $(\DD, x_0)$ and $(\DD', x'_0)$ are the pointed $\AA$-products over $\mu^B$ and $\mu^C$, and over $\nu^B$ and $\nu^C$, respectively, then there exists a pointed isomorphism $\sigma\colon (\DD, x_0)\to (\DD', x'_0)$ such that $\nu^B\circ\rho'^B\circ\sigma\sim\mu^B\circ\rho^B$ and $\nu^C\circ\rho'^C\circ\sigma\sim\mu^C\circ\rho^C$.
\end{lem}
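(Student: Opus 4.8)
The plan is to construct $\sigma$ directly from the parameters of the two given equivalences, exploiting the freedom in the choice of representatives afforded by Lemma~\ref{lem: uniqueness up to isomorphism}. Fix vertex and edge parameters $(a^B_v)_v,(a^B_f)_f$ for $\mu^B\approx\nu^B$ and $(a^C_w)_w,(a^C_g)_g$ for $\mu^C\approx\nu^C$, so that $\nu^B_v=\gamma_{a^B_v}\circ\mu^B_v$ and $\nu^B_f=\gamma_{a^B_f}\circ\mu^B_f$ (Definition~\ref{def: mor_sim} and Lemma~\ref{rem:edge_equivalence}), and likewise for $C$; recall also the twisting-element relation $f^{\nu^B}_\alpha=(a^B_v)^{-1}\,f^{\mu^B}_\alpha\,\alpha_e(a^B_f)$ and its $C$-analogue. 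Since the $\AA$-product is only defined up to isomorphism (Lemma~\ref{lem: uniqueness up to isomorphism}), I may tie the representatives defining $\DD'$ to those of $\DD$: for a $(v,w)$-vertex $x$ with representative $\tilde x$ I set $\tilde x'=(a^B_v)^{-1}\,\tilde x\,a^C_w$, and for an $(f,g)$-edge $h$ with representative $\tilde h$ I set $\tilde h'=(a^B_f)^{-1}\,\tilde h\,a^C_g$ (the general case then follows by composing with the isomorphisms of Lemma~\ref{lem: uniqueness up to isomorphism}).

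On underlying graphs, I would define $\sigma$ by $\mu^B_v(B_v)\,a\,\mu^C_w(C_w)\mapsto\nu^B_v(B_v)\,(a^B_v)^{-1}a\,a^C_w\,\nu^C_w(C_w)$ on vertices, and analogously on edges. This is a bijection: since $\nu^B_v(B_v)=(a^B_v)^{-1}\mu^B_v(B_v)\,a^B_v$ and $\nu^C_w(C_w)=(a^C_w)^{-1}\mu^C_w(C_w)\,a^C_w$, the translation $a\mapsto(a^B_v)^{-1}a\,a^C_w$ carries $(\mu^B_v(B_v),\mu^C_w(C_w))$-double cosets bijectively onto $(\nu^B_v(B_v),\nu^C_w(C_w))$-double cosets. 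Using the incidence formula~\eqref{eq: incidence in D} together with the twisting-element relations above, a direct computation shows that $\sigma$ commutes with origins and targets, so it is an isomorphism of the underlying graphs.

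For the group data, substituting the parameter relations into Equation~\eqref{eq: old x tilde} establishes two facts. First, the twisted pullback $\twpb{B_v}{\tilde x'}{A_u}{C_w}$ defining the vertex group of $\sigma(x)$ in $\DD'$ equals $\twpb{B_v}{\tilde x}{A_u}{C_w}=D_x$ as a subgroup of $B_v\times C_w$ (and likewise on edge groups); thus $\sigma$ may be taken to be the identity on every vertex and edge group. Second, the auxiliary elements solving~\eqref{eq: old x tilde} for $\DD'$ can be chosen as $b'_h=b_h$ and $c'_h=c_h$. With these choices the edge maps satisfy $\alpha_{\sigma(h)}=(\gamma_{b_h^{-1}}\circ\alpha_f,\gamma_{c_h^{-1}}\circ\alpha_g)=\alpha_h$, so all twisting elements of $\sigma$ can be taken trivial, and $\sigma$, being an isomorphism on the underlying graph and on each vertex and edge group, is an isomorphism in $\GrGp$ (exactly as in Lemma~\ref{lem: uniqueness up to isomorphism}). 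I expect this reconciliation of $\tilde x',\tilde h',b'_h,c'_h$ with the twisting elements of $\nu^B,\nu^C$ to be the main obstacle, since it is precisely where all parameters must line up so that the vertex and edge groups genuinely coincide and no residual twisting survives.

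Finally, because $\sigma$ has identity group maps and trivial twisting, a direct computation of the composite's data gives $\rho'^B\circ\sigma=\rho^B$ and $\rho'^C\circ\sigma=\rho^C$ on the nose, whence $\nu^B\circ\rho'^B\circ\sigma=\nu^B\circ\rho^B$. Lemma~\ref{cor: composition associated elements}, applied to $\mu^B\approx\nu^B$ precomposed with $\rho^B$, then yields $\mu^B\circ\rho^B\approx\nu^B\circ\rho^B$ with parameters $(a^B_{\rho^B(x)})_x,(a^B_{\rho^B(h)})_h$, and symmetrically for $C$; this produces the two desired $\approx$-equivalences. For the pointed statement, $\mu^B\sim\nu^B$ and $\mu^C\sim\nu^C$ force $a^B_{v_0}=a^C_{w_0}=1$, hence $\tilde x'_0=1$, so $\sigma$ maps the basepoint of $\DD$ to that of $\DD'$ and all the parameters above vanish at $x_0$; thus $\sigma$ is a pointed isomorphism and each equivalence upgrades from $\approx$ to $\sim$.
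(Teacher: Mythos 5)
Your proposal is correct and follows essentially the same route as the paper's proof: both use the freedom of Lemma~\ref{lem: uniqueness up to isomorphism} to choose the representatives of $\DD'$ as $\widetilde{\sigma(x)}=(a^B_v)^{-1}\tilde x\,a^C_w$, $\widetilde{\sigma(h)}=(a^B_f)^{-1}\tilde h\,a^C_g$ with $b'_{\sigma(h)}=b_h$, $c'_{\sigma(h)}=c_h$, observe that the twisted pullbacks then coincide so that $\sigma$ is the identity on all vertex and edge groups with trivial twisting, and deduce the required $\approx$- (resp.\ $\sim$-) equivalences from the parameters $(a^B_{\rho^B(\cdot)})$ and $(a^C_{\rho^C(\cdot)})$. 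The only cosmetic difference is that you route the final equivalence through $\rho'^B\circ\sigma=\rho^B$ and Lemma~\ref{cor: composition associated elements}, whereas the paper verifies $\nu^B\circ\rho'^B\circ\sigma\approx\mu^B\circ\rho^B$ by a direct elementwise computation; the content is the same.
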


\begin{figure}[H]
    \centering
\begin{tikzcd}
\DD' \arrow[rd, "\cong", dashed] \arrow[rrd, "\rho'^C", bend left] \arrow[rdd, "\rho'^B"', bend right] &[-15pt] &[10pt] \\[-15pt]
&[-15pt] \DD \arrow[r, "\rho^C"] \arrow[d, "\rho^B"']  &[10 pt] \CC \arrow[d, "\mu^C"', bend right = 20] \arrow[d, "\nu^C", bend left = 20] 
\arrow[d, "\approx" description, no head, phantom]\\[10pt]
&[-15pt] \BB \arrow[r, "\mu^B", bend left=15] \arrow[r, "\nu^B"', bend right = 15] \arrow[r, "\approx" description, no head, phantom]&[10pt] \AA        
\end{tikzcd}
    \caption{$\AA$-product and morphism equivalence.}
    \label{fig: AA-product for L36}
\end{figure}
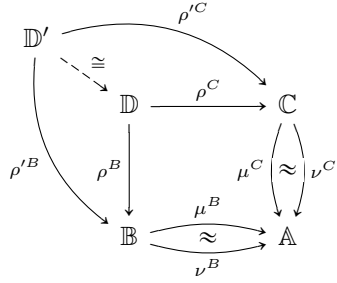

\begin{proof}
    Let $(a_v)_{v\in V(\gr B)}$ and $(a_f)_{f\in E(\gr B)}$ be parameters for the equivalence $\mu^B \approx \nu^B$. Let also $(a'_w)_{w\in V(\gr C)}$ and $(a'_g)_{g\in E(\gr C)}$ be parameters for the equivalence $\mu^C \approx \nu^C$. For each edge $f\in E(\gr B)$ (resp. $g\in E(\gr C)$), let $f_\alpha$ and $f'_\alpha$ (resp. $g_\alpha$ and $g'_\alpha$) be its $\alpha$-twisting elements relative to $\mu^B$ and $\nu^B$ (resp. $\mu^C$ and $\nu^C$).
    
    In particular, if $f\in E(\gr B)$, $g \in E(\gr C)$ satisfy $\mu^B(f) = \mu^C(g) = e$, and if $o(f) = v$ and $o(g) = w$, then
    \begin{align*}
        a_v &= f_{\alpha}\,\alpha_e(a_f)\,f_{\alpha}'^{-1} \\
        a'_w &= g_{\alpha}\,\alpha_e(a'_g)\,g_{\alpha}'^{-1}.
    \end{align*}

   We define a morphism $\sigma\colon \DD\to \DD'$ as follows. If $x\in V_{v,w}(\gr D)$ and $h \in E_{f,g}(\gr D)$, we let
    \begin{align*}
        \sigma(x) &= \nu_v^B(B_{v})\,\left(a_v^{-1}\,\tilde x\,a'_w\right)\,\nu_w^C(C_{w}) \\
        \sigma(h) &= \nu_f^B(B_{f})\,\left(a_f^{-1}\,\tilde h\,a'_g\right)\,\nu_g^C(C_{g}).
    \end{align*}

    Note that $\sigma(x)$ is also a $(v,w)$-vertex, and $\sigma(h)$ is also an $(f,g)$-edge. By Equation~\eqref{eq: old x tilde}, if $o(h) = x$, then
    \begin{align*}
        \tilde x &= \mu^B_v(b_h)\, f_\alpha\, \alpha_e(\tilde h)\, g_\alpha^{-1}\, \mu_w^C(c_h)\inv\textrm{, and hence} \\
        a_v\inv\,\tilde x\, a'_w &= a_v^{-1}\,\mu^B_v(b_h)\,f_\alpha\, \alpha_e(a_f)\,\alpha_e(a_f\inv\, \tilde h\, a'_g)\,\alpha_e(a_g'^{-1})\, g_\alpha^{-1}\, \mu_w^C(c_h)\inv\, a'_w \\
        &= a_v^{-1}\,\mu^B_v(b_h)\,a_v\,f'_\alpha\, \alpha_e(a_f\inv\, \tilde h\, a'_g)\,g_\alpha'^{-1}\,a_w'^{-1}\, \mu_w^C(c_h)\inv\, a'_w \\
        &= \nu_v^B(b_h)\,f'_\alpha\, \alpha_e(a_f\inv\, \tilde h\, a'_g)\,g_\alpha'^{-1}\, \nu_v^B(c_h)\inv.
    \end{align*}
    In view of this equation and using Lemma \ref{lem: uniqueness up to isomorphism}, we may assume that $\DD'$ was defined using representatives 
    \begin{align*}
        \widetilde{\sigma(x)} = a_v^{-1}\,\tilde{x}\,a'_w\enspace&\textrm{and}\enspace\widetilde{\sigma(h)} = a_f^{-1}\,\tilde{h}\,a'_g\\
        b_{\sigma(h)} = b_h\enspace&\textrm{and}\enspace c_{\sigma(h)} = c_h.
    \end{align*}
Let $x\in V_{v,w}(\gr D)$. We note that the subgroups $D_x$ and $D'_{\sigma(x)}$ of $B_v \times C_w$ are equal. Indeed, $D_x = \twpb{B_v}{\tilde x}{A_u}{C_w}$ is the pullback of $\gamma_{\tilde x}\circ \mu^B$ and $\mu^C$, while $D'_{\sigma(x)} = \twpb{B_v}{\widetilde{\sigma(x)}}{A_u}{C_w}$ is the pullback of $\gamma_{\widetilde{\sigma(x)}}\circ \nu^B$ and $\nu^C$. Thus, if $(b,c) \in D_x$, we have $\tilde x\inv\, \mu^B_v(b)\, \tilde x = \mu^C_w(c)$, and hence
\begin{align*}
    \widetilde{\sigma(x)}\inv\, \nu_v^B(b)\, \widetilde{\sigma(x)} &= a_w'^{-1}\,\tilde x\inv\, a_v\, \nu_v^B(b)\, a_v\inv\, \tilde x\, a'_w \\
    &= a_w'^{-1}\,\tilde x\inv\, \mu_v^B(b)\, \tilde x\, a'_w \\
    &= a_w'^{-1}\,\mu_w^C(c)\, a'_w = \nu_w^C(c),
\end{align*}
that is, $(b,c) \in D'_{\sigma(x)}$. We then let $\sigma_x\colon D_x \to D'_{\sigma(x)}$ be the identity. Similarly, if $h\in E_{f,g}(\gr D)$, then $D_h = D'_{\sigma(h)} \le B_f \times C_g$, and we let $\sigma_h\colon D_h \to D'_{\sigma(h)}$ be the identity. We also take all the twisting elements for $\sigma$ equal to the identity. It is immediate that $\sigma$ is an isomorphism.  

Finally, we see that, if $x\in V_{v,w}(\gr D)$ and $(b,c)\in D_x$, then $\mu^B\circ\rho^B(b,c) = \mu^B_v(b)$, while $\nu^B\circ{\rho'}^B\circ \sigma(b,c) = \nu^B\circ{\rho'}^B(b,c) = \nu_v^B(b) = a_v\inv\,\mu^B_v(b)\, a_v$. Similarly, if $h\in E_{f,g}(\gr D)$ and $(b,c)\in D_h$, then $\nu^B\circ{\rho'}^B\circ \sigma(b,c) = a_f\inv\,(\mu^B\circ\rho^B(b,c))\, a_f$. Thus $\nu^B\circ{\rho'}^B\circ \sigma \approx \mu^B\circ\rho^B$. Symmetrically, $\nu^C\circ{\rho'}^C\circ \sigma \approx \mu^C\circ\rho^C$.

In particular, if $\mu^B \sim \nu^B$ (resp., $\mu^C \sim \nu^C$), then $a_{v_0}=1$ (resp., $a'_{w_0}=1$). Thus in a similar fashion as above, $\nu^B\circ{\rho'}^B\circ \sigma \sim \mu^B\circ\rho^B$ and $\nu^C\circ{\rho'}^C\circ \sigma \sim \mu^C\circ\rho^C$, which completes the proof.
\end{proof}

We now record some important properties of the diagram formed by $\mu^B$, $\mu^C$, $\rho^B$ and $\rho^C$.
\begin{figure}[H]
    \centering
    \begin{tikzcd}
        \DD \arrow[d, "\rho^B"'] \arrow[r, "\rho^C"] & \CC \arrow[d, "\mu^C"]\\
        \BB \arrow[r, "\mu^B"'] & \AA
    \end{tikzcd}
    \caption{The $\AA$-product $\DD = \BB\wtimes_\AA\CC$.}
    \label{fig: AA-product}
\end{figure}

\begin{lem}
\label{lem: composition}
The following properties hold, with reference to Figure~\ref{fig: AA-product}.
\begin{enumerate}[(1)]
\item\label{item: path level equivalence} Let $\mu^B\colon \BB \to \AA$ and $\mu^C\colon \CC \to \AA$ be morphisms of graphs of groups, and let $\DD = \BB \wtimes_\AA \CC$. If $d$ is a $\DD$-path from $x\in V(\gr{D})$ to $y\in V(\gr{D})$, then
    \[
    \mu^B\circ\rho^B(d) \sim_{\AA} \tilde{x}\ (\mu^C\circ\rho^C(d))\ \tilde{y}^{-1}.
    \]
In particular, $\mu^B\circ\rho^B\approx \mu^C\circ\rho^C$ with parameters $(\tilde x)_{x\in V(\gr D)}$, $(\tilde{h})_{h\in E(\gr{D})}$.
\item\label{item: simple equivalence} Let $\mu^B\colon (\BB,v_0) \to (\AA,u_0)$ and $\mu^C\colon (\CC,w_0) \to (\AA,u_0)$ be morphisms of pointed graphs of groups, and let $(\DD,x_0)$ be their pointed $\AA$-product. If $d$ is a $\DD$-path from $x_0\in V(\gr{D})$ to $y\in V(\gr{D})$, then
    \[
    \mu^B\circ\rho^B(d) \sim_{\AA} (\mu^C\circ\rho^C(d))\ (\tilde{y})^{-1}.
    \]
In particular, $\mu^B\circ\rho^B\sim \mu^C\circ\rho^C$.
\end{enumerate}
\end{lem}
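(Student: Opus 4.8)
The plan is to prove the displayed $\sim_\AA$-identity in part~\eqref{item: path level equivalence} first, and to read off everything else from it. The ``in particular'' claim in~\eqref{item: path level equivalence} is exactly the characterisation of $\approx$ supplied by Proposition~\ref{prop: change of data}, applied with $\mu^1 = \mu^B\circ\rho^B$, $\mu^2 = \mu^C\circ\rho^C$ and vertex parameters $a_x = \tilde x$: rearranging $\mu^B\circ\rho^B(d)\sim_\AA \tilde x\,(\mu^C\circ\rho^C(d))\,\tilde y\inv$ into $\mu^C\circ\rho^C(d)\sim_\AA \tilde x\inv\,(\mu^B\circ\rho^B(d))\,\tilde y$ matches the form required there. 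The whole of part~\eqref{item: simple equivalence} is then the special case $x = x_0$ of part~\eqref{item: path level equivalence}, using the convention $\tilde x_0 = 1$ fixed for pointed $\AA$-products; its final ``in particular'' is again Proposition~\ref{prop: change of data}, now in its pointed form, with $a_{x_0} = \tilde x_0 = 1$ and $a_y = \tilde y$.

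To establish the identity I would reduce to $\DD$-paths of length $0$ and $1$. Every $\DD$-path is a concatenation of such elementary paths, both $\mu^B\circ\rho^B$ and $\mu^C\circ\rho^C$ respect concatenation by Proposition~\ref{prop: preservation of sim and equiv}\,\eqref{mu preserves products}, and $\sim_\AA$ is a congruence. So if the identity holds for a $\DD$-path $d_1$ from $x$ to $z$ and for $d_2$ from $z$ to $y$, concatenating the two right-hand sides produces an adjacent pair $\tilde z\inv\,\tilde z = 1$ in the middle, which collapses to yield the identity for $d_1 d_2$; an induction on length thus reduces everything to the two base cases.

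The length-$0$ case is immediate: a length-$0$ $\DD$-path at $x\in V_{v,w}(\gr D)$ is a single element $(b,c)\in D_x = \twpb{B_v}{\tilde x}{A_u}{C_w}$, and the defining condition of this twisted pullback is precisely $\mu^B_v(b) = \tilde x\,\mu^C_w(c)\,\tilde x\inv$, i.e.\ the desired identity (with $x=y$) on the nose. The length-$1$ case is the real computation. For $d = (1,h,1)$ with $h\in E_{f,g}(\gr D)$, $o(h)=x\in V_{v,w}(\gr D)$ and $t(h)=y\in V_{v',w'}(\gr D)$, Definition~\ref{rem: morphisms on A-paths} together with the twisting elements $h^B_\alpha = b_h$, $h^C_\alpha = c_h$ (and $h^B_\omega = b_{h\inv}$, $h^C_\omega = c_{h\inv}$) fixed in Definition~\ref{defn: projection morphisms} give explicit length-$1$ $\AA$-paths for $\mu^B\circ\rho^B(d)$ and $\mu^C\circ\rho^C(d)$. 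I would then invoke Lemma~\ref{lem: equivalent A-paths} with the single witness $x_1 = \tilde h\in A_e$: the condition on the initial group elements is exactly Equation~\eqref{eq: old x tilde} at $x$, namely $\tilde x = \mu^B_v(b_h)\,f_\alpha\,\alpha_e(\tilde h)\,g_\alpha\inv\,\mu^C_w(c_h)\inv$, while the condition on the terminal group elements is the same equation applied to the edge $h\inv$ at its origin $y$, which, using $\widetilde{h\inv}=\tilde h$, $(f\inv)_\alpha = f_\omega$, $(g\inv)_\alpha = g_\omega$ and $\alpha_{e\inv}=\omega_e$, reads $\tilde y = \mu^B_{v'}(b_{h\inv})\,f_\omega\,\omega_e(\tilde h)\,g_\omega\inv\,\mu^C_{w'}(c_{h\inv})\inv$.

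The main obstacle is precisely this length-$1$ verification: it requires careful bookkeeping of the two families of twisting elements, of the edge maps $\alpha_h,\omega_h$, and of the behaviour of all the data under the involution $h\mapsto h\inv$, so that the two halves of the criterion in Lemma~\ref{lem: equivalent A-paths} land on the two instances of Equation~\eqref{eq: old x tilde} at the origin and at the target. Once this is settled, the concatenation argument and the appeals to Proposition~\ref{prop: change of data} are routine.
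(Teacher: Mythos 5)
Your proposal is correct and follows essentially the same route as the paper: reduce to $\DD$-paths of length $0$ and $1$, handle the length-$0$ case directly from the definition of the twisted pullback $D_x$, handle the length-$1$ case via the two instances of Equation~\eqref{eq: old x tilde} at $o(h)$ and at $o(h^{-1})$ (your explicit appeal to Lemma~\ref{lem: equivalent A-paths} with witness $\tilde h$ is just a repackaging of the paper's direct rewriting), and deduce both ``in particular'' claims from Proposition~\ref{prop: change of data} and part~\eqref{item: simple equivalence} from $\tilde x_0 = 1$.
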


\begin{proof}
Statement~\eqref{item: path level equivalence} needs only to be established for $\DD$-paths of the form $d = (d_0)$ where $d_0$ lies in some $D_x$, $x\in V(\gr D)$, or $d = (1,h,1)$, $h\in E(\gr D)$.

Let $x$ be a vertex of $\gr D$, let $d_0 = (b_0, c_0)\in D_x$ and let $d = (d_0)$. Let $v$ and $w$ be the images of $x$ in $\BB$ and $\CC$, respectively, and let $u = [v] = [w] \in V(\gr A)$. Then $\mu_v^B(b_0)^{\tilde x} = \mu_w^C(c_0)$ and we have $\rho^B(d) = (b_0)$ and $\rho^C(d) = (c_0)$. It follows that $\mu^C(\rho^C(d)) = (\mu_w^C(c_0)) = (\mu_v^B(b_0)^{\tilde x}) = \mu^B(\rho^B(d))^{\tilde x}$, as expected.

Let now $h$ be an edge of $\gr D$ from vertex $x$ to vertex $y$, and let $d = (1, h, 1)$. Let $v, v'\in V(\gr B)$, $w, w'\in V(\gr C)$ and $u, u'\in V(\gr A)$ such that $x$ is a $(v,w)$-vertex, $y$ is a $(v',w')$-vertex, $[v] = [w] = u$ and $[v'] = [w'] = u'$. Let also $e, f, g$ be in $E(\gr A)$, $E(\gr B)$ and $E(\gr C)$, respectively, such that $h$ is an $(f,g)$-edge and $e = [f] = [g]$. Then, since $b_h$ and $c_h$ are the twisting elements for $h$ relative to $\rho^B$ and $\rho^C$ (see Definition~\ref{defn: projection morphisms}), we have
\begin{align*}
    \rho^B(d) &= \left(b_h, \ f,\ b_{h^{-1}}\inv)\right),\\
    \rho^C(d) &= \left(c_h,\ g,\ c_{h^{-1}}\inv\right),
\end{align*}
and hence,
\begin{align*}
    \mu^B(\rho^B(d)) &= \left(\mu_v^B(b_h)\,f_\alpha,\ e,\ f_\omega\inv\,\mu_{v'}^B(b_{h^{-1}})\inv \right),\\
    \mu^C(\rho^C(d)) &= \left(\mu_w^C(c_h)\,g_\alpha,\ e,\ g_\omega\inv\,\mu_{w'}^C(c_{h^{-1}})\inv \right).
\end{align*}

Recall Equation~\eqref{eq: old x tilde}, applied to vertices $x$ and $y$:
\begin{align*}
\tilde{x} &= \mu_v^B(b_h)\,f_{\alpha}\,\alpha_e(\tilde{h})\,g_{\alpha}^{-1}\,\mu_w^C(c_h)^{-1} \\
\tilde{y} &= \mu_{v'}^B(b_{h\inv})\,(f\inv)_{\alpha}\,\alpha_{e\inv}(\widetilde{h\inv})\,(g\inv)_{\alpha}^{-1}\,\mu_{w'}^C(c_{h\inv})^{-1} \\
&= \mu_{v'}^B(b_{h\inv})\,f_\omega\,\omega_e(\tilde h)\,g_\omega^{-1}\,\mu_{w'}^C(c_{h\inv})^{-1}.
\end{align*}
Then we have
$$\mu^B(\rho^B(d)) = \left(\tilde x\ \mu_w^C(c_h)\,g_\alpha\,\alpha_e(\tilde h)\inv,\ e,\ \omega_e(\tilde h)\,g_\omega\inv\,\mu_w^C(c_{h\inv})\inv\ \tilde y\inv\right) \sim_\AA \tilde x\ \mu^C(\rho^C(d))\ \tilde y\inv,$$
as expected. The fact that $\mu^B\circ\rho^B\approx \mu^C\circ\rho^C$ with the claimed parameters follows directly from Proposition~\ref{prop: change of data}.

Statement~\eqref{item: simple equivalence} now follows from Statement~\eqref{item: path level equivalence} and the fact that $\tilde{x}_0 = 1$.
\end{proof}

\begin{lem}\label{lem: rhos are folded 2}
If $\mu^C$ is an immersion, then $\rho^B$ is an immersion as well. Likewise, if $\mu^B$ is an immersion, then $\rho^C$ is an immersion as well.
\end{lem}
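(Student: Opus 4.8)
The two assertions are symmetric under interchanging $\BB$ with $\CC$ and $\mu^B$ with $\mu^C$, so the plan is to prove only the first: assuming $\mu^C$ is an immersion, show that $\rho^B\colon\DD\to\BB$ is one. By Proposition~\ref{folded} it suffices to show that $\rho^B$ sends reduced $\DD$-paths to reduced $\BB$-paths. Being reduced is a local condition, checked independently at each junction of consecutive edges, and $\rho^B$ carries the junctions of a path $d$ to those of $\rho^B(d)$; so I would reduce to a single junction. Concretely, I would take a reduced $\DD$-path with consecutive edges $h_1\colon x_0\to x_1$ and $h_2\colon x_1\to x_2$ and intermediate vertex element $d_1\in D_{x_1}$, and prove: if $\rho^B(d)$ backtracks at this junction, then so does $d$, contradicting reducedness.

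Writing $f_j=\rho^B(h_j)$, $g_j=\rho^C(h_j)$ and $d_1=(\beta,\gamma)\in D_{x_1}\le B_{v_1}\times C_{w_1}$, the description of $\rho^B$ on paths (Definition~\ref{rem: morphisms on A-paths}) together with its twisting elements $h_\alpha^B=b_h$, $h_\omega^B=b_{h^{-1}}$ (Definition~\ref{defn: projection morphisms}) shows that a backtracking of $\rho^B(d)$ at this junction means precisely that $f_2=f_1^{-1}$ and that the intermediate element $b_{h_1^{-1}}^{-1}\,\beta\,b_{h_2}$ lies in $\omega_{f_1}(B_{f_1})$. What must then be shown is that $h_2=h_1^{-1}$ and $d_1\in\omega_{h_1}(D_{h_1})$.

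For the first of these, observe that since $f_2=f_1^{-1}$ the edges $h_2$ and $h_1^{-1}$ both have origin $x_1$ and the same $\rho^B$-image $f_1^{-1}$, being an $(f_1^{-1},g_2)$- and an $(f_1^{-1},g_1^{-1})$-edge respectively. Using Equation~\eqref{eq: old x tilde} for $x_1=o(h_1^{-1})$ to rewrite $\tilde x_1$ through $\tilde h_1$, $b_{h_1^{-1}}$, $c_{h_1^{-1}}$ and the twisting elements of $\mu^B,\mu^C$, and feeding in the membership $b_{h_1^{-1}}^{-1}\beta b_{h_2}\in\omega_{f_1}(B_{f_1})$ together with $d_1\in D_{x_1}$, I would translate the configuration into a coset equality in $A_{e_1}$ (where $e_1=\mu^B(f_1)=\mu^C(g_1)$) between the cosets that $\mu^C$ attaches to $g_2$ and to $g_1^{-1}$ at the vertex $w_1$. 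Since $\mu^C$ is an immersion, the first condition of Definition~\ref{def: folded} then forces $g_2=g_1^{-1}$, and comparing the double cosets defining the two edges yields $h_2=h_1^{-1}$.

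With $h_2=h_1^{-1}$ in hand, the backtracking condition becomes $\beta\in b_{h_1^{-1}}\,\omega_{f_1}(B_{f_1})\,b_{h_1^{-1}}^{-1}$, so $b:=\omega_{f_1}^{-1}\big(b_{h_1^{-1}}^{-1}\beta\,b_{h_1^{-1}}\big)$ is a well-defined element of $B_{f_1}$. The crux is to promote $b$ to an element of the twisted pullback edge group $D_{h_1}=\twpb{B_{f_1}}{\tilde h_1}{A_{e_1}}{C_{g_1}}$ and to recover its matching $\CC$-coordinate: one needs $c\in C_{g_1}$ with $(b,c)\in D_{h_1}$ and $\gamma=c_{h_1^{-1}}\,\omega_{g_1}(c)\,c_{h_1^{-1}}^{-1}$, for then $d_1=\omega_{h_1}(b,c)\in\omega_{h_1}(D_{h_1})$, giving the desired contradiction. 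Here the immersion hypothesis on $\mu^C$ is indispensable: translating $d_1\in D_{x_1}$ through Equation~\eqref{eq: old x tilde} shows that $b$ lies in the first-coordinate projection of $D_{h_1}$, equivalently that $\tilde h_1^{-1}\mu^B_{f_1}(b)\tilde h_1\in\mu^C_{g_1}(C_{g_1})$, which is exactly what the second condition of Definition~\ref{def: folded} for the immersion $\mu^C$ (at the edge $g_1$) delivers; injectivity of $\mu^C_{g_1}$ then defines $c$, and the identity $\tilde x_1^{-1}\mu^B_{v_1}(\beta)\tilde x_1=\mu^C_{w_1}(\gamma)$ (i.e.\ $d_1\in D_{x_1}$) forces $\gamma=c_{h_1^{-1}}\omega_{g_1}(c)c_{h_1^{-1}}^{-1}$. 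I expect this lifting step — recovering the $\CC$-coordinate and certifying membership in $D_{h_1}$ — to be the main obstacle, and it is the only place where $\mu^C$ being an immersion is genuinely used; the surrounding friction is the bookkeeping required to push the representatives $\tilde x,\tilde h$ and twisting elements $b_h,c_h$ of \eqref{eq: old x tilde} through the immersion conditions of $\mu^C$. The statement for $\rho^C$ follows by the symmetric argument, with $\mu^B$ in the role of $\mu^C$.
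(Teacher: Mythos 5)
Your proposal is correct and, despite the different entry point, amounts to essentially the same proof as the paper's: the paper verifies the two conditions of Definition~\ref{def: folded} for $\rho^B$ directly, whereas you route through the reduced-path characterisation of Proposition~\ref{folded}, but a backtracking of $\rho^B(d)$ at a single junction is exactly the double-coset equality of condition~(1) for the pair $h_1^{-1},h_2$ together with the membership hypothesis of condition~(2). Consequently your two sub-claims --- $h_2=h_1^{-1}$ and $d_1\in\omega_{h_1}(D_{h_1})$ --- coincide with the two verifications the paper carries out, via the same manipulations of Equation~\eqref{eq: old x tilde} and the same two uses of the immersion hypothesis on $\mu^C$.
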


\begin{proof}
Assuming that $\mu^C$ is an immersion, we show that $\rho^B$ is an immersion, starting with the first condition in Definition~\ref{def: folded}. Let $h,h' \in E(\gr{D})$ be edges with the same $\rho^B$-image $f\in E(\gr{B})$ and the same initial vertex $x \in V(\gr{D})$. Let $g = \rho^C(h)$, $g' = \rho^C(h')$, $v = \rho^B(x)$, $w = \rho^C(x)$ and suppose that $(h')^B_\alpha = \rho^B_x(d)\,h^B_\alpha\,\alpha_f(b_1) = b \,h^B_\alpha\,\alpha_f(b_1)$ for some $d = (b, c)\in D_x$ and $b_1\in B_f$. We want to show that $h = h'$.

By definition of $\gr{D}$, $\mu^B(f) = \mu^C(g) = \mu^C(g') = e$ for some $e\in E(\gr{A})$, and by definition of $D_x$, we have $\mu_v^B(b)^{\tilde x} = \mu^C_w(c)$.

In view of Equation~\eqref{eq: old x tilde}, we have
\[
\tilde x = \mu_v^B(h^B_\alpha)\,f_{\alpha}\,\alpha_e(\tilde{h})\,g_{\alpha}^{-1}\,\mu_w^C(h^C_\alpha)^{-1} = \mu_v^B((h')^B_\alpha)\,f_{\alpha}\,\alpha_e(\widetilde{h'})\,(g')_{\alpha}^{-1}\,\mu_w^C((h')^C_\alpha)^{-1},
\]
and hence
\begin{align*}
    \mu^C_w(c) &= (\mu_v^B((h')^B_\alpha)\,f_{\alpha}\,\alpha_e(\widetilde{h'})\,(g')_{\alpha}^{-1}\,\mu_w^C((h')^C_\alpha)^{-1})^{-1} \, \mu^B_v(b) \,\mu_v^B(h^B_\alpha)\,f_{\alpha}\,\alpha_e(\tilde{h})\,g_{\alpha}^{-1}\,\mu_w^C(h^C_\alpha)^{-1}\\
                &= \mu^C_w((h')^C_{\alpha})\, g'_{\alpha}\, \alpha_e(\tilde{h}')^{-1}\, f_{\alpha}^{-1}\, \mu^B_v(\alpha_f^B(b_1))^{-1}\, f_{\alpha}\, \alpha_e(\tilde{h})\, g_{\alpha}^{-1}\, \mu^C_w(h_{\alpha}^C)^{-1}\\
                &= \mu^C_w((h')^C_{\alpha})\, g'_{\alpha}\, \alpha_e(\tilde{h}'^{-1}\, \mu^B_{f}(b_1)^{-1}\, \tilde{h})\, g_{\alpha}^{-1}\, \mu^C_w(h_{\alpha}^C)^{-1}.
\end{align*}
Rearranging, we get that
\begin{align*}
    \mu^C_w(((h')_{\alpha}^C)^{-1}\, c\, h_{\alpha}^C)\, g_{\alpha}\, \alpha_e(\tilde{h}^{-1}\, \mu^B_f(b_1)\, \tilde{h}') = g_{\alpha}'.
\end{align*}
Since $\mu^C$ is an immersion, this implies that $g = g'$.

Now
$$\alpha_e(\widetilde{h'}) = \mu^B_v\left(((h')^B_\alpha)\inv h^B_\alpha\right)^{f_\alpha}\,\alpha_e(\tilde h)\, \mu^C_w\left(((h')^C_\alpha)\inv h^C_\alpha\right)^{g_\alpha}.$$
The first factor is in $\alpha_e(\mu^B_f(B_f))$ and the last factor is in $\alpha_e(\mu^C_g(C_g))$. Using the injectivity of $\alpha_e$, it follows that $\widetilde{h'} \in \mu^B_f(B_f)\,\tilde h\,\mu^C_g(C_g) = h$, and so $h = h'$. That is, the first condition in the definition of an immersion (Definition~\ref{def: folded}) is satisfied.

We now turn to the second condition in that definition. Let us assume that $d = (b, c)\in D_x$ satisfies $\rho^B_x(d)^{h_\alpha} \in \alpha_f(B_f)$, and let us show that $d \in \alpha_h(D_h)$. By definition, we have $\mu^B_v(b)^{\tilde x}  = \mu^C_w(c)$. Since $b^{h_\alpha} \in \alpha_f(B_f)$, there exists $y\in B_f$ such that $b^{b_h} = \alpha_f(y)$. It follows that
\begin{align*}
\mu^B_v(b)^{\tilde x} &= \mu^B(b^{b_h})^{f_\alpha\alpha_e(\tilde h)g_\alpha\inv\mu^C_w(c_h)\inv} = \mu^B_v(\alpha_f(y))^{f_\alpha\alpha_e(\tilde h)g_\alpha\inv\mu^C_w(c_h)\inv} \\
&=\alpha_e(\mu^B_f(y))^{\alpha_e(\tilde h)g_\alpha\inv\mu^C_w(c_h)\inv} =\enspace \alpha_e(\mu^B_f(y)^{\tilde h})^{g_\alpha\inv\mu^C_w(c_h)\inv}.
\end{align*}
Since $\mu^B(b)^{\tilde{x}} = \mu^C(c)$, we have that $\mu^C_w(c^{c_h})^{g_{\alpha}}= \alpha_e(\mu^B_f(y)^{\tilde h}) \in \alpha_e(A_e)$. Since $\mu^C$ is an immersion, it follows that $c^{c_h} = \alpha_g(z)$ for some $z\in C_g$ and
$$\alpha_e(\mu^B_f(y)^{\tilde h}) = \mu^C_w(c^{c_h})^{g_\alpha} = \mu^C_w(\alpha_g(z))^{g_\alpha} = \alpha_e(\mu^C_g(z)).$$
Therefore, $\mu^B_f(y)^{\tilde h} = \mu^C_g(z)$ and so $(y, z)\in \twpb{B_f}{\tilde h}{A_e}{C_g} = D_h$ and %
$$\rho^B_x(\alpha_h(y, z)) = \alpha_f(\rho^B_h(y, z))^{b_h\inv} = \alpha_f(y)^{b_h\inv} = b = \rho^B_x(d).$$
It follows, finally, that $d = \alpha_h(y, z) \in \alpha_h(D_h)$, which concludes the proof that $\rho^B$ is an immersion.

The proof that $\rho^C$ is an immersion is identical, swapping the roles of $B$ and $C$.
\end{proof}

\paragraph{Notational convention}
    The full notation of morphisms of graphs of groups used so far is rather heavy, and is not easy to parse when multiple morphisms interact. For the sake of legibility, we will often skip the vertex and edge subscripts when the context makes them unambiguous.

\subsection{Lifts to the $\AA$-product}
\label{sec: lifts}

In this section, we work towards the discussion of pullbacks in the categories $\GrGp^*$ and $\GrGp$, by considering the following situation. We fix morphisms $\mu^B$, $\mu^C$,  $\sigma^B$ and $\sigma^C$ as follows, see also Figure~\ref{fig: for the lifts section}.

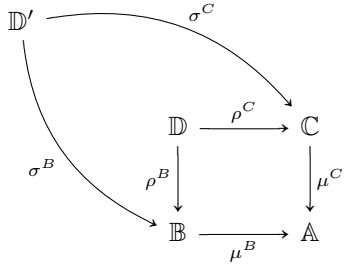
\begin{figure}[htbp]
    \centering
    \begin{tikzcd}
        \DD'  \arrow[rdd, "\sigma^B"', bend right] \arrow[rrd, "\sigma^C", bend left] \\
        & \DD \arrow[d, "\rho^B"'] \arrow[r, "\rho^C"] & \CC \arrow[d, "\mu^C"]\\
        & \BB \arrow[r, "\mu^B"'] & \AA
    \end{tikzcd}
    \caption{The morphisms $\mu^B$ and $\mu^C$, $\rho^B$ and $\rho^C$, and $\sigma^B$ and $\sigma^C$}
\label{fig: for the lifts section}
\end{figure}

\begin{itemize}
\item $\mu^B\colon \BB\to\AA$ and $\mu^C\colon\CC\to\AA$ are morphisms of graphs of groups, such that $\BB$ and $\CC$ are connected. We let $\DD = \BB\wtimes_\AA\CC$ be their $\AA$-product, with projection morphisms $\rho^B\colon \DD\to\BB$ and $\rho^C\colon \DD\to \CC$. 
\item $\sigma^B\colon \DD'\to\BB$ and $\sigma^C\colon \DD'\to \CC$ are morphisms of graphs of groups, such that $\DD'$ is connected and $\mu^B\circ\sigma^B \approx \mu^C\circ\sigma^C$. Let $(a_y)_{y\in V(\gr D')}$ and $(a_h)_{h\in E(\gr D')}$ be parameters for this equivalence.
\end{itemize}

We shall completely characterise all morphisms of graphs of groups $\sigma\colon \DD'\to \DD$ such that $\rho^B\circ\sigma \approx \sigma^B$ and $\rho^C\circ\sigma \approx \sigma^C$. We call such morphisms \emph{lifts (of $\sigma^B, \sigma^C$)}. It will turn out that all lifts are of the form described in the following proposition.

\begin{prop}
\label{prop: existence 2}
For each tuple $d = (d_y)_{y\in V(\gr D')}$ in $C(\mu^C\circ\sigma^C)$, there exists
 a lift $\sigma^{(d)}\colon \DD' \to \DD$ such that
 \begin{itemize}
 \item for each $y\in V(\gr D')$, we have
  \[
    \sigma^{(d)}(y) = \mu^B(B_{\sigma^B(y)})\ (a_y\,d_y)\ \mu^C(C_{\sigma^C(y)});
  \]
  \item if $(b_y)_y$ and $(c_y)_y$ are tuples so that
  \[
  \widetilde{\sigma^{(d)}(y)} = \mu^B(b_y)\, (a_y\, d_y)\, \mu^C(c_y)^{-1},
  \]
  then $(b_y)_y$ and $(c_y)_y$ are parameters for the equivalences $\rho^B\circ\sigma^{(d)} \approx \sigma^B$ and $\rho^C\circ\sigma^{(d)} \approx \sigma^C$, respectively.
\end{itemize}
\end{prop}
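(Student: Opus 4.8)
The plan is to build the morphism $\sigma^{(d)}$ in stages --- underlying graph map, then vertex and edge group homomorphisms, then twisting elements --- and finally to read off the two equivalences from the construction. Write $(a_h)_h$ for the edge parameters determined by the vertex parameters $(a_y)_y$ of $\mu^B\circ\sigma^B\approx\mu^C\circ\sigma^C$, and $(d_h)_h$ for the edge parameters determined by $d=(d_y)_y\in C(\mu^C\circ\sigma^C)$ (both via Definition~\ref{def: mor_sim}). For $y\in V(\gr D')$ set $v=\sigma^B(y)$, $w=\sigma^C(y)$, $u=[v]=[w]$, and for $h\in E(\gr D')$ set $f=\sigma^B(h)$, $g=\sigma^C(h)$, $e=[f]=[g]$. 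Since $a_y,d_y\in A_u$, the double coset $\mu^B_v(B_v)\,(a_y d_y)\,\mu^C_w(C_w)$ is a genuine $(v,w)$-vertex of $\gr D$ and $\mu^B_f(B_f)\,(a_h d_h)\,\mu^C_g(C_g)$ a genuine $(f,g)$-edge; these are declared to be $\sigma^{(d)}(y)$ and $\sigma^{(d)}(h)$, and the assignment respects the edge involution because $a_{h^{-1}}=a_h$ and $d_{h^{-1}}=d_h$. To check that incidence is respected, abbreviate $\beta=h_\alpha^{\sigma^B}\in B_v$, $\delta=h_\alpha^{\sigma^C}\in C_w$, and let $f_\alpha,g_\alpha$ be the twisting elements of $\mu^B,\mu^C$ at $f,g$. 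The composition rule for twisting elements gives $h_\alpha^{\mu^B\circ\sigma^B}=\mu^B_v(\beta)\,f_\alpha$ and $h_\alpha^{\mu^C\circ\sigma^C}=\mu^C_w(\delta)\,g_\alpha$, so that condition~(3) of Definition~\ref{def: mor_sim}, applied to $\mu^B\circ\sigma^B\approx\mu^C\circ\sigma^C$ and to the trivial equivalence recorded by $d$, reads
\begin{align*}
a_y &= \mu^B_v(\beta)\,f_\alpha\,\alpha_e(a_h)\,g_\alpha^{-1}\,\mu^C_w(\delta)^{-1},\\
d_y &= \mu^C_w(\delta)\,g_\alpha\,\alpha_e(d_h)\,g_\alpha^{-1}\,\mu^C_w(\delta)^{-1}.
\end{align*}
Substituting the second into the first yields the key identity
\[
f_\alpha\,\alpha_e(a_h\,d_h)\,g_\alpha^{-1} = \mu^B_v(\beta)^{-1}\,(a_y\,d_y)\,\mu^C_w(\delta),
\]
whose two sides lie in the same $(\mu^B_v(B_v),\mu^C_w(C_w))$-double coset; comparing with the incidence formula~\eqref{eq: incidence in D} gives $o(\sigma^{(d)}(h))=\sigma^{(d)}(o(h))$, and the target is symmetric.

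Next I would fix, for every $y$, elements $b_y\in B_v$, $c_y\in C_w$ with $\widetilde{\sigma^{(d)}(y)}=\mu^B(b_y)\,(a_y d_y)\,\mu^C(c_y)^{-1}$ (possible since $\widetilde{\sigma^{(d)}(y)}$ represents this double coset), and similarly $b_h,c_h$ on edges, and define
\[
\sigma^{(d)}_y\colon D'_y\to D_{\sigma^{(d)}(y)},\qquad z\mapsto\bigl(\gamma_{b_y^{-1}}(\sigma^B_y(z)),\,\gamma_{c_y^{-1}}(\sigma^C_y(z))\bigr),
\]
with the analogous formula for $\sigma^{(d)}_h$. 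That the image lands in the twisted pullback $D_{\sigma^{(d)}(y)}=\twpb{B_v}{\widetilde{\sigma^{(d)}(y)}}{A_u}{C_w}$ reduces, after cancelling $b_y$ and $c_y$, to the identity
\[
(a_y\,d_y)^{-1}\,\mu^B_v(\sigma^B_y(z))\,(a_y\,d_y)=\mu^C_w(\sigma^C_y(z)),
\]
which follows from condition~(2) of Definition~\ref{def: mor_sim} (namely $\mu^C_w\circ\sigma^C_y=\gamma_{a_y}\circ\mu^B_v\circ\sigma^B_y$) together with the fact that $d_y$ centralises $\mu^C_w(\sigma^C_y(D'_y))$ --- again condition~(2), now for the equivalence witnessing $d\in C(\mu^C\circ\sigma^C)$. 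Each $\sigma^{(d)}_y$ is injective, being a coordinatewise conjugate of the monomorphism $\sigma^B_y$; the same holds on edge groups.

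The delicate step, and the one I expect to be the main obstacle, is to supply twisting elements $h_\alpha^{\sigma^{(d)}}\in D_{\sigma^{(d)}(o(h))}$ for which $\sigma^{(d)}$ satisfies the twisted commutation condition~\eqref{eq: twisted commutation}. This is where three layers of twisting data must be reconciled: those of $\sigma^B,\sigma^C$, those internal to the $\AA$-product $\DD$ (the elements entering its edge maps $\alpha_{\sigma^{(d)}(h)}$ through~\eqref{eq: old x tilde} and Definition~\ref{defn: projection morphisms}), and those being defined for $\sigma^{(d)}$. The plan is to posit $h_\alpha^{\sigma^{(d)}}$ as the pair in $B_v\times C_w$ obtained by comparing $\alpha_{\sigma^{(d)}(h)}\circ\sigma^{(d)}_h$ with $\sigma^{(d)}_y\circ\alpha_h$ coordinate by coordinate --- an explicit expression in $b_y,c_y,b_h,c_h$, the $\sigma^B$- and $\sigma^C$-twisting elements and the internal $\AA$-product elements, in the same spirit as the twisting element computed in the proof of Lemma~\ref{lem: uniqueness up to isomorphism} --- then to verify that this pair genuinely lies in $D_{\sigma^{(d)}(o(h))}$, and to check~\eqref{eq: twisted commutation} by separate computations in $B_v$ and $C_w$ (using that $\rho^B$ and $\rho^C$ are morphisms). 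One must also confirm compatibility with the involution, i.e. $h_\omega^{\sigma^{(d)}}=(h^{-1})_\alpha^{\sigma^{(d)}}$.

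Finally the equivalences are read off. By construction $(\rho^B\circ\sigma^{(d)})_y=\rho^B_{\sigma^{(d)}(y)}\circ\sigma^{(d)}_y$ is the first-coordinate projection of $\sigma^{(d)}_y$, namely $\gamma_{b_y^{-1}}\circ\sigma^B_y$, so that $\sigma^B_y=\gamma_{b_y}\circ(\rho^B\circ\sigma^{(d)})_y$; this is precisely condition~(2) of Definition~\ref{def: mor_sim} for $\rho^B\circ\sigma^{(d)}\approx\sigma^B$ with vertex parameters $(b_y)_y$. Conditions~(1) and~(3), relating $(b_y)_y$ to the induced edge parameters, fall out of the twisting-element computation of the previous step; alternatively, one checks the path-level relation $\sigma^B(p)\sim_\BB b_{o(p)}^{-1}\,(\rho^B\circ\sigma^{(d)})(p)\,b_{t(p)}$ for all $\DD'$-paths $p$ and invokes Proposition~\ref{prop: change of data}. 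The argument for $\rho^C\circ\sigma^{(d)}\approx\sigma^C$ with parameters $(c_y)_y$ is symmetric, and together these show that $\sigma^{(d)}$ is a lift of the stated form.
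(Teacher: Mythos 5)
Your proposal follows the paper's proof essentially step for step: the same double-coset assignment $\sigma^{(d)}(y)=\mu^B(B_v)\,(a_y d_y)\,\mu^C(C_w)$ on vertices and edges, the same incidence check via the identity relating $a_y d_y$ to $a_h d_h$ through the composed twisting elements, the same conjugated-projection formulas $(\gamma_{b_y^{-1}}\circ\sigma^B,\gamma_{c_y^{-1}}\circ\sigma^C)$ for the vertex and edge group maps, and the same readoff of $(b_y)_y$ and $(c_y)_y$ as the parameters of the two equivalences. The one step you only sketch --- writing down the twisting element $h_\alpha^{\sigma^{(d)}}$ explicitly and verifying that it lies in $D_{\sigma^{(d)}(o(h))}$ --- is precisely where the paper spends its longest computation, taking $h_\alpha^{\sigma^{(d)}}=\bigl(b_y\,h_\alpha^{\sigma^B}\,\alpha_f(b_h)^{-1}\,(\sigma^{(d)}(h)_\alpha^B)^{-1},\ c_y\,h_\alpha^{\sigma^C}\,\alpha_g(c_h)^{-1}\,(\sigma^{(d)}(h)_\alpha^C)^{-1}\bigr)$, and it goes through exactly as you anticipate.
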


\begin{proof}
For each edge $h\in E(\gr D')$, let $h^{\sigma^B}_\alpha$ and $h^{\sigma^C}_\alpha$ be its $\alpha$-twisting elements relative to $\sigma^B$ and $\sigma^C$. In particular, if the images of $h$ in $\BB$ and $\CC$ are $f$ and $g$, respectively, then the $\alpha$-twisting elements of $h$ relative to $\mu^B\circ\sigma^B$ and $\mu^C\circ\sigma^C$ are $\mu^B(h^{\sigma^B}_{\alpha})\,f_{\alpha}$ and $\mu^C(h^{\sigma^C}_{\alpha})\,g_{\alpha}$, respectively.

Let $p$ be a $\DD'$-path from $y$ to $z$. In view of Definition~\ref{def: mor_sim} and Proposition~\ref{prop: change of data}, we have 
\begin{equation}\label{eq: new ay vs ah}
\left.
\begin{aligned}
\mu^C\circ\sigma^C(p) &\sim_{\AA} a_y^{-1}\ \left(\mu^B\circ\sigma^B(p)\right)\ a_z \\
a_y &= \mu^B(h^{\sigma^B}_{\alpha})\,f_{\alpha}\, \alpha_e(a_h)\, g_\alpha\inv\, \mu^C(h^{\sigma^C}_{\alpha})\inv
\end{aligned}
\right\rbrace.
\end{equation}
Let $(d_y)_y \in C(\mu^C\circ\sigma^C)$. We construct a morphism $\sigma^{(d)}\colon \DD' \to \DD$ satisfying $\rho^B \circ \sigma^{(d)} \approx \sigma^B$ and $\rho^C \circ \sigma^{(d)} \approx \sigma^C$ and such that, if $y\in V(\gr D')$, $v = \sigma^B(y)$ and $w = \sigma^C(y)$, then
\begin{equation*}
\sigma^{(d)}(y) = \mu^B(B_v)\ (a_y\,d_y)\ \mu^C(C_w).
\end{equation*}
First we verify that this map on vertices extends to a graph morphism from $\gr D'$ to $\gr D$.

Recall that $d = (d_y)_y$ is a tuple of vertex parameters for the equivalence $\mu^C\circ\sigma^C \approx \mu^C\circ\sigma^C$ and let $(d_h)_h$ be the corresponding tuple of edge parameters (as in Definition~\ref{def: mor_sim}). In particular,
\begin{equation}\label{eq: new k_h}
d_y = \mu^C(h_{\alpha}^{\sigma^C})\, g_{\alpha}\, \alpha_{e}(d_h)\,(\mu^C(h_{\alpha}^{\sigma^C})\, g_{\alpha})\inv.
\end{equation}
We now let 
\[
\sigma^{(d)}(h) = \mu^B(B_f)\ (a_h\,d_h)\ \mu^C(C_g).
\]
The origin of $\sigma^{(d)}(h)$ is then
\begin{align*}
 o(\sigma^{(d)}(h)) &= \mu^B(B_v)\, (\mu^B(h_{\alpha}^{\sigma^B})\, f_{\alpha})\, \alpha_e(a_h\,d_h)\, (\, \mu^C(h_{\alpha}^{\sigma^C})\,g_{\alpha})\inv\, \mu^C(C_w) \\
 &= \mu^B(B_v)\, a_y\, \mu^C(h_{\alpha}^{\sigma^C})\,g_{\alpha}\, \alpha_e(d_h)\, (\mu^C(h_{\alpha}^{\sigma^C})\,g_{\alpha})\inv\, \mu^C(C_w)\textrm{ by Equation~\eqref{eq: new ay vs ah}}\\
 &= \mu^B(B_v)\ (a_y\,d_y)\ \mu^C(C_w)\textrm{ by Equation~\eqref{eq: new k_h},}
 \end{align*}
that is, $o(\sigma^{(d)}(h)) = \sigma^{(d)}(y)$. The proof that $t(\sigma^{(d)}(h)) = \sigma^{(d)}(z)$ is similar. Thus $\sigma^{(d)}$ is a graph morphism.

We now turn to the vertex group and edge group components of the morphism of graphs of groups $\sigma^{(d)}$ whose existence we seek to establish.
For each vertex $y\in V(\gr{D}')$, with images $v$ and $w$ in $\BB$ and $\CC$, there exist elements $b_y\in B_v$ and $c_y\in C_w$ such that
\begin{equation}\label{eq: new ay by cy sigma tilde}
\widetilde{\sigma^{(d)}(y)} = \mu^B(b_y)\,(a_y\,d_y)\,\mu^C(c_y)^{-1}
\end{equation}
where $\widetilde{\sigma^{(d)}(y)}$ is the double coset representative associated with the vertex $\sigma^{(d)}(y)$.

We let $\sigma^{(d)}_y = (\gamma_{b_y}\inv\circ\sigma^B, \gamma_{c_y}\inv\circ\sigma^C)$. We need to verify that the range of this map is in $D_{\sigma(y)} = \twpb{B_v}{\widetilde{\sigma(y)}}{A_u}{C_w}$ (it is clearly a morphism). Indeed, if $x\in D'_y$, then
 \begin{align*}
     \widetilde{\sigma(y)}&\inv\, \mu^B\left(\sigma^B(x)^{b_y\inv}\right)\, \widetilde{\sigma(y)} =\\
     &= \mu^C(c_y)\, d_y\inv\, a_y\inv\, \mu^B(b_y)\inv\, \mu^B(\sigma^B(x))^{\mu^B(b_y\inv)} \mu^B(b_y)\, a_y\, d_y\, \mu^C(c_y)\inv \\
     &= \mu^C(c_y)\, d_y\inv\, a_y\inv\, (\mu^B\circ\sigma^B)(x)\, a_y\, d_y\, \mu^C(c_y)\inv \\
     &= \mu^C(c_y)\, d_y\inv\, (\mu^C\circ\sigma^C)(x)\, d_y\, \mu^C(c_y)\inv  \textrm{, since $\mu^B\circ\sigma^B \approx \mu^C\circ\sigma^C$ has parameters $(a_y)_y$}\\
     &= \mu^C(c_y)\, (\mu^C\circ\sigma^C)(x)\, \mu^C(c_y)\inv \textrm{, since $(d_y)_y \in C(\mu^C\circ\sigma^C)$}\\
     &= \mu^C\left(\sigma^C(y)^{c_y\inv}\right).
 \end{align*}
 Similarly, for each edge $h\in E(\gr D')$ with images $e$, $f$ and $g$ in $\AA$, $\BB$ and $\CC$, there exist $b_h \in B_f$ and $c_h\in C_g$ such that 
$$\widetilde{\sigma^{(d)}(h)} = \mu^B(b_h)\, (a_h\,d_h)\,\mu^C(c_h)\inv,$$
where $\widetilde{\sigma^{(d)}(h)}$ is the double coset representative associated with the edge $\sigma^{(d)}(h)$.
By the same reasoning as for vertices, now using Lemma~\ref{rem:edge_equivalence}, we find that the map $\sigma^{(d)}_h = (\gamma_{b_h\inv}\circ\sigma^B_h, \gamma_{c_h\inv}\circ\sigma^C_h)$ defines a morphism from $D'_h$ to $D_{\sigma^{(d)}(h)}$.

Now consider the following elements of $B_v$ and $C_w$:
$$b = b_{y}\, h_{\alpha}^{\sigma^B}\, \alpha_{f}(b_h)\inv\, (\sigma^{(d)}(h)_{\alpha}^{B})^{-1} \quad\textrm{and}\quad c = c_{y}\, h_{\alpha}^{\sigma^C}\, \alpha_{g}(c_h)\inv\, (\sigma^{(d)}(h)_{\alpha}^{C})^{-1}.$$
We let $h_\alpha = (b,c)$ be the twisting element associated to $h$ by $\sigma^{(d)}$ (and $h_\omega = (h\inv)_\alpha$). In order to justify this choice, we need to verify that $(b,c) \in D_{\sigma^{(d)}(y)}$. We first note that
Equations~\eqref{eq: new ay vs ah} and~\eqref{eq: new k_h} show that
\begin{equation}\label{eq: new ayky vs ahkh}
a_y\,d_y = \mu^B(h_\alpha^{\sigma^B})\, f_\alpha\, \alpha_e(a_h\,d_h)\, g_\alpha\inv\, \mu^C(h_\alpha^{\sigma^C})\inv.
\end{equation}
We now verify that $\mu^C(c) = \mu^B(b)^{\widetilde{\sigma^{(d)}(y)}}$:
\begin{align*}
\mu^C(c) &= \mu^C(c_{y})(a_{y}\,d_y)^{-1}\,\mu^B(h_{\alpha}^{\sigma^B})\,f_{\alpha}\,\alpha_e(a_h\,d_h)\,g_{\alpha}^{-1}\ \mu^C\left(\alpha_{g}(c_h)\inv\,(\sigma^{(d)}(h)_{\alpha}^{C})^{-1}\right)\textrm{ by Eq.~\eqref{eq: new ayky vs ahkh}}\\
&=\widetilde{\sigma^{(d)}(y)}\inv\,\mu^B(b_y)\, \mu^B(h_{\alpha}^{\sigma^B})\,f_{\alpha}\,\alpha_e(a_h\,d_h)\,g_{\alpha}^{-1}\ \mu^C\left(\alpha_{g}(c_h)\inv\,(\sigma^{(d)}(h)_{\alpha}^{C})^{-1}\right) \\
&\hskip 10cm\textrm{ by definition of $c_y$}\\
&= \widetilde{\sigma^{(d)}(y)}^{-1}\,\mu^B(b_y\, h_{\alpha}^{\sigma^B})\,f_{\alpha}\,\alpha_e\left(\mu^B(b_h)\inv\,\widetilde{\sigma^{(d)}(h)}\,\mu^C(c_h)\right)\,g_{\alpha}^{-1} \\
				&\hskip 4.5cm \mu^C\left(\alpha_{g}(c_h)\inv\,(\sigma^{(d)}(h)_{\alpha}^{C})^{-1}\right)\quad \textrm{by definition of $b_h$ and $c_h$}\\
				&= \widetilde{\sigma^{(d)}(y)}^{-1}\,\mu^B(b_y\inv\, h_{\alpha}^{\sigma^B})\,\mu^B(\alpha_f(b_h)\inv)\,f_{\alpha}\,\alpha_e(\widetilde{\sigma^{(d)}(h)})\,g_{\alpha}^{-1}\, \mu^C(\sigma^{(d)}(h)_{\alpha}^{C})^{-1} \\
				& \hskip 8cm \textrm{ since $\mu^C$ and $\mu^B$ are morphisms}\\
                &= \widetilde{\sigma^{(d)}(y)}\inv\, \mu^B(b_y\inv\, h_{\alpha}^{\sigma^B}\,\alpha_f(b_h)\inv\, (\sigma^{(d)}(h)^B_\alpha)\inv)  \\
                &\hskip 5cm\mu^B(\sigma^{(d)}(h)^B_\alpha)\,f_{\alpha}\,\alpha_e(\widetilde{\sigma^{(d)}(h)})\,g_{\alpha}^{-1}\, \mu^C(\sigma^{(d)}(h)_{\alpha}^{C})^{-1}\\
                &= \widetilde{\sigma^{(d)}(y)}\inv\, \mu^B(b) \,\left(\mu^B(\sigma^{(d)}(h)^B_\alpha)\,f_{\alpha}\,\alpha_e(\widetilde{\sigma^{(d)}(h)})\,g_{\alpha}^{-1}\, \mu^C(\sigma^{(d)}(h)_{\alpha}^{C})^{-1}\right).
\end{align*}
Now Equation~\eqref{eq: old x tilde}, together with the choice of twisting elements in Definition~\ref{defn: projection morphisms} shows that
$$\widetilde{\sigma^{(d)}(y)} = \mu^B(\sigma^{(d)}(h)^B_\alpha)\,f_{\alpha}\,\alpha_e(\widetilde{\sigma^{(d)}(h)})\,g_{\alpha}^{-1}\, \mu^C(\sigma^{(d)}(h)_{\alpha}^{C})^{-1},$$
and hence $\mu^C(c) = \mu^B(b)^{\widetilde{\sigma^{(d)}(y)}}$, that is, $(b,c) \in D_{\sigma^{(d)}(y)}$. This completes the definition of $\sigma^{(d)}$.

There remains to verify that $\sigma^B \approx \rho^B\circ\sigma^{(d)}$ and $\sigma^C \approx \rho^C\circ\sigma^{(d)}$.
In view of Definition~\ref{defn: projection morphisms}, for each $x \in D'_y$ and $z\in D'_h$, we have
\begin{align*}
    \rho^B(\sigma^{(d)}(x)) &= \rho^B(\gamma_{b_y}\inv\circ\sigma^B(x),\gamma_{c_y}\inv\circ\sigma^C(x)) = \gamma_{b_y}\inv\circ\sigma^B(x) \\
    \rho^B(\sigma^{(d)}(z)) &= \rho^B(\gamma_{b_h}\inv\circ\sigma^B(z),\gamma_{c_h}\inv\circ\sigma^C(z)) = \gamma_{b_h}\inv\circ\sigma^B(z). 
\end{align*}
so $\sigma^B = \gamma_{b_y} \circ \rho^B\circ \sigma^{(d)}$ on $D'_y$, and $\sigma^B = \gamma_{b_h} \circ \rho^B\circ \sigma^{(d)}$ on $D'_h$.

We now only need to verify Condition~(3) in Definition~\ref{def: mor_sim}, namely that $b_y$ is equal to $h_{\alpha}^{\rho^B\circ\sigma^{(d)}} \,\alpha_f(b_h)\ (h_\alpha^{\sigma^B})\inv$, where $h_{\alpha}^{\rho^B\circ\sigma^{(d)}}$ is the $\alpha$-twisting element for $h$ relative to $\rho^B\circ\sigma^{(d)}$. Since $(b,c)$ is the twisting element for $h$ relative to $\sigma^{(d)}$, we have $h_{\alpha}^{\rho^B\circ\sigma^{(d)}} = \rho^B(b,c)\,(\sigma^{(d)}(h))^B_\alpha = b\, (\sigma^{(d)}(h))^B_\alpha$. The expected equality then holds by definition of $b$.

Thus $(b_y)_y$ and $(b_h)_h$ (resp. $(c_y)_y$ and $(c_h)_h$) are parameters for the equivalence $\rho^B \circ \sigma^{(d)} \approx \sigma^B$ (resp. $\rho^C \circ \sigma^{(d)} \approx \sigma^C$), which concludes the proof.
\end{proof}

\begin{prop}\label{prop: agree on graph}
Let $\sigma, \sigma'\colon \DD'\to \DD$ be two lifts of $(\sigma^B, \sigma^C)$, and suppose there exists a vertex $y_0\in V(\gr D')$ such that $\sigma(y_0) = \sigma'(y_0)$. Then $\sigma$ and $\sigma'$ agree on the underlying graph $\gr D'$.
\end{prop}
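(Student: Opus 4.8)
The plan is to argue by propagation along the connected graph $\gr D'$, reducing the whole statement to a single local claim about edges. First I would extract what the lift conditions already give at the level of underlying graphs: since $\rho^B\circ\sigma\approx\sigma^B$ and $\rho^B\circ\sigma'\approx\sigma^B$, Definition~\ref{def: mor_sim} forces $\rho^B\circ\sigma$, $\rho^B\circ\sigma'$ and $\sigma^B$ to induce the same map on $\gr D'$, and symmetrically for the $C$-side. Hence for every vertex $y$ the images $\sigma(y),\sigma'(y)$ both lie in $V_{v,w}(\gr D)$ with $v=\sigma^B(y)$, $w=\sigma^C(y)$, and for every edge $h$ the images $\sigma(h),\sigma'(h)$ both lie in the same $E_{f,g}(\gr D)$. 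Set $S=\{y\in V(\gr D')\colon \sigma(y)=\sigma'(y)\}$, so that $y_0\in S$ by hypothesis. It then suffices to prove that whenever $y\in S$ and $h$ is an edge with $o(h)=y$, one has $\sigma(h)=\sigma'(h)$: this gives at once $\sigma(t(h))=t(\sigma(h))=t(\sigma'(h))=\sigma'(t(h))$, so $S$ is closed under adjacency, and the connectedness of $\gr D'$ yields $S=V(\gr D')$ and, a fortiori, agreement on every edge.

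For the local claim, fix $y\in S$ and an edge $h\colon y\to z$; write $f=\sigma^B(h)$, $g=\sigma^C(h)$, $e=[f]=[g]$, $v=\sigma^B(y)$, $w=\sigma^C(y)$, $u=[v]=[w]$, and let $t_0=\widetilde{\sigma(y)}=\widetilde{\sigma'(y)}$ be the common representative. Both $\sigma(h)$ and $\sigma'(h)$ are $(f,g)$-edges with this common origin, so applying \eqref{eq: old x tilde} to each of them writes $t_0$ in terms of the edge representatives $\widetilde{\sigma(h)},\widetilde{\sigma'(h)}\in A_e$ and their associated twisting elements $b_{\sigma(h)},c_{\sigma(h)}$ and $b_{\sigma'(h)},c_{\sigma'(h)}$. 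Eliminating $t_0$ between the two instances reduces the goal to showing that $\widetilde{\sigma(h)}$ and $\widetilde{\sigma'(h)}$ represent the same $(\mu_f^B(B_f),\mu_g^C(C_g))$-double coset in $A_e$, that is, $\sigma(h)=\sigma'(h)$ as edges of $\gr D$.

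The crux is to produce the correcting elements of $B_f$ and $C_g$ witnessing this double-coset equality, and here I would compare the two lifts through the projections. Transitivity of $\approx$ (Lemma~\ref{lem: double equivalence elements}) gives $\rho^B\circ\sigma\approx\rho^B\circ\sigma'$ and $\rho^C\circ\sigma\approx\rho^C\circ\sigma'$, with vertex parameters $(m_y),(n_y)$ and edge parameters $(m_h),(n_h)$ obtained by composing the parameters of the two lift-equivalences. The decisive structural input is the defining relation of the vertex group $D_{\sigma(y)}=\twpb{B_v}{t_0}{A_u}{C_w}$: the twisting elements of $\sigma$ and $\sigma'$ at $h$ lie in this twisted pullback, and feeding the relation $\mu_v^B(\cdot)\,t_0=t_0\,\mu_w^C(\cdot)$ into the computation makes the genuinely $\sigma$-dependent contribution cancel, leaving an expression built only from $t_0$, the fixed twisting data of $\sigma^B$ and $\sigma^C$, and the parameters $(m_y),(m_h),(n_y),(n_h)$. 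Combining the vertex relation (from $m_y,n_y$) with condition~\eqref{item: a v versus a f} of Definition~\ref{def: mor_sim} (which ties $m_y,n_y$ to $m_h\in B_f$ and $n_h\in C_g$ through the edge twisting elements) should exhibit exactly the required elements of $\mu_f^B(B_f)$ and $\mu_g^C(C_g)$; an appeal to the characterisation in Proposition~\ref{prop: change of data} together with the injectivity of $\alpha_e$ then concludes $\sigma(h)=\sigma'(h)$.

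I expect this last step to be the main obstacle. The point is that $\sigma(h)$ is genuinely \emph{not} determined by its origin $\sigma(y)$ alone, since the incidence map \eqref{eq: incidence in D} need not be injective on $(f,g)$-edges; the argument must therefore use the full morphism structure — in particular the twisted-pullback relation defining $D_{\sigma(y)}$ and the twisted-commutation identities \eqref{eq: twisted commutation} — to upgrade the vertex-level coincidence $\sigma(y)=\sigma'(y)$ to the finer edge-level coincidence. The delicate bookkeeping is precisely to make the vertex and edge parameters line up so that the leftover factors land in $\mu_f^B(B_f)$ and $\mu_g^C(C_g)$, rather than merely in the coarser $\mu_v^B(B_v)$ and $\mu_w^C(C_w)$.
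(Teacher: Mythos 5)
Your proposal is correct and follows essentially the same route as the paper's proof: propagate agreement from $y_0$ along the connected graph via the local claim that agreement at $o(h)$ forces agreement at $h$, and establish that claim by comparing the two instances of Equation~\eqref{eq: old x tilde} at the common origin and using the composed equivalences $\rho^B\circ\sigma\approx\rho^B\circ\sigma'$ and $\rho^C\circ\sigma\approx\rho^C\circ\sigma'$ (applied to the path $(1,h,1)$) to place $\widetilde{\sigma'(h)}$ in the double coset $\mu^B_f(B_f)\,\widetilde{\sigma(h)}\,\mu^C_g(C_g)$. The final bookkeeping you flag as delicate is exactly the computation the paper carries out, with the correcting factors landing in $\alpha_f(B_f)$ and $\alpha_g(C_g)$ as you anticipate.
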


\begin{proof}
Let $(b_y)_{y\in V(\gr D')}$ and $(b_h)_{h\in E(\gr D')}$ (resp. $(b'_y)_y$ and $(b'_h)_h$, $(c_y)_y$ and $(c_h)_h$ and $(c'_y)_y$ and $(c'_h)_h$) be the parameters associated with the equivalence $\rho^B\circ \sigma\approx \sigma^B$ (resp. $\rho^B\circ \sigma' \approx \sigma^B$, $\rho^C\circ \sigma\approx \sigma^C$ and $\rho^C\circ \sigma' \approx \sigma^C$).

We show that, if $h\in E(\gr{D}')$ is an edge with origin $y$ and if $\sigma(y) = \sigma'(y)$, then $\sigma(h) = \sigma'(h)$. This will imply that $\sigma$ and $\sigma'$ also agree on the vertex $t(h)$. Since $\sigma(y_0) = \sigma'(y_0)$ and $\gr D'$ is connected, it will follow that $\sigma$ and $\sigma'$ agree on $\gr D'$.
 
Let $e$, $f$ and $g$ (resp. $u$, $v$ and $w$) be the images of $h$ (resp. $y$) in $\AA$, $\BB$ and $\CC$ under $\mu^C\circ\sigma^C$, $\sigma^B$ and $\sigma^C$. We have
\begin{align*}
    \rho^C\circ\sigma(1, h, 1) &= \rho^C(h_{\alpha}^{\sigma},\ \sigma(h),\  (h_{\omega}^{\sigma})^{-1})\\
                                &= (\rho^C(h_{\alpha}^{\sigma})\,\sigma(h)_{\alpha}^C,\ g,\ (\sigma(h)_{\omega}^C)^{-1}\, \rho^C(h_{\omega}^{\sigma})^{-1})\\
    \rho^C\circ\sigma'(1, h, 1) &= \rho^C(h_{\alpha}^{\sigma'},\  \sigma'(h),\  (h_{\omega}^{\sigma'})^{-1})\\
                                &= (\rho^C(h_{\alpha}^{\sigma'})\,\sigma'(h)_{\alpha}^C,\ g,\ (\sigma'(h)_{\omega}^C)^{-1}\, \rho^C(h_{\omega}^{\sigma'})^{-1})
\end{align*}
Since the equivalence $\rho^C\circ\sigma\approx \rho^C\circ\sigma'$ has parameters $(c_y(c_y')^{-1})_y$ and $(c_h(c_h')^{-1})_h$ (Lemma~\ref{lem: double equivalence elements}), Proposition \ref{prop: sim vs equiv} shows that there exists $x^C_h \in C_g$ such that
\[
    \rho^C(h_{\alpha}^{\sigma})\, \sigma(h)_{\alpha}^C\, \alpha_g(x^C_h) = c_y\, (c_y')^{-1}\, \rho^C(h_{\alpha}^{\sigma'})\, \sigma'(h)_{\alpha}^C.
\]
Rearranging, we get
\begin{equation}\label{eq: new sigma(h)}
    \sigma(h)_{\alpha}^C = \rho^C(h_{\alpha}^{\sigma})^{-1}\, c_y\, (c_y')^{-1}\, \rho^C(h_{\alpha}^{\sigma'})\, \sigma'(h)_{\alpha}^C\, \alpha_g(x^C_h)\inv.
\end{equation}
Using the definition of the parameters associated with an $\approx$-equivalence, we have
\[
\alpha_g(c_h(c_h')^{-1}) = (\rho^C(h_{\alpha}^{\sigma})\, \sigma'(h)_{\alpha}^C)^{-1}\, c_y\, (c_y')^{-1}\, \rho^C(h_{\alpha}^{\sigma'})\, \sigma'(h)_{\alpha}^C.
\]
Substituting this into \eqref{eq: new sigma(h)}, and carrying out all the above for $B$ as well, we see that
\begin{align}
\label{eq: new sigma(h) 2}    \sigma(h)_{\alpha}^C &= \sigma'(h)_{\alpha}^C\, \alpha_g(c_h\, (c_h')^{-1}\, x^C_h)\\
\label{eq: new sigma(h) 3}    \sigma(h)_{\alpha}^B &= \sigma'(h)_{\alpha}^B\, \alpha_f(b_h\,(b_h')^{-1}\, x^B_h)
\end{align}
for some $x^B_h \in B_f$.
Now we have (by definition of the incidence relation in $\gr D$ and in view of Equation~\eqref{eq: old x tilde})
\begin{align*}
\mu^B(\sigma(h)_{\alpha}^B)\, f_{\alpha}\, \alpha_e(\widetilde{\sigma(h)})\, g_{\alpha}^{-1}\, \mu^C(\sigma(h)_{\alpha}^C)^{-1} &= \widetilde{\sigma(y)}\\
\mu^B(\sigma'(h)_{\alpha}^B)\, f_{\alpha}\, \alpha_e(\widetilde{\sigma'(h)})\, g_{\alpha}^{-1}\, \mu^C(\sigma'(h)_{\alpha}^C)^{-1} &= \widetilde{\sigma'(y)}.
\end{align*}
Using \eqref{eq: new sigma(h) 2} and \eqref{eq: new sigma(h) 3}, and our assumption that $\sigma(y) = \sigma'(y)$, we have
\begin{align*}
f_{\alpha}\, \alpha_e(\widetilde{\sigma'(h)})\, g_{\alpha}^{-1} &= \mu^B(\alpha_f(b_h\, (b_h')^{-1}\, x^B_h))\, f_{\alpha}\, \alpha_e(\widetilde{\sigma(h)})\, g_{\alpha}\inv\, \mu^C(\alpha_g(c_h\, (c_h')^{-1}\,x^C_h))^{-1}\\
        &= f_{\alpha}\,\alpha_e(\mu^B(b_h\,(b_h')^{-1}\,x^B_h)\, \widetilde{\sigma(h)}\, \mu^C(c_h\, (c_h')^{-1}\, x^C_h)^{-1})\, g_{\alpha}^{-1}
\end{align*}
and so
\[
\widetilde{\sigma'(h)} = \mu^B(b_h\, (b_h')^{-1}\, x^B_h)\ \widetilde{\sigma(h)}\ \mu^C(c_h\, (c_h')^{-1}\, x^C_h)^{-1} \in \mu^B(B_f)\, \widetilde{\sigma(h)}\, \mu^C(C_g).
\]
Thus, $\sigma$ and $\sigma'$ agree on $h$, and hence on $\gr D'$.
\end{proof}

\begin{prop}
\label{prop: equivalence characterisation}
Let $\sigma, \sigma'\colon \DD'\to \DD$ be two lifts of $(\sigma^B, \sigma^C)$, let $(c_y)_y$ be parameters for the equivalence $\rho^C\circ\sigma\approx\sigma^C$ and let $(c_y')_y$ be parameters for the equivalence $\rho^C\circ\sigma'\approx\sigma^C$. Then $\sigma\approx\sigma'$ if and only if there exist $(t_y)_y \in C(\mu^C\circ\sigma^C)$ and, for each vertex $y\in V(\gr D')$, $x_y = (\beta_y, \gamma_y) \in D_{\sigma(y)}$,  (so that $\beta_y \in B_{\sigma^B(y)}$ and $\gamma_y \in C_{\sigma^C(y)}$) such that
\[
t_y = \mu^C(c_y\inv\,\gamma_y\,c'_y) = \left(\widetilde{\sigma(y)}\,\mu^C(c_y)\right)^{-1}\, \mu^B(\beta_y)\, \left(\widetilde{\sigma'(y)}\,\mu^C(c_y')\right)
\]
In that case, $(x_y)_y$ is a tuple of parameters for the equivalence $\sigma \approx \sigma'$.
\end{prop}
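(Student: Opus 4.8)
The plan is to transport the equivalence up to $\AA$ along $\mu^C$ and then back down, first to $\CC$ and then to $\DD$, using Lemma~\ref{lem: pulling back equivalence} as the descent tool and Lemma~\ref{lem: double equivalence elements} for the bookkeeping. The backbone is the chain of equivalences
\[
\mu^C\circ\sigma^C \;\approx\; \mu^C\circ\rho^C\circ\sigma \;\approx\; \mu^C\circ\rho^C\circ\sigma' \;\approx\; \mu^C\circ\sigma^C,
\]
whose outer two links come, by post-composition with $\mu^C$ (Lemma~\ref{cor: composition associated elements 2}), from the defining equivalences $\rho^C\circ\sigma\approx\sigma^C$ and $\rho^C\circ\sigma'\approx\sigma^C$ of the two lifts, with respective parameters $(\mu^C(c_y)\inv)_y$ and $(\mu^C(c'_y))_y$, while the middle link has parameters $(\mu^C(\gamma_y))_y$, where $\gamma_y = \rho^C(x_y)$. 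By Lemma~\ref{lem: double equivalence elements} the composite is witnessed by the product tuple $(\mu^C(c_y)\inv\,\mu^C(\gamma_y)\,\mu^C(c'_y))_y = (\mu^C(c_y\inv\gamma_y c'_y))_y$; this is exactly $(t_y)_y$, and being a tuple of parameters for $\mu^C\circ\sigma^C\approx\mu^C\circ\sigma^C$ it is precisely an element of $C(\mu^C\circ\sigma^C)$.

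For the direct implication, I assume $\sigma\approx\sigma'$ and pick parameters $(x_y)_y$ with $x_y=(\beta_y,\gamma_y)\in D_{\sigma(y)}$. Post-composition with $\rho^C$ (Lemma~\ref{cor: composition associated elements 2}) gives $\rho^C\circ\sigma\approx\rho^C\circ\sigma'$ with parameters $(\gamma_y)_y$, and a further post-composition with $\mu^C$ supplies the middle link of the chain above, so that $t_y\in C(\mu^C\circ\sigma^C)$. It then remains to check that the two displayed forms of $t_y$ agree: since $\approx$-equivalent morphisms agree on the underlying graph we have $\widetilde{\sigma(y)}=\widetilde{\sigma'(y)}$, and the membership $x_y\in D_{\sigma(y)}$ reads $\mu^C(\gamma_y)=\widetilde{\sigma(y)}\inv\,\mu^B(\beta_y)\,\widetilde{\sigma(y)}$; substituting this into the right-hand expression and cancelling yields $\mu^C(c_y\inv\gamma_y c'_y)$.

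For the converse, I assume the data $(t_y)_y$ and $x_y=(\beta_y,\gamma_y)$ exist. Comparing the middle and right-hand forms of $t_y$ and using $x_y\in D_{\sigma(y)}$ first forces $\widetilde{\sigma(y)}=\widetilde{\sigma'(y)}$; since $\sigma(y)$ and $\sigma'(y)$ are both $(\sigma^B(y),\sigma^C(y))$-vertices they must then coincide, so $\sigma$ and $\sigma'$ agree on a vertex and hence, by Proposition~\ref{prop: agree on graph} together with connectedness of $\gr D'$, on all of $\gr D'$. Reading the chain backwards, the hypothesis $t_y\in C(\mu^C\circ\sigma^C)$ yields $\mu^C\circ\rho^C\circ\sigma\approx\mu^C\circ\rho^C\circ\sigma'$ with parameters $(\mu^C(\gamma_y))_y$. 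I then apply the ``if'' direction of Lemma~\ref{lem: pulling back equivalence} twice: first with $\tau=\mu^C$ (trivial centraliser tuple and pulled-back parameters $\gamma_y\in C_{\sigma^C(y)}$) to descend to $\rho^C\circ\sigma\approx\rho^C\circ\sigma'$ with parameters $(\gamma_y)_y$; then with $\tau=\rho^C$ (trivial centraliser tuple and pulled-back parameters $x_y\in D_{\sigma(y)}$) to obtain $\sigma\approx\sigma'$ with parameters $(x_y)_y$.

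The two points needing care are the following. First, the second descent requires $\rho^C$ to be a morphism whose vertex and edge components are injective, as demanded by Lemma~\ref{lem: pulling back equivalence}; this holds because $(\beta,1)\in D_x$ forces $\mu^B(\beta)=1$ and hence $\beta=1$. Second, the whole argument hinges on the parameter tuples in the chain multiplying out to exactly $\mu^C(c_y\inv\gamma_y c'_y)$, in the correct order; this fixes the precise form of $t_y$ in the statement and is where the orientations of the equivalences $\rho^C\circ\sigma\approx\sigma^C$ and $\rho^C\circ\sigma'\approx\sigma^C$ must be tracked with care.
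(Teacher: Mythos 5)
Your proof is correct and follows essentially the same route as the paper: both directions reduce to Lemma~\ref{lem: pulling back equivalence} together with the parameter bookkeeping of Lemmas~\ref{lem: double equivalence elements} and~\ref{cor: composition associated elements 2}, and the derivation of $\sigma(y)=\sigma'(y)$ followed by Proposition~\ref{prop: agree on graph} is exactly the paper's argument. The only difference is organizational --- the paper applies the descent lemma once with $\tau=\mu^C\circ\rho^C$ and then conjugates the resulting tuple from $C(\mu^C\circ\rho^C\circ\sigma)$ into $C(\mu^C\circ\sigma^C)$, whereas you stage the descent as two applications (along $\mu^C$, then along $\rho^C$) and assemble $(t_y)_y$ directly from the four-link chain; your remark that injectivity of the coordinate projections of $\rho^C$ is what legitimizes the second descent is accurate and is automatic from $\mu^B$ being injective on vertex groups.
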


\begin{proof}
By Lemmas~\ref{lem: double equivalence elements} and~\ref{cor: composition associated elements 2}, the equivalence $\rho^C\circ\sigma\approx\rho^C\circ\sigma'$ has parameters $(c_yc_y'^{-1})_y$, and the equivalence $\mu^C\circ\rho^C\circ\sigma\approx\mu^C\circ\rho^C\circ\sigma'$ has parameters $(\mu^C(c_yc_y'^{-1}))_y$.

By Lemma \ref{lem: pulling back equivalence}, $\sigma\approx\sigma'$ if and only if $\sigma$ and $\sigma'$ agree on the underlying graph $\gr D'$ and there exist $(t'_y)_y\in C(\mu^C\circ\rho^C\circ\sigma)$ and $(x_y)_y = ((\beta_y,\gamma_y))_y$ such that, for each vertex $y$,
\begin{equation}\label{eq: equivalent lifts}
t'_y\ \mu^C(c_y\,c_y'^{-1}) = \mu^C\circ\rho^C(x_y) = \mu^C(\gamma_y) = \mu^B(\beta_y)^{\widetilde{\sigma(y)}},
\end{equation}
in which case $(x_y)_y$ is a tuple of parameters for the equivalence $\sigma \approx \sigma'$.

Let us first suppose that $\sigma \approx \sigma'$. Then
$\sigma$ and $\sigma'$ agree on $\gr D'$ and
there exist $(t'_y)_y \in C(\mu^C\circ\rho^C\circ\sigma)$ and $(x_y)_y \in \prod_yD_{\sigma(y)}$ (with each $x_y = (\beta_y,\gamma_y)$) such that Equation~\eqref{eq: equivalent lifts} holds.

Let $t_y = (t'_y)^{\mu^C(c_y)}$. Then $(t_y)_y \in C(\mu^C\circ\sigma^C)$
by Lemma~\ref{lem: almost uniqueness of associated elements}. Moreover, we have
\begin{align*}
    t_y &= \mu^C(c_y\inv\,\gamma_y\,c'_y)\textrm{ and} \\
    t_y &= \mu^C(c_y)\inv\, \widetilde{\sigma(y)}\inv\, \mu^B(\beta_y)\, \widetilde{\sigma(y)}\, \mu^C(c'_y) \\
    &= \mu^C(c_y)\inv\, \widetilde{\sigma(y)}\inv\, \mu^B(\beta_y)\, \widetilde{\sigma'(y)}\, \mu^C(c'_y) \enspace\textrm{(since $\sigma(y) = \sigma'(y)$),}
\end{align*}
as announced.

Conversely, suppose that
there exist $(t_y)_y\in C(\mu^C\circ\sigma^C)$ and, for each vertex $y$ of $\gr D'$, $x_y = (\beta_y, \gamma_y) \in D_{\sigma(y)}$ such that
\begin{equation}\label{eq: equivalent lifts with prime}
t_y  = \mu^C(c_y\inv\,\gamma_y\,c'_y) = \mu^C(c_y)\inv\, \widetilde{\sigma(y)}\inv\, \mu^B(\beta_y)\, \widetilde{\sigma'(y)}\, \mu^C(c_y').
\end{equation}
It follows that
\[
\widetilde{\sigma(y)} = \mu^B(\beta_y)\, \widetilde{\sigma'(y)}\, \mu^C(\gamma_y)\inv \in \mu^B(B_{\sigma^B(y)})\, \widetilde{\sigma'(y)}\, \mu^C(C_{\sigma^C(y)}),
\]
that is, $\sigma(y) = \sigma'(y)$. Thus $\sigma$ and $\sigma'$ agree on vertices. By Proposition \ref{prop: agree on graph} they agree on the whole graph $\gr D'$.

Now, let $t'_y = (t_y)^{\mu^C(c_y)\inv}$ for each $y \in V(\gr D')$. Then $(t'_y)_y \in C(\mu^C\circ\rho^C\circ\sigma)$ (Lemma~\ref{lem: almost uniqueness of associated elements}) and Equation~\eqref{eq: equivalent lifts with prime} yields
\begin{align*}
    &t'_y = \mu^C(c_y)\,t_y\, \mu^C(c_y)\inv = \widetilde{\sigma(y)}^{-1}\, \mu^B(\beta_y)\, \widetilde{\sigma(y)}\, \mu^C(c_y'\,c_y\inv)) \textrm{ and hence}\\
    &t'_y\, \mu^C(c_y\,c_y'^{-1}) = \mu^C(\gamma_y) =  \mu^B(\beta_y)^{\widetilde{\sigma(y)}}.
\end{align*}
Thus Equation~\eqref{eq: equivalent lifts} holds, and we have $\sigma \approx \sigma'$, with parameters $(x_y)_y$.
\end{proof}

\begin{rem}
    Note that the existence condition in Proposition \ref{prop: equivalence characterisation} does not depend on the particular choice of parameters $(c_y)_y$ and $(c_y')_y$ since all other parameters lie in $C(\sigma^C)\, (c_y)_y$ and $C(\sigma^C)\, (c_y')_y$, and $\mu^C(C(\sigma^C))\leqslant C(\mu^C\circ\sigma^C)$.
\end{rem}

\begin{thm}
\label{thm: lift characterisation}
    Let $\sigma\colon \DD'\to \DD$ be a lift and let $(b_y)_y$ and $(c_y)_y$ be the parameters for the equivalences $\rho^B \circ \sigma \approx \sigma^B$ and $\rho^C\circ\sigma \approx\sigma^C$. For each $y\in V(\gr D')$, let $d_y = a_y^{-1}\,\mu^B(b_y)\inv\,\widetilde{\sigma(y)}\,\mu^C(c_y)$. Then $d = (d_y)_y \in C(\mu^C\circ\sigma^C)$ and $\sigma\approx\sigma^{(d)}$ with parameters $(1)_y$, where $\sigma^{(d)}$ denotes the lift from Proposition~\ref{prop: existence 2}.
    
 Let $y_0$ be a vertex of $\gr D'$, and let $d^1 = (d_y^1)_y, d^2 = (d_y^2)_y\in C(\mu^C\circ\sigma^C)$. Then $\sigma^{(d^1)}$ and $\sigma^{(d^2)}$ agree on $\gr D'$ if and only if they agree on $y_0$, if and only if
 there exist $\beta \in B_{\sigma^B(y_0)}$ and $\gamma \in C_{\sigma^C(y_0)}$ such that
 $$\mu^C(\gamma) = \left(a_{y_0}d^1_{y_0}\right)^{-1}\mu^B(\beta)\left(a_{y_0}d_{y_0}^2\right).$$
    Finally, $\sigma^{(d^1)}\approx\sigma^{(d^2)}$ if and only if
    there exist $(t_y)_y \in C(\mu^C\circ\sigma^C)$ and tuples $(\beta_y)_y$ and $(\gamma_y)_y$ such that $\beta_y\in B_{\sigma^B(y)}$, $\gamma_y \in C_{\sigma^C(y)}$ and, for each vertex $y \in V(\gr D')$,
    \begin{equation}\label{eq: eq d1 d2}
    t_y = \mu^C(\gamma_y) = \left(a_y\, d^1_y\right)^{-1}\, \mu^B(\beta_y)\, \left(a_y\, d_y^2\right),
    \end{equation}
    if only if Equation~\eqref{eq: eq d1 d2} holds for at least one vertex $y$ of $V(\gr D')$. In that case, each $x_y = (\beta_y,\gamma_y)$ is in $D_{\sigma^{(d^1)}(y)}$ and $(x_y)_y$ is a tuple of parameters for the equivalence $\sigma^{(d^1)} \approx \sigma^{(d^2)}$. 
\end{thm}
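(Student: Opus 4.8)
The plan is to establish the four assertions in sequence, using throughout the translation between $\approx$-equivalences and conjugation of path-images furnished by Proposition~\ref{prop: change of data}. For the first assertion, I would begin by checking $d\in C(\mu^C\circ\sigma^C)$. Given a $\DD'$-path $p$ from $y$ to $z$, Lemma~\ref{lem: composition}\,\eqref{item: path level equivalence} applied to $\sigma(p)$ yields $\mu^B\circ\rho^B(\sigma(p))\sim_\AA\widetilde{\sigma(y)}\,\mu^C\circ\rho^C(\sigma(p))\,\widetilde{\sigma(z)}^{-1}$. I would then rewrite $\mu^B\circ\rho^B\circ\sigma$ and $\mu^C\circ\rho^C\circ\sigma$ through the lift equivalences, pushed forward by $\mu^B$ and $\mu^C$ via Lemma~\ref{cor: composition associated elements 2} (parameters $(\mu^B(b_y))_y$ and $(\mu^C(c_y))_y$), and finally replace $\mu^B\circ\sigma^B$ using the hypothesis $\mu^B\circ\sigma^B\approx\mu^C\circ\sigma^C$ with parameters $(a_y)_y$. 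After cancelling the common factor $\mu^C\circ\sigma^C(p)$, the chain of $\sim_\AA$-relations collapses to $d_y^{-1}\,\mu^C\circ\sigma^C(p)\sim_\AA\mu^C\circ\sigma^C(p)\,d_z^{-1}$, i.e. $d^{-1}\in C(\mu^C\circ\sigma^C)$; since this set is a group, $d$ itself lies in it. Because $a_yd_y=\mu^B(b_y)^{-1}\,\widetilde{\sigma(y)}\,\mu^C(c_y)$, the defining formula of $\sigma^{(d)}$ (Proposition~\ref{prop: existence 2}) gives $\sigma^{(d)}(y)=\mu^B(B_{\sigma^B(y)})\,\widetilde{\sigma(y)}\,\mu^C(C_{\sigma^C(y)})=\sigma(y)$ for all $y$; Proposition~\ref{prop: agree on graph} then forces agreement of $\sigma$ and $\sigma^{(d)}$ on $\gr D'$, and the lift equivalence $\rho^B\circ\sigma\approx\sigma^B$ identifies the coordinate maps of $\sigma$ and $\sigma^{(d)}$, so $\sigma\approx\sigma^{(d)}$ with parameters $(1)_y$ (feeding $t_y=1$, $x_y=(1,1)$ into Proposition~\ref{prop: equivalence characterisation}).

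For the second assertion, the equivalence ``agree on $\gr D'$'' $\Leftrightarrow$ ``agree on $y_0$'' is trivial in one direction and is exactly Proposition~\ref{prop: agree on graph} in the other. The remaining equivalence is a direct unwinding of Definition~\ref{def: underlying graph A-product}: $\sigma^{(d^i)}(y_0)$ is, by construction, the $(\mu^B(B_{v_0}),\mu^C(C_{w_0}))$-double coset of $a_{y_0}d^i_{y_0}$ (writing $v_0=\sigma^B(y_0)$, $w_0=\sigma^C(y_0)$), so the two vertices coincide iff $a_{y_0}d^1_{y_0}$ and $a_{y_0}d^2_{y_0}$ share a double coset, iff there are $\beta\in B_{v_0}$ and $\gamma\in C_{w_0}$ with $a_{y_0}d^1_{y_0}=\mu^B(\beta)\,(a_{y_0}d^2_{y_0})\,\mu^C(\gamma)^{-1}$, which rearranges to the stated identity.

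For the third assertion, the equivalence of $\sigma^{(d^1)}\approx\sigma^{(d^2)}$ with Equation~\eqref{eq: eq d1 d2} holding at every $y$ is obtained by applying Proposition~\ref{prop: equivalence characterisation} to the pair $(\sigma^{(d^1)},\sigma^{(d^2)})$ and substituting $\widetilde{\sigma^{(d^i)}(y)}\,\mu^C(c_y^i)=\mu^B(b_y^i)\,(a_yd_y^i)$ from Proposition~\ref{prop: existence 2}: a bijective relabelling of the $\beta_y,\gamma_y$ turns the criterion there into Equation~\eqref{eq: eq d1 d2}, and a direct computation confirms $x_y=(\beta_y,\gamma_y)\in D_{\sigma^{(d^1)}(y)}$ and that $(x_y)_y$ is the parameter tuple. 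That the all-$y$ condition specialises to a single vertex is trivial, so everything hinges on the converse: from Equation~\eqref{eq: eq d1 d2} at one vertex $y_*$ I would first invoke the second assertion to get $\sigma^{(d^1)}(y_*)=\sigma^{(d^2)}(y_*)$, hence, by Proposition~\ref{prop: agree on graph}, agreement of the two lifts on all of $\gr D'$; it then remains to upgrade the lone element $t_{y_*}=\mu^C(\gamma_{y_*})$ to a tuple $(t_y)_y\in C(\mu^C\circ\sigma^C)$ realising Equation~\eqref{eq: eq d1 d2} everywhere.

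The hard part is precisely this last upgrade. A value prescribed at a single vertex extends to at most one centraliser tuple---two elements of $C(\mu^C\circ\sigma^C)$ agreeing at one vertex coincide by Lemma~\ref{lem: trivial element of centraliser}---but existence is a genuine constraint. I would prove it by propagation over the connected graph $\gr D'$: setting $e=(d^1)^{-1}d^2\in C(\mu^C\circ\sigma^C)$, Equation~\eqref{eq: eq d1 d2} at $y_*$ becomes $t_{y_*}\,e_{y_*}^{-1}=\bigl(\mu^B(\beta_{y_*})\bigr)^{a_{y_*}d^1_{y_*}}$, and transporting this identity across an edge $h\colon y\to y'$---using that $\sigma^{(d^1)}$ and $\sigma^{(d^2)}$ now agree on $h$ and that $e$ is a centraliser element---produces the analogous identity, and the required $\beta_{y'},\gamma_{y'}$, at $y'$. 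Connectedness propagates $(t_y)$ to every vertex, and the uniqueness just noted makes the outcome independent of the chosen path, so $(t_y)_y\in C(\mu^C\circ\sigma^C)$; reinserting it into Proposition~\ref{prop: equivalence characterisation} delivers $\sigma^{(d^1)}\approx\sigma^{(d^2)}$ with the claimed parameters.
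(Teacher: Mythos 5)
Your treatment of the first two assertions, and of the equivalence between $\sigma^{(d^1)}\approx\sigma^{(d^2)}$ and the all-vertices form of Equation~\eqref{eq: eq d1 d2}, follows the same route as the paper: membership of $d$ in $C(\mu^C\circ\sigma^C)$ comes from composing the chain $\mu^C\circ\sigma^C\approx\mu^B\circ\sigma^B\approx\mu^B\circ\rho^B\circ\sigma\approx\mu^C\circ\rho^C\circ\sigma\approx\mu^C\circ\sigma^C$ via Lemma~\ref{lem: double equivalence elements}; agreement on vertices plus Proposition~\ref{prop: agree on graph} gives agreement on $\gr D'$; and Proposition~\ref{prop: equivalence characterisation} with $t_y=1$, $x_y=(1,1)$ (respectively, with the substitution $a_y\,d_y^i=\mu^B(b_y^i)^{-1}\,\widetilde{\sigma^{(d^i)}(y)}\,\mu^C(c_y^i)$ and the induced relabelling of $\beta_y,\gamma_y$) does the rest. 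Up to that point the argument is correct and essentially identical to the paper's.

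The gap is in the last step, exactly where you place the difficulty. You read ``Equation~\eqref{eq: eq d1 d2} holds at one vertex $y_*$'' as the bare existence of $\beta_{y_*},\gamma_{y_*}$ with $\mu^C(\gamma_{y_*})=(a_{y_*}d^1_{y_*})^{-1}\mu^B(\beta_{y_*})(a_{y_*}d^2_{y_*})$, set $t_{y_*}:=\mu^C(\gamma_{y_*})$, and then claim that edge-by-edge propagation produces a tuple which, by path-independence, lies in $C(\mu^C\circ\sigma^C)$. Path-independence only yields a well-defined tuple; membership in $C(\mu^C\circ\sigma^C)$ is a condition over \emph{all} $\DD'$-paths between each pair of vertices --- in particular over $\DD'$-circuits and over the length-$0$ paths given by vertex group elements --- and nothing in the propagation controls these. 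Under your reading the implication is in fact false. Take $\AA$, $\BB$, $\CC$, $\DD'$ each to be a single vertex with no edges, $A_u=\langle s,t\rangle$ free of rank $2$, $\mu^B(B_v)=\langle s^2,st\rangle$, $\mu^C(C_w)=\langle s^2,t\rangle$, $\mu^B\circ\sigma^B(D'_y)=\mu^C\circ\sigma^C(D'_y)=\langle s^2\rangle$, $a_y=1$, $d^1_y=1$, $d^2_y=s$; then $C(\mu^C\circ\sigma^C)=\langle s\rangle$ contains both $d^1$ and $d^2$. The one-vertex identity has the solution $\beta=(st)^{-1}$, $\gamma=t^{-1}$, so $\sigma^{(d^1)}(y)=\sigma^{(d^2)}(y)$; but any solution with $t_y=\mu^B(\beta)\,s\in\langle s\rangle$ forces $\mu^B(\beta)\in\langle s^2,st\rangle\cap\langle s\rangle=\langle s^2\rangle$, hence $\mu^C(\gamma)$ an odd power of $s$, which never lies in $\langle s^2,t\rangle$; a direct check confirms $\sigma^{(d^1)}\not\approx\sigma^{(d^2)}$. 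So the one-vertex condition must be read as retaining the quantifier ``there exists $(t_y)_y\in C(\mu^C\circ\sigma^C)$'' --- the value at $y_*$ has to be the component of a genuine centraliser tuple --- and the propagation must be run from that stronger hypothesis, as in the paper's reduction to Proposition~\ref{prop: equivalence characterisation}. As written, your argument sets out to prove a false statement, and the step asserting $(t_y)_y\in C(\mu^C\circ\sigma^C)$ cannot be repaired without changing the hypothesis.
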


\begin{proof}
By Corollary~\ref{cor: composition associated elements} and Lemma \ref{lem: composition}, the equivalence $\mu^B\circ\rho^B\circ\sigma \approx \mu^C\circ\rho^C\circ\sigma$ has parameter $\big(\widetilde{\sigma(y)}\big)_y$. It follows, by Lemma~\ref{lem: double equivalence elements}, that the equivalence $\mu^C \circ \sigma^C \approx \mu^C \circ \sigma^C$, as witnessed by the sequence of equivalences
$$\mu^C\circ\sigma^C \approx \mu^B\circ\sigma^B \approx \mu^B\circ\rho^B\circ\sigma \approx \mu^C\circ\rho^C\circ\sigma \approx \mu^C\circ\sigma^C,$$
has parameters $(a_y\inv\, \mu^B(b_y)\inv\, \widetilde{\sigma(y)}\, \mu^C(c_y))_y = (d_y)_y$, and hence $(d_y)_y \in C(\mu^C\circ\sigma^C)$.

For each vertex $y$, we have
$$\widetilde{\sigma(y)} = \mu^B(b_y)\, (a_y\, d_y)\, \mu^C(c_y)\inv \in \mu^B(B_{\sigma^B(y)})\, (a_y\, d_y)\, \mu^C(C_{\sigma^C(y)}) = \sigma^{(d)}(y).$$
Thus, $\sigma$ and $\sigma^{(d)}$ agree on $V(\gr D')$. From Proposition~\ref{prop: existence 2}, it follows that $(b_y)_y$ and $(c_y)_y$ are tuples of parameters for the equivalences $\rho^B\circ\sigma^{(d)} \approx \sigma^B$ and $\rho^C\circ\sigma^{(d)} \approx \sigma^C$, respectively. In particular, $(c_y)_y$ is a tuple of parameters for both equivalences $\rho^C\circ\sigma \approx \sigma^C$ and $\rho^C\circ\sigma^{(d)} \approx \sigma^C$.

Now let $x_y = (1,1) \in D_{\sigma(y)}$. Let also $t_y = 1$, so that $(t_y)_y\in C(\mu^C\circ\sigma^C)$. Then Proposition~\ref{prop: equivalence characterisation} shows that $\sigma \approx \sigma^{(d)}$ with parameters $(x_y)_y = ((1,1))_y$.

The statement about $\sigma^{(d^1)}$, $\sigma^{(d^2)}$ agreeing on vertices follows from Proposition \ref{prop: agree on graph} combined with the fact that there exist $\beta \in B_{\sigma^B(y_0)}$ and $\gamma \in C_{\sigma^C(y_0)}$ such that
$$\mu^C(\gamma) = \left(a_{y_0}d^1_{y_0}\right)^{-1}\mu^B(\beta)\left(a_{y_0}d_{y_0}^2\right)$$
if and only if $a_{y_0}d^1_{y_0} \in \mu^B(B_{\sigma^B(y_0)})\, (a_{y_0}d_{y_0}^2)\, \mu^C(C_{\sigma^C(y_0))})$,
that is, if and only if $\sigma^{(d^1)}(y_0) = \sigma^{(d^2)}(y_0)$.

For the last statement, first note that we just proved that, for each $y \in V(\gr D')$,
the existence of $\beta \in B_{\sigma^B(y)}$ and $\gamma \in C_{\sigma^C(y)}$ such that $\mu^C(\gamma) = \left(a_{y}\, d^1_{y}\right)^{-1}\, \mu^B(\beta)\, \left(a_{y}\, d_{y}^2\right)$ implies that $\sigma^{(d^1)}$ and $\sigma^{(d^2)}$ coincide on the underlying graph.

Then the result follows from Proposition \ref{prop: equivalence characterisation} and the fact that, for each vertex $y$ and for $i = 1, 2$, $d_y^i = a_y^{-1}\mu^B(b_y^i)\inv\widetilde{\sigma^{(d^i)}(y)}\mu^C(c_y^i)$ (by Proposition \ref{prop: existence 2}), where $(b_y^i)_y$ and $(c_y^i)_y$ are parameters for the equivalences $\rho^B\circ\sigma^{(d^i)} \approx \sigma^B$ and $\rho^C\circ\sigma^{(d^i)} \approx \sigma^C$.
\end{proof}

\subsection{Pullbacks of pointed graphs of groups}
\label{sec: final pointed pullbacks}

The main result of this section, Theorem~\ref{thm: pullbacks in GrGp*} below, establishes that pullbacks exist in the category $\GrGp^*$ of pointed graphs of groups (with connected underlying graph), and gives an explicit construction for these pullbacks. This explicit construction is essential in the group-theoretic applications that are exploited in \cite{dllrw_fgip}.

\begin{thm}\label{thm: pullbacks in GrGp*}
Pullbacks exist in the category $\GrGp^*$. More precisely, if $\mu^B
\colon (\BB,v_0) \to (\AA,u_0)
$ and $\mu^C\colon (\CC,w_0) \to (\AA,u_0)$ are morphisms between pointed connected graphs of groups, then their pointed $\AA$-product is the pullback of $\mu^B$ and $\mu^C$.
\end{thm}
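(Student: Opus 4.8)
The plan is to verify directly that the pointed $\AA$-product $(\DD,x_0)$, with $\DD = \BB\wtimes_\AA\CC$ and its projection morphisms $\rho^B,\rho^C$, satisfies the universal property of the pullback in $\GrGp^*$. (Note first that, by Lemma~\ref{lem: uniqueness up to isomorphism}, $(\DD,x_0)$ is a well-defined object of $\GrGp^*$, independent of the choices of representatives; and uniqueness of a pullback up to isomorphism is automatic.) There are three things to establish: that the square commutes in $\GrGp^*$, that lifts exist, and that they are unique up to $\sim$. The first is already available: by Lemma~\ref{lem: composition}\,\eqref{item: simple equivalence} we have $\mu^B\circ\rho^B \sim \mu^C\circ\rho^C$, so $(\DD,x_0,\rho^B,\rho^C)$ is a commutative cone over $\mu^B,\mu^C$ in $\GrGp^*$.

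For the universal property, I would fix a connected pointed graph of groups $(\DD',y_0)$ together with pointed morphisms $\sigma^B\colon(\DD',y_0)\to(\BB,v_0)$ and $\sigma^C\colon(\DD',y_0)\to(\CC,w_0)$ whose $\sim$-classes satisfy $\mu^B\circ\sigma^B \sim \mu^C\circ\sigma^C$; let $(a_y)_y$ be parameters for this pointed equivalence, so that $a_{y_0}=1$. To produce a lift I would apply Proposition~\ref{prop: existence 2} to the trivial tuple $d=(1)_y\in C(\mu^C\circ\sigma^C)$, obtaining a morphism $\sigma=\sigma^{(d)}\colon\DD'\to\DD$ with $\rho^B\circ\sigma\approx\sigma^B$ and $\rho^C\circ\sigma\approx\sigma^C$. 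Since $\sigma^B(y_0)=v_0$, $\sigma^C(y_0)=w_0$ and $a_{y_0}=1$, the defining formula gives $\sigma(y_0)=\mu^B(B_{v_0})\,\mu^C(C_{w_0})=x_0$ by Equation~\eqref{eq: basepoint in pointed}, so $\sigma$ is pointed. Moreover, because $\widetilde{x_0}=1$, the equivalence parameters $(b_y)_y,(c_y)_y$ of Proposition~\ref{prop: existence 2} can be chosen with $b_{y_0}=c_{y_0}=1$; hence $\rho^B\circ\sigma\sim\sigma^B$ and $\rho^C\circ\sigma\sim\sigma^C$, i.e. $\sigma$ is a lift in $\GrGp^*$.

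For uniqueness up to $\sim$, I would take two pointed lifts $\sigma,\sigma'$. Both send $y_0$ to $x_0$, so they agree at the basepoint, and Proposition~\ref{prop: agree on graph} forces them to agree on the whole underlying graph $\gr D'$. To compare the graph-of-groups data, I would normalise using Theorem~\ref{thm: lift characterisation}: each pointed lift is $\sim$-equivalent to a lift $\sigma^{(d)}$ whose associated tuple satisfies $d_{y_0}=1$ (this uses $a_{y_0}=1$, $\widetilde{x_0}=1$ and the pointed parameters $b_{y_0}=c_{y_0}=1$). Writing $\sigma\sim\sigma^{(d^1)}$, $\sigma'\sim\sigma^{(d^2)}$ with $d^1_{y_0}=d^2_{y_0}=1$, the final part of Theorem~\ref{thm: lift characterisation} reduces the equivalence $\sigma^{(d^1)}\approx\sigma^{(d^2)}$ to the solvability of Equation~\eqref{eq: eq d1 d2} at a single vertex. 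At $y=y_0$ it is solved by $t_{y_0}=1$ and $\beta_{y_0}=\gamma_{y_0}=1$, so the equivalence holds; since the centraliser tuple satisfies $t_{y_0}=1$, Lemma~\ref{lem: trivial element of centraliser} forces $t_y=1$ for all $y$, and Equation~\eqref{eq: eq d1 d2} then determines the parameter tuple $(x_y)_y$ with $x_{y_0}=(1,1)$. This is precisely the basepoint condition making $(x_y)_y$ a $\sim$-equivalence, so $\sigma\sim\sigma'$. Thus all pointed lifts lie in a single $\sim$-class, and $(\DD,x_0)$ is the pullback.

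The hard part is the uniqueness step, where one must track the full graph-of-groups data — not merely the underlying graph maps — and check that the basepoint normalisations ($d_{y_0}=1$ and $x_{y_0}=(1,1)$) are simultaneously compatible with the centraliser condition $(t_y)_y\in C(\mu^C\circ\sigma^C)$ and with the memberships $x_y\in D_{\sigma(y)}$. The key simplification is Lemma~\ref{lem: trivial element of centraliser}, which propagates triviality at the basepoint to all vertices and thereby pins down the parameters. Everything else (commutativity and existence) is a direct application of Lemma~\ref{lem: composition} and Proposition~\ref{prop: existence 2}; the delicate bookkeeping has already been isolated in Theorem~\ref{thm: lift characterisation}, which is why that result is proved beforehand.
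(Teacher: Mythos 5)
Your proposal is correct and follows essentially the same route as the paper's proof: commutativity via Lemma~\ref{lem: composition}, existence of a pointed lift by applying Proposition~\ref{prop: existence 2} to the trivial tuple $(1)_y$, and uniqueness via Theorem~\ref{thm: lift characterisation} together with Lemma~\ref{lem: trivial element of centraliser}. The only (harmless) difference is bookkeeping in the uniqueness step: the paper compares an arbitrary pointed lift directly to $\sigma^{(d^1)}$ by showing its associated tuple $d$ has $d_{y_0}=1$ and hence equals $(1)_y$, whereas you normalise both lifts first and then invoke the final criterion of Theorem~\ref{thm: lift characterisation}.
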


\begin{proof}
Let $(\AA,u_0)$, $(\BB,v_0)$, $(\CC, w_0)$ be connected pointed graphs of groups, and let $\mu^B\colon (\BB,v_0)\to (\AA,u_0)$ and $\mu^C\colon (\CC,w_0)\to (\AA,u_0)$ be morphisms. Let $(\DD,x_0) = (\BB,v_0) \wtimes_\AA (\CC,w_0)$ be their pointed $\AA$-product, with projection morphisms $\rho^B$ and $\rho^C$.

Let $(\DD',y_0)$ be a connected pointed graph of groups and let $\sigma^B\colon (\DD',y_0)\to (\BB,v_0)$ and $\sigma^C\colon (\DD',y_0)\to (\CC,w_0)$ be morphisms (as in Figure~\ref{fig: for the lifts section}) such that $\mu^B\circ\sigma^B \sim \mu^C\circ\sigma^C$. We let $(a_y)_y$ be a tuple of parameters for this $\sim$-equivalence. 

We need to show that $(\sigma^B, \sigma^C)$ admits a lift, which is unique up to $\sim$-equivalence. Let $d^1 = (1)_y$. Then $d^1 \in C(\mu^C\circ\sigma^C)$ and the morphism $\sigma^{(d^1)}$ defined in Proposition~\ref{prop: existence 2}, is a lift of $(\sigma^B, \sigma^C)$. Moreover, we have $\sigma(y_0) = \mu^B(B_{v_0})\,(a_{y_0}\,d^1_{y_0})\, \mu^C(C_{w_0}) = \mu^B(B_{v_0})\, \mu^C(C_{w_0}) = x_0$.

Finally, if $\sigma\colon (\DD',y_0) \to (\DD, x_0)$ is another lift, we want to show that $\sigma \sim \sigma^{(d^1)}$. Let $(b_y)_y$ and $(c_y)_y$ be the parameters for the equivalences $\rho^B \circ \sigma \sim \sigma^B$ and $\rho^C\circ\sigma \sim \sigma^C$. For each vertex $y$, let $d_y = a_y^{-1}\, \mu^B(b_y)\inv\, \widetilde{\sigma(y)}\, \mu^C(c_y)$, and let $d = (d_y)_y$. Theorem~\ref{thm: lift characterisation} establishes that $d\in C(\mu^C\circ\sigma^C)$ and, since $d_{y_0} = 1$, Lemma~\ref{lem: trivial element of centraliser} shows that $d = d^1$. Theorem~\ref{thm: lift characterisation} again states that $\sigma \approx \sigma^{(d)}$ with parameters $(1)_y$. In particular, $\sigma \sim \sigma^{(d^1)}$ and this concludes the proof.
\end{proof}

Theorem~\ref{thm: pullbacks in GrGp*} also yields a description of pullbacks in the subcategory of pointed graphs of groups with immersions.

\begin{cor}\label{cor: it is a pullback}
Let $\mu^B\colon(\BB, v_0)\to(\AA, u_0)$ and $\mu^C\colon (\CC, w_0)\to(\AA, u_0)$ be immersions of pointed graphs of groups. Let $(\DD,x_0)$ be their pointed $\AA$-product (and their pullback in $\GrGp^*$), with projection morphisms $\rho^B$ and $\rho^C$.

Then $((\DD, x_0), \rho^B, \rho^C)$ is their pullback in the category of pointed graph of groups with immersions.
\end{cor}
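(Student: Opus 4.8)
The plan is to leverage Theorem~\ref{thm: pullbacks in GrGp*}, which already identifies $((\DD, x_0), \rho^B, \rho^C)$ as the pullback of $\mu^B$ and $\mu^C$ in the ambient category $\GrGp^*$, and to upgrade this to the subcategory of pointed graphs of groups with immersions. First I would record that the cone itself lives in the subcategory: since $\mu^B$ and $\mu^C$ are immersions, Lemma~\ref{lem: rhos are folded 2} guarantees that both projections $\rho^B$ and $\rho^C$ are immersions. Thus $(\DD, x_0)$ together with $\rho^B, \rho^C$ is a legitimate cone over $\mu^B, \mu^C$ inside the subcategory, and it only remains to verify the universal property there.

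The key technical ingredient --- which is also exactly what makes the subcategory well defined --- is that \emph{being an immersion is invariant under $\sim$-equivalence} (indeed under $\approx$-equivalence). I would establish the following: if $\mu^1 \approx \mu^2\colon \BB \to \AA$ and $\mu^1$ is an immersion, then so is $\mu^2$. By the characterisation in Proposition~\ref{folded}, it suffices to show that $\mu^2$ sends every reduced $\BB$-path $p$ to a reduced $\AA$-path. Since $\mu^1$ is an immersion, $\mu^1(p)$ is reduced, and by Proposition~\ref{prop: change of data} we have $\mu^2(p) \sim_\AA a_{o(p)}^{-1}\,\mu^1(p)\,a_{t(p)}$ for suitable vertex elements. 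Now reducedness is unaffected by multiplying the two endpoint elements of an $\AA$-path by vertex-group elements, because the reducedness condition only constrains the \emph{internal} group elements $a_1,\dots,a_{k-1}$; and reducedness is a $\sim_\AA$-invariant, since by Lemma~\ref{lem: equivalent A-paths} two $\sim_\AA$-equivalent $\AA$-paths share the same underlying path and, at each backtracking position $e_{i+1} = e_i^{-1}$, their middle elements differ by left and right multiplication by elements of $\omega_{e_i}(A_{e_i})$, so one lies in $\omega_{e_i}(A_{e_i})$ precisely when the other does. Hence $\mu^2(p)$ is reduced, and $\mu^2$ is an immersion. This routine verification is the main (and essentially the only) obstacle.

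With this in hand the universal property follows. Let $(\DD', y_0)$ be a connected pointed graph of groups equipped with immersions $\sigma^B$ and $\sigma^C$ such that $\mu^B \circ \sigma^B \sim \mu^C \circ \sigma^C$. Theorem~\ref{thm: pullbacks in GrGp*} furnishes a lift $\sigma\colon (\DD', y_0) \to (\DD, x_0)$, unique up to $\sim$, with $\rho^B \circ \sigma \sim \sigma^B$ and $\rho^C \circ \sigma \sim \sigma^C$. I would then argue that $\sigma$ is itself an immersion: since $\sigma^B$ is an immersion and $\rho^B \circ \sigma \sim \sigma^B$, the invariance just established shows that $\rho^B \circ \sigma$ is an immersion; as $\rho^B$ is also an immersion, Corollary~\ref{cor: composition of folded} (the implication that $\nu$ and $\nu\circ\mu$ immersions force $\mu$ to be an immersion, applied with $\nu = \rho^B$ and $\mu = \sigma$) yields that $\sigma$ is an immersion, hence an arrow of the subcategory. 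Uniqueness up to $\sim$ within the subcategory is then inherited for free: any immersion-lift is in particular a lift in $\GrGp^*$, so all such lifts are $\sim$-equivalent by the uniqueness clause of Theorem~\ref{thm: pullbacks in GrGp*}. This shows that $((\DD, x_0), \rho^B, \rho^C)$ satisfies the universal property of a pullback in the category of pointed graphs of groups with immersions, completing the proof.
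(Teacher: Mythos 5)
Your proof is correct and follows essentially the same route as the paper: invoke Theorem~\ref{thm: pullbacks in GrGp*} for the lift, note via Lemma~\ref{lem: rhos are folded 2} that $\rho^B$ and $\rho^C$ are immersions, and conclude that $\sigma$ is an immersion by Corollary~\ref{cor: composition of folded}. The only difference is that you explicitly verify that the immersion property is invariant under $\sim$-equivalence (needed to pass from $\sigma^B$ being an immersion to $\rho^B\circ\sigma$ being one), a step the paper's two-line proof leaves implicit; your verification of it via Lemma~\ref{lem: equivalent A-paths} is correct.
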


\begin{proof}
Let $\sigma^B\colon (\DD',y_0)\to(\BB,v_0)$ and $\sigma^C\colon (\DD',y_0)\to(\CC,w_0)$ be immersions such that $\mu^B\circ \sigma^B \sim \mu^C \circ \sigma^C$, and let $\sigma\colon(\DD',y_0) \to (\DD,x_0)$ be the morphism induced by the fact that $(\DD,x_0)$ is the pullback of $\mu^B$ and $\mu^C$ (in $\GrGp^*$). In particular, $\rho^B \circ \sigma \sim \sigma^B$ and $\rho^C \circ \sigma \sim \sigma^C$.

We already observed that $\rho^B$ and $\rho^C$ are immersions (Lemma~\ref{lem: rhos are folded 2}). It then follows from Corollary~\ref{cor: composition of folded} that $\sigma$ is an immersion as well, thus concluding the proof.
\end{proof}

Finally, we record an important application of the construction of pullbacks to the intersection of subgroups of $\pi_1(\AA, u_0)$. We start with a technical statement of independent interest, about lifting a pair of a $\BB$-path and a $\CC$-path to a $\DD$-path.

\begin{cor}\label{cor: lifting}
Let $\mu^B\colon \BB \to \AA$ and $\mu^C\colon \CC \to \AA$ be morphisms of graphs of groups, and let $\DD = \BB\wtimes_\AA\CC$ be their $\AA$-product. Let $u, v, w$ (resp. $u', v', w'$) be vertices of $\gr A$, $\gr B$ and $\gr C$, respectively, such that $[v] = [w] = u$ (resp. $[v'] = [w'] = u'$). Let $x$ be a $(v,w)$-vertex of $\gr D$ and let $x'$ be a $(v',w')$-vertex.

If $r$ is a $\DD$-path from $x$ to $x'$, then $\mu^B(\rho^B(r)) \sim_\AA (\tilde x)\,\mu^C(\rho^C(r))\, (\tilde x')\inv$. Conversely, if there exists a $\BB$-path $q^B$ from $v$ to $v'$ and a $\CC$-path $q^C$ from $w$ to $w'$ such that $\mu^B(q^B) \sim_\AA (\tilde x)\,\mu^C(q^C)\, (\tilde x')\inv$, then there exists a $\DD$-path $r$ from $x$ to $x'$ such that $q^B \sim_\BB \rho^B(r)$ and $q^C \sim_\CC \rho^C(r)$.
\end{cor}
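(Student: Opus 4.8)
The first assertion is exactly Lemma~\ref{lem: composition}\eqref{item: path level equivalence} applied with $y = x'$, so only the converse requires work. Before constructing $r$ I would reduce to the case where $\gr B$ and $\gr C$ are connected: since $q^B$ (resp.\ $q^C$) lies in the connected component of $v$ (resp.\ $w$), and since a $\DD$-path together with its two projections lies in a single component, I may replace $\BB$ and $\CC$ by the components containing $v,v'$ and $w,w'$, and $\DD$ by the corresponding sub-graph of groups of $\BB\wtimes_\AA\CC$ (whose vertex, edge and incidence data are inherited, so that a path there is a path in $\DD$). This places us in the standing hypotheses of Section~\ref{sec: lifts}. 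Next, because $\sim_\AA$ preserves underlying paths (Lemma~\ref{lem: equivalent A-paths}) and conjugation by the length-$0$ paths $(\tilde x)$, $(\tilde x')$ leaves them unchanged, the underlying $\AA$-paths of $\mu^B(q^B)$ and $\mu^C(q^C)$ coincide. Writing $q^B = (b_0, f_1, \dots, f_k, b_k)$ and $q^C = (c_0, g_1, \dots, g_k, c_k)$, this forces $k$ to agree and $[f_i] = [g_i] =: e_i$, so each $(f_i, g_i)$ is an edge of $\gr B\times_\gr A\gr C$.

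The plan for the converse is to realise $q^B$ and $q^C$ as morphisms out of a segment graph of groups and then invoke Proposition~\ref{prop: existence 2}. Let $\mathcal P$ be the graph of groups whose underlying graph $\gr P$ is the segment $y_0\,\bar f_1\,y_1\cdots \bar f_k\,y_k$, with all vertex and edge groups trivial; since these groups are trivial the twisted-commutation conditions \eqref{eq: twisted commutation} are vacuous, so a morphism out of $\mathcal P$ is just a graph map together with a free choice of twisting elements. I would define $\sigma^B\colon \mathcal P\to\BB$ by $y_0\mapsto v$, $y_i\mapsto t(f_i)$, $\bar f_i\mapsto f_i$, with twisting elements chosen so that $\sigma^B$ sends the full $\mathcal P$-path $\bar q = (1,\bar f_1,1,\dots,\bar f_k,1)$ exactly to $q^B$ (for instance $(\bar f_i)_\alpha = b_{i-1}$ and $(\bar f_i)_\omega$ trivial except $(\bar f_k)_\omega = b_k\inv$), and define $\sigma^C\colon \mathcal P\to\CC$ analogously from $q^C$. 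Then $\mu^B\circ\sigma^B(\bar q) = \mu^B(q^B)$ and $\mu^C\circ\sigma^C(\bar q) = \mu^C(q^C)$.

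The key step is to verify that $\mu^B\circ\sigma^B \approx \mu^C\circ\sigma^C$. Applying Lemma~\ref{lem: equivalent A-paths} to the hypothesis $\mu^B(q^B)\sim_\AA (\tilde x)\,\mu^C(q^C)\,(\tilde x')\inv$ produces elements $a_i\in A_{e_i}$ ($1\le i\le k$) solving the usual matching system. I would take these $a_i$ as edge parameters and their induced partial products as vertex parameters $a_{y_i}$: the first and last equations of the system give exactly $a_{y_0} = \tilde x$ and $a_{y_k} = \tilde x'$, while the internal equations are precisely the consistency conditions making the $a_{y_i}$ well defined. Checking $\mu^C\circ\sigma^C(p)\sim_\AA a_{y_{o(p)}}\inv\,(\mu^B\circ\sigma^B(p))\,a_{y_{t(p)}}$ on each generating edge $(1,\bar f_i,1)$, hence by multiplicativity on all $\mathcal P$-paths, establishes the equivalence via Proposition~\ref{prop: change of data}. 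Now Proposition~\ref{prop: existence 2}, applied with the trivial tuple $d = (1)\in C(\mu^C\circ\sigma^C)$, yields a lift $\sigma\colon \mathcal P\to\DD$ with $\rho^B\circ\sigma\approx\sigma^B$ and $\rho^C\circ\sigma\approx\sigma^C$; I set $r = \sigma(\bar q)$. The vertex formula for $\sigma$ gives $\sigma(y_0) = \mu^B(B_v)\,\tilde x\,\mu^C(C_w) = x$ and $\sigma(y_k) = x'$, so $r$ runs from $x$ to $x'$.

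Finally I would sharpen the $\approx$-conclusion. The parameters for $\rho^B\circ\sigma\approx\sigma^B$ furnished by Proposition~\ref{prop: existence 2} may be chosen with trivial vertex components at $y_0$ and $y_k$: there $\widetilde{\sigma(y_0)} = \tilde x = a_{y_0}d_{y_0}$ (and likewise at $y_k$), so the defining equation is solved by $b_{y_0}=c_{y_0}=b_{y_k}=c_{y_k}=1$. Proposition~\ref{prop: change of data} then gives $\rho^B(r) = \rho^B\circ\sigma(\bar q)\sim_\BB b_{y_0}\inv\,\sigma^B(\bar q)\,b_{y_k} = q^B$, and symmetrically $\rho^C(r)\sim_\CC q^C$. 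The hard part will be this last bookkeeping together with the equivalence check: one must convert the single $\sim_\AA$-identity of the hypothesis into a fully compatible system of $\approx$-parameters on $\mathcal P$, and then arrange the endpoint twisting so that the conjugating factors collapse to the identity. Note in particular that the $\CC$-side cannot be obtained cheaply from injectivity of $\mu^C_*$, since $\mu^C$ is not assumed to be an immersion here.
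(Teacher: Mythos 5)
Your argument is essentially the paper's own proof: the paper also encodes $q^B$ and $q^C$ as morphisms $\sigma^B,\sigma^C$ out of a length-$k$ segment with trivial vertex and edge groups (with exactly the twisting elements you choose), deduces $\mu^B\circ\sigma^B\approx\mu^C\circ\sigma^C$ from Lemma~\ref{lem: equivalent A-paths}, and applies Proposition~\ref{prop: existence 2} with the trivial tuple to produce $r$ with the stated $\sim_\BB$- and $\sim_\CC$-equivalences. The one step that fails as written is the degenerate case $k=0$: a single vertex with trivial vertex group gives $\sigma^B(\bar q)=(1)$, which cannot equal $q^B=(b_0)$ unless $b_0=1$, so this case must be handled separately --- as the paper does, by observing that the hypothesis forces $x=x'$ and $(b_0,c_0)\in D_x$, whence $r=((b_0,c_0))$ works. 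Your reduction to connected components and your explicit check that the endpoint parameters can be taken trivial are sound and slightly more careful than the paper's write-up.
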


\begin{proof}
 Let $r$ be a $\DD$-path from $x$ to $x'$. Then $\mu^B(\rho^B(r)) \sim_\AA (\tilde x)\,\mu^C(\rho^C(r))\,(\tilde x')\inv$ by Lemma~\ref{lem: composition}~\eqref{item: path level equivalence}.

Conversely, suppose that $q^B$ and $q^C$ satisfy $\mu^B(q^B) \sim_\AA (\tilde x)\,\mu^C(q^C)\, (\tilde x')\inv$. In particular, $q^B$ and $q^C$ have the same length $k$, say, $q^B = (b_0, f_1, b_1, \dots, f_k, b_k)$ and $q^C = (c_0, g_1, c_1, \dots, g_k, c_k)$.

If $k = 0$, we have $\mu^B(b_0)^{\tilde x} = \mu^C(c_0)$, and hence $d_0 = \mu^C(c_0)$ is an element of $D_x$. The path $r = (d_0)$ then satisfies $\rho^B(r) = q^B$ and $\rho^C(r) = q^C$.

Let us now assume that $k > 0$. Let $\DD'$ be the graph of groups whose underlying graph is a path of length $k$, say, $q = (h_1,\dots, h_k)$, with trivial vertex groups. We define a morphism $\sigma^B\colon \DD' \to \BB$ as follows: $\sigma^B$ maps the underlying graph of $\DD'$ to the underlying path of $q^B$; the twisting elements are $(h_i)^B_\alpha = b_{i-1}$ for every $i\in [1,k]$, $(h_i)^B_\omega = 1$ for every $i\in [1,k-1]$ and $(h_k)^B_\omega = b_k\inv$. We then have $\sigma^B(q) = q^B$. In the same way, we define a morphism $\sigma^C\colon \DD' \to \CC$ such that $\sigma^C(q) = q^C$. By applying Lemma \ref{lem: equivalent A-paths} to the equivalence $\sigma^B\circ\mu^B(q)\sim_{\AA} (\tilde x)\,(\sigma^C\circ\mu^C(q)) \,(\tilde{x}')\inv$ we see that $\sigma^B\circ\mu^B\approx\sigma^C\circ\mu^C$. By Proposition \ref{prop: existence 2}, if $\DD$ is the $\AA$-product of $\mu^B$ and $\mu^C$ (with projection morphisms $\rho^B$ and $\rho^C$), there exists a morphism $\sigma^{(1)}\colon \DD'\to\DD$ such that $\sigma^B\sim\rho^B\circ\sigma^{(1)}$ and $\sigma^C\approx\rho^C\circ\sigma^{(1)}$ with parameters $(1)$ and $(1)$, respectively. Hence, the path $r = \sigma^{(1)}(q)$ satisfies $q^B\sim_{\BB}\rho^B(r)$ and $q^C\sim_{\CC}\rho^C(r)$ as required.
\end{proof}

This yields the following important corollary on the intersection of subgroups (for the purpose of which we can restrict our attention to immersions of graphs of groups).

\begin{cor}\label{cor: intersection of subgroups}
Let $\mu^B\colon \BB \to \AA$ and $\mu^C\colon \CC \to \AA$ be morphisms of graphs of groups, and let $\DD$ be the $\AA$-product of $\mu^B$ and $\mu^C$. 
\begin{enumerate}[(1)]
\item\label{item: intersection of conjugates} Let $v\in V(\gr B)$ and $w\in V(\gr C)$ such that $[v] = [w]$, and let $x\in V(\gr D)$ be a $(v,w)$-vertex. Then
    \[
    \left((\mu^B\circ\rho^B)_*(\pi_1(\DD, x))\right)^{\tilde x} = (\mu^C\circ\rho^C)_*(\pi_1(\DD, x)) \leqslant \left(\mu^B_*(\pi_1(\BB, v))\right)^{\tilde x} \cap \mu^C_*(\pi_1(\CC, w)),
    \]
    with equality if $\mu^B$ and $\mu^C$ are immersions.

\item\label{item: intersection of subgroups} Let $u_0$, $v_0$ and $w_0$ be vertices of $\gr A$, $\gr B$ and $\gr C$, respectively, such that $[v_0] = [w_0] = u_0$. Let $(\DD,x_0)$ be the $\AA$-product of $\mu^B$ and $\mu^C$, seen as morphisms of pointed graphs of groups. Then
    \[
    (\mu^B\circ\rho^B)_*(\pi_1(\DD, x_0)) = (\mu^C\circ\rho^C)_*(\pi_1(\DD, x_0)) \leqslant \mu^B_*(\pi_1(\BB, v_0))\cap \mu^C_*(\pi_1(\CC, w_0)),
    \]
    with equality if $\mu^B$ and $\mu^C$ are immersions.

\item \label{item: intersection_component} Let $v, w, x$ be as in \eqref{item: intersection of conjugates}, let $p_v$ be a reduced $\BB$-path from $v_0$ to $v$ and let $q_w$ be a reduced $\CC$-path from $w_0$ to $w$. Then we have
    \[
    \mu^C_*\circ\rho^C_*(\pi_1(\DD, x))^{\mu^C(q_w)^{-1}} \le \mu^B_*(\pi_1(\BB, v_0))^{\left(\mu^B(p_v)\tilde{x}\mu^C(q_w)^{-1}\right)}\cap \mu^C_*(\pi_1(\CC, w_0)),
    \]
    with equality if $\mu^B$ and $\mu^C$ are immersions.
\end{enumerate}
\end{cor}

\begin{proof}
We start with Statement~\eqref{item: intersection of conjugates}. By Lemma~\ref{lem: composition}~\eqref{item: path level equivalence}, we have $(\mu^B\circ\rho^B)_*(\pi_1(\DD,x))^{\tilde x} = (\mu^C\circ\rho^C)_*(\pi_1(\DD,x))$, which is clearly contained in $\mu^B_*(\pi_1(\BB,v))^{\tilde x} \cap \mu^C_*(\pi_1(\CC,w))$.

To establish the converse inclusion when $\mu^B$ and $\mu^C$ are immersions, we need to show that if $q^B$ is a reduced $\BB$-circuit at $v$ and $q^C$ is a reduced $\CC$-circuit at $w$ such that $\mu^B(q^B)^{\tilde x} =_\AA \mu^C(q^C)$, then there exists a reduced $\DD$-circuit $r$ at $x$ such that $\rho^B(r) =_\BB q^B$ and $\rho^C(r) =_\CC q^C$. Since $\mu^B$ and $\mu^C$ are immersions, the $\BB$- and $\CC$-paths $\mu^B(q^B)$ and $\mu^C(q^C)$ are reduced, and  $\mu^B(q^B)^{\tilde x} =_\AA \mu^C(q^C)$ implies $\mu^B(q^B)^{\tilde x} \sim_\AA \mu^C(q^C)$ (by Proposition~\ref{prop: sim vs equiv}~\eqref{eq: for reduced, sim is =}). We can now conclude, using Corollary~\ref{cor: lifting}.

Statement~\eqref{item: intersection of subgroups} is proved in the same fashion, with the additional consideration that ${\tilde x_0 = 1}$.

Finally, we verify Statement~\eqref{item: intersection_component}. By~\eqref{item: intersection of conjugates}, we have
    \begin{align*}
    \mu_*^C\circ\rho_*^C(\pi_1(\DD, x)) &\le \mu^B_*(\pi_1(\BB, v))^{\tilde{x}}\cap \mu^C_*(\pi_1(\CC, w)) \\
    								&\le \mu^B_*(\pi_1(\BB, v_0))^{\mu^B(p_v)\tilde{x}}\cap \mu^C_*(\pi_1(\CC, w_0))^{\mu^C(q_w)}.
    \end{align*}
    The desired formula follows, once we conjugate both sides by $\mu^C(q_w)^{-1}$.
\end{proof}

\subsection{Pullbacks in the category of (unpointed) graphs of groups}\label{sec: final unpointed pullback}

In contrast with the category of pointed graphs of groups, pullbacks do not always exist in $\GrGp$, as we will see in Example~\ref{ex: pullbacks may not exist}. However, if $\mu^B\colon \BB \to \AA$ and $\mu^C\colon \CC \to \AA$ are morphisms of graphs of groups, we can still derive information about the pullback of $\mu^B$ and $\mu^C$, if it exists. Moreover, in specific instances we may use Theorem \ref{thm: lift characterisation} to determine precisely when it exists.

\begin{thm}
\label{thm: pullbacks_exist_sometimes}
Let $\mu^B\colon \BB\to \AA$ and $\mu^C\colon \CC\to \AA$ be
morphisms of graphs of groups, and let $\BB \wtimes_\AA\CC$ be their $\AA$-product, with projections $\rho^B$ and $\rho^C$ onto $\BB$ and $\CC$, respectively. If the pullback $(\BB\times_{\AA}\CC, \sigma^B, \sigma^C)$ of $\mu^B$ and $\mu^C$ exists in $\GrGp$, then it is a subgraph of groups of $\BB\wtimes\AA$.

More precisely, there exists a morphism of graphs of groups $\sigma\colon\BB\times_{\AA}\CC\to \BB\wtimes_{\AA}\CC$ such that $\sigma$ is an isomorphism onto its image and such that $\rho^B\circ\sigma\approx \sigma^B$ and $\rho^C\circ\sigma \approx\sigma^C$.
\end{thm}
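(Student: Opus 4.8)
The plan is to produce two morphisms running in opposite directions between the pullback object $P = \BB\times_\AA\CC$ and the $\AA$-product $\DD = \BB\wtimes_\AA\CC$, and then to show that one composite equals the identity of $P$ up to $\approx$, from which the injectivity of $\sigma$ follows. Throughout I would use that, since $P$ is the pullback of $\mu^B$ and $\mu^C$ in $\GrGp$, the defining square commutes there, that is $\mu^B\circ\sigma^B\approx\mu^C\circ\sigma^C$, and that $P$ enjoys the universal property with respect to every cone $(\DD',s^B,s^C)$ satisfying $\mu^B\circ s^B\approx\mu^C\circ s^C$.

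First I would construct the candidate $\sigma\colon P\to\DD$. Taking $\DD'=P$ with the morphisms $\sigma^B,\sigma^C$ as in the setup of Section~\ref{sec: lifts} (the commuting condition $\mu^B\circ\sigma^B\approx\mu^C\circ\sigma^C$ being exactly the pullback relation), Proposition~\ref{prop: existence 2} applied with the trivial tuple $d=(1)_y\in C(\mu^C\circ\sigma^C)$ yields a lift $\sigma=\sigma^{(1)}\colon P\to\DD$ with $\rho^B\circ\sigma\approx\sigma^B$ and $\rho^C\circ\sigma\approx\sigma^C$. The lifting results of Section~\ref{sec: lifts} are phrased for connected graphs of groups, so here one either argues componentwise on $P$ or observes that the construction of $\sigma^{(1)}$ is purely local and that the all-trivial tuple lies in $C(\mu^C\circ\sigma^C)$ irrespective of connectedness.

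Next I would produce the reverse morphism $\pi\colon\DD\to P$ from the universal property of $P$. By Lemma~\ref{lem: composition}~\eqref{item: path level equivalence} the $\AA$-product projections satisfy $\mu^B\circ\rho^B\approx\mu^C\circ\rho^C$, so $(\DD,\rho^B,\rho^C)$ is a cone over $\mu^B,\mu^C$; the universal property of the pullback $P$ then provides $\pi\colon\DD\to P$ with $\sigma^B\circ\pi\approx\rho^B$ and $\sigma^C\circ\pi\approx\rho^C$. Composing and using that $\approx$ is preserved under composition (Corollary~\ref{cor: composition}), I compute $\sigma^B\circ(\pi\circ\sigma)\approx\rho^B\circ\sigma\approx\sigma^B$ and likewise $\sigma^C\circ(\pi\circ\sigma)\approx\sigma^C$. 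Thus $\pi\circ\sigma\colon P\to P$ fits into the cone $(P,\sigma^B,\sigma^C)$ over $\mu^B,\mu^C$; since $1_P$ does as well, the uniqueness clause of the universal property forces $\pi\circ\sigma\approx 1_P$.

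Finally I would extract the structural conclusion. Because $\approx$-equivalent morphisms agree on the underlying graph by Definition~\ref{def: mor_sim}, the relation $\pi\circ\sigma\approx 1_P$ forces $\pi\circ\sigma$ to equal the identity on $\gr{P}$; in particular $\sigma$ is injective on the vertices and edges of $\gr{P}$. Since every morphism of graphs of groups is a monomorphism on each vertex and edge group by Definition~\ref{def: morphism}, the maps $\sigma_y$ and $\sigma_h$ are injective as well, so $\sigma$ restricts to an isomorphism onto its image, a subgraph of groups of $\DD=\BB\wtimes_\AA\CC$; together with the relations $\rho^B\circ\sigma\approx\sigma^B$ and $\rho^C\circ\sigma\approx\sigma^C$ this completes the statement. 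I expect the main obstacle to be the connectivity mismatch with the lifting framework of Section~\ref{sec: lifts}: the cleanest fix is to run the lift construction on each connected component of $P$ via the locally defined formulas and glue, checking that the trivial tuple is an admissible parameter on each component. The remaining steps are then formal consequences of the universal property and of the fact that $\approx$-equivalence pins down the underlying graph morphism.
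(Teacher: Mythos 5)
Your proof is correct and follows essentially the same route as the paper's: construct the lift $\sigma$ via Proposition~\ref{prop: existence 2} with the trivial tuple, obtain the reverse morphism from the universal property of the pullback, and deduce $\pi\circ\sigma\approx 1_{\BB\times_\AA\CC}$ from the uniqueness clause, whence $\sigma$ is injective on the underlying graph and an isomorphism onto its image. Your additional remark about the connectedness hypothesis in Section~\ref{sec: lifts} (handled componentwise) addresses a point the paper passes over silently, but it does not alter the argument.
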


\begin{proof} 
Since $\mu^B\circ\rho^B \approx \mu^C\circ\sigma^C$, there exists a unique morphism $\sigma'\colon\BB\wtimes_{\AA}\CC\to \BB\times_{\AA}\CC$ such that $\sigma^B\circ\sigma'\approx\rho^B$ and $\sigma^C\circ\sigma'\approx\rho^C$ (by the definition of pullbacks).

Proposition~\ref{prop: existence 2} establishes also the existence of a lift $\sigma\colon \BB\times_{\AA}\CC \to \BB\wtimes_{\AA}\CC$, such that $\rho^B\circ\sigma \approx \sigma^B$ and $\rho^C\circ \sigma \approx\sigma^C$.
It follows that $\sigma^B\circ(\sigma'\circ\sigma) \approx \sigma^B$ and $\sigma^C\circ(\sigma'\circ\sigma) \approx \sigma^C$: by the uniqueness in the definition of pullbacks, it follows that $\sigma'\circ\sigma \approx 1_{\BB\wtimes_{\AA}\CC}$ and, hence, $\BB\times_\AA\CC$ is isomorphic in $\GrGp$ to a subgraph of groups of $\BB\wtimes_{\AA}\CC$, as announced.
\end{proof}

\begin{exm}[Pullbacks sometimes do not exist in $\GrGp$]\label{ex: pullbacks may not exist}
    Let $\AA$ be the graph of groups with underlying graph having a single vertex $u$ and three edges $e_1, e_2, e_3$, vertex group $A_u = \langle a\rangle \cong \Z$, edge groups $A_{e_1}, A_{e_3} = 1$, $A_{e_2} = \Z$ and identity edge maps $\alpha_{e_2}=\omega_{e_2} = \id_{\ZZ}$. For convenience, we write $e_i$ for the element $(1,e_i,1)$ of $\pi_1(\AA,u)$. One can see directly that
    \[
    \pi_1(\AA, u) \cong \langle e_1, e_3 \mid\enspace \rangle *\langle a, e_2 \mid [a, e_2] = 1\rangle \cong F_2*\Z^2.
    \]
    Consider the following subgroups: 
    \begin{align*}
        B &= \langle a e_1 a^{-1}, e_2, e_3\rangle\\
        C &= \langle e_1, e_2, a e_3 a^{-1}\rangle.
    \end{align*}
    Let $\mu^B\colon \BB\to \AA$ and $\mu^C\colon \CC\to \AA$ be an immersion of graphs of groups whose fundamental group images are $B$ and $C$, respectively. Both $\BB$ and $\CC$ have single vertices $v$ and $w$ with $[v] = [w] = u$ and three edges $f_1, f_2, f_3$ and $g_1, g_2, g_3$ with $[f_i] = [g_i] = e_i$ for $i = 1, 2, 3$. 

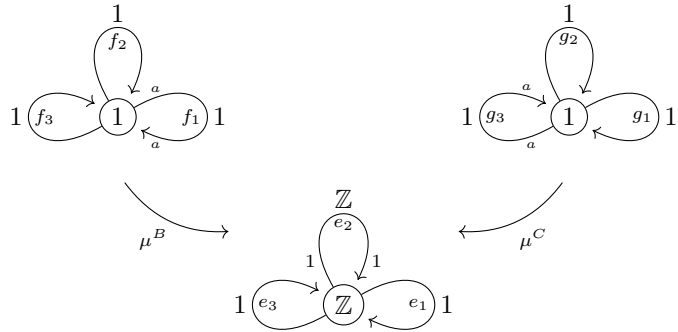
\begin{figure}[htbp]
\centering
\begin{tikzpicture}[shorten >=3pt, node distance=2cm and 1.75cm, on grid,auto,-latex]

\begin{scope}[yshift=1 cm]
    \node[state] (vA) {$\ZZ$};
    \node (ab) [above left = 1 and 1.3 of vA]{};
    \node (ac) [above right = 1 and 1.3 of vA]{};

    \node[] (Aab) [above = 0.25 of vA]{};
    \path[->] (vA) edge[loop right,min distance=15mm,in=330, out=30]
    node[left] {$e_1$}
    node[right] {$\text{\normalsize$1$}$} (vA);

    \path[->] (vA) edge[loop above,min distance=15mm,in=60, out=120]
    node[above] {$\text{\normalsize$\ZZ$}$}
    node[below] {$e_2$}
    node[pos=0.9,right] {$1$}
    node[pos=0.1,left] {$1$}
            (vA);

    \path[->] (vA) edge[loop left,min distance=15mm,in=150, out=210]
    node[left] {$\text{\normalsize$1$}$}
    node[right] {$e_3$} (vA);
\end{scope}

\begin{scope}[xshift=-3cm,yshift=3.5 cm]
    \node[state] (vA) {$1$};
    \node (b) [below = 0.75 of vA]{};
    \node (bb) [above left = 1.5 of vA]{};
    
    \node[] (Aab) [above = 0.25 of vA]{};
    \path[->] (vA) edge[loop right,min distance=15mm,in=330, out=30]
    node[right] {$\text{\normalsize$1$}$}
    node[left] {$f_1$}
    node[pos=0.92,below] {$\scriptstyle{a}$}
    node[pos=0.08,above] {$\scriptstyle{a}$}
            (vA);

    \path[->] (vA) edge[loop above,min distance=15mm,in=60, out=120]
    node[above] {$\text{\normalsize$1$}$}
    node[below] {$f_2$} (vA);

    \path[->] (vA) edge[loop left,min distance=15mm,in=150, out=210]
    node[left] {$\text{\normalsize$1$}$}
    node[right] {$f_3$} (vA);
\end{scope}

\begin{scope}[xshift=3cm,yshift=3.5cm]
    \node[state] (vA) {$1$};
    \node (c) [below = 0.75 of vA]{};
    \node (cc) [above right = 1.5 of vA]{};
    
    \node[] (Aab) [above = 0.25 of vA]{};
    \path[->] (vA) edge[loop right,min distance=15mm,in=330, out=30]
    node[right] {$\text{\normalsize$1$}$}
    node[left] {$g_1$} (vA);

    \path[->] (vA) edge[loop above,min distance=15mm,in=60, out=120]
    node[above] {$\text{\normalsize$1$}$}
    node[below] {$g_2$} (vA);

    \path[->] (vA) edge[loop left,min distance=15mm,in=150, out=210]
    node[left] {$\text{\normalsize$1$}$}
    node[right] {$g_3$}
    node[pos=0.9,above] {$\scriptstyle{a}$}
    node[pos=0.08,below] {$\scriptstyle{a}$} (vA);
\end{scope}

\path[->] (b) edge[bend right]
    node[below left] {$\mu^B$}
            (ab);
\path[->] (c) edge[bend left]
    node[below right] {$\mu^C$}
            (ac);
\end{tikzpicture}
\vspace{-15pt}
\caption{Immersions $\mu^B$ and $\mu^C$  realizing $B$ and $C$}
\label{fig: immersions}
\end{figure}
    
    All vertex and edge groups are also trivial and the twisting elements are $(f_1)_{\alpha} = (f_1)_{\omega} = (g_3)_{\alpha} = (g_3)_{\omega} = a^{-1}$ with all others are trivial. It is clear that $B \cong C \cong F_3$.

Now consider the $\AA$-product $\DD = \BB\wtimes_{\AA}\CC$. Its vertex set is the set of double cosets $x_i = \mu^B_v(B_v)\,a^i\,\mu^C_w(C_w) = \{a^i\}$ ($i\in \Z$), since $\mu^B_v(B_v)$ and $\mu^C_w(C_w)$ are trivial. The $(f_2,g_2)$-edges of $\DD$ are the $h_i = \{a^i\}$ ($i\in \Z$), where $h_i$ is a loop at $x_i$. In addition, $\DD$ has a single $(f_1,g_1)$-edge $h'_{-1}$ which is a loop at $x_{-1}$, and a single $(f_3,g_3)$-edge $h'_1$, which is a loop at $x_1$. Finally, the vertex and edge groups of $\DD$ are all trivial. It follows that $\DD$ has connected components $\DD_i$ ($i\in \Z$), each with a single vertex (namely $x_i$) and that $\pi_1(\DD_i,x_i)$ is freely generated by $(1,h_i,1)$ if $i\ne \pm1$, by $(1,h_1,1)$ and $(1,h'_1,1)$ if $i = 1$, and by $(1,h_{-1},1)$ and $(1,h'_{-1},1)$ if $i = -1$ (see Figure~\ref{fig: immersions and product}).

By Theorem~\ref{thm: pullbacks_exist_sometimes}, if the pullback of $\mu^B$ and $\mu^C$ exists in the unpointed category of graphs of groups, then it is a subgraph of groups of $\DD$ and there is a unique morphism $\sigma\colon \DD\to \BB\times_\AA\CC$ such that $\rho^B\circ\sigma \approx \rho^B$ and $\rho^C\circ\sigma \approx \rho^C$. In particular, $\DD_1$ and $\DD_{-1}$ must be in $\BB\times_\AA\CC$, since these components admit a unique morphism into $\DD$, compatible with $\rho^B$ and $\rho^C$. On the other hand, if $i \ne \pm1$ and $j\in \Z$, there is a morphism from $\DD_i$ to $\DD_j$, since all the $h_j$ map to $f_2$, $g_2$ and $e_2$. As a result, for there to be a unique morphism $\sigma$ from $\DD_i$ to $\BB\times_\AA\CC$ seen as a subgraph of groups of $\DD$, $\DD$ cannot contain both $h_1$ and $h_{-1}$, a contradiction. Thus $\mu^B$ and $\mu^C$ do not admit a pullback in $\GrGp$.

\begin{figure}[htbp]
\centering
\begin{tikzpicture}[shorten >=3pt, node distance=2cm and 1.75cm, on grid,auto,-latex]

\begin{scope}[yshift=1 cm]
    \node[state] (vA) {$\ZZ$};
    \node (ab) [above left = 1 and 1.3 of vA]{};
    \node (ac) [above right = 1 and 1.3 of vA]{};

    \node[] (Aab) [above = 0.25 of vA]{};
    \path[->] (vA) edge[loop right,min distance=15mm,in=330, out=30]
    node[left] {$e_1$}
    node[right] {$\text{\normalsize$1$}$} (vA);

    \path[->] (vA) edge[loop above,min distance=15mm,in=60, out=120]
    node[above] {$\text{\normalsize$\ZZ$}$}
    node[below] {$e_2$}
    node[pos=0.9,right] {$1$}
    node[pos=0.1,left] {$1$} (vA);

    \path[->] (vA) edge[loop left,min distance=15mm,in=150, out=210]
    node[left] {$\text{\normalsize$1$}$}
    node[right] {$e_3$} (vA);
\end{scope}

\begin{scope}[xshift=-3cm,yshift=3.5 cm]
    \node[state] (vA) {$1$};
    \node (b) [below = 0.75 of vA]{};
    \node (bb) [above left = 1.5 of vA]{};
    
    \node[] (Aab) [above = 0.25 of vA]{};
    \path[->] (vA) edge[loop right,min distance=15mm,in=330, out=30]
    node[right] {$\text{\normalsize$1$}$}
    node[left] {$f_1$}
    node[pos=0.9,below] {$\scriptstyle{a}$}
    node[pos=0.1,above] {$\scriptstyle{a}$}
            (vA);

    \path[->] (vA) edge[loop above,min distance=15mm,in=60, out=120]
    node[above] {$\text{\normalsize$1$}$}
    node[below] {$f_2$} (vA);

    \path[->] (vA) edge[loop left,min distance=15mm,in=150, out=210]
    node[left] {$\text{\normalsize$1$}$}
    node[right] {$f_3$} (vA);
\end{scope}

\begin{scope}[xshift=3cm,yshift=3.5cm]
    \node[state] (vA) {$1$};
    \node (c) [below = 0.75 of vA]{};
    \node (cc) [above right = 1.5 of vA]{};
    
    \node[] (Aab) [above = 0.25 of vA]{};
    \path[->] (vA) edge[loop right,min distance=15mm,in=330, out=30]
    node[right] {$\text{\normalsize$1$}$}
    node[left] {$g_1$} (vA);

    \path[->] (vA) edge[loop above,min distance=15mm,in=60, out=120]
    node[above] {$\text{\normalsize$1$}$}
    node[below] {$g_2$} (vA);

    \path[->] (vA) edge[loop left,min distance=15mm,in=150, out=210]
    node[left] {$\text{\normalsize$1$}$}
    node[right] {$g_3$}
    node[pos=0.9,above] {$\scriptstyle{a}$}
    node[pos=0.08,below] {$\scriptstyle{a}$} (vA);
\end{scope}

\path[->] (b) edge[bend right]
    node[below left] {$\mu^B$}
            (ab);
\path[->] (c) edge[bend left]
    node[below right] {$\mu^C$}
            (ac);

\begin{scope}[yshift= 6.5 cm]
    \node[state] (x0) {$1$};
    
    \node[state] (x1) [right = 1.5 of x0]{$1$};
    \node[state] (x2) [right = 1.5 of x1]{$1$};
    \node (d1) [right = 1 of x2]{$\cdots$};
    \node[state] (xi) [right = 2 of x2]{$1$};

    \node (pc) [below = 0.75 of xi]{};
    \node (d2) [right = 1 of xi]{$\cdots$};

    \node[state] (-x1) [left = 1.5 of x0]{$1$};
    \node[state] (-x2) [left = 1.5 of -x1]{$1$};
    \node (-d1) [left = 1 of -x2]{$\cdots$};
    \node[state] (-xi) [left = 2 of -x2]{$1$};

    \node (pb) [below = 0.75 of -xi]{};
    \node (-d2) [left = 1 of -xi]{$\cdots$};

    \path[->] (x0) edge[loop above,min distance=15mm,in=60, out=120]
    node[below] {$h_0$}
    node[above] {$1$} (x0);

    \path[->] (x1)
    edge[loop above,min distance=15mm,in=60, out=120]
    node[below] {$h_1$}
    node[above] {$1$} (x1);

    \path[->] (x1)
    edge[loop below,min distance=15mm,out=-60, in=-120]
    node[above] {$h'_1$}
    node[below] {$1$} (x1);

    \path[->] (x2) edge[loop above,min distance=15mm,in=60, out=120]
    node[below] {$h_2$}
    node[above] {$1$} (x2);

    \path[->] (xi) edge[loop above,min distance=15mm,in=60, out=120]
    node[below] {$h_i$}
    node[above] {$1$} (xi);

    \path[->] (-x1)
    edge[loop above,min distance=15mm,in=60, out=120]
    node[below] {$h_{\text{-1}}$}
    node[above] {$1$} (-x1);

    \path[->] (-x1)
    edge[loop below,min distance=15mm,out=-60, in=-120]
    node[above] {$h'_{\text{-}1}$}
    node[below] {$1$} (-x1);

    \path[->] (-x2) edge[loop above,min distance=15mm,in=60, out=120]
    node[below] {$h_{\text{-}2}$}
    node[above] {$1$} (-x2);

    \path[->] (-xi) edge[loop above,min distance=15mm,in=60, out=120]
    node[below] {$h_{\text{-}i}$}
    node[above] {$1$} (-xi);

\end{scope}

\path[->] (pb) edge[bend right]
    node[below left] {$\rho^B$}
            (bb);
\path[->] (pc) edge[bend left]
    node[below right] {$\rho^C$}
            (cc);

\end{tikzpicture}
\vspace{-15pt}
\caption{Immersions $\mu^B$ and $\mu^C$  realizing $B$ and $C$ and their $\AA$-product}
\label{fig: immersions and product}
\end{figure}
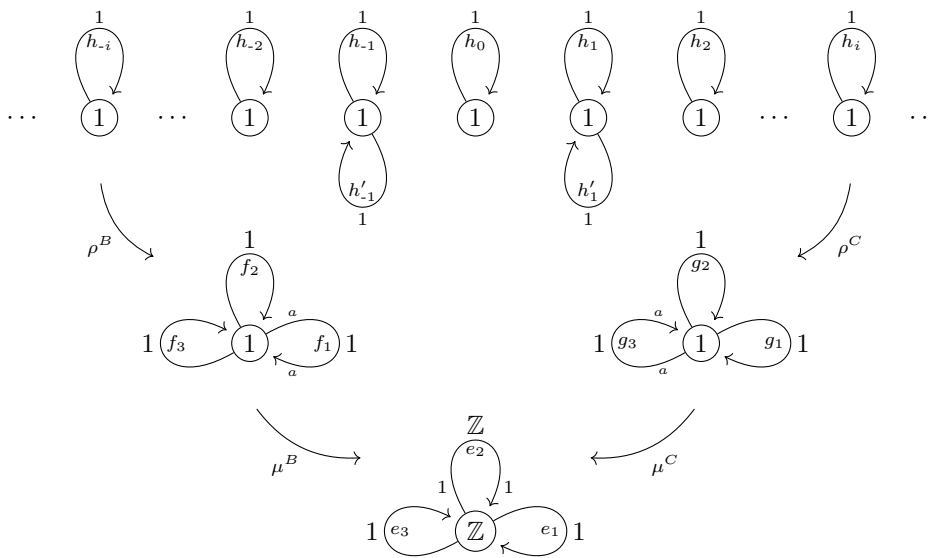
\end{exm}

\subsection{Example of an application of Corollary \ref{cor: intersection of subgroups}}\label{sec: worked out example}

A \emph{Baumslag--Solitar group} is a group of the form
\begin{equation}\label{eq: bs}\bs(m,n) = \langle a,t \mid t\inv a^mt = a^n\rangle,\end{equation}
where $m, n \in \ZZ \setminus\{ 0 \}$ are non-zero integers. 
Recall that $\bs(m,n) \cong \bs(m',n')$ if and only if $(m',n') \in \{(m,n),(n,m),(-m,-n),(-n,-m)\}$ \cite{mol91}.
Hence, without loss of generality, we can assume that $|n|\geq m \geq 1$.

It is a simple fact that $\bs(m,n)$ splits as a graph of groups with a single vertex and a single edge, with infinite cyclic vertex and edge group, see Figure~\ref{fig: BS}.
\begin{figure}[H]
\centering
\begin{tikzpicture}[shorten >=3pt, node distance=2cm and 1.75cm, on grid,auto,-latex]
\begin{scope}
    \node[state] (vA) {$\ZZ$};
    \node[] (Aab) [above = 0.25 of vA]{};
    \node[] (eA) [right = of vA]{$\ZZ$};
    \node[] (Aac) [above = 0.25 of eA]{};
    \path[->] (vA) edge[loop right,min distance=19mm,in=335, out=25]
    node[left] {$e$}
    node[pos=0.9,below] {$n$}
    node[pos=0.1,above] {$m$}
            (vA);
\end{scope}
\end{tikzpicture}
\vspace{-15pt}
\caption{Splitting of $\bs(m,n)$; for every $i\in \Z$, $\alpha_e(i) = mi$ and $\omega_e(i) = ni$}
\label{fig: BS}
\end{figure}
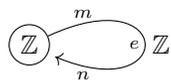

It is also well-known that $\bs(1, n)$ has the \fgip, that is, the intersection of two finitely generated subgroups of $\bs(1,n)$ is finitely generated, see \cite{mol68} for instance. Paramantzoglou showed in \cite{pa12} that all other Baumslag--Solitar groups contain $F_2\times\Z$ as a subgroup and hence do not have the \fgip\ We provide an alternative proof of this fact via an explicit computation of the pullback over the graph of groups from Figure~\ref{fig: BS}.


\begin{prop}{\cite[Proposition~1]{pa12}}
\label{prop: fgip BS}
    A Baumslag--Solitar group has the finitely generated intersection property if and only if it is of the form ${\bs(1,n)}$.
\end{prop}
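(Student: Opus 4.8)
The plan is to prove the two implications separately, keeping the reduction $|n|\ge m\ge 1$ in force throughout (legitimate by the isomorphism classification \cite{mol91} recalled above). The groups of the form $\bs(1,n)$ are then exactly those with $m=1$, and for these the finitely generated intersection property is classical \cite{mol68}; this settles the ``if'' direction. So the whole content is to show that $\bs(m,n)$ fails the \fgip\ whenever $2\le m\le |n|$, by exhibiting two finitely generated subgroups $B,C$ with $B\cap C$ infinitely generated.

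For this I realize $G=\bs(m,n)$ as $\pi_1(\AA,u)$, where $\AA$ is the one-vertex, one-edge graph of groups of Figure~\ref{fig: BS}: $A_u=\langle a\rangle\cong\Z$, $A_e\cong\Z$, $\alpha_e(i)=a^{mi}$ and $\omega_e(i)=a^{ni}$, with stable letter $t=(1,e,1)$. I then choose two immersions $\mu^B\colon(\BB,v)\to(\AA,u)$ and $\mu^C\colon(\CC,w)\to(\AA,u)$ with \emph{trivial} vertex and edge groups, each having a single vertex and two loops mapping to $e$, the twisting elements being suitable powers of $a$; concretely one can take $B=\langle t,\,a t a^{-1}\rangle$ and $C=\langle t a^{-1},\, a t a^{-2}\rangle$. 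Since $\BB$ and $\CC$ are finite graphs of trivial groups, $\pi_1(\BB,v)\cong\pi_1(\CC,w)\cong F_2$, and as $\mu^B,\mu^C$ are immersions, $B$ and $C$ are free of rank $2$, hence finitely generated. The immersion conditions of Definition~\ref{def: folded} reduce here to requiring that the two origin-shifts be distinct modulo $m$ and the two target-shifts distinct modulo $n$, which holds because $m,|n|\ge 2$. By Corollary~\ref{cor: intersection of subgroups}~\eqref{item: intersection of subgroups}, applied to the pointed $\AA$-product $(\DD,x_0)=(\BB,v)\wtimes_\AA(\CC,w)$, the map $(\mu^B\circ\rho^B)_*$ sends $\pi_1(\DD,x_0)$ isomorphically onto $B\cap C$ (the relevant morphisms being immersions, by Lemma~\ref{lem: rhos are folded 2} and Corollary~\ref{cor: composition of folded}); thus it suffices to show that $\pi_1(\DD,x_0)$ is infinitely generated.

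The heart of the argument is the explicit computation of $\DD$. Because all vertex and edge groups of $\BB,\CC$ are trivial, the vertex set of $\gr D$ is the set of double cosets $\{1\}\backslash A_u/\{1\}=\{a^k:k\in\Z\}$, which I identify with $\Z$, and each pair of loops contributes an edge set in bijection with $A_e\cong\Z$. Reading off the incidence maps \eqref{eq: incidence in D}, the edge coming from a loop-pair $(f,g)$ and parameter $\ell\in\Z$ runs from the vertex $m\ell + (i_f-j_g)$ to the vertex $n\ell+(i'_f-j'_g)$, where $i_\bullet,i'_\bullet$ (resp.\ $j_\bullet,j'_\bullet$) are the origin/target shifts of the loops of $\BB$ (resp.\ $\CC$). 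With the shifts chosen as above, two distinct loop-pairs produce, for every $\ell$, a \emph{pair of parallel edges} with the same endpoints $m\ell$ and $n\ell-1$; the remaining edge families then splice these bigons together into a single connected infinite subgraph containing $x_0$. Consequently the connected component of $x_0$ in $\gr D$ is an infinite graph carrying infinitely many independent cycles, so $\pi_1(\DD,x_0)$ is free of infinite rank, and $B\cap C$ is infinitely generated.

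The main obstacle is this last step: choosing the twisting elements so that, \emph{uniformly for all $2\le m\le |n|$}, the basepoint component is both connected and of infinite first Betti number. The tension is that origins scale by $m$ while targets scale by $n$, so for $n\gg m$ the naive graph tends to be a forest; the remedy is to force coincidences of endpoints between different loop-pairs (creating the bigons) while using the auxiliary edge families to guarantee connectivity. Verifying connectivity and the infinitude of independent cycles for the general choice of shifts --- essentially a bookkeeping argument on the arithmetic progressions $m\ell+c$ and $n\ell+c'$ modulo $m$ and $n$ --- is the one genuinely computational part of the proof; the small cases $\bs(2,2)$ and $\bs(2,4)$ already display the infinite chain of bigons and serve as a template for the general verification.
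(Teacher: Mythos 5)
Your overall strategy coincides with the paper's: realise $B$ and $C$ as immersions of one-vertex graphs of trivial groups into the splitting of Figure~\ref{fig: BS}, compute the $\AA$-product explicitly, and conclude via Corollary~\ref{cor: intersection of subgroups} that $B\cap C\cong\pi_1(\DD,x_0)$ has infinite rank. However, the concrete witnesses you propose do not work for all $2\le m\le|n|$, and the step you yourself flag as ``the main obstacle'' is exactly where the argument breaks. Take $B=\langle t,\,ata^{-1}\rangle$, $C=\langle ta^{-1},\,ata^{-2}\rangle$ in $\bs(4,4)$. Reading off \eqref{eq: incidence in D}, the four edge families run from $a^{4\ell}$ to $a^{4\ell-1}$ (twice, the bigon), from $a^{4\ell-1}$ to $a^{4\ell-2}$, and from $a^{4\ell+1}$ to $a^{4\ell}$. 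Only vertices $a^z$ with $z\equiv 0,\pm1\pmod 4$ carry outgoing edges, so $a^{-2}$ is a dead end and $a^{1}$ is a leaf: the component of $x_0=a^0$ is the finite graph on $\{a^1,a^0,a^{-1},a^{-2}\}$ with four edges and first Betti number $1$. Hence $B\cap C=\langle tat^{-1}a^{-1}\rangle\cong\Z$ is finitely generated, and the same failure occurs for $\bs(m,m)$ with $m\ge 4$ (and for $\bs(5,5)$, etc.). Your claim that ``the remaining edge families splice these bigons together into a single connected infinite subgraph containing $x_0$'' is therefore false in general; with only two loops on each side the bigons sit only at multiples of $m$, and nothing forces infinitely many of them into the basepoint component. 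Checking $\bs(2,2)$ and $\bs(2,4)$ is not a template for the general case, because the connectivity pattern depends on which residues mod $m$ carry outgoing edges.

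The paper closes precisely this gap by enlarging $B$ to $\langle t, ata^{-1},\dots,a^mta^{-m}\rangle$, i.e.\ taking $m+1$ loops $f_0,\dots,f_m$ with twisting elements $a^j$. Then for every $j\in[0,m-1]$ and every $i$ the two edges $(f_j,g_+,i)$ and $(f_{j+1},g_-,i)$ form a bigon from $a^{mi+j}$ to $a^{ni+j+1}$, so \emph{every} vertex of $\gr D$ carries a bigon; after that, a one-line monotonicity estimate ($z'-z=(n-m)i+1>0$ for $z\ge 0$ when $n\ge m$, and symmetrically when $n\le -m$) produces an embedded infinite path through infinitely many independent cycles. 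If you want to keep two-generator witnesses you would need to prove connectivity of infinitely many bigon-bearing vertices to $x_0$ for every pair $(m,n)$, which your chosen shifts demonstrably do not achieve; otherwise, adopt the paper's $(m+1)$-generator subgroup $B$, after which the rest of your argument goes through essentially verbatim.
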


\begin{proof}
Given Moldavanskiĭ's result \cite{mol68} mentioned earlier, which states that $\bs(1, n)$ has the finitely generated intersection property, we only need to show that $A = \bs(m,n)$ does not have this property if $2\leq m \leq |n|$. Indeed, consider the subgroups of $A$ given by
    \begin{align*}
        B &= \langle t, ata\inv, \ldots, a^{m-1}ta^{-(m-1)}, a^mta^{-m}\rangle\\
        C &= \langle at, ta\rangle.
    \end{align*}
    We claim that $B\cap C$ is not finitely generated.
    
Let $\AA$ be the graph of groups in Figure~\ref{fig: BS}. We fix a generator $a$ of $A_u$ and denote the operation of $A_u$ multiplicatively: then, for every $i\in \ZZ$, we let $\alpha_e(i) = a^{mi}$ and $\omega_e(i) = a^{ni}$. With reference to Equation~\eqref{eq: bs}, the generator $t$ is $t = (1,e,1)$.

Now let $\BB$ be the graph of groups with a single vertex $v$, $m+1$ edges $f_0, \ldots,f_m$, and trivial vertex and edge groups. Let also $\mu^B$ be the morphism from $\BB$ to $\AA$, which maps each edge of $\gr B$	 to $e$, and such that $(f_j)_\alpha = (f_j)_\omega = a^j$ for each $j \in [0,m]$. It is immediate that $\pi_1(\BB,v)$ is freely generated by the $b_j = (1,f_j,1)$ ($j \in [0,m]$). Moreover, $\mu^B_*(b_j) = ((f_j)_\alpha,e,(f_j)_\omega\inv) = (a^j,e,a^{-j}) = a^jta^{-j}$ for each $j$. So $\mu^B(\pi_1(\BB,v)) = B$.
    \begin{figure}[htbp]
\centering
\begin{tikzpicture}[shorten >=3pt, node distance=2cm and 1.75cm, on grid,auto,-latex]
\begin{scope}
    \node[state] (vA) {$\ZZ$};
    \node[] (Aab) [above left = 0.35 of vA]{};
    \node[] (Ab) [below right = 1 and 0.75 of vA]{$\AA$};
    \node[] (eA) [right = of vA]{$\ZZ$};
    \node[] (Aac) [above right = 0.35 of eA]{};
    
    \path[->] (vA) edge[loop right,min distance=19mm,in=335, out=25]
    node[left] {$e$}
    node[pos=0.85,below] {$n$}
    node[pos=0.15,above] {$m$} (vA);
\end{scope}

\begin{scope}[xshift=-1.5cm,yshift=2 cm]
    \node[state] (vB) {$1$};
    \node[] (Ba) [above = 0.8 of vB]{$\BB$};
    \node[] (Bb) [below = 0.5 of vB]{};
    \node[] (eB) [right = of vB]{$1$};
    \node[] (lB) [left = 1.25 of vB]{$\scriptstyle{j \,\in\, [0\,,\,p-1]}$};

    \path[->] (vB) edge[loop right,min distance=20mm,in=335, out=25]
    node[left] {$f_j$}
    node[pos=0.95,below] {$a^{j}$}
    node[pos=0.05,above] {$a^{j}$} (vB);
\end{scope}
\begin{scope}[xshift=3cm,yshift=2 cm]
    \node[state] (vC) {$1$};
    \node[] (Ca) [above = 0.8 of vC]{$\CC$};
    \node[] (Cb) [below = 0.5 of vC]{};
    \node[] (eC) [right = of vC]{$1$};
    \node[] (eeC) [left = of vC]{$1$};
    
    \path[->] (vC) edge[loop right,min distance=20mm,in=25, out=335]
    node[left] {$g_{_+}$}
    node[pos=0.95,above] {$a\inv$}
    node[pos=0.05,below] {} (vC);
     \path[->] (vC) edge[loop left,min distance=20mm,in=205, out=155]
    node[right] {$g_{_-}$}
    node[pos=0.95,below] {}
    node[pos=0.05,above right] {$a$} (vC);
\end{scope}
 \path[->] (Bb) edge[bend right] 
    node[pos=0.5,below left]{$\mu^{B}$}
    (Aab);
 \path[->] (Cb) edge[bend left] 
    node[pos=0.5,below right]{$\mu^{C}$}
    (Aac);
\end{tikzpicture}
\caption{Morphisms of graphs of groups for subgroups $B$ and $C$ of $\bs(m,n)$}
\label{fig: hom bs(m,n)}
\end{figure}
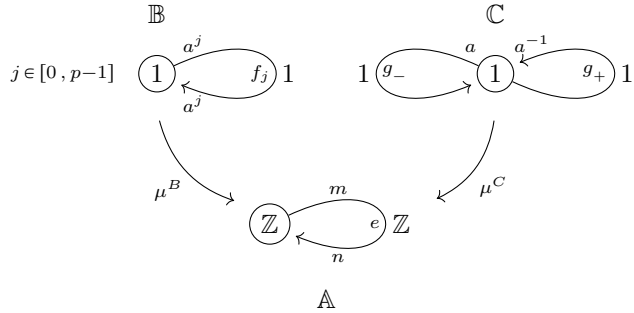
Finally, let $\CC$ be the graph of groups with a single vertex $w$, two edges $g_+$ and $g_-$, and trivial vertex and edge groups. Again, $\pi_1(\CC,w)$ is freely generated by $c_{-} = (1,g_-,1)$ and $c_{+} = (1,g_+,1)$. Let now $\mu^C$ be the morphism from $\CC$ to $\AA$, which maps $g_-$ and $g_+$ to $e$, and such that $(g_{-})_{\alpha} = a$, $(g_+)_\omega = a\inv$ and $(g_-)_\omega = (g_+)_\alpha = 1$. Then $\mu^C_*(c_{-}) = \mu^C_*((1,g_-,1)) = (a,e,1) = at$, and $\mu^C_*(c_{+}) = \mu^C_*((1,g_+,1)) = (1,e,a) = ta$. Thus $\mu^C_*(\pi_1(\CC,w)) = C$. In addition, both morphisms $\mu^B$ and $\mu^C$ are immersions.

By definition, the vertex set of $\BB\wtimes_\AA\CC$ is the set of $(1,1)$-double cosets in $A_u$, that is, the set of all $\{a^z\}$ ($z\in \Z$). For simplicity, we write $V = \{a^z\mid z\in \ZZ\}$. Similarly, its edge set can be identified with
$$E(\gr B\times_{\gr{A}}\gr C)\times A_e = \{f_0, \dots, f_m\} \times \{g_-, g_+\} \times A_e = \{f_0, \dots, f_m\} \times \{g_-, g_+\} \times \Z.$$
In view of~\eqref{eq: incidence in D}, the incidence relation in the underlying graph is given by
 \begin{align*}
o(f_j, g_{+}, i) &= a^{mi + j} &&o(f_j, g_{-}, i) = a^{mi + j - 1} \textrm{ and}\\
t(f_j, g_{+}, i) &= a^{ni + j + 1} &&t(f_j, g_{-}, i) = a^{ni + j}.
\end{align*}
And the vertex and edge groups of $\BB\wtimes_\AA\CC$ are trivial.

In particular, if $j \in [0,m-1]$ and $i\in \ZZ$, then $\BB\wtimes_\AA\CC$ has two distinct edges from $a^{mi+j}$ to $a^{ni+j+1}$ (namely $(f_j,g_+,i)$ and $(f_{j+1},g_-,i)$). Since every integer $z$ is of the form $mi+j$ for some $j \in [0,m-1]$ and $i\in \ZZ$, every vertex of $\BB\wtimes_\AA\CC$  sits in a non-trivial circuit, see Figure~\ref{fig: length 2 circuit}.
\begin{figure}[htbp]
\centering
\begin{tikzpicture}[shorten >=3pt, node distance=2cm and 1.75cm, on grid,auto,-latex]
\begin{scope}
    \node (1) {$a^{mi+j}$};
    \node (2) [right = 4 of 1]{$a^{ni+j+1}$};

    \path[->] (1) edge[bend left =15] 
    node[pos=0.5,above]{$(f_{j},g_{+},i)$}
    (2);
    \path[->] (1) edge[bend left=-15] 
    node[pos=0.5,below]{$(f_{j+1},g_{-},i)$}
    (2);
\end{scope}
\end{tikzpicture}
\caption{Length 2 circuits in $\BB\wtimes_\AA\CC$, where $i\in \Z$ and $j\in [0,m-1]$}
\label{fig: length 2 circuit}
\end{figure}

Suppose now that $n \ge m $ and let $z$ be a non-negative integer, say $z = mi+j$ for some $i \in \Z$ and $j\in [0,m-1]$. Then the edge $(f_j,g_+,i)$ of $\BB\wtimes_\AA\CC$ goes from $x_z$ to $x_{z'}$, where $z' = ni+j+1$. Then $z'-z = (n-m)i+1 > 0$. Thus $\BB\wtimes_\AA\CC$ contains an embedded infinite path (that is, a path without repetition of vertices) starting at $x_0$.

Symmetrically, suppose that $n \le -m$ and let $z\ne 0$, say $z = -mi + j$ for $i\in \Z$ and $j\in [0,m-1]$. The edge $(f_j,g_+,i)$ of $\BB\wtimes_\AA\CC$ goes from $x_z$ to $x_{z'}$, where $z' - z = (n-m)i+1 \le -2m+1 < 0$. Here too, $\BB\wtimes_\AA\CC$ has an embedded infinite path starting at $x_0$.

Thus, in every case, the pullback $\DD = \BB\times_{\AA}\CC$ contains an embedded infinite path starting at $x_0$ such that at each vertex along this path there is a (non-trivial) circuit of length 2. This implies that $\pi_1(\gr{D}, x_0)$ is an infinite rank free group. Since $\pi_1(\DD, x_0)$ surjects $\pi_1(\gr{D}, x_0)$ (this is well-known, see \cite[p. 42]{ser80} for instance), it follows that $\pi_1(\DD, x_0) \cong B\cap C$ is not finitely generated.
\end{proof}

\section{The subcategory of core graphs of groups with immersions}\label{sec: subcategory of core gog}

As discussed in the introduction, a major motivation for considering morphisms of graphs of groups and their pullbacks is to explore the intersection of subgroups of the fundamental group of a graph of groups. In this context, it is natural to restrict our attention to immersions of graphs of groups (see Corollary~\ref{cor: folded morphisms}). There is one further restriction that we shall impose, \emph{coreness}, inspired by Stallings' treatment of the case of graphs in \cite{sta83}. Indeed, the overarching theme of this section is to demonstrate that, in the subcategory of core graphs of groups with immersions, there are completely analogous statements to those established by Stallings \cite{sta83} for immersions of core graphs.

The structure of this section is as follows. We shall begin by defining the core of a graph of groups and establishing a criterion for the existence of the pullback in the category of core graphs of groups with immersions. Then, we shall convert the theory of coverings of graphs of groups, as developed by Bass \cite{bas93}, into our language. This allows us to define canonical immersions of core graphs of groups which are in correspondence with (conjugacy classes of) subgroups. We then use this to explicitly interpret the components of a given $\AA$-product $\BB\wtimes_{\AA}\CC$ (over $\mu^B$ and $\mu^C$) in terms of $B = \mu^B_*(\pi_1(\BB, v)), C = \mu^C_*(\pi_1(\CC, w))$ double cosets in $A = \pi_1(\AA, u)$: each component of $\core(\BB\wtimes_{\AA}\CC)$ corresponds to a double coset $B\,g\,C$ such that $B^g\cap C$ contains at least one element that is not conjugate into a vertex group. Finally, we close the paper with further improvements for core acylindrical graphs of groups with immersions: in this case, pullbacks always exist.

\subsection{Core graphs of groups and the pullback}\label{sec: core gog and pullbacks}

The notion of a core graph extends naturally to graphs of groups as follows.

\begin{defn}
    Let $\AA$ be a graph of groups, let $\gr A$ be its underlying graph and let $u \in V(\gr A)$.
    \begin{itemize}
        \item An $\AA$-circuit $p = (a_0,\, e_1,\, a_1\, \dots\, e_k,\, a_k)$ is \emph{cyclically reduced} if it is reduced and, when $e_k = e_1\inv$, we additionally have $a_ka_0\notin \alpha_{e_1}(A_{e_1})$. Every $\AA$-circuit $p$ has a \emph{cyclic reduction}; that is, a cyclically reduced $\AA$-circuit $q$ so that $p \sim_{\AA} r^{-1}\,q\,r$ for some $\AA$-path connecting the origin of $q$ with that of $p$ (as in Proposition~\ref{prop: sim vs equiv}~\eqref{eq: reduce path}).
        
        \item The \emph{core of $\AA$ at $u$}, written $\core(\AA,u)$, is the graph of groups consisting of all the vertices and edges of $\gr{A}$ (and the corresponding vertex and edge groups and structure maps) that occur in reduced $\AA$-circuits at $u$.

        \item The (unpointed) \emph{core of $\AA$}, written $\core(\AA)$, is the graph of groups consisting of all the vertices and edges of $\gr{A}$ (and the corresponding vertex and edge groups and structure maps) that occur in cyclically reduced $\AA$-circuits  of positive length (at any vertex).
        
        \item We say that $\AA$ is \emph{core} if it is equal to $\core(\AA)$, and that it is \emph{core at $u$} if it is equal to $\core(\AA,u)$. Finally, we say that the pointed graph $(\AA, u)$ is \emph{core} if $\AA$ is core at $u$.
    \end{itemize}
\end{defn}

Note that when $\AA$ is a graph of trivial groups, the underlying graph of $\core(\AA)$ coincides with Stallings' core of the underlying graph $\gr{A}$.

The form of the core of a graph of groups is not too dissimilar to the case of graphs, as the following lemma demonstrates.

\begin{lem}
\label{lem: form_of_core}
    Let $\AA'$ be a graph of groups and suppose that $\AA = \core(\AA')\neq\emptyset$. Then $\AA'$ contains subtrees of groups $\mathbb{T}_v$ for each vertex $v\in V(\gr{A})$, such that $\gr{T}_v\cap \gr{A} = v$, $\gr{T}_v\cap \gr{T}_w = \emptyset$ for all $v\neq w\in V(\gr{A})$ and 
    \[
    \gr{A}' = \gr{A} \cup \bigcup_{v\in V(\gr{A})}\gr{T}_v.
    \]
Furthermore, for each edge $e\in E(\gr{T}_v)$ oriented towards $v$, the edge map $\alpha_e\colon A_e\to  A_{o(e)}$ is an isomorphism.
\end{lem}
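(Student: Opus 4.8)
The plan is to treat the two assertions separately: first the combinatorial shape of $\gr{A}'$ relative to its core $\gr{A}$ (a core with trees attached at single vertices), and then the statement about the edge maps. Throughout, the workhorse will be two ways of manufacturing cyclically reduced circuits. The first is the observation that if $(e_1,\dots,e_k)$ is an embedded cycle of the underlying graph $\gr{A}'$ (so $e_k\neq e_1\inv$ and $k\ge 1$), then the all-identity $\AA'$-circuit $(1,e_1,1,\dots,e_k,1)$ is automatically reduced and cyclically reduced of positive length, since the reducedness and cyclic-reducedness conditions are only tested at indices where consecutive edges are mutually inverse. The second is that a reduced ``excursion'' $(e\inv,a,e)$ with $a\notin\alpha_e(A_e)$ can be grafted onto a nontrivial core circuit to produce a cyclically reduced circuit through $e$. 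I would first reduce to the case where $\gr{A}'$ is connected (the general statement then follows component by component, the only relevant components being those meeting $\gr{A}=\core(\AA')$; note that a non-core vertex cannot be an endpoint of a core edge, since both endpoints of an edge occurring in a cyclically reduced circuit lie on that circuit, hence are themselves core vertices).

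For the underlying graph, I would argue that every edge not in $\gr{A}$ is a bridge: by the first construction, any edge lying on an embedded cycle belongs to $\core(\AA')$, so the non-core edges lie on no cycle and therefore span a forest $F$. Next I would show that each connected component of $F$ contains exactly one vertex of $\gr{A}$. At least one such vertex exists by connectedness of $\gr{A}'$ together with the remark that non-core vertices are incident only to non-core edges. At most one is the key point: if a component of $F$ contained two distinct core vertices $v_1,v_2$, joined by the reduced path $q=(q_1,\dots,q_s)$ in $F$, then choosing nontrivial cyclically reduced circuits $c_1$ at $v_1$ and $c_2$ at $v_2$, the circuit $q\,c_2\,q\inv\,c_1$ (with identity elements along $q$ and $q\inv$) is reduced and cyclically reduced of positive length: its consecutive mutually inverse edges all lie inside $c_1$ or $c_2$ and are guarded by the reducedness of those circuits, every junction between $q^{\pm1}$ and a $c_i$ pairs a non-core with a core edge, and its first edge $q_1$ and last edge (taken from $c_1$) are respectively non-core and core, hence not mutually inverse. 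Thus it traverses every $q_j$, forcing $q_1,\dots,q_s\in\core(\AA')$, a contradiction. Defining $\gr{T}_v$ to be the component of $F$ through $v$ (and $\{v\}$ if $v$ lies on no non-core edge) then yields pairwise disjoint subtrees with $\gr{T}_v\cap\gr{A}=\{v\}$ and $\gr{A}'=\gr{A}\cup\bigcup_v\gr{T}_v$.

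For the edge maps, fix an edge $e\in E(\gr{T}_v)$ oriented towards $v$, so $u'=o(e)$ is the endpoint farther from $v$; I want to show $\alpha_e\colon A_e\to A_{u'}$ is onto (it is injective, being a monomorphism). Suppose not and pick $a\in A_{u'}\setminus\alpha_e(A_e)$. Let $P=(p_1,\dots,p_k)$ be the reduced path in $\gr{T}_v$ from $t(e)$ to $v$, and let $c=(d_0,f_1,\dots,f_m,d_m)$ be a nontrivial cyclically reduced circuit at $v$. I would then form the circuit
\[
S=(1,p_k\inv,1,\dots,1,p_1\inv,1,e\inv,a,e,1,p_1,1,\dots,1,p_k,d_0,f_1,d_1,\dots,f_m,d_m)
\]
at $v$, which runs out along $P\inv$ and $e\inv$ to $u'$, turns around using $a$, returns along $e$ and $P$, and then performs $c$. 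The only pair of consecutive mutually inverse edges is $(e\inv,e)$, guarded by $a\notin\alpha_e(A_e)$, so $S$ is reduced; and its first edge $p_k\inv$ (or $e\inv$ when $k=0$) is a non-core edge while its last edge $f_m$ is a core edge, so they are not mutually inverse and $S$ is cyclically reduced of positive length. As $S$ traverses $e$, this gives $e\in\core(\AA')$, contradicting $e\in E(\gr{T}_v)$. Hence $\alpha_e$ is an isomorphism.

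The main obstacle — and the reason both constructions look slightly elaborate — is securing \emph{cyclic} reducedness rather than mere reducedness: a naive ``there-and-back'' excursion is reduced but always backtracks cyclically at its basepoint. The device that resolves this, in both the uniqueness argument and the edge-map argument, is to append a nontrivial cyclically reduced circuit drawn from the core, so that the first and last edges of the resulting circuit are a non-core and a core edge respectively and hence cannot be mutually inverse. A secondary point to keep in mind is the connectedness caveat noted above, needed so that $\gr{A}'$ is genuinely exhausted by $\gr{A}$ together with the trees $\gr{T}_v$.
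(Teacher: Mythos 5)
Your proof is correct and follows essentially the same route as the paper's: embedded cycles in the underlying graph yield cyclically reduced $\AA'$-circuits with trivial labels (so non-core edges span a forest), and an out-and-back excursion through $\gr{T}_v$ capped with a cyclically reduced core circuit forces each $\alpha_e$ to be surjective. You spell out the forest decomposition and the one-core-vertex-per-component argument, which the paper compresses into a single sentence, and your connectedness caveat is a fair observation that the paper leaves implicit.
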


\begin{proof}
    The fact that $\gr{A}'$ has the claimed form follows from the fact that any cyclically reduced circuit in $\gr{A}'$ gives rise to a cyclically reduced $\AA$-circuit of positive length.
            
    Let $v\in V(\gr{A})$, $q$ a cyclically reduced $\AA$-circuit based at $v$, $e_1$ be an edge in $\gr T_v$, oriented towards $v$ and let $a_1$ be an arbitrary element of $A_{o(e_1)}$. Since $\gr T_v$ is a tree, there exist $e_2,\dots,e_n \in E(\gr T_v)$ such that $(e_1, \ldots, e_n)$ is a geodesic path ending at $v$. Consider the $\AA'$-circuit $p = (a_1, e_1, 1, \ldots, e_n, 1)\, q\, (1, e_n^{-1}, \ldots, e_1^{-1}, 1)$. Then $p$ is cyclically reduced if and only if $a_1 \notin  \alpha_{e_1}(A_{e_1})$. But $e_1$ is not in $\AA = \core(\AA')$, so we must have $a_1 \in \alpha_{e_1}(A_{e_1})$. Since this holds for every $a_1\in A_{o(e_1)}$, $\alpha_{e_1}$ is surjective and hence an isomorphism.
\end{proof}

We note the following useful special case.

\begin{rem}\label{rk: pointed core vs core}
    Let $(\AA',u_0)$ be a pointed core graph of groups and let $\AA = \core(\AA')$. If $\AA$ is empty, then $\gr A'$ is a finite line.

    If $\AA$ is not empty, Lemma~\ref{lem: form_of_core} shows that $\AA'$ has underlying graph a finite line $\gr L$ connecting the basepoint with $\AA$, such that for each edge $e\in E(\gr{L})$ pointing away from the basepoint (and hence towards $\AA$), the map $\alpha_e$ is an isomorphism.
\end{rem}

Theorem~\ref{thm: pullbacks in GrGp*} and Corollary~\ref{cor: it is a pullback} established the existence of pullbacks in $\GrGp^*$ and in its subcategory, restricted to immersions. This easily extends to the subcategory restricted to core pointed graphs of groups and immersions. 

\begin{cor}\label{cor: core pullback}
Let $\mu^B\colon(\BB, v_0)\to(\AA, u_0)$ and $\mu^C\colon (\CC, w_0)\to(\AA, u_0)$ be immersions between core pointed graphs of groups. Let $(\DD,x_0)$ be their pointed $\AA$-product, with projection morphisms $\rho^B$ and $\rho^C$.

Then $(\core(\DD, x_0), \rho^B, \rho^C)$ is the pullback of $\mu^B$ and $\mu^C$ in the category of core pointed graphs of groups with immersions (up to $\sim$-equivalence).
\end{cor}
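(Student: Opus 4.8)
The goal is to deduce Corollary~\ref{cor: core pullback} from the already-established fact (Corollary~\ref{cor: it is a pullback}) that $(\DD,x_0)$ is the pullback of $\mu^B$ and $\mu^C$ in the category of pointed graphs of groups with immersions, by replacing $\DD$ with its pointed core $\core(\DD,x_0)$. The plan is to work entirely within the subcategory of core pointed graphs of groups with immersions and verify the universal property directly, leaning on two facts: that taking the pointed core does not change the fundamental group (so $\pi_1(\core(\DD,x_0),x_0) = \pi_1(\DD,x_0)$, and the inclusion $\core(\DD,x_0)\hookrightarrow \DD$ is an immersion inducing an isomorphism on $\pi_1$), and that any immersion between core graphs of groups has image landing inside the core.

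First I would record that $\rho^B$ and $\rho^C$ restrict to immersions $\core(\DD,x_0)\to(\BB,v_0)$ and $\core(\DD,x_0)\to(\CC,w_0)$: since $\rho^B,\rho^C$ are immersions by Lemma~\ref{lem: rhos are folded 2}, and the inclusion of the core is itself an immersion, the composites are immersions by Corollary~\ref{cor: composition of folded}. I would also note that $\mu^B\circ\rho^B\sim\mu^C\circ\rho^C$ still holds after restriction, by Lemma~\ref{lem: composition}~\eqref{item: simple equivalence}. Next, given a test object: an immersion pair $\sigma^B\colon(\DD',y_0)\to(\BB,v_0)$, $\sigma^C\colon(\DD',y_0)\to(\CC,w_0)$ between core pointed graphs of groups with $\mu^B\circ\sigma^B\sim\mu^C\circ\sigma^C$, Corollary~\ref{cor: it is a pullback} already furnishes a unique (up to $\sim$) immersion $\sigma\colon(\DD',y_0)\to(\DD,x_0)$ with $\rho^B\circ\sigma\sim\sigma^B$ and $\rho^C\circ\sigma\sim\sigma^C$. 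The crux is to show that $\sigma$ actually factors through $\core(\DD,x_0)$, i.e.\ that the image of $\sigma$ lies in the core.

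The main obstacle, and the key step, is establishing that factorisation. The argument is that $\DD'$ is core at $y_0$, so every vertex and edge of $\gr D'$ lies on a reduced $\DD'$-circuit at $y_0$; since $\sigma$ is an immersion (Proposition~\ref{folded}), it sends reduced $\DD'$-circuits at $y_0$ to reduced $\DD$-circuits at $x_0$, so every vertex and edge in the image of $\sigma$ occurs in a reduced $\DD$-circuit at $x_0$ and therefore lies in $\core(\DD,x_0)$ by definition of the pointed core. Hence $\sigma$ corestricts to an immersion $\bar\sigma\colon(\DD',y_0)\to(\core(\DD,x_0),x_0)$, and composing with the core inclusion recovers $\sigma$, so $\rho^B\circ\bar\sigma\sim\sigma^B$ and $\rho^C\circ\bar\sigma\sim\sigma^C$ where here $\rho^B,\rho^C$ now denote the restrictions to the core.

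Finally I would address uniqueness. If $\bar\sigma,\bar\sigma'\colon(\DD',y_0)\to(\core(\DD,x_0),x_0)$ both satisfy the two compatibility equivalences, then composing each with the core inclusion gives two immersions $\DD'\to\DD$ satisfying the compatibility conditions; the uniqueness clause of Corollary~\ref{cor: it is a pullback} forces them to be $\sim$-equivalent as morphisms into $\DD$, and since both factor through the core with the same basepoint image $x_0$, the $\sim$-equivalence is witnessed by parameters valued in the vertex and edge groups of $\core(\DD,x_0)$ — these coincide with those of $\DD$ at core vertices and edges — so $\bar\sigma\sim\bar\sigma'$ as morphisms into the core. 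This completes the verification of the universal property and hence the proof.
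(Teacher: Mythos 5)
Your proposal is correct and follows essentially the same route as the paper: invoke Corollary~\ref{cor: it is a pullback} to obtain the lift $\sigma$ into $(\DD,x_0)$, then use the fact that immersions send reduced paths to reduced paths (Proposition~\ref{folded}) together with coreness of $(\DD',y_0)$ to conclude that the image of $\sigma$ lies in $\core(\DD,x_0)$. The extra remarks you include (that $\rho^B,\rho^C$ restrict to immersions on the core, and the explicit uniqueness check) are routine and consistent with what the paper leaves implicit.
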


\begin{proof}
Let $\sigma^B\colon (\DD',y_0) \to (\BB,v_0)$ and $\sigma^C\colon (\DD',y_0) \to (\CC,w_0)$ be immersions such that $(\DD',y_0)$ is core and $\mu^B \circ \sigma^B \sim \mu^C \circ \sigma^C$. Since $((\DD, x_0), \rho^B, \rho^C)$ is the pullback of $\mu^B$ and $\mu^C$ in the category of pointed graphs of groups with immersions (Corollary~\ref{cor: it is a pullback}), there exists an immersion $\sigma\colon (\DD',y_0) \to (\DD,x_0)$ such that $\rho^B\circ\sigma \sim \sigma^B$ and $\rho^C\circ\sigma \sim \sigma^C$.

Since $\sigma$ is an immersion, it maps reduced paths to reduced paths (Proposition~\ref{folded}), and hence its range $\sigma(\DD')$ is core at $\sigma(y_0) = x_0$. Thus $\sigma$ is in fact an immersion from $(\DD',y_0)$ to $(\core(\DD,x_0), x_0)$, which concludes the proof.
\end{proof}

We saw that pullbacks do not always exist in the category $\GrGp$ of unpointed graphs of groups (Example~\ref{ex: pullbacks may not exist}). This is the case also if we restrict $\GrGp$ to core graphs of groups and immersions between them. However, with a technical condition on a given immersion $\mu^C\colon \CC\to \AA$, we can at least guarantee that pullbacks with $\mu^C$ always exist.

\begin{cor}
\label{thm: pullbacks_exist_sometimes1}
    Let $\mu^C\colon \CC\to \AA$ be an immersion of core graphs of groups such that for each cyclically reduced $\CC$-circuit $c$, there is no non-trivial element $a\in A_{\mu^C(o(c))}$ such that ${[a, \mu^C(c)] = 1}$. Then the following holds:

    If $\mu^B\colon \BB\to \AA$ is an immersion of core graphs of groups, then the pullback of $\mu^B$ and $\mu^C$ exists in the category of core graphs of groups with immersions and is precisely $(\core(\BB\wtimes_{\AA}\CC), \rho^B, \rho^C)$.
\end{cor}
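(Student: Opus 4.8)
The plan is to verify that the triple $(\core(\BB\wtimes_\AA\CC), \rho^B, \rho^C)$, where $\DD = \BB\wtimes_\AA\CC$, is the pullback of $\mu^B$ and $\mu^C$ in the stated subcategory. First I would record that this is a legitimate cone: $\core(\DD)$ is core by construction, the restrictions of $\rho^B$ and $\rho^C$ to $\core(\DD)$ are immersions (they are immersions on $\DD$ by Lemma~\ref{lem: rhos are folded 2}, and the conditions of Definition~\ref{def: folded} are inherited by the full subgraph of groups $\core(\DD)$), and $\mu^B\circ\rho^B \approx \mu^C\circ\rho^C$ by Lemma~\ref{lem: composition}~\eqref{item: path level equivalence}. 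The one preliminary fact I would isolate is that \emph{an immersion sends cyclically reduced circuits to cyclically reduced circuits}: a circuit $c$ is cyclically reduced if and only if the concatenation $c\,c$ is reduced, and since an immersion sends reduced paths to reduced paths (Proposition~\ref{folded}), the image $\sigma(c)\,\sigma(c) = \sigma(c\,c)$ is reduced, so $\sigma(c)$ is cyclically reduced. Consequently an immersion maps a core graph of groups into the core of its target.

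For existence, I would take a cone $(\DD',\sigma^B,\sigma^C)$ with $\DD'$ core and $\sigma^B,\sigma^C$ immersions satisfying $\mu^B\circ\sigma^B \approx \mu^C\circ\sigma^C$, and argue one connected component of $\DD'$ at a time. A component $\DD'_j$ maps under $\sigma^B$ and $\sigma^C$ into single connected components $\BB_k$ and $\CC_l$, and $\BB_k\wtimes_\AA\CC_l$ is a union of connected components of $\DD$; Proposition~\ref{prop: existence 2} then furnishes a lift $\sigma_j\colon\DD'_j \to \BB_k\wtimes_\AA\CC_l$ with $\rho^B\circ\sigma_j \approx \sigma^B|_{\DD'_j}$ and $\rho^C\circ\sigma_j\approx\sigma^C|_{\DD'_j}$. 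Since $\rho^B$ is an immersion and $\rho^B\circ\sigma_j$ is $\approx$-equivalent to the immersion $\sigma^B|_{\DD'_j}$ (and $\approx$-equivalence preserves being an immersion, as by Proposition~\ref{prop: change of data} it alters the image of a path only by conjugation at its endpoints, which preserves reducedness), Corollary~\ref{cor: composition of folded} shows $\sigma_j$ is an immersion; and because $\DD'_j$ is core, the preliminary fact places $\sigma_j(\DD'_j)$ inside $\core(\DD)$. Assembling the $\sigma_j$ over all components and collecting the component-wise parameters yields an immersion $\sigma\colon\DD' \to \core(\DD)$ with $\rho^B\circ\sigma\approx\sigma^B$ and $\rho^C\circ\sigma\approx\sigma^C$.

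The heart of the argument, and the step I expect to be the main obstacle, is \emph{uniqueness} up to $\approx$, which is exactly where the hypothesis on $\mu^C$ enters. Working again on a connected component $\DD'_j$ and fixing a vertex $y_0$, I would show that the centraliser $C(\mu^C\circ\sigma^C)$ is trivial. Indeed, any tuple $(d_y)_y$ in this centraliser has $d_{y_0}\in A_{\mu^C(\sigma^C(y_0))}$ commuting with $(\mu^C\circ\sigma^C)_*(\pi_1(\DD'_j,y_0))$; choosing a positive-length cyclically reduced $\DD'_j$-circuit $c$ at $y_0$ (which exists since $\DD'_j$ is core), the preliminary fact makes $\sigma^C(c)$ a cyclically reduced $\CC$-circuit, and the hypothesis on $\mu^C$ forces $d_{y_0} = 1$; Lemma~\ref{lem: trivial element of centraliser} then propagates this to $d_y = 1$ for all $y$. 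With $C(\mu^C\circ\sigma^C)$ trivial, Theorem~\ref{thm: lift characterisation} shows that every lift $\DD'_j\to\DD$ is $\approx$-equivalent to the single lift $\sigma^{(d)}$ with $d=(1)_y$, so any two lifts agree up to $\approx$ on each component; reassembling the component-wise equivalence parameters gives a global $\approx$-equivalence, which establishes uniqueness. Existence together with uniqueness of the lift is precisely the universal property, so $(\core(\DD),\rho^B,\rho^C)$ is the desired pullback.
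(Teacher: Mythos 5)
Your proposal is correct and follows essentially the same route as the paper: the key step in both is that the hypothesis on $\mu^C$, applied to the image under $\sigma^C$ of a cyclically reduced circuit in the core graph $\DD'$, forces the centraliser $C(\mu^C\circ\sigma^C)$ to be trivial, after which Theorem~\ref{thm: lift characterisation} yields existence and uniqueness of the lift up to $\approx$-equivalence. The additional details you supply (verifying the cone conditions, the component-wise assembly, and the observation that immersions preserve cyclic reducedness so the lift lands in $\core(\BB\wtimes_\AA\CC)$) are correct and are left implicit in the paper's shorter argument.
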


\begin{proof}
    Let $\sigma^B \colon\DD'\to \BB$ and $\sigma^C\colon\DD'\to \CC$ be immersions of core graphs of groups such that $\mu^B\circ\sigma^B\approx\mu^C\circ\sigma^C$. Let $d$ be a cyclically reduced $\DD'$ circuit at $y$ and let $(a_y)_y\in C(\mu^C\circ\sigma^C)$. By definition, we have $(\mu^C\circ\sigma^C(d))\,a_y = a_y\, (\mu^C\circ\sigma^C(d))$, that is, $[a_y, \mu^C(\sigma^C(d))] = 1$. Since $\sigma^C$ is an immersion, $\sigma^C(d)$ is cyclically reduced. Hence, our assumption on $\mu^C$ implies that $(a_y)_y = (1)_y$. Now Theorem \ref{thm: lift characterisation} implies that there is a unique lift $\sigma\colon \DD'\to \core(\BB\wtimes_{\AA}\CC)$ (up to $\approx$-equivalence) such that $\rho^B\circ\sigma\approx\sigma^B$ and $\rho^C\circ\sigma\approx\sigma^C$. It follows that $(\core(\BB\wtimes_{\AA}\CC), \rho^B, \rho^C)$ is the pullback of $\mu^B$ and $\mu^C$, as announced.
\end{proof}

\subsection{Coverings of graphs of groups}
\label{sec: covers}

Let $(\AA,u_0)$ be a pointed graph of groups and let $A = \pi_1(\AA,u_0)$. We saw before that if $\mu\colon (\BB,v_0) \to (\AA, u_0)$ is a morphism of pointed graphs of groups, then $\mu_*\colon \pi_1(\BB,v_0) \to \pi_1(\AA,u_0)$ is a group morphism (see Corollary~\ref{cor: mu* is a group morphism})\footnote{In category-theoretic terms, $\pi_1$ can be viewed as a functor, and $\mu_*$ is the image of $\mu$ under $\pi_1$.}, so that $\mu$ can be thought of as \emph{representing} the subgroup $B = \mu_*(\pi_1(\BB,v_0))$ of $A$. We also saw that $\sim$-equivalent morphisms represent the same subgroup (Proposition~\ref{prop: change of data}). Finally, we saw that if $\mu$ is an immersion, then $\mu_*$ is injective (Corollary~\ref{cor: folded morphisms}), that is, $\pi_1(\BB,v_0)$ is isomorphic to $B$. It turns out that every subgroup of $A$ can be represented in this way, and one can identify a canonical morphism representing it using the notion of coverings, see Theorem~\ref{thm: uniqueness of covers} below. 

\begin{defn}\label{def: coverings}
    Let $\mu\colon \BB\to \AA$ be a morphism of graphs of groups. We say that $\mu$ is a \emph{covering} of $\AA$ if it is an immersion and if, for each vertex $v\in V(\gr B)$, the following map 
\begin{align*}
    \Star(v) &\longrightarrow \bigsqcup_{e\in \Star(\mu(v))} \dblcoset{\mu(B_v)}{A_{\mu(v)}}{\alpha_e(A_e)} \\
    f &\longmapsto \left(\mu(B_v)\right)\, f_{\alpha}\, \left(\alpha_{\mu(f)}(A_{\mu(f)})\right).
\end{align*}
is bijective.

A morphism $\mu\colon (\BB, v_0) \to (\AA, u_0)$ of pointed graphs of groups is a \emph{pointed covering} if the underlying morphism $\mu\colon \BB \to \AA$ is a covering.
\end{defn}

\begin{rem}\label{rk: equivalent to Bass}
    Bass \cite[Section 2]{bas93} defines a covering of graphs of groups to be a morphism $\mu\colon \BB\to \AA$ such that for each vertex $v\in V(\gr{B})$, the following map,
    \begin{align*}
        \bigsqcup_{f\in \Star(v)}B_v/\alpha_f(B_f) &\to \bigsqcup_{e\in\Star(\mu(v))}A_{\mu(v)}/\alpha_{e}(A_{e})\\
        b\, \alpha_f(B_f) &\mapsto \mu_v(b)\, f_{\alpha}\, \alpha_{\mu(f)}(A_{\mu(f)}),
    \end{align*}
    is bijective. This can be seen to be equivalent to Definition~\ref{def: coverings}, see Remark~\ref{rem: folded}.
\end{rem}

We note the following sharpening of Lemma~\ref{lem: rhos are folded 2}, in the case of coverings.

\begin{lem}\label{lem: coverings yield coverings}
    If $\mu^B\colon \BB\to \AA$ is a covering and $\mu^C\colon \CC\to \AA$ is a morphism, then $\rho^C\colon \BB\wtimes_{\AA}\CC\to \CC$ is a covering. Likewise, if $\mu^C$ is a covering, then $\rho^B$ is one too.
\end{lem}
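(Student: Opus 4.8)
The plan is to prove that $\rho^C$ is a \emph{covering}, i.e.\ (Definition~\ref{def: coverings}) an immersion all of whose star maps are \emph{bijective}. Half of this is already available: since a covering is in particular an immersion, Lemma~\ref{lem: rhos are folded 2} tells us that $\rho^C$ is an immersion, and by condition~(1) of Definition~\ref{def: folded} this means precisely that each star map of $\rho^C$ is \emph{injective}. So the entire content of the lemma reduces to proving that, for every vertex $x\in V_{v,w}(\gr D)$, the star map $h\mapsto \rho^C_x(D_x)\,c_h\,\alpha_g(C_g)$ (where $g=\rho^C(h)$ and $c_h=h^C_\alpha$ is the fixed twisting element of $\rho^C$ at $h$, see Definition~\ref{defn: projection morphisms}) is \emph{surjective} onto $\bigsqcup_{g\in\Star(w)}\dblcoset{\rho^C_x(D_x)}{C_w}{\alpha_g(C_g)}$.

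First I would set up the surjectivity construction, and this is where the covering hypothesis on $\mu^B$ enters. Fix $x\in V_{v,w}(\gr D)$ with $[v]=[w]=u$, fix $g\in\Star(w)$ with $[g]=e$, and fix $c\in C_w$; the goal is to produce $h\in\Star(x)$ with $\rho^C(h)=g$ and $c_h$ in the double coset $\rho^C_x(D_x)\,c\,\alpha_g(C_g)$. Consider the element $a=\tilde x\,\mu^C_w(c)\,g_\alpha\in A_u$. Because $\mu^B$ is a covering, its star map at $v$ is bijective, so there is a (unique) edge $f\in\Star(v)$ with $[f]=e$ and $\mu^B_v(B_v)\,f_\alpha\,\alpha_e(A_e)=\mu^B_v(B_v)\,a\,\alpha_e(A_e)$; extracting witnesses yields $b\in B_v$ and $a''\in A_e$ with $f_\alpha\,\alpha_e(a'')=\mu^B_v(b)\,a$. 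I then let $h$ be the $(f,g)$-edge $\mu^B_f(B_f)\,a''\,\mu^C_g(C_g)$. Using the incidence formula~\eqref{eq: incidence in D} and the identity $f_\alpha\,\alpha_e(a'')\,g_\alpha^{-1}=\mu^B_v(b)\,\tilde x\,\mu^C_w(c)$, a direct check gives $o(h)=\mu^B_v(B_v)\,\tilde x\,\mu^C_w(C_w)=x$, so indeed $h\in\Star(x)$ and $\rho^C(h)=g$.

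The main obstacle — and the only genuinely computational step — is the last verification, namely that the \emph{fixed} twisting datum $c_h$ attached to $h$ in the definition of $\DD$ actually lands in the prescribed double coset, even though I constructed $h$ only as a double coset (its chosen representative $\tilde h$ and the chosen $b_h,c_h$ are arbitrary). I would handle this by writing $\tilde h=\mu^B_f(\beta)\,a''\,\mu^C_g(\gamma)$ for suitable $\beta\in B_f$, $\gamma\in C_g$, inserting this into Equation~\eqref{eq: old x tilde} for $h$, and pushing the factors $\alpha_e(\mu^B_f(\beta))$ and $\alpha_e(\mu^C_g(\gamma))$ out using the twisted commutation relations~\eqref{eq: twisted commutation}. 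This collapses~\eqref{eq: old x tilde} to a relation of the form $\mu^B_v(B_0)^{\tilde x}=\mu^C_w\!\big(c_h\,\alpha_g(\gamma)^{-1}\,c^{-1}\big)$ for some $B_0\in B_v$, which by the very definition of the twisted pullback $D_x=\twpb{B_v}{\tilde x}{A_u}{C_w}$ says exactly that $c_h\,\alpha_g(\gamma)^{-1}\,c^{-1}\in\rho^C_x(D_x)$. Hence $c_h\in\rho^C_x(D_x)\,c\,\alpha_g(C_g)$, as required, and surjectivity follows. Combined with the injectivity from Lemma~\ref{lem: rhos are folded 2}, each star map of $\rho^C$ is bijective, so $\rho^C$ is a covering.

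Finally, the second assertion, that $\rho^B$ is a covering when $\mu^C$ is, follows by the symmetric argument obtained by interchanging the roles of $\BB$ and $\CC$ (and of the left and right factors of the double cosets, via $a\mapsto a^{-1}$), exactly as in the two symmetric cases treated in Lemma~\ref{lem: rhos are folded 2}.
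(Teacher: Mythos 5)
Your proposal is correct and follows essentially the same route as the paper's proof: injectivity of the star maps comes for free from Lemma~\ref{lem: rhos are folded 2}, and surjectivity is obtained by applying the covering property of $\mu^B$ at $v$ to the element $\tilde x\,\mu^C_w(c)\,g_\alpha$, defining $h$ as the resulting $(f,g)$-edge, and checking $o(h)=x$ via the incidence formula. The only (welcome) difference is that you verify the double-coset condition for the \emph{pre-fixed} representatives $\tilde h, b_h, c_h$ directly, whereas the paper re-chooses $\tilde h = a$ and $h^B_\alpha = b$, $h^C_\alpha = c$, implicitly invoking the independence of the construction from these choices (Lemma~\ref{lem: uniqueness up to isomorphism}).
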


\begin{proof}
    Let $\DD = \BB\wtimes_\AA\CC$. By Lemma~\ref{lem: rhos are folded 2}, $\rho^C$ is an immersion. Thus, we just need to check that the map given in Definition~\ref{def: coverings} is a bijection. Explicitly, we want to show that for each vertex $x$ of  $\gr{D}$, say $x\in V_{v,w}(\gr{D})$, each edge $g\in \Star(w)$ and each $c\in C_w$, there is an edge $h\in \Star(x)$ such that $\rho^C(h) = g$ and
    \[
    \rho^C(D_x)\, h_{\alpha}^C\, \alpha_g(C_g) = \rho^C(D_x)\, c\, \alpha_g(C_g).
    \]
    Let $e = \mu^C(g)$. Since $\mu^B$ is a covering, there exists $f\in \Star(v)$ such that $\mu^B(f) = e$ and
    \[
    \mu^B(B_v)\,f_{\alpha}\,\alpha_e(A_e) = \mu^B(B_v)\,(\tilde x\, \mu^C(c)\, g_\alpha)\, \alpha_e(A_e).
    \]
    This implies the existence of $b\in B_v$ and $a\in A_e$ such that
    \begin{equation}\label{eq: f from covering}
        f_\alpha = \mu^B(b\inv)\,\tilde x\, \mu^C(c)\, g_\alpha\, \alpha_e(a\inv).
    \end{equation}
    Now let $h = \mu^B(B_f)\,a\,\mu^C(C_g) \in E_{f,g}(\gr D)$. We choose $\tilde h = a$. By Definition~\ref{def: underlying graph A-product}, the origin of $h$ is the double coset
    $$\mu^B(B_v)\, (f_\alpha\,\alpha_e(a)\, g_\alpha\inv)\, \mu^C(C_w) = \mu^B(B_v)\, (\mu^B(b)\inv\,\tilde x\, \mu^C(c))\, \mu^C(C_w) = \mu^B(B_v)\, \tilde x\, \mu^C(C_w) = x,$$
    that is, $h \in \Star(x)$.
    Finally, Equations~\eqref{eq: f from covering} and~\eqref{eq: old x tilde}, together with Definition~\ref{defn: projection morphisms}, show that we can choose $h_\alpha^B = b$ and $h_\alpha^C = c$. It follows that $\rho^C(D_x)\, h_{\alpha}^C\, \alpha_g(C_g) = \rho^C(D_x)\, c\, \alpha_g(C_g)$, as announced.
\end{proof}

Just like in the usual covering space theory, there are natural lifting results for coverings of graphs of groups. We begin with lifting paths.

\begin{lem}
\label{lem: pointed segment lift}
    Let $\mu\colon (\BB, v_0)\to (\AA, u_0)$ be a pointed covering of graphs of groups and let $p$ be an $\AA$-path from $u_0$ to a vertex $u$. Then there is a $\BB$-path $q$ leading out from $v_0$ and an element $a\in A_u$ so that $\mu(q)\,a\sim_{\AA} p$.

    Moreover, if $p$ is an $\AA$-circuit at $u_0$ and $p\in \mu_*(\pi_1(\BB, v_0))$, then $q$ is a loop and we may take $a = 1$.
\end{lem}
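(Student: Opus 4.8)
The plan is to prove the first assertion by induction on the length $k$ of $p$, building the lift one edge at a time from the covering condition, and then to extract the ``moreover'' part from a uniqueness (determinism) statement about this construction. For the base case $k=0$, write $p=(a_0)$ with $a_0\in A_{u_0}$; the trivial $\BB$-path $q=(1_{v_0})$ together with $a=a_0$ works. For the inductive step, factor $p=p_0\,(1,e_k,a_k)$, where $p_0$ runs from $u_0$ to $u'=o(e_k)$, and lift $p_0$ to a $\BB$-path $q_0$ from $v_0$ to a vertex $v'$ with $\mu(v')=u'$, together with $a'\in A_{u'}$ such that $\mu(q_0)\,a'\sim_\AA p_0$. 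Now I apply Definition~\ref{def: coverings}: there is a unique edge $f\in\Star(v')$ with $\mu(f)=e_k$ whose double coset $\mu(B_{v'})\,f_\alpha\,\alpha_{e_k}(A_{e_k})$ equals $\mu(B_{v'})\,a'\,\alpha_{e_k}(A_{e_k})$, so $a'=\mu_{v'}(b)\,f_\alpha\,\alpha_{e_k}(x)$ for some $b\in B_{v'}$, $x\in A_{e_k}$. Setting $q=q_0\,(b,f,1)$ and $a=f_\omega\,\omega_{e_k}(x)\,a_k$, a short check with Lemma~\ref{lem: equivalent A-paths} gives
\[
(a',e_k,a_k)\ \sim_\AA\ \bigl(\mu_{v'}(b)\,f_\alpha,\ e_k,\ f_\omega^{-1}a\bigr)
\]
with witnessing edge-group element $x^{-1}$; concatenating on the left with $\mu(q_0)$ and using Proposition~\ref{prop: preservation of sim and equiv}(1) (so that $\mu(q_0)\,\mu((b,f,1))=\mu(q)$) yields $\mu(q)\,a\sim_\AA p$. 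Since $t(q)=t(f)$ maps to $t(e_k)=u$, the induction is complete.

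For the ``moreover'' part, I first observe that the construction above is \emph{deterministic}: at each edge the covering condition of Definition~\ref{def: coverings} selects the lifted $\BB$-edge uniquely, so the construction produces a well-defined lift $q$ with endpoint $v''$ and trailing element $a$ depending only on the actual $\AA$-path $p$. Choose a $\BB$-circuit $r$ at $v_0$ with $\mu(r)=_\AA p$, which exists because $p\in\mu_*(\pi_1(\BB,v_0))$. The same determinism shows that running the construction on $\mu(r)$ traverses exactly the edges of $r$ (each lifted edge is forced to coincide with the corresponding edge of $r$, as they realize the same double coset) and hence ends at $v_0$ with trailing left coset $\mu(B_{v_0})$. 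Thus it suffices to prove that the pair consisting of the endpoint $v''$ and the left coset $\mu(B_{v''})\,a$ depends only on the $=_\AA$-class of $p$: applied to $p=_\AA\mu(r)$ this forces $v''=v_0$ (so $q$ is a loop) and $a\in\mu(B_{v_0})$. Writing $a=\mu_{v_0}(b)$ with $b\in B_{v_0}$ and replacing $q$ by $q\,(b)$ (again Proposition~\ref{prop: preservation of sim and equiv}(1)), I obtain a loop whose image is $\sim_\AA p$, i.e. one may take $a=1$.

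The main obstacle is exactly this $=_\AA$-invariance of the pair $(v'',\,\mu(B_{v''})\,a)$. Since $\sim_\AA$-equivalent paths have identical lifts (if $\mu(q)\,a\sim_\AA p$ and $p\sim_\AA p'$ then $\mu(q)\,a\sim_\AA p'$), and $=_\AA$ is generated by $\sim_\AA$ together with backtracking insertions and removals, it is enough to check invariance when a segment $(\alpha_e(y))$ is replaced by $(1,e,\omega_e(y),e^{-1},1)$ at some vertex whose lift is $\bar v$. The key computation is that lifting the inserted $e$ from $\bar v$ produces some edge $f$, and then the covering condition forces the lift of the following $e^{-1}$ to be $f^{-1}$: one checks that the two relevant double cosets coincide, using $(f^{-1})_\alpha=f_\omega$, $\alpha_{e^{-1}}=\omega_e$ and $A_{e^{-1}}=A_e$, so the lift returns to $\bar v$. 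Tracking trailing elements then shows that the element entering the remainder of the path differs, between the two cases, only by left multiplication by an element of $\mu(B_{\bar v})$; such a modification is absorbed immediately after the next edge is lifted (or survives only as the same final left coset if the remainder is trivial). Hence both the endpoint and the terminal left coset are unchanged, which establishes the invariance. Conceptually this is just unique path lifting to the Bass--Serre tree, but I would give the combinatorial argument above to keep the proof self-contained at this point in the development.
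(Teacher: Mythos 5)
Your proof of the first assertion is essentially the paper's: the same induction on the length of $p$, peeling off the last edge, invoking the bijectivity in Definition~\ref{def: coverings} to select the lift $f$ of $e_k$ from the double coset of the trailing element $a'$, and writing $a' = \mu_{v'}(b)\,f_\alpha\,\alpha_{e_k}(x)$ to produce $q$ and the new trailing element (the paper uses $q'\,(b^{-1},f,1)$ where you use $q_0\,(b,f,1)$, a cosmetic difference). For the ``moreover'' clause, however, you take a genuinely different route. The paper's argument is short and algebraic: taking a $\BB$-circuit $q'$ at $v_0$ with $\mu(q') =_\AA p$, it observes that $a =_\AA \mu(q^{-1}q')$, reduces $q^{-1}q'$ to a reduced $\BB$-path $r$, and uses Proposition~\ref{folded} (immersions preserve reducedness) together with Proposition~\ref{prop: sim vs equiv}~\eqref{eq: shortest is reduced} to conclude that $r$ has length $0$; this simultaneously forces $t(q)=v_0$ and $a = \mu(r)\in\mu(B_{v_0})$. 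You instead prove a unique-path-lifting/monodromy statement: the edge sequence of the lift is forced by the covering condition, the terminal vertex and terminal left coset $\mu(B_{v''})\,a$ are invariant under the generating moves of $=_\AA$ (checked explicitly for backtracking insertion, where the lift of $e^{-1}$ is forced to be $f^{-1}$ and the perturbation of the trailing element lies in $\mu(B_{\bar v})$ and is absorbed at the next edge), and then you apply this invariance to $p =_\AA \mu(r)$. Your version is correct and makes the Bass--Serre-tree picture explicit and self-contained, at the cost of a longer verification (determinism, well-definedness of the terminal coset, invariance under each rewriting move); the paper's version is shorter because it reuses the immersion machinery already in place. Both arguments are sound.
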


\begin{proof}
    Let $p = (a_0, e_1, \ldots, e_k, a_k)$. The proof proceeds by induction on the length of $p$. The result is clearly true for $k = 0$. Now suppose that $k\geqslant 1$ and that the result is true for all $\AA$-paths of length less than $k$.
    
    Let $p' = (a_0, e_1, \ldots, e_{k-1}, a_{k-1})$ and $u' = t(p')$. Let $q'$, a $\BB$-path leading out from $v_0$ and $a'\in A_{u'}$ be given by the induction hypothesis, such that $\mu(q')\,a'\sim_{\AA} p'$. Let $v' = t(q')$.
    Since $\mu$ is a covering, there is an edge $f\in \Star(v')$ such that $\mu(f) = e_k$ and $\mu(B_{v'})\,f_{\alpha}\,\alpha_{e_k}(A_{e_k}) = \mu^B(B_{v'})\,a'\,\alpha_{e_k}(A_{e_k})$. In particular, there exist $b\in B_{v'}$ and $a''\in A_{e_k}$ such that $f_\alpha = \mu(b)\,a'\,\alpha_{e_k}(a'')$.
    Now let $q = q'\, (b\inv, f, 1)$. Then
    \begin{align*}
        \mu(q) = \mu(q')\, (\mu(b)\inv\,f_\alpha, e_k, 1) &\sim_\AA p'\, a'^{-1}\, (\mu(b)\inv\,f_\alpha, e_k, 1) \\ 
        &\sim_\AA p'\,(\alpha_{e_k}(a''), e_k, 1)
        \\
        &\sim_\AA p'\,(1, e_k, \omega_{e_k}(a'')) = p\, (a_k\inv\,\omega_{e_k}(a'')).
    \end{align*}
    This proves that $p \sim_\AA \mu(q)\,a$, where $a = \omega_{e_k}(a'')\inv\,a_k$.
    
    Now suppose that $p$ is an $\AA$-circuit at $u_0$ and $p\in \mu_*(\pi_1(\BB, v_0))$, say $p = \mu_*(q')$ for some $\BB$-circuit $q'$ at $v_0$. Then, as above, there exists a $\BB$-path $q$ and an element $a\in A_{u_0}$ such that $\mu(q)\,a\sim_{\AA}p =_\AA \mu(q')$. It follows that $a =_\AA \mu(q\inv\,q')$. Let $r$ be a reduced $\BB$-path such that $q\inv\,q' \sim_\BB r$. Then $\mu(r)$ is reduced as well, and has length 0, that is, $r\in B_{v_0}$, and we have $a' = \mu(r)$. In particular, $q\inv$ starts at $v_0$, that is, $q$ is a $\BB$-circuit at $v_0$. Now let $\hat q = q\,r$: this $\BB$-circuit at $v_0$ satisfies $\mu(\hat q) = \mu(q)\,\mu(r) \sim_\AA \mu(q)\,a' \sim_\AA p$, which completes the proof.
\end{proof}

With a slight modification to the proof of Lemma \ref{lem: pointed segment lift}, we may also obtain an unpointed statement.

\begin{lem}
\label{lem: segment_lift}
    Let $\mu\colon \BB\to \AA$ be a covering of graphs of groups and let $p$ be an $\AA$-path from vertex $u$ to vertex $u'$. Let $v \in V(\gr B)$ such that $\mu(v) = u$.  Then there exists a $\BB$-path $q$ from $v$ to a vertex $v'$ and elements $a\in A_u$ and $a'\in A_{u'}$ such that $a\,\mu^B(q)\,a'\sim_{\AA}p$.

    Moreover, if $p$ is an $\AA$-circuit in $\mu_*(\pi_1(\BB, o(q)))$, then $q$ is a $\BB$-circuit and we may take $a' = a^{-1}$.
\end{lem}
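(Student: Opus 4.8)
The plan is to re-run, essentially verbatim, the inductive argument of Lemma~\ref{lem: pointed segment lift}, the only change being that the fixed basepoint is replaced by the prescribed vertex $v$ over $u$. This is legitimate because a covering satisfies the bijectivity condition of Definition~\ref{def: coverings} at \emph{every} vertex of $\gr B$, not merely at a chosen basepoint. Writing $p = (a_0, e_1, \ldots, e_k, a_k)$, I would induct on $k$. For $k = 0$ take $q$ to be the trivial $\BB$-path at $v$, $a = a_0$, $a' = 1$. For the inductive step, split $p = p'\,(1, e_k, a_k)$, lift $p'$ to a $\BB$-path $q'$ from $v$ to some $v'$ with $\mu(q')\,a'' \sim_\AA p'$ (where $\mu(v')=u'=t(p')$), and invoke the covering property \emph{at $v'$} to obtain an edge $f\in\Star(v')$ with $\mu(f)=e_k$ and $\mu(B_{v'})\,f_\alpha\,\alpha_{e_k}(A_{e_k})=\mu(B_{v'})\,a''\,\alpha_{e_k}(A_{e_k})$. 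Writing $f_\alpha=\mu(b)\,a''\,\alpha_{e_k}(a''')$ and setting $q=q'\,(b\inv,f,1)$, the same sliding of edge-group elements along $\sim_\AA$ as in the pointed proof gives $\mu(q)\,a\sim_\AA p$ with $a=\omega_{e_k}(a''')\inv a_k$. Taking the leading element trivial and $a'=a$ then yields the first assertion, $1\cdot\mu(q)\cdot a'\sim_\AA p$.

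The single point where the full covering hypothesis is essential — and the only genuine obstacle — is the \emph{absorption of the initial element} $a_0$. When the first edge of $p$ is traversed, the double coset $\mu(B_v)\,a_0\,\alpha_{e_1}(A_{e_1})$ must be realised by some twisting element $f_\alpha$, and this is exactly the surjectivity of the star map at $v$ from Definition~\ref{def: coverings}. An immersion supplies only injectivity and would fail here; everything else is the bookkeeping already carried out in the pointed case, so the proof is indeed only a slight modification.

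For the final clause, assume $u'=u$, that $p$ is an $\AA$-circuit at $u$, and that $p=_\AA\mu(q_0)$ for a $\BB$-circuit $q_0$ at $v=o(q)$. From $\mu(q)\,a'\sim_\AA p=_\AA\mu(q_0)$ I would deduce $a'=_\AA\mu(q)\inv\,\mu(q_0)=_\AA\mu(q\inv\,q_0)$ (using that $\mu_*$ is a groupoid morphism, Proposition~\ref{prop: preservation of sim and equiv}), an equality between a length-$0$ path and the image of the $\BB$-path $q\inv q_0$ running from $v'$ to $v$. Let $r$ be a reduced $\BB$-path with $q\inv q_0=_\BB r$; since $\mu$ is an immersion, $\mu(r)$ is reduced (Proposition~\ref{folded}), and two reduced coterminal $\AA$-paths that are $=_\AA$-equal have the same length (Proposition~\ref{prop: sim vs equiv}), so $\mu(r)$, being $=_\AA$-equal to $a'$, has length $0$. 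As $\mu$ preserves the underlying length, $r$ has length $0$ as well; this forces $v'=v$ (so $q$ is a $\BB$-circuit at $v$) and $r\in B_v$ with $a'=\mu(r)$. Finally, $q\,r$ is a $\BB$-circuit at $v$ with $\mu(q\,r)=\mu(q)\,a'\sim_\AA p$, so choosing the witnesses $a=a'=1$ exhibits a $\BB$-circuit together with the relation $a'=a\inv$, as required.
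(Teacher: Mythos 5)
Your proposal is correct and is precisely the ``slight modification'' of Lemma~\ref{lem: pointed segment lift} that the paper intends: the induction runs unchanged from the prescribed vertex $v$ because the covering condition of Definition~\ref{def: coverings} holds at every vertex of the domain, and your treatment of the circuit clause mirrors the pointed argument step for step. The only cosmetic wrinkle is the base case, where for a length-$0$ path the element $a_0$ should be recorded as the \emph{trailing} witness (legitimate there since $u = u'$), so that the inductive hypothesis is uniformly of the form $\mu(q')\,a'' \sim_\AA p'$ with trivial leading element, exactly as in the pointed proof.
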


We may now lift arbitrary morphisms, using Lemma \ref{lem: pointed segment lift} and our results on the pullback.

\begin{thm}
\label{thm: lifting_pointed_morphisms}
    Let $\mu^B\colon (\BB, v_0)\to (\AA, u_0)$ be a pointed covering and let $\mu^C\colon (\CC, w_0)\to (\AA, u_0)$ be a morphism of pointed graphs of groups. There exists a pointed lift $\sigma\colon (\CC, w_0)\to (\BB, v_0)$ such that $\mu^B\circ\sigma \sim \mu^C$, if and only if $\mu^C_*(\pi_1(\CC, w_0))\leqslant \mu^B_*(\pi_1(\BB, v_0))$. Moreover, this lift is unique up to $\sim$-equivalence.
\begin{figure}[H]
    \centering
    \begin{tikzcd}[column sep=4em,row sep=3em]
          & (\BB,v_0) \arrow[d, "\mu^B"] \\
         (\CC,w_0) \arrow[ur,dashed, "\sigma"]\arrow[r, "\mu^C"'] & (\AA,u_0)
    \end{tikzcd}
    
\end{figure}
    In this situation, $(\CC, w_0)$ (with projection morphisms $1_\CC$ and $\sigma$) is the pullback of $(\BB, v_0)$ and $(\CC, w_0)$.
\end{thm}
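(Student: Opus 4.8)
The statement naturally splits into an easy necessity direction, a constructive existence direction, and a uniqueness claim that will simultaneously yield the final pullback assertion. The plan is to route the whole argument through the $\AA$-product $\DD = \BB\wtimes_\AA\CC$ and its projections $\rho^B,\rho^C$, exploiting the fact that a covering pulls back to a covering. For the necessity direction, suppose a pointed lift $\sigma$ with $\mu^B\circ\sigma\sim\mu^C$ exists. By Proposition~\ref{prop: change of data}, $\sim$-equivalent pointed morphisms induce the same homomorphism on fundamental groups (for a circuit $p$ at $w_0$ the relevant parameter is $a_{t(p)} = a_{w_0} = 1$), so $\mu^C_* = \mu^B_*\circ\sigma_*$ as maps $\pi_1(\CC,w_0)\to\pi_1(\AA,u_0)$, whence $\mu^C_*(\pi_1(\CC,w_0)) = \mu^B_*(\sigma_*(\pi_1(\CC,w_0)))\leqslant\mu^B_*(\pi_1(\BB,v_0))$.

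For existence, assume $\mu^C_*(\pi_1(\CC,w_0))\leqslant\mu^B_*(\pi_1(\BB,v_0))$ and let $(\DD,x_0)$ be the pointed $\AA$-product, with $\DD_0$ the connected component of $x_0$. Since $\mu^B$ is a covering, Lemma~\ref{lem: coverings yield coverings} shows that $\rho^C\colon(\DD_0,x_0)\to(\CC,w_0)$ is a pointed covering, hence an immersion with $(\rho^C)_*$ injective (Corollary~\ref{cor: folded morphisms}). The crux of the whole proof is to prove that $(\rho^C)_*$ is also \emph{surjective}; this is exactly where the subgroup hypothesis enters. Given $[q^C]\in\pi_1(\CC,w_0)$, represented by a reduced $\CC$-circuit $q^C$ at $w_0$, the class of $\mu^C(q^C)$ lies in $\mu^B_*(\pi_1(\BB,v_0))$ by hypothesis, so the ``moreover'' clause of Lemma~\ref{lem: pointed segment lift} applied to the covering $\mu^B$ produces a $\BB$-circuit $q^B$ at $v_0$ with $\mu^B(q^B)\sim_\AA\mu^C(q^C)$. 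As $\tilde x_0 = 1$, Corollary~\ref{cor: lifting} then lifts the pair $(q^B,q^C)$ to a $\DD$-circuit $r$ at $x_0$ (necessarily inside $\DD_0$) with $\rho^C(r)\sim_\CC q^C$, so $[q^C] = (\rho^C)_*([r])$. Thus $(\rho^C)_*$ is an isomorphism, and Lemma~\ref{lem:immersion_inclusion} shows $\rho^C$ restricts to an inclusion on the underlying graphs and to isomorphisms on all vertex and edge groups; since $\rho^C$ is moreover a covering, local surjectivity on stars together with connectedness of $\gr C$ forces $\rho^C$ to be surjective on $\gr C$, so $\rho^C\colon(\DD_0,x_0)\to(\CC,w_0)$ is an isomorphism in $\GrGp^*$. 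Setting $\sigma = \rho^B\circ(\rho^C)^{-1}$, Lemma~\ref{lem: composition}~\eqref{item: simple equivalence} and Corollary~\ref{cor: composition} give $\mu^B\circ\sigma = \mu^B\circ\rho^B\circ(\rho^C)^{-1}\sim\mu^C\circ\rho^C\circ(\rho^C)^{-1}\sim\mu^C$, the required lift.

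Finally, by Theorem~\ref{thm: pullbacks in GrGp*} the pointed $\AA$-product $(\DD_0,x_0)$ is the pullback of $\mu^B$ and $\mu^C$, with projections $\rho^B,\rho^C$. Transporting this pullback along the isomorphism $(\rho^C)^{-1}$ shows that $(\CC,w_0)$, equipped with the projections $\rho^B\circ(\rho^C)^{-1} = \sigma$ and $\rho^C\circ(\rho^C)^{-1} = 1_\CC$, is again a pullback of $\mu^B$ and $\mu^C$, which is precisely the closing assertion of the theorem. Uniqueness of the lift then drops out: given any lift $\sigma'$ we have $\mu^B\circ\sigma'\sim\mu^C = \mu^C\circ 1_\CC$, so the universal property of $(\DD_0,x_0)$ supplies $\tau\colon(\CC,w_0)\to(\DD_0,x_0)$ with $\rho^B\circ\tau\sim\sigma'$ and $\rho^C\circ\tau\sim 1_\CC$; invertibility of $\rho^C$ forces $\tau\sim(\rho^C)^{-1}$, and hence $\sigma'\sim\rho^B\circ\tau\sim\rho^B\circ(\rho^C)^{-1} = \sigma$. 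I expect the only genuinely delicate point to be the surjectivity of $(\rho^C)_*$, i.e. the interplay between the two lifting statements (path lifting for the covering $\mu^B$ and path-pair lifting into the $\AA$-product), every other step being a formal consequence of results already established.
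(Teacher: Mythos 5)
Your proposal is correct and follows essentially the same route as the paper's proof: form the pointed $\AA$-product, use Lemma~\ref{lem: coverings yield coverings} to see $\rho^C$ is a covering, establish surjectivity of $\rho^C_*$ via Lemma~\ref{lem: pointed segment lift} and Corollary~\ref{cor: lifting}, conclude via Lemma~\ref{lem:immersion_inclusion} that $\rho^C$ is an isomorphism, and set $\sigma=\rho^B\circ(\rho^C)^{-1}$, with uniqueness coming from the universal property of the pullback. The only cosmetic difference is that you omit the paper's (inessential) preliminary appeal to Corollary~\ref{cor: intersection of subgroups} and spell out a few verifications the paper leaves implicit.
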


\begin{proof}
If such a lift exists, then certainly $\mu^C_*(\pi_1(\CC, w_0))\leqslant \mu^B_*(\pi_1(\BB, v_0))$. Now suppose that $\mu^C_*(\pi_1(\CC, w_0))\leqslant \mu^B_*(\pi_1(\BB, v_0))$. Consider the pointed pullback $(\DD, x_0)$ of the morphisms $\mu^B$ and $\mu^C$, with projection morphisms $\rho^B$ and $\rho^C$. Since $\mu^C_*(\pi_1(\CC, w_0))\leqslant \mu_*^B(\pi_1(\BB, v_0))$, Theorem \ref{cor: intersection of subgroups} shows that $(\mu^C\circ\rho^C)_*(\pi_1(\DD, x_0)) = \mu^C_*(\pi_1(\CC, w_0))$.

By Lemma \ref{lem: pointed segment lift}, for each $\CC$-circuit $c$ at $w_0$, there is a $\BB$-circuit $b$ at $v_0$ so that $\mu^B(b) \sim_{\AA} \mu^C(c)$. By Corollary \ref{cor: lifting}, this implies that $\rho^C_*$ is surjective. By Lemma~\ref{lem: coverings yield coverings}, $\rho^C$ is a covering. In particular, it is an immersion and so $\rho^C_*$ is an isomorphism from $\pi_1(\DD, x_0)$ to $\pi_1(\CC, w_0)$. By Lemma~\ref{lem:immersion_inclusion} it follows that $\rho^C$ is an inclusion of the underlying graphs, and an isomorphism on each vertex and edge groups. Combined with the fact that $\rho^C$ is a covering, this establishes that $\rho^C$ is an isomorphism of graphs of groups. Let $\sigma\colon (\CC,w_0) \to (\BB,v_0)$ be given by $\sigma = \rho^B \circ {\rho^C}\inv$. Then, as announced, $\sigma$ is a lift of $\mu^C$, and $(\CC,w_0)$, with morphisms $1_\CC$ and $\sigma$, is the pullback of $\mu^B$ and $\mu^C$ (up to isomorphism in $\GrGp^*$). The uniqueness of pullbacks then yields the lift uniqueness statement: if $\sigma'\colon (\CC,w_0) \to (\BB,v_0)$ is a lift of $\mu^C$, then $\sigma' \sim \sigma$.
\end{proof}

An unpointed lifting theorem can be proved by combining Theorem \ref{thm: lifting_pointed_morphisms} with Lemma \ref{lem: pointed segment lift}.

\begin{thm}
\label{thm: lifting_unpointed_morphisms}
    Let $\mu^B\colon \BB\to \AA$ be a covering and let $\mu^C\colon \CC\to \AA$ be a morphism of graphs of groups. Let $w_0 \in V(\gr C)$ and $u_0 = \mu^C(w_0)$. The following are equivalent:
    \begin{enumerate}[(1)]
        \item There exists a lift $\mu\colon \CC\to \BB$ such that $\mu^B\circ\mu \approx \mu^C$. \label{item: exists unpointed lift}
        \item There exists $v \in (\mu^B)\inv(u_0)$ and $a \in \pi_1(\AA,u_0)$ such that $\mu^C_*(\pi_1(\CC, w_0))^a \le \mu^B_*(\pi_1(\BB, v))$. \label{item: cjug in the big group}
        \item There exists $v \in (\mu^B)\inv(u_0)$ and $a \in A_{u_0}$ such that $\mu^C_*(\pi_1(\CC, w_0))^a \le \mu^B_*(\pi_1(\BB, v))$. \label{item: cjug in the small group}
    \end{enumerate}
    Moreover, there exists precisely one such lift (up to $\approx$-equivalence) for each vertex $v\in (\mu^B)\inv(u_0)$ such that $\mu^C_*(\pi_1(\CC, w_0))^a \le \mu^B_*(\pi_1(\BB, v))$ for some $a \in A_{u_0}$.
\end{thm}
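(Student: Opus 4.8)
The plan is to prove the equivalence of \ref{item: exists unpointed lift}, \ref{item: cjug in the big group} and \ref{item: cjug in the small group} through the cycle $\ref{item: exists unpointed lift}\Rightarrow\ref{item: cjug in the small group}\Rightarrow\ref{item: cjug in the big group}\Rightarrow\ref{item: cjug in the small group}\Rightarrow\ref{item: exists unpointed lift}$, reducing every step to the pointed lifting theory already established. Throughout I may assume $\CC$ connected (otherwise restrict to the connected component of $w_0$, since a lift is built component by component), and I abbreviate $Q = \mu^C_*(\pi_1(\CC,w_0))$ and $P_v = \mu^B_*(\pi_1(\BB,v))$ for $v\in(\mu^B)\inv(u_0)$. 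The implication $\ref{item: cjug in the small group}\Rightarrow\ref{item: cjug in the big group}$ is immediate, as $A_{u_0}\leqslant \pi_1(\AA,u_0)$. For $\ref{item: exists unpointed lift}\Rightarrow\ref{item: cjug in the small group}$, given an unpointed lift $\mu$ with $\mu^B\circ\mu\approx\mu^C$, Proposition~\ref{prop: change of data} (applied to $\mu^1=\mu^B\circ\mu$ and $\mu^2=\mu^C$) furnishes vertex parameters $(a_w)_w$ with $a_{w_0}\in A_{u_0}$ and $\mu^C(c)\sim_\AA a_{w_0}\inv\,(\mu^B\circ\mu)(c)\,a_{w_0}$ for every $\CC$-circuit $c$ at $w_0$. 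Setting $v=\mu(w_0)\in(\mu^B)\inv(u_0)$, this reads $Q = a_{w_0}\inv\,(\mu^B\circ\mu)_*(\pi_1(\CC,w_0))\,a_{w_0}$ with $(\mu^B\circ\mu)_*(\pi_1(\CC,w_0))\leqslant P_v$, so $Q^{a_{w_0}\inv}\leqslant P_v$, which is \ref{item: cjug in the small group} with $a=a_{w_0}\inv\in A_{u_0}$.

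Next I treat $\ref{item: cjug in the big group}\Rightarrow\ref{item: cjug in the small group}$, the only place where the full fundamental group is traded for a vertex group; this is the monodromy of the covering $\mu^B$. Given $v$ and $a\in\pi_1(\AA,u_0)$ with $Q^a\leqslant P_v$, represent $a\inv$ by a reduced $\AA$-circuit $p$ at $u_0$ and lift it out of $v$ using Lemma~\ref{lem: pointed segment lift} (applied to the covering $\mu^B\colon(\BB,v)\to(\AA,u_0)$): this yields a $\BB$-path $q$ from $v$ to some $v''\in(\mu^B)\inv(u_0)$ and an element $a''\in A_{u_0}$ with $\mu^B(q)\,a''\sim_\AA p$, so that, writing $\bar q$ for the class of $\mu^B(q)$ in $\pi_1(\AA,u_0)$, one has $\bar q = a\inv\,(a'')\inv$. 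Conjugation along $q$ gives $P_{v''} = \bar q\inv P_v\,\bar q$, and a direct computation (multiply $Q^a\leqslant P_v$ through by $a$ and by $a''$) yields $Q^{(a'')\inv}\leqslant P_{v''}$, establishing \ref{item: cjug in the small group} with vertex $v''$ and vertex-group conjugator $(a'')\inv\in A_{u_0}$. Together with $\ref{item: cjug in the small group}\Rightarrow\ref{item: cjug in the big group}$ this shows \ref{item: cjug in the big group} and \ref{item: cjug in the small group} are equivalent.

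For $\ref{item: cjug in the small group}\Rightarrow\ref{item: exists unpointed lift}$, I would first absorb the vertex-group conjugator into the morphism $\mu^C$. Given $v$ and $a\in A_{u_0}$ with $Q^a\leqslant P_v$, performing an $A0$-move at $w_0$ (conjugating $\mu^C_{w_0}$ by $a$ and adjusting the twisting elements of the edges incident to $w_0$; equivalently, invoking Proposition~\ref{prop: change of data} with vertex parameters supported at $w_0$, $a_{w_0}=a$) produces a morphism $\nu^C\colon(\CC,w_0)\to(\AA,u_0)$ with $\nu^C\approx\mu^C$ as unpointed morphisms and $\nu^C_*(\pi_1(\CC,w_0)) = Q^a$. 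Since $Q^a\leqslant P_v$, Theorem~\ref{thm: lifting_pointed_morphisms} (with basepoint $v$ on $\BB$) provides a pointed lift $\sigma\colon(\CC,w_0)\to(\BB,v)$ with $\mu^B\circ\sigma\sim\nu^C$; combined with $\nu^C\approx\mu^C$ this gives $\mu^B\circ\sigma\approx\mu^C$, so $\sigma$ is the desired unpointed lift, and by construction $\sigma(w_0)=v$.

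Finally, for the counting statement, the assignment $\mu\mapsto\mu(w_0)$ descends to a well-defined map from $\approx$-classes of unpointed lifts to $(\mu^B)\inv(u_0)$, because $\approx$-equivalent morphisms agree on the underlying graph; by $\ref{item: exists unpointed lift}\Rightarrow\ref{item: cjug in the small group}$ its image lands in the set of admissible vertices, and by $\ref{item: cjug in the small group}\Rightarrow\ref{item: exists unpointed lift}$ every admissible vertex is attained. What remains is injectivity: two lifts $\mu,\mu'$ with $\mu(w_0)=\mu'(w_0)=v$ must be $\approx$-equivalent. From $\mu^B\circ\mu\approx\mu^C\approx\mu^B\circ\mu'$ one gets $\mu^B\circ\mu\approx\mu^B\circ\mu'$, and I would invoke the cancellation criterion Lemma~\ref{lem: pulling back equivalence} with $\tau=\mu^B$: its two conditions are that $\mu,\mu'$ agree on $\gr C$, and that a suitable tuple in $C(\mu^B\circ\mu)$ and lifted parameters exist. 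The second is checked at $w_0$ (where it is trivial since $\mu(w_0)=\mu'(w_0)$) and propagated by connectedness exactly as in the uniqueness argument of Theorem~\ref{thm: lifting_pointed_morphisms}, using Lemma~\ref{lem: trivial element of centraliser}. I expect the first condition — unique path lifting for the covering $\mu^B$ — to be the main obstacle: because a graph-of-groups covering may carry several edges of $\gr B$ over a single edge of $\gr A$, ``uniqueness'' of the lift of an edge holds only once the associated double-coset data is fixed, and one must propagate agreement from $w_0$ along a spanning tree of $\gr C$ by combining the parameters of the equivalence $\mu^B\circ\mu\approx\mu^B\circ\mu'$ with the star bijection of Definition~\ref{def: coverings}, which is precisely the ingredient that restores uniqueness.
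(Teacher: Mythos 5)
Your treatment of the equivalence of \eqref{item: exists unpointed lift}, \eqref{item: cjug in the big group} and \eqref{item: cjug in the small group} is correct and follows essentially the same route as the paper: \eqref{item: exists unpointed lift}$\Rightarrow$\eqref{item: cjug in the small group} via Proposition~\ref{prop: change of data}, the equivalence of \eqref{item: cjug in the big group} and \eqref{item: cjug in the small group} by lifting a representative of $a\inv$ out of $v$ with Lemma~\ref{lem: pointed segment lift} and transporting the conjugator along the resulting $\BB$-path, and \eqref{item: cjug in the small group}$\Rightarrow$\eqref{item: exists unpointed lift} by twisting $\mu^C$ at $w_0$ into a morphism $\nu^C$ with $\nu^C_*(\pi_1(\CC,w_0)) = \mu^C_*(\pi_1(\CC,w_0))^a$ and invoking Theorem~\ref{thm: lifting_pointed_morphisms}. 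Your version of the twist, which conjugates the vertex-group morphism $\mu^C_{w_0}$ as well as the twisting elements at $w_0$, is the right one; adjusting only the twisting elements would not conjugate the image of $\pi_1(\CC,w_0)$.

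The ``Moreover'' clause is where the proposal has a genuine gap, and the gap is not where you locate it. You reduce the count to injectivity of $[\mu]_\approx\mapsto\mu(w_0)$, propose to verify the two conditions of Lemma~\ref{lem: pulling back equivalence} with $\tau=\mu^B$, and assert that the parameter condition ``is trivial at $w_0$ since $\mu(w_0)=\mu'(w_0)$,'' leaving only graph-agreement to worry about. This is backwards. If $\mu,\mu'$ are lifts with $\mu(w_0)=\mu'(w_0)=v$, the vertex parameter $c_{w_0}\in A_{u_0}$ of the equivalence $\mu^B\circ\mu\approx\mu^B\circ\mu'$ is in general nontrivial, and the lemma requires $c_{w_0}$ to lie in $C(\mu^B\circ\mu)\cdot\mu^B(B_v)$ --- which can fail even when the two lifts agree on all of $\gr{C}$. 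Concretely: let $\AA$ be a single vertex $u$ with $A_u=F(x,y)$ and no edges, let $\BB$ be a single vertex $v$ with $B_v=\langle x,\,y\inv xy\rangle$ and $\mu^B$ the inclusion (a covering), and let $\CC$ be a single vertex $w_0$ with $C_{w_0}=\langle x\rangle$ and $\mu^C$ the inclusion. The morphisms $\mu,\mu'\colon\CC\to\BB$ determined by $x\mapsto x$ and $x\mapsto y\inv xy$ are both lifts of $\mu^C$ and both send $w_0$ to the unique vertex $v$, but they are not $\approx$-equivalent: any $b$ with $b\inv xb=y\inv xy$ lies in $\langle x\rangle y$, which is disjoint from $B_v$ (compare exponent sums in $y$). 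So $[\mu]_\approx\mapsto\mu(w_0)$ is not injective, and the counting statement cannot be established in the form you are aiming at; the invariant that separates $\approx$-classes of lifts over a fixed $v$ is the double coset of the conjugator $a$ (equivalently, the $\sim$-class of the twisted morphism $\nu^C$ being lifted), not the vertex alone. For comparison, the paper disposes of this clause with the single sentence that the pointed uniqueness ``translates directly''; what that sentence actually delivers is uniqueness of the pointed lift of each \emph{fixed} $\nu^C$, which is exactly the finer statement above. Your instinct that something substantive is hiding in this step is sound, but the missing piece is an obstruction to be acknowledged and routed around, not a verification to be propagated along a spanning tree.
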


\begin{proof}
    We first verify that Items~\eqref{item: cjug in the big group} and~\eqref{item: cjug in the small group} are equivalent. Let $C$ be a subgroup of $\pi_1(\AA,u_0)$, and suppose that $v_0 \in (\mu^B)\inv(u_0)$ and $a \in \pi_1(\AA,u_0)$ satisfy $C^a \le \mu^B_*(\pi_1(\BB, v))$. Let $p$ be an $\AA$-circuit at $u_0$ representing $a\inv$. By Lemma~\ref{lem: pointed segment lift}, there exists a reduced $\BB$-path $q$ starting at $v_0$ and ending at some vertex $v_1$, and there exists an element $a'\in A_{u_0}$ such that $\mu^B(q)\,a' \sim_\AA p$. In particular, $\mu^B(v_1) = u_0$. Now we have
    \[
    C^a = C^{p\inv} = (C^{a'^{-1}})^{\mu^B(q\inv)} \le \mu^B_*(\pi_1(\BB, v_0)),
    \]
    and hence $C^{a'^{-1}} \le (\mu^B_*(\pi_1(\BB, v)))^{\mu^B(q)} = \mu^B(\pi_1(\BB, v_0)^{q}) = \mu^B(\pi_1(\BB, v_1))$. This establishes the equivalence of~\eqref{item: cjug in the big group} and~\eqref{item: cjug in the small group}.
    
    Let us now assume that $\mu\colon \CC\to\BB$ is a lift, that is, $\mu^B\circ\mu\approx\mu^C$. Let $v_0 = \mu(w_0)$. Then $\mu^B(v_0) = u_0$ and $\mu^C_*(\pi_1(\CC, w_0))$ conjugates into $\mu^B_*(\pi_1(\BB, v_0))$ via an element of $A_{u_0}$ by Proposition~\ref{prop: change of data}. Thus~\eqref{item: exists unpointed lift} implies~\eqref{item: cjug in the small group}.

    Conversely, suppose that $v_0\in V(\gr B)$ satisfies $\mu^B(v_0) = u_0$ and $\mu^C_*(\pi_1(\CC, w_0))^a \le \mu^B_*(\pi_1(\BB, v_0))$, for some $a\in A_{u_0}$. Let $\nu^C\colon \CC\to \AA$ be the morphism which differs from  $\mu^C$ only on the $\alpha$-twisting elements of edges starting at $w_0$ (and $\omega$-twisting elements of edges into $w_0$): if $g$ is an edge starting at $w_0$ with $\alpha$-twisting element $g_\alpha$ relative to $\mu^C$, its $\alpha$-twisting element relative to $\nu^C$ is $a\inv\,g_\alpha$. It is directly verified that $\nu^C \approx \mu^C$ and $\nu^C(\CC,w_0) = \mu^C(\CC,w_0)^a \le \mu^B_*(\pi_1(\BB, v_0))$. 

    Theorem \ref{thm: lifting_pointed_morphisms} now implies that there exists a unique pointed lift $\mu\colon (\CC,w_0) \to (\BB,v_0)$ such that $\nu^C \sim \mu^C\circ\mu$, and hence, such that $\nu^C \approx \mu^C\circ\mu$. The uniqueness statement (up to $\sim$-equivalence) on the pointed lift $\mu$ translates directly to the announced uniqueness statement (up to $\approx$-equivalence).   %
\end{proof}

Continuing the analogy with covering space theory for topological spaces, every subgroup and every conjugacy class of subgroups of $\pi_1(\AA, u_0)$ is captured by a covering. These can be defined and constructed intrinsically or as quotients of the universal cover, the Bass--Serre tree, by the action of a subgroup (see Appendix \ref{appendix}). A direct and explicit construction \cite[Section 4]{bas93} is, by now, standard. For the sake of completeness, we briefly describe it in our language.

Let $(\AA, u_0)$ be a pointed graph of groups and $A = \pi_1(\AA, u_0)$. We denote by $\pi^\AA[u_0, u]$ the $=_\AA$-equivalence classes of $\AA$-paths connecting $u_0$ with $u$, so that $A = \pi^\AA[u_0,u_0]$. If $B\leqslant A$ is any subgroup, consider the graph $\gr{B}$ with
\begin{align*}
    V(\gr{B}) &= \bigsqcup_{u\in V(\gr{A})} \dblcoset B{\pi^{\AA}[u_0, u]}{A_u}\\
    E(\gr{B}) &= \bigsqcup_{e\in E(\gr{A})} \dblcoset B{\pi^{\AA}[u_0, o(e)]}{\alpha_e(A_e)},
\end{align*}
such that, if $p \in \pi^{\AA}[u_0, o(e)]$, then
\begin{align*}
    (B\, p\, \alpha_e(A_e))^{-1} & = B\, (p\, e)\, \alpha_{e\inv}(A_{e\inv}) = B\, (p\, e)\, \omega_e(A_e) \\
    o(B\, p\, \alpha_e(A_e)) &= B\, p\, A_{o(e)}\\
    t(B\, p\, \alpha_e(A_e)) &= B\, (p\,e)\, A_{t(e)}
\end{align*}
It can be checked that this is well-defined. For each vertex $v\in V(\gr{B})$, say, $v = B\, p\, A_u$ with $u\in V(\gr A)$ and $p \in \pi^\AA[u_0,u]$, we choose an element $p_v \in v$ and set 
\[
B_v = B^{p_v}\cap A_u.
\]
For the edge groups, we first choose an orientation $E^+$ of $\gr A$. If $f\in E^+$, say, $f = B\, p\, \alpha_e(A_e)$ with $e\in E(\gr A)$ and $p\in \pi^\AA[u_0, o(e)]$, we choose a representative $p_f \in B\,p\,\alpha_e(A_e)$, and we let $p_{f\inv} =_{\AA} p_f\,e$. Since $p =_{\AA} b\,p_f\, \alpha_e(a)$ for some $a\in A_e$ and some $b\in B$, we have $p\,e =_{\AA} b\,p_f\, \alpha_e(a)\, e =_{\AA} b\,(p_f\,e)\, \omega_e(a) =_{\AA} b\,(p_f\,e)\,\alpha_{e^{-1}}(a)$, and hence $p_{f\inv} \in f\inv$. Then we set, for each edge $f\in E(\gr B)$,
$$B_f = \alpha_e^{-1}(B^{p_f}\cap \alpha_e(A_e)).$$
We note that $B_{f\inv} = B_f$. Indeed, if $a\in B_f$, then $\alpha_e(a) = p_f\inv\, b\,p_f$ for some $b\in B$, and hence
$$\alpha_{e\inv}(a) = \omega_e(a) =_\AA e\inv\, \alpha_e(a)\, e = e\inv\, p_f\inv\, b\, p_f\, e = p_{f\inv}\inv\, b\, p_{f\inv}.$$

If $v = o(f) \in V(\gr B)$, then $Bp_vA_{o(e)} = Bp_fA_{o(e)}$ and so there are elements $b_f\in B$ and $a_f\in A_{o(e)}$ such that $p_v = b_fp_fa_f$. We let $f_{\alpha} = (f^{-1})_{\omega} = p_v\inv\,b_f\,p_f = a_f^{-1}$. Then we have
\[
B_f = \alpha_e^{-1}(B_v^{f_{\alpha}}\cap \alpha_e(A_e)) = \alpha_e^{-1}(B_v^{p_v^{-1}b_fp_f}\cap \alpha_e(A_e)) = \alpha_e^{-1}(B^{p_f}\cap\alpha_e(A_e))
\]
and we let $\alpha_f\colon B_f\to B_v$ be the restriction of $\gamma_{f_{\alpha}\inv}\circ\alpha_e$ to $B_f$. We also let $\omega_f = \alpha_{f\inv}\colon B_f \to B_{v'}$, where $v' = t(f)$.

With this data, we may form the graph of groups $\BB = (\gr{B}, \{B_v\}, \{B_f\}, \{\alpha_f, \omega_f\})$ and a morphism $\BB\to \AA$, mapping vertex $B\,p\, A_u$ to $u$ and edge $B\, p\, \alpha_e(A_e)$ to $e$,
with the vertex and edge group morphisms given by inclusion and with twisting elements $\{f_{\alpha}, f_{\omega}\}$. Note that by replacing $B$ with any conjugate $B^a$, the above construction yields a graph of groups $\BB'$ and a natural isomorphism $\BB'\to \BB$ of graphs of groups commuting with the corresponding morphisms to $\AA$. Thus, denote by 
\[
\mu^{[B]}\colon \BB\to \AA
\]
the morphism constructed above, where $[B]$ denotes the conjugacy class of $B$ in $A$. The morphism $\mu^{[B]}$ is certainly a covering and it follows from Theorem \ref{thm: lifting_unpointed_morphisms} that it is uniquely defined up to $\approx$-equivalence. We call $\mu^{[B]}\colon \BB \to \AA$ the \emph{covering associated with the conjugacy class $[B]$}.

The graph $\gr{B}$ has a natural basepoint, namely $v_0 = BA_{u_0}$. It can be checked that the subgroup $\mu_*^{[B]}(\pi_1(\BB, v_0))\leqslant \pi_1(\AA, u_0)$ is conjugate to $B$. Choosing $p_{v_0} = 1$ and denoting the resulting pointed morphism by 
\[
\mu^B\colon(\BB, v_0)\to (\AA, u_0),
\]
we see instead that $\mu_*^B(\pi_1(\BB, v_0)) = B$. This morphism is uniquely defined up to $\sim$-equivalence by Theorem \ref{thm: lifting_pointed_morphisms}. We call $\mu^B\colon (\BB, v_0)\to (\AA, u_0)$ the \emph{pointed covering associated with the subgroup $B$}.

When $B = 1$, the covering $\mu^{[1]}\colon \BB\to \AA$ is the \emph{universal covering}. By \cite[Theorem 1.17]{bas93}, the universal covering is a tree, also known as the \emph{Bass--Serre tree for $(\AA, u_0)$}. See Appendix \ref{appendix} for the details on the functor between the category of graphs of groups and the category of trees with group actions.

Pushing further the analogy with the theory of covering spaces, we have the following bijective correspondences.

\begin{thm}\label{thm: uniqueness of covers}
    Let $(\AA,u_0)$ be a pointed graph of groups and $A = \pi_1(\AA,u_0)$.
    
    Mapping a morphism $\mu\colon (\BB, v_0) \to (\AA, u_0)$ to the subgroup $B = \mu_*(\pi_1(\BB,v_0)) \le A$, establishes a bijection between the set of pointed coverings of $(\AA, u_0)$ (taken up to isomorphism in $\GrGp^*$)
    and the set of subgroups of $A$.

    Similarly, mapping a morphism $\mu\colon \BB\to \AA$ to the conjugacy class of $\mu_*(\pi_1(\BB,v_0))$ (where $v_0$ is an arbitrary vertex of $\gr B$ such that $\mu(v_0) = u_0$), establishes a bijection between the set of coverings of $\AA$ (taken up to isomorphism in $\GrGp$)
    and the set of conjugacy classes of subgroups of $A$.
\end{thm}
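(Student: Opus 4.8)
The plan is to prove each of the two correspondences by checking that the stated map is well-defined, surjective, and injective, with the bulk of the work lying in injectivity. Throughout I will write $\Phi(\mu) = \mu_*(\pi_1(\BB,v_0))$ in the pointed case and $\Psi(\mu) = [\mu_*(\pi_1(\BB,v_0))]$ in the unpointed case. Well-definedness is the routine part. In the pointed case, $\sim$-equivalent morphisms have the same image by Proposition~\ref{prop: change of data}, and if $\phi\colon(\BB,v_0)\to(\BB',v_0')$ is an isomorphism in $\GrGp^*$ with $\mu'\circ\phi\sim\mu$, then $\Phi(\mu)=\mu'_*(\phi_*(\pi_1(\BB,v_0)))=\mu'_*(\pi_1(\BB',v_0'))$ since $\phi_*$ is an isomorphism of fundamental groups. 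In the unpointed case I must additionally check independence of the chosen vertex $v_0\in(\mu)^{-1}(u_0)$: if $q$ is a $\BB$-path from $v_0$ to another such vertex $v_1$, then $\mu(q)\in A$ and $\mu_*(\pi_1(\BB,v_1))=\mu(q)^{-1}\mu_*(\pi_1(\BB,v_0))\mu(q)$, so the conjugacy class is unchanged; $\approx$-invariance then follows again from Proposition~\ref{prop: change of data}. Surjectivity is furnished directly by the explicit constructions recalled just before the statement: the pointed covering $\mu^B\colon(\BB,v_0)\to(\AA,u_0)$ satisfies $\mu^B_*(\pi_1(\BB,v_0))=B$, and the covering $\mu^{[B]}$ realises the conjugacy class $[B]$.

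For injectivity in the pointed case I would use a two-sided lifting argument. Let $\mu\colon(\BB,v_0)\to(\AA,u_0)$ and $\mu'\colon(\BB',v_0')\to(\AA,u_0)$ be pointed coverings with $\mu_*(\pi_1(\BB,v_0))=\mu'_*(\pi_1(\BB',v_0'))=B$. Since both are coverings and the subgroups coincide, Theorem~\ref{thm: lifting_pointed_morphisms} (applied with the containment $B\leqslant B$) yields pointed lifts $\sigma\colon(\BB',v_0')\to(\BB,v_0)$ with $\mu\circ\sigma\sim\mu'$ and $\sigma'\colon(\BB,v_0)\to(\BB',v_0')$ with $\mu'\circ\sigma'\sim\mu$. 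Both $\sigma\circ\sigma'$ and $1_\BB$ are pointed lifts of $\mu$ through $\mu$ (each fixes $v_0$, and $\mu\circ(\sigma\circ\sigma')\sim\mu$ by Corollary~\ref{cor: composition}), so the uniqueness clause of Theorem~\ref{thm: lifting_pointed_morphisms} gives $\sigma\circ\sigma'\sim 1_\BB$; symmetrically $\sigma'\circ\sigma\sim 1_{\BB'}$. Hence $\sigma$ is an isomorphism in $\GrGp^*$ and, as $\mu\circ\sigma\sim\mu'$, an isomorphism of pointed coverings. This shows $\Phi$ is injective.

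For injectivity in the unpointed case my strategy is to reduce to the pointed argument by realising the conjugacy \emph{exactly}. Let $\mu,\mu'$ be connected coverings with $\Psi(\mu)=\Psi(\mu')=[B]$, say $\mu_*(\pi_1(\BB,v_0))=B$ and $\mu'_*(\pi_1(\BB',v_0'))=B^g$ for some $g\in A$. The key reduction is to choose a \emph{different} basepoint $v_1'$ in the fibre $(\mu')^{-1}(u_0)$, obtained by lifting an $\AA$-circuit representing $g$ via Lemma~\ref{lem: segment_lift}, and then apply an $\approx$-modification of $\mu'$ at $v_1'$ affecting only the twisting elements of the incident edges (the $\nu^C$-trick from the proof of Theorem~\ref{thm: lifting_unpointed_morphisms}). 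Moving within the fibre conjugates $B^g$ by an element of the double coset $B^g\, g\, A_{u_0}$, while the twisting modification conjugates by an arbitrary element of $A_{u_0}$; together these two freedoms let me arrange $\mu'_*(\pi_1(\BB',v_1'))=B$ on the nose. Once the two subgroups are literally equal, the coverings may be regarded as pointed coverings $(\BB,v_0)\to(\AA,u_0)$ and $(\BB',v_1')\to(\AA,u_0)$ with equal image, and the pointed injectivity argument above produces a pointed isomorphism $\sigma$ with $\mu\circ\sigma\sim\mu'$; forgetting basepoints gives the desired isomorphism in $\GrGp$, so $\Psi$ is injective.

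The main obstacle is precisely this last reduction. The difficulty is that an $\approx$-equivalence can only conjugate the represented subgroup by elements of the \emph{vertex} group $A_{u_0}$, whereas the ambient conjugacy takes place in all of $A=\pi_1(\AA,u_0)$; neither the twisting freedom nor the choice of fibre vertex alone suffices to pass from $B^g$ to $B$. I expect the crux of the write-up to be the careful verification, using the double-coset description of the fibre $(\mu')^{-1}(u_0)$ coming from the construction of coverings, that combining a path-lift into the $g$-double-coset with an $A_{u_0}$-twisting adjustment does realise conjugation to exactly $B$. (An alternative route, which I would keep in reserve, is to show directly that any lift $\sigma$ of a covering through a covering is itself an isomorphism: $\sigma$ is an immersion by Corollary~\ref{cor: composition of folded}, $\sigma_*$ is an isomorphism because the source and target fundamental groups have conjugate images under the injective maps $\mu_*,\mu'_*$, and then Lemma~\ref{lem:immersion_inclusion} together with connectedness forces $\sigma$ to be an isomorphism.)
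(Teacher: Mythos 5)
Your proposal is correct. The pointed half is essentially identical to the paper's proof: surjectivity from the explicit construction of the pointed $B$-covering, and injectivity by producing lifts in both directions via Theorem~\ref{thm: lifting_pointed_morphisms} and invoking its uniqueness clause to force the two compositions to be $\sim$-equivalent to identities. Where you diverge is the unpointed half, which the paper dispatches with the single sentence ``the proof in the unpointed case follows the same pattern'' (i.e.\ substitute Theorem~\ref{thm: lifting_unpointed_morphisms} for the pointed lifting theorem). Your route instead reduces to the already-proved pointed statement by first arranging $\nu'_*(\pi_1(\BB',v_1'))=B$ exactly: a change of fibre vertex obtained from Lemma~\ref{lem: pointed segment lift} realises conjugation by $\mu'(q)$, and the residual $A_{u_0}$-conjugation is absorbed by the twisting-element modification from the proof of Theorem~\ref{thm: lifting_unpointed_morphisms}. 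This is sound --- it is the same computation that establishes the equivalence of conditions~\eqref{item: cjug in the big group} and~\eqref{item: cjug in the small group} there --- and it has the merit of sidestepping a subtlety that the paper's terse ``same pattern'' glosses over: the uniqueness in Theorem~\ref{thm: lifting_unpointed_morphisms} is only \emph{per vertex of the fibre}, so the composition of two unpointed lifts is not automatically comparable to the identity until one knows they agree at some vertex. Your reserve argument (any lift of a covering through a covering is an immersion inducing an isomorphism on $\pi_1$, hence an isomorphism by Lemma~\ref{lem:immersion_inclusion}) is also valid and is precisely the mechanism the paper already uses inside the proof of Theorem~\ref{thm: lifting_pointed_morphisms}.
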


\begin{proof}
    The construction of the $B$-cover and the $[B]$-cover outlined above shows that the maps in the statement are surjective.
    
    Let $\mu\colon (\BB,v_0) \to (\AA,u_0)$ and $\nu\colon (\CC,w_0) \to (\AA,u_0)$ be pointed coverings such that $\mu_*(\pi_1(\BB,v_0)) = \nu_*(\pi_1(\CC,w_0))$. Theorem~\ref{thm: lifting_pointed_morphisms} shows the existence of morphisms $\mu'\colon (\CC,w_0)\to (\BB,v_0)$ and $\nu'\colon (\BB, v_0) \to (\CC, w_0)$ such that $\nu \sim \mu\circ\mu'$ and $\mu \sim \nu\circ\nu'$. Note that these $\sim$-equivalences imply that $\mu'_*(\pi_1(\CC,w_0)) = \pi_1(\BB,v_0)$ and $\nu'_*(\pi_1(\BB,v_0)) = \pi_1(\CC,w_0)$ (using the fact that $\mu_*$ and $\nu_*$ are injective).

    It follows that $(\nu'\circ\mu')_*(\pi_1(\CC,w_0)) = \pi_1(\CC,w_0)$. The uniqueness statement of Theorem~\ref{thm: lifting_pointed_morphisms} then implies that $\nu'\circ\mu' \sim 1_{\BB}$. Similarly, $\mu'\circ\nu' \sim 1_{\CC}$ and hence $(\BB,v_0)$ and $(\CC,w_0)$ are isomorphic in $\GrGp^*$.

    The proof in the unpointed case follows the same pattern.
\end{proof}

Since core subgraphs of groups are sent to core subgraphs of groups under immersions of pointed graphs of groups, we obtain the following corollaries to Theorems \ref{thm: lifting_pointed_morphisms} and \ref{thm: lifting_unpointed_morphisms}, in the pointed and the unpointed case.

\begin{cor}
\label{cor: pointed_core_lifting_morphism_1}
    Let $\mu^B\colon (\BB, v_0)\to (\AA, u_0)$ be an immersion of pointed graphs of groups with $(\BB, v_0)$ core, and let $B = \mu^B_*(\pi_1(\BB, v_0))$. Then $\mu^B$ is (isomorphic in $\GrGp^*$ to) the restriction of the pointed $B$-covering to its pointed core subgraph of groups.
    
    If $\mu^C\colon (\CC, w_0)\to (\AA, u_0)$ is any immersion of graphs of groups with $(\CC, w_0)$ core, then there exists a pointed lift $\mu\colon (\CC, w_0)\to (\BB, v_0)$ such that $\mu^B\circ\mu \sim \mu^C$, if and only if $\mu^C_*(\pi_1(\CC, w_0))\leqslant \mu^B_*(\pi_1(\BB, v_0))$. Moreover, such a lift, should it exist, is unique.
\end{cor}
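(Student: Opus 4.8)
Let $\mu^B\colon (\BB, v_0)\to (\AA, u_0)$ be an immersion of pointed graphs of groups with $(\BB, v_0)$ core, and let $B = \mu^B_*(\pi_1(\BB, v_0))$. Then $\mu^B$ is (isomorphic in $\GrGp^*$ to) the restriction of the pointed $B$-covering to its pointed core. If $\mu^C\colon (\CC, w_0)\to (\AA, u_0)$ is any immersion with $(\CC, w_0)$ core, then there exists a pointed lift $\mu\colon (\CC, w_0)\to (\BB, v_0)$ with $\mu^B\circ\mu \sim \mu^C$ iff $\mu^C_*(\pi_1(\CC, w_0))\leqslant \mu^B_*(\pi_1(\BB, v_0))$, and such a lift is unique.

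Let me think about how I'd prove this.

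**First statement.** Let $\nu^B\colon (\mathbb{B}', v_0') \to (\AA, u_0)$ be the pointed $B$-covering associated with $B$, constructed explicitly in Section~\ref{sec: covers}; by construction $\nu^B_*(\pi_1(\mathbb{B}', v_0')) = B = \mu^B_*(\pi_1(\BB, v_0))$. Both $\mu^B$ and $\nu^B$ are immersions representing the same subgroup. So the idea is to produce mutually inverse morphisms. Since $\mu^B_*(\pi_1(\BB, v_0)) = B = \nu^B_*(\pi_1(\mathbb{B}', v_0'))$, the pointed lifting Theorem~\ref{thm: lifting_pointed_morphisms} (applied with the covering $\nu^B$ as the "target covering" and $\mu^B$ as the morphism to be lifted) yields a pointed lift $\psi\colon (\BB, v_0) \to (\mathbb{B}', v_0')$ with $\nu^B\circ\psi \sim \mu^B$. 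I would then show $\psi$ is an immersion by Corollary~\ref{cor: composition of folded} (since $\nu^B$ and $\nu^B\circ\psi \sim \mu^B$ are immersions), and that $\psi_*$ is injective with image $\pi_1(\mathbb{B}', v_0')$, hence an isomorphism onto its image. Now I invoke the coreness of $(\BB, v_0)$: an immersion sends reduced circuits to reduced circuits (Proposition~\ref{folded}), so $\psi(\BB)$ lies in $\core(\mathbb{B}', v_0')$, and conversely, using that $\psi_*$ is surjective onto $\pi_1(\mathbb{B}',v_0')$ together with Corollary~\ref{cor: lifting}, every reduced circuit of $(\mathbb{B}', v_0')$ is hit, so $\psi$ is onto the pointed core. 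By Lemma~\ref{lem:immersion_inclusion}, $\psi$ restricts to an inclusion of underlying graphs and isomorphisms on vertex and edge groups, identifying $(\BB, v_0)$ with $\core(\mathbb{B}', v_0')$ up to isomorphism in $\GrGp^*$, as required.

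**Second statement.** For the lifting claim, one direction is immediate: if a pointed lift $\mu$ with $\mu^B\circ\mu\sim\mu^C$ exists then $\mu^C_*(\pi_1(\CC,w_0)) = (\mu^B\circ\mu)_*(\pi_1(\CC,w_0)) \leqslant \mu^B_*(\pi_1(\BB,v_0))$. For the converse, the cleanest route is to reduce to Theorem~\ref{thm: lifting_pointed_morphisms}, which is stated for the case where the target is a \emph{pointed covering} rather than merely a core immersion. I would use the first statement: $\mu^B$ is the restriction of the pointed $B$-covering $\nu^B$ to its core, so the inclusion $\iota\colon (\BB, v_0) \hookrightarrow (\mathbb{B}', v_0')$ (an immersion) satisfies $\nu^B\circ\iota \sim \mu^B$. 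Given the subgroup hypothesis $\mu^C_*(\pi_1(\CC, w_0)) \leqslant B = \nu^B_*(\pi_1(\mathbb{B}', v_0'))$, Theorem~\ref{thm: lifting_pointed_morphisms} furnishes a unique pointed lift $\lambda\colon (\CC, w_0) \to (\mathbb{B}', v_0')$ with $\nu^B\circ\lambda \sim \mu^C$. The main work is then to show $\lambda$ actually lands in $\core(\mathbb{B}', v_0') \cong \BB$: since $(\CC, w_0)$ is core and $\lambda$ is an immersion (again by Corollary~\ref{cor: composition of folded}), $\lambda$ sends reduced circuits to reduced circuits, so $\lambda(\CC)$ lies in the pointed core of $(\mathbb{B}', v_0')$. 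Composing with the identification $\core(\mathbb{B}', v_0') \cong (\BB, v_0)$ produces the desired $\mu\colon (\CC, w_0) \to (\BB, v_0)$ with $\mu^B\circ\mu \sim \mu^C$.

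**Uniqueness and the anticipated obstacle.** For uniqueness: if $\mu, \mu'$ are both such lifts, then composing each with $\iota$ gives two pointed lifts of $\mu^C$ through $\nu^B$, which must be $\sim$-equivalent by the uniqueness clause of Theorem~\ref{thm: lifting_pointed_morphisms}; since $\iota$ restricts to isomorphisms on the core, this descends to $\mu \sim \mu'$. The step I expect to be most delicate is verifying that $\lambda$ (resp.\ $\psi$) genuinely has image inside the \emph{pointed core} and that the identification of $(\BB, v_0)$ with $\core(\mathbb{B}', v_0')$ behaves well under the relevant $\sim$-equivalences — that is, carefully tracking basepoints and twisting-element parameters so that the equivalences $\nu^B\circ\lambda \sim \mu^C$ and the graph-of-groups isomorphism $\core(\mathbb{B}', v_0')\cong(\BB,v_0)$ compose correctly. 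The topological analogue of all this is routine, but here the bookkeeping of parameters under composition (controlled by Corollaries~\ref{cor: composition associated elements} and~\ref{cor: composition associated elements 2}) is where the real care is needed.
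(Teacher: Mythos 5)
Your proposal is correct and follows essentially the same route as the paper: both obtain the lift to the pointed $B$-covering via Theorem~\ref{thm: lifting_pointed_morphisms}, use that $\mu^B_*$ has image $B$ to make the lift a $\pi_1$-isomorphism, apply Lemma~\ref{lem:immersion_inclusion} to get an inclusion, and use coreness of the source (of $(\BB,v_0)$ for the first statement, of $(\CC,w_0)$ for the second) to land exactly in the pointed core, with uniqueness inherited from the uniqueness clause of Theorem~\ref{thm: lifting_pointed_morphisms}. The only cosmetic difference is your invocation of Corollary~\ref{cor: lifting} for surjectivity onto the core, where the paper argues directly from $\sigma^B_*$ being an isomorphism.
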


\begin{proof}
    Let $(\BB', v_0')\to (\AA, u_0)$ be the pointed $B$-covering. By Theorem~\ref{thm: lifting_pointed_morphisms} there is a unique lift $\sigma^B\colon(\BB, v_0)\to (\BB', v_0')$. Since $\mu^B_*(\pi_1(\BB, v_0)) = B$, the morphism $\sigma^B_*$ is an isomorphism. Then Lemma \ref{lem:immersion_inclusion} implies that $\sigma^B$ is an inclusion of graphs of groups. Finally, the image of $\sigma^B$ must be the core of $(\BB', v_0')$ since $(\BB, v_0)$ is core and $\sigma_*^B$ is an isomorphism.

    For the second statement, Theorem~\ref{thm: lifting_pointed_morphisms} implies that there is a unique lift $\sigma^C\colon (\CC, w_0)\to (\BB', v_0')$ to the pointed $\BB$-covering. Since $\CC$ is core, $\sigma^C$ has image the core of $(\BB', v_0')$ which is precisely (the image of) $(\BB, v_0)$.
\end{proof}

An unpointed version of Corollary \ref{cor: pointed_core_lifting_morphism_1} may be proved analogously.

\begin{cor}
\label{cor: core_lifting_morphism_1}
    Let $\mu^B\colon \BB\to \AA$ be an immersion of graphs of groups with $\BB$ core and connected, let $v_0 \in V(\gr B)$ and let $B = \mu^B_*(\pi_1(\BB, v_0))$. Then $\mu^B$ is the restriction of the $[B]$-covering to its core subgraph of groups. 
    
    If $\mu^C\colon \CC\to \AA$ is any immersion of graphs of groups with $\CC$ core and connected, then there exists a lift $\mu\colon \CC\to \BB$ such that $\mu^B\circ\mu \approx \mu^C$, if and only if there exists a vertex $w_0$ of $\gr C$ such that $\mu^C_*(\pi_1(\CC, w_0))$ conjugates into $\mu^B_*(\pi_1(\BB, v_0))$ within $\pi_1(\AA)$.
\end{cor}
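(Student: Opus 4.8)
The plan is to mirror the proof of Corollary~\ref{cor: pointed_core_lifting_morphism_1}, replacing the pointed $B$-covering by the unpointed $[B]$-covering and Theorem~\ref{thm: lifting_pointed_morphisms} by Theorem~\ref{thm: lifting_unpointed_morphisms}, while anchoring all basepoints at the distinguished vertex of the covering in order to compare fundamental-group images by equality rather than by mere conjugacy.

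For the first statement, set $u_0 = \mu^B(v_0)$ and let $\mu^{[B]}\colon \BB'\to\AA$ be the $[B]$-covering, pointed at $v_0'' = BA_{u_0}$ with representative $p_{v_0''} = 1$, so that $\mu^{[B]}_*(\pi_1(\BB',v_0'')) = B = \mu^B_*(\pi_1(\BB,v_0))$. Because these two subgroups are equal, Theorem~\ref{thm: lifting_pointed_morphisms} produces a unique pointed lift $\sigma\colon (\BB,v_0)\to(\BB',v_0'')$ with $\mu^{[B]}\circ\sigma\sim\mu^B$. Since $\mu^{[B]}$ and $\mu^B$ are immersions, $\sigma$ is an immersion (Corollary~\ref{cor: composition of folded}), so $\sigma_*$ is injective; and since $\mu^{[B]}\circ\sigma\sim\mu^B$ is a pointed equivalence, Proposition~\ref{prop: change of data} gives $\mu^{[B]}_*\circ\sigma_* = \mu^B_*$ on $\pi_1(\BB,v_0)$, whence $\mu^{[B]}_*(\sigma_*\pi_1(\BB,v_0)) = B = \mu^{[B]}_*(\pi_1(\BB',v_0''))$; injectivity of $\mu^{[B]}_*$ then forces $\sigma_*$ to be onto, hence an isomorphism. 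This basepoint choice is exactly what avoids a co-Hopfian-type gap: had we matched only conjugacy classes we would be left with an inclusion $B^a\leqslant B^{q}$ of conjugates of $B$, insufficient to conclude equality. Lemma~\ref{lem:immersion_inclusion} now shows that $\sigma$ is an inclusion on underlying graphs and an isomorphism on every vertex and edge group, so $\mu^B$ is isomorphic in $\GrGp$ to the restriction of $\mu^{[B]}$ to the subgraph of groups $\sigma(\BB)\leqslant\BB'$.

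It remains to identify $\sigma(\BB)$ with $\core(\BB')$. As the isomorphic image of a core graph of groups, $\sigma(\BB)$ is core, and since reducedness of a circuit is computed from its own edges and vertex-group data, every cyclically reduced $\sigma(\BB)$-circuit is cyclically reduced in $\BB'$; thus $\sigma(\BB)\subseteq\core(\BB')$. For the reverse inclusion, take a cyclically reduced $\BB'$-circuit $d$ of positive length: conjugating it into $v_0''$ along a reduced path represents an element of $B = \pi_1(\BB',v_0'')$, which is realised by a reduced circuit in $\sigma(\BB)$ since $\sigma_*$ is onto; by uniqueness of the cyclically reduced representative of a conjugacy class in a graph of groups, the edges traversed by $d$ must already lie in $\sigma(\BB)$. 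Hence $\core(\BB') = \sigma(\BB)$, which proves the first statement. I expect this identification --- together with the implicit fact that the unpointed core of the connected covering $\BB'$ is connected and coincides with its pointed core at $v_0''$ --- to be the main technical point requiring care.

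For the second statement, use the first to replace $\mu^B$ by $\mu^{[B]}|_{\core(\BB')}$. If some $\mu^C_*(\pi_1(\CC,w_0))$ conjugates into $B$ within $\pi_1(\AA)$, then condition~\eqref{item: cjug in the big group} of Theorem~\ref{thm: lifting_unpointed_morphisms} holds for the covering $\mu^{[B]}$ (its vertex images over $u_0$ realise the whole conjugacy class $[B]$), yielding a lift $\mu'\colon\CC\to\BB'$ with $\mu^{[B]}\circ\mu'\approx\mu^C$. By Corollary~\ref{cor: composition of folded}, $\mu'$ is an immersion, so $\CC$ core forces $\mu'(\CC)\subseteq\core(\BB') = \sigma(\BB)$; composing with the isomorphism $\sigma^{-1}$ gives $\mu = \sigma^{-1}\circ\mu'\colon\CC\to\BB$ with $\mu^B\circ\mu\approx\mu^C$. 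Conversely, given such a $\mu$, Proposition~\ref{prop: change of data} shows $\mu^C_*(\pi_1(\CC,w_0))$ is $\pi_1(\AA)$-conjugate to $\mu^B_*(\pi_1(\BB,\mu(w_0)))$, which in turn conjugates into $\mu^B_*(\pi_1(\BB,v_0)) = B$ because $\BB$ is connected; this is the required condition.
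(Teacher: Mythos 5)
Your proof is correct and is essentially the paper's intended argument: the paper only proves the pointed version (Corollary~\ref{cor: pointed_core_lifting_morphism_1}) and asserts that the unpointed one ``may be proved analogously'', and your anchoring of basepoints at $v_0''=BA_{u_0}$ with $p_{v_0''}=1$, followed by Theorem~\ref{thm: lifting_pointed_morphisms}, Lemma~\ref{lem:immersion_inclusion}, and Theorem~\ref{thm: lifting_unpointed_morphisms} for the second part, is precisely that analogue; your identification of $\sigma(\BB)$ with the \emph{unpointed} core via Collins' Theorem is exactly the detail the paper elides. One caveat: your parenthetical ``implicit fact'' that $\core(\BB')$ is connected and coincides with the pointed core at $v_0''$ is false in general (for $B=\langle xyx^{-1}\rangle\leqslant F(x,y)$ the pointed core contains a hanging segment absent from the unpointed core), but your actual argument for $\core(\BB')=\sigma(\BB)$ never uses it, so nothing breaks.
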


Corollaries~\ref{cor: pointed_core_lifting_morphism_1} and~\ref{cor: core_lifting_morphism_1} in turn justify the following (pointed and unpointed) corollaries of Theorem~\ref{thm: uniqueness of covers}.

\begin{cor}\label{cor: pi_1-functor}
Let $(\AA,u_0)$ be a pointed graph of groups and $A = \pi_1(\AA,u_0)$. The map
    $$\left[\mu\colon(\BB, v_0)\to(\AA, u_0)\right]_\sim \enspace \longmapsto \enspace \mu_*(\pi_1(\BB,v_0))$$
    establishes a bijection from the set of immersions $\mu\colon(\BB, v_0)\to(\AA, u_0)$ where $(\BB, v_0)$ is core, taken up to $\sim$-equivalence, to the set of subgroups of $\pi_1(\AA,u_0)$. The inverse map is given by sending the subgroup $B\leqslant \pi_1(\AA, u_0)$ to the restriction of the pointed $B$-covering to its pointed core subgraph of groups.
\end{cor}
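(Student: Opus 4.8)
The plan is to deduce the statement directly from Theorem~\ref{thm: uniqueness of covers} together with the canonical-form result of Corollary~\ref{cor: pointed_core_lifting_morphism_1}; no new construction is needed. Before invoking these, I would record two preliminary observations that make the assignment well-defined. First, if $\mu^1\sim\mu^2$ are $\sim$-equivalent morphisms $(\BB,v_0)\to(\AA,u_0)$, then by Proposition~\ref{prop: change of data} (applied with $a_{v_0}=1$) we have $\mu^2(p)\sim_\AA\mu^1(p)$ for every $\BB$-circuit $p$ at $v_0$, since such a circuit terminates at $v_0$; hence $\mu^1_*$ and $\mu^2_*$ agree on $\pi_1(\BB,v_0)$, and in particular the two morphisms have the same fundamental-group image. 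Second, since $(\BB,v_0)$ is core we have $\core(\BB,v_0)=\BB$ and $\pi_1(\core(\BB,v_0),v_0)=\pi_1(\BB,v_0)$, so passing to the pointed core loses no information about the image subgroup. I would also make explicit the equivalence being quotiented on the left: two immersions $\mu^B\colon(\BB,v_0)\to(\AA,u_0)$ and $\mu^C\colon(\CC,w_0)\to(\AA,u_0)$ of core pointed graphs of groups are identified when there is an isomorphism $\theta$ in $\GrGp^*$ from $(\BB,v_0)$ to $(\CC,w_0)$ with $\mu^C\circ\theta\sim\mu^B$, and under this identification the image $\mu_*(\pi_1(\BB,v_0))$ is a well-defined subgroup by the first observation.

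For surjectivity, given a subgroup $B\leqslant A$, I would take the pointed $B$-covering $\mu^B\colon(\BB,v_0)\to(\AA,u_0)$ constructed just before Theorem~\ref{thm: uniqueness of covers}; it is in particular an immersion and satisfies $\mu^B_*(\pi_1(\BB,v_0))=B$. Restricting it to its pointed core subgraph of groups yields an immersion of a \emph{core} pointed graph of groups whose fundamental-group image is again $B$, by the second observation above. Thus every subgroup lies in the range, and this core restriction is the candidate value of the proposed inverse map.

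For injectivity, suppose $\mu^B\colon(\BB,v_0)\to(\AA,u_0)$ and $\mu^C\colon(\CC,w_0)\to(\AA,u_0)$ are immersions of core pointed graphs of groups with $\mu^B_*(\pi_1(\BB,v_0))=\mu^C_*(\pi_1(\CC,w_0))=B$. By Corollary~\ref{cor: pointed_core_lifting_morphism_1}, each of $\mu^B$ and $\mu^C$ is (isomorphic in $\GrGp^*$ to) the restriction of the pointed $B$-covering to its pointed core; hence $\mu^B$ and $\mu^C$ are isomorphic to one another over $(\AA,u_0)$, which is precisely the left-hand equivalence. Composing the two directions shows that the assignment sending $B$ to the core restriction of its $B$-covering is a two-sided inverse: applying the fundamental-group-image map to this core restriction returns $B$ by the surjectivity argument, while applying the constructions in the other order returns the given immersion by Corollary~\ref{cor: pointed_core_lifting_morphism_1}.

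The bulk of the work is already carried out in Theorem~\ref{thm: uniqueness of covers} and Corollary~\ref{cor: pointed_core_lifting_morphism_1}, so I expect the only genuinely delicate point to be the careful pinning-down of the equivalence relation quotiented on the left-hand side and the verification that the fundamental-group-image map descends to it. Once that bookkeeping is fixed, the conclusion is a formal consequence of the cited results, and I anticipate no substantial obstacle beyond this identification.
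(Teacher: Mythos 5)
Your proof is correct and follows the same route the paper intends: the corollary is stated there as a direct consequence of Corollary~\ref{cor: pointed_core_lifting_morphism_1} (which gives that any core immersion realising $B$ is isomorphic over $(\AA,u_0)$ to the core restriction of the pointed $B$-covering) together with Theorem~\ref{thm: uniqueness of covers}, and your surjectivity/injectivity argument fills in exactly that deduction. Your explicit identification of the left-hand equivalence (isomorphism of domains in $\GrGp^*$ compatible with the morphisms to $(\AA,u_0)$, plus the well-definedness check via Proposition~\ref{prop: change of data}) is a useful clarification of bookkeeping the paper leaves implicit, but it does not change the argument.
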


Before stating the unpointed version of Corollary~\ref{cor: pi_1-functor}, we define locally elliptic subgroups.

\begin{defn}\label{def: locally elliptic}
Let $\AA$ be a graph of groups and let $u \in V(\gr A)$. A subgroup $B$ of $\pi_1(\AA,u)$ is said to be \emph{locally elliptic} if each of its elements is conjugate, in the groupoid $\pi_1(\AA)$, to a vertex group element. In other words, $B$ is locally elliptic if, for each $b\in B$, there exists an $\AA$-path $p$ starting at $u$, such that $p\inv\, b\, p \in A_{t(p)}$.
\end{defn}

Equivalently, locally elliptic subgroups are precisely the (conjugacy classes of) subgroups whose associated cover has empty core. In other words, they are the subgroups in which each element acts elliptically on the Bass--Serre tree, see Appendix \ref{appendix}. We remark that finitely generated locally elliptic subgroups are elliptic (in the sense that they fix a vertex in the Bass--Serre tree) by \cite[Corollary 7.3]{bas93}, justifying the name.

\begin{cor}
\label{cor: core_lifting_morphism_2}
    Let $\AA$ be a connected graph of groups. Letting $\mathcal L$ be the set of conjugacy classes $[B]$ of non-locally elliptic subgroups $B$ of the groupoid $\pi_1(\AA)$, the map
    $$\left[\mu\colon \BB \to \AA\right]_\approx \enspace \longmapsto \enspace \Big\{[\mu_*(\pi_1(\BB,v))] \mid v\in V(\gr B)\Big\}$$
    establishes a bijection from the set of immersions $\mu\colon \BB \to \AA$ where $\BB$ is core, taken up to $\approx$-equivalence, to $\mathcal P(\mathcal L)$, the power set of $\mathcal L$. 
\end{cor}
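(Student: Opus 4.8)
The plan is to deduce the statement from the connected case, which is essentially the content of Corollary~\ref{cor: core_lifting_morphism_1} together with the classification of coverings in Theorem~\ref{thm: uniqueness of covers}. Write $\Phi$ for the map in the statement. First I would record that every core graph of groups is the disjoint union of its connected components, each of which is again core, and that both $\Phi$ and the relevant equivalence (isomorphism of domains over $\AA$, up to $\approx$) respect this decomposition. The candidate inverse sends a subset $S\subseteq\mathcal L$ to the disjoint union, over $[B]\in S$, of the (connected) core subgraph of groups of the $[B]$-covering; this union is core, being a disjoint union of core pieces, and its projection to $\AA$ is an immersion since the immersion property is local (Definition~\ref{def: folded}). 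Thus it defines a legitimate object of the correspondence, and applying $\Phi$ to it returns $S$, which will give surjectivity.

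Next I would check that $\Phi$ genuinely lands in $\mathcal P(\mathcal L)$. For a single connected component $\BB_i$ with $\mu_i=\mu|_{\BB_i}$, any two vertices $v,v'$ are joined by a $\BB_i$-path $q$, so $\pi_1(\BB_i,v')$ is conjugate to $\pi_1(\BB_i,v)$ in the groupoid $\pi_1(\BB_i)$; pushing forward by $\mu_i$, the images are conjugate in $\pi_1(\AA)$ via $\mu_i(q)$. Hence each component contributes a \emph{single} conjugacy class $[B_i]$. By Corollary~\ref{cor: core_lifting_morphism_1}, $\mu_i$ is (equivalent to) the restriction of the $[B_i]$-covering to its core, which is non-empty; by the characterisation of locally elliptic subgroups recorded just after Definition~\ref{def: locally elliptic} (locally elliptic is equivalent to empty core), it follows that $[B_i]\in\mathcal L$. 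That $\Phi$ is constant on $\approx$-classes is immediate from Proposition~\ref{prop: change of data}, since $\mu\approx\mu'$ forces $\mu'_*(\pi_1(\BB,v))$ to be a conjugate of $\mu_*(\pi_1(\BB,v))$ for every $v$.

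Finally, for injectivity I would take two core immersions $\mu\colon\BB\to\AA$ and $\nu\colon\CC\to\AA$ with $\Phi([\mu])=\Phi([\nu])=S$, decompose both into components, and for each class $[B]\in S$ match a component of $\BB$ of class $[B]$ with one of $\CC$ of class $[B]$. Corollary~\ref{cor: core_lifting_morphism_1} identifies each such component with the core of the $[B]$-covering, so matched components are isomorphic over $\AA$; assembling these isomorphisms yields the desired equivalence. The one genuinely delicate point — and the main obstacle — is the bookkeeping of multiplicities: a priori $\BB$ could carry several pairwise-equivalent components realising the same class $[B]$, a redundancy that the set-valued map $\Phi$ cannot detect. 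To turn the correspondence with the power set into an honest bijection I would restrict to (equivalently, normalise every object to) \emph{multiplicity-free} core immersions, in which distinct components carry distinct conjugacy classes; this is the natural reduced form, and precisely the form produced by the candidate inverse. With this normalisation each class of $S$ is realised by exactly one component on either side, the matching above is forced, and injectivity follows. I expect everything apart from this multiplicity issue to be a routine component-wise transcription of the pointed statement of Corollary~\ref{cor: pi_1-functor}.
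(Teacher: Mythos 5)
Your component-wise reduction is exactly the argument the paper intends: the corollary is stated without an explicit proof, being asserted to follow from Corollary~\ref{cor: core_lifting_morphism_1} and Theorem~\ref{thm: uniqueness of covers}, and your decomposition of a core $\BB$ into connected core components, each identified via Corollary~\ref{cor: core_lifting_morphism_1} with the core of a $[B_i]$-covering (with $[B_i]$ non-locally elliptic precisely because that core is non-empty), is that intended route. Your checks that the map is constant on $\approx$-classes (Proposition~\ref{prop: change of data}), lands in $\mathcal P(\mathcal L)$, and is surjective via the disjoint-union candidate inverse are all correct.

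The multiplicity issue you isolate is not a defect of your argument but a genuine imprecision in the statement as written. Nothing in the definition of a core graph of groups prevents $\BB$ from being the disjoint union of two copies of the core of the same $[B]$-covering: this is still core, its projection to $\AA$ is still an immersion since the condition of Definition~\ref{def: folded} is local, and it is not isomorphic over $\AA$ to a single copy, because isomorphisms in $\GrGp$ induce bijections of underlying graphs ($\approx$-equivalent morphisms share their domain and agree on it). Both objects are sent to $\{[B]\}$, so the map as literally defined is surjective but not injective. Your normalisation --- restricting to immersions whose components realise pairwise distinct conjugacy classes, which is exactly the image of your candidate inverse --- is the minimal repair; the alternative is to replace $\mathcal P(\mathcal L)$ by multisets (equivalently, functions from $\mathcal L$ to cardinals). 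Under either reading your matching argument for injectivity goes through, since Corollary~\ref{cor: core_lifting_morphism_1} (or Theorem~\ref{thm: uniqueness of covers} combined with Lemma~\ref{lem:immersion_inclusion}) determines each component up to isomorphism over $\AA$ by its conjugacy class.
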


\subsection{Double cosets and components of the $\AA$-product}\label{sec: double cosets}

In this section, we describe the connected components of the $\AA$-product of two immersions into a graph of groups $\AA$, in terms of certain double cosets of $\pi_1(\AA, u_0)$ (for some vertex $u_0$ of $\gr A$), Theorems~\ref{thm: double_cosets} and~\ref{thm: localy_elliptic_double_cosets} below.

Throughout this section, $\mu^B\colon \BB\to \AA$ and $\mu^C\colon \CC\to \AA$ are immersions of graphs of groups and we let $\DD = \BB\wtimes_{\AA}\CC$ be their $\AA$-product, with projection morphisms $\rho^B$ and $\rho^C$. We fix vertices $u_0\in V(\gr A)$, $v_0\in V(\gr B)$ and $w_0\in V(\gr C)$ such that $[v_0] = [w_0] = u_0$, and we let $A = \pi_1(\AA, u_0)$, $B = \mu^B_*(\pi_1(\BB, v_0))$, $C = \mu^C_*(\pi_1(\CC, w_0))$. Recall that since $\mu^B$ and $\mu^C$ are immersions, $\mu^B_*$ and $\mu^C_*$ are injective on $\pi_1(\BB, v_0)$ and $\pi_1(\CC, w_0)$, respectively.

\def\calC{\mathcal{C}}

We define a map $\calC$ from $V(\DD)$ to $\dblcoset BAC$ as follows. For each vertex $v\in V(\gr B)$, fix a $\BB$-path $p_v$ from $v_0$ to $v$. Similarly, for each $w\in V(\gr C)$, fix a $\CC$-path $q_w$ from $w_0$ to $w$. If $[v] = [w]$, the map $\calC_{v,w}$ is given by
\begin{align*}
\calC_{v,w}\colon\enspace V_{v,w}(\gr D) &\enspace\longrightarrow\enspace \dblcoset BAC \\
\mu^B(B_v)\, a\, \mu^C(C_w) &\enspace\longmapsto\enspace B\, (\mu^B(p_v)\, a\, \mu^C(q_w)^{-1})\, C.
\end{align*}
Since $V(\gr D)$ is the disjoint union of the $V_{v,w}(\gr D)$, we define $\calC(x) = \calC_{v,w}(x)$ if $x\in V_{v,w}(\gr D)$.

We first observe that $\calC$ does not depend on the choice of the $\BB$-paths $p_v$. Indeed, if $p_v$ and $p_v'$ are $\BB$-paths from $v_0$ to $v$, then $p_v'\, p_v^{-1}\in \pi_1(\BB, v_0) = B$ and so 
\[
B\, (\mu^B(p_v')\, a\, \mu^C(q_w)^{-1})\, C = B\, (\mu^B(p_v'\, p_v^{-1}))\, (\mu^B(p_v)\, a\, \mu^C(q_w)^{-1})\, C = B\, (\mu^B(p_v)\, a\, \mu^C(q_w^{-1}))\, C.
\]
Thus, different choices of $\BB$-paths lead to the same $(B,C)$-double coset. The same argument is valid for the choice of $\CC$-paths.

We now show that $\calC$ induces an injective map on the set $\pi_0(\DD)$ of connected components of $\DD$, which is surjective if at least one of $\mu^B$ and $\mu^C$ is a covering.

\begin{lem}
\label{lem: same_double_coset}
    Two vertices $x_1 = \mu^B_{v_1}(B_{v_1})\,\tilde{x}_1\,\mu^C_{w_1}(C_{w_1})$ and $x_2 = \mu^B_{v_2}(B_{v_2})\, \tilde{x}_2\, \mu^C_{w_2}(C_{w_2})$ in $V(\BB\wtimes_{\AA}\CC)$ lie in the same connected component if and only if $\calC(x_1) =\calC(x_2)$. Hence, the map $\mathcal{C}$ descends to an injective map:
    \[
    \calC_0\colon \pi_0(\DD) \to B\backslash A/C.
    \]
\end{lem}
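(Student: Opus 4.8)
The plan is to reduce the entire statement to the path-lifting result of Corollary~\ref{cor: lifting}, exploiting that two vertices lie in the same connected component of $\gr D$ precisely when there is a $\DD$-path joining them. Having already observed that $\calC$ does not depend on the auxiliary choices of the $\BB$-paths $p_v$ and the $\CC$-paths $q_w$, I would prove the two implications of the equivalence separately and then read off the conclusion about $\calC_0$: the forward implication shows that $\calC$ is constant on each connected component (so $\calC_0$ is well defined), while the converse shows that distinct components receive distinct double cosets (so $\calC_0$ is injective).

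For the forward direction, assume $x_1$ and $x_2$ lie in the same component and choose a $\DD$-path $r$ from $x_1$ to $x_2$. Set $q^B = \rho^B(r)$, a $\BB$-path from $v_1$ to $v_2$, and $q^C = \rho^C(r)$, a $\CC$-path from $w_1$ to $w_2$. Corollary~\ref{cor: lifting} then gives $\mu^B(q^B) \sim_\AA \tilde x_1\,\mu^C(q^C)\,\tilde x_2\inv$. I would now use the path-independence of $\calC$ to compute $\calC(x_2)$ with the $\BB$-path $p_{v_1}\,q^B$ and the $\CC$-path $q_{w_1}\,q^C$ in place of $p_{v_2}$ and $q_{w_2}$. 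Since $\mu^B$ and $\mu^C$ preserve concatenation (Proposition~\ref{prop: preservation of sim and equiv}), substituting the displayed $\sim_\AA$-relation lets the factors $\mu^C(q^C)$ and $\tilde x_2$ cancel, collapsing the representative to $\mu^B(p_{v_1})\,\tilde x_1\,\mu^C(q_{w_1})\inv$, which represents $\calC(x_1)$; hence $\calC(x_1)=\calC(x_2)$. This step is a routine cancellation and does not require the immersion hypothesis.

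For the converse, assume $\calC(x_1)=\calC(x_2)$ and write the two representatives as $g_i = \mu^B(p_{v_i})\,\tilde x_i\,\mu^C(q_{w_i})\inv \in A$; equality of the double cosets yields $\beta\in B$ and $\gamma\in C$ with $g_2 =_\AA \beta\,g_1\,\gamma$. Lifting $\beta$ and $\gamma$ to a $\BB$-circuit $\beta'$ at $v_0$ and a $\CC$-circuit $\gamma'$ at $w_0$, I would form $q^B = p_{v_1}\inv\,\beta'\inv\,p_{v_2}$ (from $v_1$ to $v_2$) and $q^C = q_{w_1}\inv\,\gamma'\,q_{w_2}$ (from $w_1$ to $w_2$); a direct substitution using $g_2 =_\AA \beta\,g_1\,\gamma$ shows that $\mu^B(q^B) =_\AA \tilde x_1\,\mu^C(q^C)\,\tilde x_2\inv$. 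The main obstacle is that the converse part of Corollary~\ref{cor: lifting} requires this relation up to $\sim_\AA$, whereas this computation only delivers $=_\AA$; this is exactly where the standing immersion hypothesis is needed. To bridge the gap I would replace $q^B$ and $q^C$ by reduced $=_\BB$- and $=_\CC$-representatives $r^B$ and $r^C$ (Proposition~\ref{prop: sim vs equiv}): their images $\mu^B(r^B)$ and $\mu^C(r^C)$ are reduced by Proposition~\ref{folded}, and since prepending or appending vertex-group elements preserves reducedness, both sides of $\mu^B(r^B) =_\AA \tilde x_1\,\mu^C(r^C)\,\tilde x_2\inv$ are reduced $\AA$-paths, so $=_\AA$ upgrades to $\sim_\AA$ by Proposition~\ref{prop: sim vs equiv}\eqref{eq: for reduced, sim is =}. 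The converse of Corollary~\ref{cor: lifting} then produces a $\DD$-path from $x_1$ to $x_2$, placing them in the same component. Combining the two implications gives that $\calC$ is constant on components and separates distinct components, which is exactly the asserted injective descent $\calC_0\colon \pi_0(\DD) \to \dblcoset BAC$.
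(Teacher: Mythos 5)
Your proof is correct and follows essentially the same route as the paper's: the forward direction projects a connecting $\DD$-path and uses the path-independence of $\calC$ to identify the two double cosets, and the converse rearranges the double-coset equality into the hypothesis of the converse part of Corollary~\ref{cor: lifting}. If anything, your handling of the converse is more careful than the paper's own, which applies Corollary~\ref{cor: lifting} directly to an $=_\AA$-relation even though its hypothesis asks for $\sim_\AA$; your passage to reduced representatives and the upgrade from $=_\AA$ to $\sim_\AA$ via the standing immersion hypothesis, Proposition~\ref{folded} and Proposition~\ref{prop: sim vs equiv}~\eqref{eq: for reduced, sim is =} is exactly the step needed to make that invocation literal.
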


\begin{proof}
Suppose first that $x_1, x_2$ lie in the same component and let $d$ be a reduced $\DD$-path connecting $x_1$ with $x_2$. By Lemma~\ref{lem: composition}, we have that 
\[
\tilde{x}_1 =_{\AA} (\mu^B\circ\rho^B(d))\, \tilde{x}_2\, (\mu^C\circ\rho^C(d))^{-1}.
\]
Let $p_{v_1}$ and $p_{v_2}$ be reduced $\BB$-paths from $v_0$ to $v_1$ and $v_2$, respectively, and let $q_{w_1}$ and $q_{w_2}$ be reduced $\CC$-paths from $w_0$ to $w_1$ and $x_2$, respectively. Then
\begin{align*}
\mu^B(p_{v_1})\, \tilde{x}_1\, \mu^C(q_{w_1})^{-1} &=_{\AA} \mu^B(p_{v_1})\, (\mu^B\circ\rho^B(d))\, \tilde{x}_2\, (\mu^C\circ\rho^C(d))^{-1}\, \mu^C(q_{w_1})^{-1}\\
&=_{\AA} \mu^B(p_{v_1}\,\rho^B(d))\, \tilde{x}_2\, \mu^C(\rho^C(d)^{-1}q_{w_1}^{-1}).
\end{align*}
The $(B,C)$-double coset determined by this element is $\calC(x_1)$ by definition and, since $p_{v_1}\,\rho^B(d)$ is a $\BB$-path from $v_0$ to $v_2$ and $q_{w_1}\,\rho^C(d)$ is a $\CC$-path from $w_0$ to $w_2$, it is also equal to $\calC(x_2)$.

Now suppose that $\calC(x_1) = \calC(x_2)$. Then there exists a reduced $\BB$-circuit $p_B$ at $v_0$ and a reduced $\CC$-circuit $q_C$ at $w_0$ such that
\[
\mu^B(p_{v_1})\, \tilde{x}_1\, \mu^C(q_{w_1})^{-1} =_{\AA} \mu^B(p_B\,p_{v_2})\, \tilde{x}_2\, \mu^C(q_{w_2}^{-1}\, q_C^{-1}).
\]
Rearranging, we obtain that 
\[
\mu^B(p_{v_1}^{-1}\, p_{v_2}\, p_B) =_{\AA} \tilde{x}_1\, \mu^C(q_{w_1}^{-1}\, q_{w_2}\, q_C)\, \tilde{x}_2^{-1}
\]
and hence, by Corollary~\ref{cor: lifting}, there exists a reduced $\DD$-path connecting $x_1$ with $x_2$.
\end{proof}

The map $\calC_0$ from Lemma \ref{lem: same_double_coset} may not always be surjective. However, when $\mu^B$ is a covering, it always is.

\begin{lem}
\label{lem: double_cosets_surjection}
If $\mu^B\colon \BB\to \AA$ is a covering, then the map $\calC_0$ is a bijection.
\end{lem}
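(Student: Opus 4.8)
Since Lemma~\ref{lem: same_double_coset} already shows that $\calC_0$ is well-defined and injective, the plan is to prove only that it is surjective. Concretely, given an arbitrary double coset $B\,a\,C$ with $a\in A=\pi_1(\AA,u_0)$, I want to exhibit a vertex $x$ of $\gr D$ with $\calC(x)=B\,a\,C$.

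First I would represent $a$ by an $\AA$-circuit $p$ at $u_0$, so that the $=_\AA$-class of $p$ equals $a$. Here is where the covering hypothesis enters: viewing $\mu^B$ as a pointed covering $\mu^B\colon(\BB,v_0)\to(\AA,u_0)$, Lemma~\ref{lem: pointed segment lift} produces a $\BB$-path $q$ starting at $v_0$ and ending at some vertex $v$, together with an element $a'\in A_{t(p)}=A_{u_0}$, such that $\mu^B(q)\,a'\sim_\AA p$. Since $p$ ends at $u_0$, we have $[v]=u_0$, so that $x:=\mu^B(B_v)\,a'\,\mu^C(C_{w_0})$ is a genuine $(v,w_0)$-vertex of $\gr D$ (recall $a'\in A_{u_0}$ and $[v]=[w_0]=u_0$).

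Finally I would compute $\calC(x)$. Because $\calC$ is independent of the chosen $\BB$- and $\CC$-paths, I may take $p_v=q$ and $q_{w_0}$ trivial, so that $\calC(x)=B\,(\mu^B(q)\,a')\,C$. Using Proposition~\ref{prop: sim vs equiv}, the relation $\mu^B(q)\,a'\sim_\AA p$ gives $\mu^B(q)\,a'=_\AA p$, whence $\mu^B(q)\,a'=a$ in $A$, and therefore $\calC(x)=B\,a\,C$, as required. The only real content is the path lift of the middle step; this is exactly where being a covering (rather than merely an immersion) is used, and it is the step that fails in general, explaining why $\calC_0$ need not be surjective without this hypothesis.
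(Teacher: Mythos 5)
Your argument is correct and is essentially the paper's own proof: both represent the double coset by an $\AA$-circuit $p$ at $u_0$, apply Lemma~\ref{lem: pointed segment lift} to obtain a $\BB$-path $q$ from $v_0$ to some $v$ over $u_0$ and an element $a'\in A_{u_0}$ with $\mu^B(q)\,a'\sim_\AA p$, and then read off that the $(v,w_0)$-vertex $\mu^B(B_v)\,a'\,\mu^C(C_{w_0})$ maps to the given double coset under $\calC$. Injectivity is, as you note, already Lemma~\ref{lem: same_double_coset}, so nothing further is needed.
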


\begin{proof}
Let $g \in A$, represented by a reduced $\AA$-circuit $p$ at $u_0$. By Lemma \ref{lem: pointed segment lift}, there exists a reduced $\BB$-path $b$ starting at $v_0$ and an element $a\in A_{u_0}$ such that $\mu^B(b)\,a \sim_\AA p$. In particular, $\mu^B(b)$ is an $\AA$-circuit at $u_0$, and hence $B\,g\,C = B\,(\mu^B(b)\,a)\,C$. Let $v$ be the end vertex of $b$. Then $[v] = u_0$ and $x = \mu^B(B_v)\,a\,\mu^C(C_{w_0})$ is a $(v,w_0)$-vertex of $\DD$, and we have $\calC(x) = B\,g\,C$. Thus $\calC$ is surjective as claimed.
\end{proof}




This directly yields the following result, when one of the two morphisms is a covering.

\begin{thm}
\label{thm: double_cosets}
    Let $\mu^B\colon \BB\to \AA$ be a covering and let $\mu^C\colon \CC\to \AA$ be an immersion of graphs of groups. If $\DD\subset \BB\wtimes_{\AA}\CC$ is the union of the components with non-trivial fundamental group, then the map $\calC_0$ from Lemma \ref{lem: same_double_coset} induces a bijection:
    \[
    \pi_0(\DD)  \enspace\longrightarrow\enspace \Big\{B\,g\,C \mid B^g\cap C\neq 1\Big\}.
    \]
\end{thm}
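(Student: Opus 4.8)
The plan is to deduce the statement from the fact that, when $\mu^B$ is a covering, $\calC_0$ is already a bijection between the \emph{full} set of components of $\BB\wtimes_\AA\CC$ and the set $\dblcoset BAC$ of all double cosets. Indeed, by Lemmas~\ref{lem: same_double_coset} and~\ref{lem: double_cosets_surjection} the map
\[
\calC_0\colon \pi_0(\BB\wtimes_\AA\CC)\longrightarrow \dblcoset BAC
\]
is a bijection. A restriction of a bijection to a subset of its domain is automatically injective, so the entire content of the theorem is to identify the subset of components on which $\calC_0$ takes values in $\{B\,g\,C\mid B^g\cap C\neq 1\}$. I would therefore reduce the proof to the following equivalence: for a component $\DD_0$ with $\calC_0(\DD_0) = B\,g\,C$, the fundamental group of $\DD_0$ is non-trivial if and only if $B^g\cap C\neq 1$. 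Granting this, the restriction of $\calC_0$ to the non-trivial components is an injection whose image is exactly $\{B\,g\,C\mid B^g\cap C\neq 1\}$ (surjectivity onto this set following from surjectivity of $\calC_0$ together with the equivalence), which is the asserted bijection.

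To prove the equivalence I would fix a vertex $x$ of $\DD_0$, put $v = \rho^B(x)$ and $w = \rho^C(x)$, and choose reduced paths $p_v$ from $v_0$ to $v$ in $\gr B$ and $p_w$ from $w_0$ to $w$ in $\gr C$; then $g = \mu^B(p_v)\,\tilde x\,\mu^C(p_w)\inv$ is a representative of $\calC(x) = \calC_0(\DD_0)$. Lemma~\ref{lem: intersection_component} supplies directly the identity
\[
\big(\mu^C_*\circ\rho^C_*(\pi_1(\BB\wtimes_\AA\CC, x))\big)^{\mu^C(p_w)\inv} = B^{g}\cap C ,
\]
so that $B^g\cap C$ is a conjugate of the image of $\pi_1(\BB\wtimes_\AA\CC, x)$ under $\mu^C_*\circ\rho^C_*$. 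Since the fundamental group at a vertex depends only on its component, $\pi_1(\BB\wtimes_\AA\CC, x)$ is exactly the fundamental group of $\DD_0$.

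The crux is then to observe that the homomorphism $\mu^C_*\circ\rho^C_*$ is injective on $\pi_1(\BB\wtimes_\AA\CC, x)$, so that the displayed identity transfers triviality faithfully. As $\mu^B$ is a covering it is in particular an immersion; hence $\rho^C$ is an immersion by Lemma~\ref{lem: rhos are folded 2}, and $\rho^C_*$ is injective by Corollary~\ref{cor: folded morphisms}. Moreover $\mu^C_*$ is injective on $\pi_1(\CC, w)$ because $\mu^C$ is an immersion, and conjugation is an isomorphism. Combining these, $\pi_1(\BB\wtimes_\AA\CC, x)$ is trivial if and only if $B^g\cap C$ is trivial, which is precisely the equivalence needed.

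I expect no serious obstacle here: the argument is essentially bookkeeping on top of Lemmas~\ref{lem: same_double_coset}, \ref{lem: double_cosets_surjection} and~\ref{lem: intersection_component}. The only point demanding attention is the injectivity chain — one must use that $\mu^B$ is a covering (hence an immersion) to make $\rho^C$ an immersion, rather than merely relying on $\mu^C$ being an immersion, so that $\mu^C_*\circ\rho^C_*$ is injective on all of $\pi_1(\BB\wtimes_\AA\CC, x)$ and not just on a subgroup. This is exactly the extra strength that distinguishes the present statement from Theorem~\ref{thm: localy_elliptic_double_cosets}, where only immersions are assumed and local ellipticity (rather than triviality) appears.
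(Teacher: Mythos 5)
Your proposal is correct and follows essentially the same route as the paper, which simply cites Lemma~\ref{lem: double_cosets_surjection} for the bijectivity of $\calC_0$ on all of $\pi_0(\BB\wtimes_{\AA}\CC)$ and Lemma~\ref{lem: intersection_component} for the identification of the non-trivial components; you have merely made explicit the injectivity chain ($\mu^B$ covering $\Rightarrow$ immersion $\Rightarrow$ $\rho^C$ immersion $\Rightarrow$ $\mu^C_*\circ\rho^C_*$ injective) that the paper leaves implicit. Nothing to correct.
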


\begin{proof}
    The map $\calC_0$ is a bijection on $\pi_0(\BB\wtimes_{\AA}\CC)$ by Lemma \ref{lem: double_cosets_surjection}. Corollary~\ref{cor: intersection of subgroups}~\eqref{item: intersection_component} now implies the result.
\end{proof}

The analogous statement in the case of general immersions is a little more complex.

\begin{thm}
\label{thm: localy_elliptic_double_cosets}
    Let $\mu^B\colon \BB \to \AA$ and $\mu^C\colon \CC\to \AA$ be immersions of graphs of groups. If $\DD = \BB\wtimes_{\AA}\CC$, then the map $\calC_0$ from Lemma \ref{lem: same_double_coset} induces a bijection:
    \[
    \pi_0(\core(\DD)) \enspace\longrightarrow\enspace \big\{B\,g\,C \mid B^g\cap C \text{ is not locally elliptic}\big\}.
    \]
\end{thm}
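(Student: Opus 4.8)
The plan is to combine three ingredients: the identification of the fundamental group of a component of $\DD$ with an intersection of conjugates (Lemma~\ref{lem: intersection_component}), the fact that the non-empty core of a connected graph of groups is connected, and a covering trick that upgrades the injection $\calC_0$ of Lemma~\ref{lem: same_double_coset} to a surjection. First I would record the key fact that \emph{immersions preserve and reflect local ellipticity of elements}: if $\Phi$ is an immersion and $[d]$ is a circuit in its domain, then $[d]$ is non-elliptic (its cyclic reduction has positive length) if and only if $\Phi([d])$ is. The implication ``$[d]$ elliptic $\Rightarrow \Phi([d])$ elliptic'' is immediate, since $\Phi$ sends a vertex-group element to a vertex-group element. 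For the converse, let $\delta$ be a cyclically reduced representative of $[d]$ of positive length $k$; then every power $\delta^n$ is reduced, so by Proposition~\ref{folded} each $\Phi(\delta)^n = \Phi(\delta^n)$ is reduced of length $nk$. Were $\Phi(\delta)$ elliptic, it would satisfy $\Phi(\delta) =_\AA r^{-1}\,z\,r$ for a fixed reduced $\AA$-path $r$ and a vertex-group element $z$, whence $\Phi(\delta)^n =_\AA r^{-1}z^n r$ would have reduced length at most $2\,\lvert r\rvert$, contradicting the previous sentence for large $n$. Applying this to the immersion $\Phi=\mu^C\circ\rho^C$ (an immersion by Lemma~\ref{lem: rhos are folded 2} and Corollary~\ref{cor: composition of folded}), a subgroup of $\pi_1(\DD)$ is locally elliptic in $\DD$ if and only if its $\Phi_*$-image is locally elliptic in $\AA$.

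Combining this with Lemma~\ref{lem: intersection_component} yields the central equivalence: if $x\in V(\gr D)$ and $\calC(x) = B\,g\,C$, then the component $\DD_x$ of $x$ has non-empty core if and only if $B^g\cap C$ is not locally elliptic. Indeed, $\core(\DD_x)\neq\emptyset$ exactly when $\pi_1(\DD_x, x)$ contains a non-elliptic element, i.e.\ is not locally elliptic in $\DD$; by the paragraph above this is equivalent to $(\mu^C\circ\rho^C)_*(\pi_1(\DD_x,x))$ being non-locally-elliptic in $\AA$, and by Lemma~\ref{lem: intersection_component} this subgroup is conjugate in $\pi_1(\AA)$ to $B^g\cap C$, so the claim follows since local ellipticity is a conjugacy invariant. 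Together with the injectivity of $\calC_0$ (Lemma~\ref{lem: same_double_coset}) this exhibits an \emph{injection} from the components of $\DD$ with non-empty core into $\{B\,g\,C : B^g\cap C\text{ not locally elliptic}\}$. To replace ``components with non-empty core'' by $\pi_0(\core(\DD))$ I would invoke Lemma~\ref{lem: form_of_core}: a non-empty core of a connected graph of groups is obtained by deleting the trees of groups hanging off single vertices, hence is connected. Thus $\core(\DD)$ is the disjoint union of the (connected or empty) cores of the components of $\DD$, and $\pi_0(\core(\DD))$ is in natural bijection with the set of components of $\DD$ having non-empty core, compatibly with $\calC_0$.

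The main obstacle is \textbf{surjectivity}: the map $\calC_0$ need not meet every double coset, so a priori a non-locally-elliptic $B^g\cap C$ might be realised by no vertex of $\DD$. I would overcome this by a covering replacement. Let $\hat\mu^B\colon \hat\BB\to\AA$ be the covering associated with $B$, so that (after passing to cores, which changes neither side of the sought bijection) $\core(\BB)$ is identified with $\core(\hat\BB)$ by Corollary~\ref{cor: core_lifting_morphism_1}, exhibiting $\DD=\BB\wtimes_\AA\CC$ and $\hat\DD:=\hat\BB\wtimes_\AA\CC$ with the same $\calC$-value on common vertices. Because $\mu^C$ is an immersion, $\rho^B\colon\hat\DD\to\hat\BB$ is an immersion, so every cyclically reduced $\hat\DD$-circuit projects to a cyclically reduced $\hat\BB$-circuit, which lies in $\core(\hat\BB)$; hence $\core(\hat\DD)$ is supported over $\core(\hat\BB)=\core(\BB)$, giving $\core(\hat\DD)=\core(\DD)$. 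Now, since $\hat\mu^B$ is a covering, Lemma~\ref{lem: double_cosets_surjection} makes $\calC_0$ for $\hat\DD$ a \emph{bijection} onto $B\backslash A/C$.

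To finish, I would apply the central equivalence of the second paragraph to $\hat\DD$ (legitimate since $\hat\mu^B$ and $\mu^C$ are immersions). Given any double coset $B\,g\,C$ with $B^g\cap C$ not locally elliptic, surjectivity of $\calC_0$ on $\hat\DD$ produces a vertex $\hat x$ with $\calC(\hat x)=B\,g\,C$, and the central equivalence forces $\core(\hat\DD_{\hat x})\neq\emptyset$; conversely every non-empty-core component of $\hat\DD$ maps to such a double coset, and distinct such components have distinct $\calC_0$-values. Hence $\calC_0$ restricts to a bijection from $\pi_0(\core(\hat\DD))$ onto $\{B\,g\,C : B^g\cap C\text{ not locally elliptic}\}$. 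Since $\core(\hat\DD)=\core(\DD)$ and $\calC_0$ agrees on this common core, this is exactly the asserted bijection for $\DD=\BB\wtimes_\AA\CC$, completing the proof.
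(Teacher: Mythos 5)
Your proof is correct, and its first half (a core component maps to a double coset $B\,g\,C$ with $B^g\cap C$ not locally elliptic, via Lemma~\ref{lem: intersection_component}) matches the paper's argument in substance --- though you justify the key fact that immersions \emph{reflect} non-ellipticity by a clean power-versus-length argument, where the paper asserts it by appealing to cyclic reduction being well defined. The genuine divergence is in surjectivity. The paper takes a non-elliptic element of $B^g\cap C$, writes the resulting conjugacy $\mu^B(p_B')^{(\cdot)} =_\AA \mu^C(p_C')$ between cyclically reduced circuits, and applies Collins' Theorem (Theorem~\ref{Collins'}) to the conjugator to extract an explicit vertex of $\core(\DD)$, of the form $\mu^B(B_v)\,(f_i)_{\alpha}\,a\,(g_1)_{\alpha}^{-1}\,\mu^C(C_w)$, realising $B\,g\,C$; Corollary~\ref{cor: lifting} then certifies the cyclically reduced circuit at that vertex. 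You instead replace $\BB$ by the $[B]$-cover $\hat{\BB}$, check that $\core(\hat{\BB}\wtimes_\AA\CC)=\core(\BB\wtimes_\AA\CC)$, and import surjectivity of $\calC_0$ from Lemma~\ref{lem: double_cosets_surjection}. This is precisely the device the paper reserves for Proposition~\ref{prop: conjugate_into_edge_group}, so it is fully within the developed toolkit; it buys a proof of Theorem~\ref{thm: localy_elliptic_double_cosets} that makes no appeal to Collins' Theorem at all (beyond the well-definedness of cyclic reduction length, which your power argument also sidesteps), at the price of some bookkeeping: the identifications $\core(\BB)\cong\core(\hat{\BB})$ and of the two $\AA$-products over their common part are canonical only up to the isomorphisms of Lemmas~\ref{lem: uniqueness up to isomorphism} and~\ref{lem: uniqueness up to isomorphism_2}, and if $v_0\notin V(\core(\gr B))$ then ``passing to cores'' replaces $B$ by a conjugate, so the target set of double cosets changes by the corresponding (harmless, but worth stating) natural bijection. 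The paper's route, by contrast, stays inside $\DD$ and produces an explicit double-coset representative for each core component, which is better suited to the computational applications.
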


\begin{proof}    
Let $x\in V(\core(\DD))$. By definition,  $\calC(x) = B\,(\mu^B(p_v)\,\tilde x\,\mu^C(q_w)\inv)\,C$. We show that $B^{\mu^B(p_v)\,\tilde x\,\mu^C(q_w)\inv} \cap C$ is not locally elliptic. Indeed, there exists a cyclically reduced $\DD$-circuit $r$ at $x$ with positive length. Since immersions send cyclically reduced elements to cyclically reduced elements, this implies that $\rho^C(r)$ is a cyclically reduced $\CC$-circuit at $w$, and $\rho^C(r)^{q_w\inv} \in C$ has a cyclic reduction of positive length (that is, it is not conjugated to a vertex group element). By Corollary~\ref{cor: intersection of subgroups}~\eqref{item: intersection_component}, we have
$$\mu^C_*(\rho^C_*(r)^{p_w\inv}) =_{\AA} \mu^C_*(\rho^C_*(r))^{\mu^C(q_w)\inv} \in B^{\mu^B(p_v)\,\tilde x\,\mu^C(q_w)\inv} \cap C.$$
Thus $B^{\mu^B(p_v)\,\tilde x\,\mu^C(q_w)\inv} \cap C$ is not locally elliptic and the codomain of $\calC_0$ is as claimed.

Now we show that the restriction of $\calC$ to $V(\core(\DD))$ surjects the set of double cosets $BgC$ such that $B^g\cap C$ is not locally elliptic. Since $\calC$ descends to an injection on $\pi_0(\DD)$ by Lemma \ref{lem: same_double_coset}, this will imply that the required map is a bijection.

Consider a double coset $B\,g\,C$ such that $B^g\cap C$ is not locally elliptic. Since $B^g \cap C$ contains a non-locally elliptic element, there exists a $\BB$-circuit $p_B$ at $v_0$ and a $\CC$-circuit $q_C$ at $w_0$ such that $\mu^B(p_B)^g =_{\AA} \mu^C(q_C)$ and $\mu^C(q_C)$ has a cyclic reduction with positive length. There exists a $\BB$-path $r'_B$ and a cyclically reduced $\BB$-circuit 
\[
p_B' = (1, f_1, b_1, \ldots, f_n, b_n)
\]
so that $p_B \sim_{\BB} r_B'^{-1}\,p_B'\,r_B'$. Similarly, there exists a $\CC$-path $r'_C$ and a cyclically reduced $\CC$-circuit 
\[
q_C' = (1, g_1, c_1, \ldots, g_m, c_m)
\]
so that $q_C\sim_{\CC}r_C'^{-1}\, q_C'\, r'_C$. Now we have
\[
\mu^B(p_B')^{\mu^B(r'_B)g\mu^C(r'_C)^{-1}} =_{\AA} \mu^C(q_C').
\]
Applying Theorem \ref{Collins'}, we see that one of the following holds:
\begin{enumerate}
    \item There is a $\BB$-suffix $r_B$ of $(p_B')^{\ell}$ for some $\ell\geqslant 1$ and an element $a\in \omega_{[g_m]}(A_{[g_m]})$ such that
    \[
    \mu^B(r'_B)\,g\, \mu^C(r'_C)^{-1} =_{\AA} \mu^B(r_B)^{-1}\,(f_i)_{\alpha} a\, (g_1)^{-1}_{\alpha}
    \]
    where here $f_i$ is the first edge in $r_B$. Hence,
    \[
    B\, g\, C = B(f_i)_{\alpha}\, a\, (g_1)_{\alpha}^{-1}C = \calC(\mu^B(B_v)\, (f_i)_{\alpha}\, a\, (g_1)_{\alpha}^{-1}\, \mu^C(C_w))
    \]
    where $v = o(r_B)$ and $w = o(r'_C)$. Since (the reduction of) $(p_B')^{r_B^{-1}}$ and $p'_C$ are cyclically reduced, by Corollary \ref{cor: lifting} there is a cyclically reduced circuit at the vertex $x = \mu^B(B_v)(f_i)_{\alpha}a(g_1)_{\alpha}^{-1}\mu^C(C_w)\in V(\gr{D}))$ and so $x\in V(\core(\DD))$ with $\calC(x) = BgC$ as required.
    \item There is a $\BB$-prefix $r_B$ of $(p_B')^{\ell}$ for some $\ell\geqslant 1$ and an element $a\in \alpha_{[g_1]}(A_{[g_1]})$ such that
    \[
    \mu^B(r'_B)\, g\, \mu^C(r'_C)^{-1} =_{\AA} \mu^B(r_B)\,(f_i)_{\omega}\, a\, (g_1)^{-1}_{\alpha}
    \]
    where here $f_i$ is the last edge in $r_B$. Hence,
    \[
    B\, g\, C = B(f_i)_{\omega}\, a\, (g_1)_{\alpha}^{-1}C = \calC(\mu^B(B_v)\, (f_i)_{\omega}\, a\, (g_1)_{\alpha}^{-1}\, \mu^C(C_w))
    \]
    where $v = t(r_B)$ and $w = o(r'_C)$. Since (the reduction of) $(p_B')^{r_B}$ and $p'_C$ are cyclically reduced, by Corollary \ref{cor: lifting} there is a cyclically reduced circuit at the vertex $x = \mu^B(B_v)(f_i)_{\omega}\, a\, (g_1)_{\alpha}^{-1}\mu^C(C_w)\in V(\gr{D}))$ and so $x\in V(\core(\DD))$ with $\calC(x) = BgC$ as required.
\end{enumerate}
This completes the proof.
\end{proof}

We close this section with the following proposition, which should be thought of as an explanation of the mismatch between Theorems~\ref{thm: double_cosets} and~\ref{thm: localy_elliptic_double_cosets}.

\begin{prop}
\label{prop: conjugate_into_edge_group}
   Suppose that $\BB$ is core. If $B\,g\,C\notin \calC(V(\BB\wtimes_{\AA}\CC))$, then $B^g\cap C$ conjugates into an edge group of $\AA$.
\end{prop}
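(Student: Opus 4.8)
The plan is to reduce to the case where $\mu^B$ is a covering, where we already control the double cosets completely, and then to exploit the special shape of the core to locate an edge group. Since $\BB$ is core, Corollary~\ref{cor: core_lifting_morphism_1} lets me realise $\mu^B$ as the restriction to its core of the covering $\widehat\mu^B\colon\widehat\BB\to\AA$ associated with $[B]$, so that $\BB=\core(\widehat\BB)$ with the same vertex and edge groups, and $\widehat\mu^B_*(\pi_1(\widehat\BB,v_0))=B$. I then form $\widehat\DD=\widehat\BB\wtimes_\AA\CC$ with projections $\rho^B,\rho^C$. As $\widehat\mu^B$ is a covering, Lemmas~\ref{lem: rhos are folded 2} and~\ref{lem: coverings yield coverings} make $\rho^B$ an immersion and $\rho^C$ a covering, and Lemmas~\ref{lem: same_double_coset} and~\ref{lem: double_cosets_surjection} show that the analogue of $\calC_0$ for $\widehat\DD$ is a \emph{bijection} onto $\dblcoset BAC$. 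The inclusion $\BB\hookrightarrow\widehat\BB$ identifies $V(\BB\wtimes_\AA\CC)$ with the vertices of $\widehat\DD$ whose $\rho^B$-image is a core vertex of $\widehat\BB$, compatibly with $\calC$; hence $B\,g\,C\notin\calC(V(\BB\wtimes_\AA\CC))$ exactly when the unique component $\widehat\DD_g$ of $\widehat\DD$ with $\calC_0(\widehat\DD_g)=B\,g\,C$ has $\rho^B(\widehat\DD_g)$ entirely avoiding the core.

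Next I would locate a convenient witness vertex. Using Lemma~\ref{lem: form_of_core}, $\widehat\BB=\core(\widehat\BB)\cup\bigcup_v\gr T_v$, so the connected set $\rho^B(\widehat\DD_g)$ lands in a subtree $S$ of a single hanging tree $\gr T_{v}$ with $v\notin S$. I take $\hat v$ to be the vertex of $S$ closest to $v$ and $\hat e$ the first edge on the geodesic from $\hat v$ to $v$, oriented towards $v$; then $o(\hat e)=\hat v$, $\hat e\notin S$, and Lemma~\ref{lem: form_of_core} gives that $\alpha_{\hat e}\colon B_{\hat e}\to B_{\hat v}$ is an isomorphism, so $B_{\hat v}=\alpha_{\hat e}(B_{\hat e})$. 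Fixing a vertex $\hat x$ of $\widehat\DD_g$ over $\hat v$, with $w=\rho^C(\hat x)$ and $u=[\hat v]=[w]$, the aim is to show $\pi_1(\widehat\DD_g,\hat x)=\widehat D_{\hat x}=\twpb{B_{\hat v}}{\tilde{\hat x}}{A_u}{C_w}$ and that $\widehat\mu^B(B_{\hat v})$ lies in a conjugate of $\alpha_e(A_e)$, where $e=\widehat\mu^B(\hat e)$.

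The main obstacle is the first of these, namely proving that there are no reduced $\widehat\DD$-circuits of positive length at $\hat x$. Since $\rho^B$ is an immersion, such a circuit would map to a reduced $\widehat\BB$-circuit at $\hat v$ whose underlying path lies in the tree $S$ (Proposition~\ref{folded}), and of equal length. I would then argue that a reduced $\widehat\BB$-circuit at $\hat v$ supported in $S$ must be trivial: a positive-length closed walk in a tree reaches a vertex $\hat v'$ of maximal distance from $\hat v$, which it enters and leaves along one edge $k$ with $t(k)=\hat v'$; as $k$ points away from $v$, its reverse points towards $v$, so $\omega_k(B_k)=\alpha_{k^{-1}}(B_{k^{-1}})=B_{\hat v'}$ by Lemma~\ref{lem: form_of_core}, and reducedness at $\hat v'$ would require an intervening element outside $\omega_k(B_k)=B_{\hat v'}$, which is impossible. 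This ``tree collapse'' forces length $0$, giving $\pi_1(\widehat\DD_g,\hat x)=\widehat D_{\hat x}\leqslant B_{\hat v}\times C_w$; the delicate points are getting the orientations and the reduction condition exactly right and handling an arbitrary (possibly infinite) $S$, which the ``deepest vertex'' argument does.

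Finally I would conclude by a routine computation. For $(b,c)\in\widehat D_{\hat x}$ we have $b\in B_{\hat v}=\alpha_{\hat e}(B_{\hat e})$ and $\mu^C(c)=\widehat\mu^B(b)^{\tilde{\hat x}}$; writing $b=\alpha_{\hat e}(y)$ and applying the morphism identity~\eqref{eq: twisted commutation} for $\widehat\mu^B$ at $\hat e$ yields $\widehat\mu^B(b)=\hat e_\alpha\,\alpha_e(\widehat\mu^B_{\hat e}(y))\,\hat e_\alpha^{-1}\in\hat e_\alpha\,\alpha_e(A_e)\,\hat e_\alpha^{-1}$, so that $\mu^C(c)\in\alpha_e(A_e)^{\hat e_\alpha^{-1}\tilde{\hat x}}$. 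Hence $\mu^C_*\rho^C_*(\pi_1(\widehat\DD_g,\hat x))$ is contained in a conjugate of the edge-group image $\alpha_e(A_e)\cong A_e$. Since Lemma~\ref{lem: intersection_component}, applied to $\widehat\DD$, identifies $\mu^C_*\rho^C_*(\pi_1(\widehat\DD_g,\hat x))$ with a conjugate of $B^g\cap C$ in $\pi_1(\AA)$, it follows that $B^g\cap C$ conjugates into the edge group $A_e$, as required.
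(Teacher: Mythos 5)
Your proof is correct and follows essentially the same route as the paper's: pass to the $[B]$-cover, use surjectivity of the double-coset map $\calC_0$ there to find the component realising $B\,g\,C$, observe via Lemma~\ref{lem: form_of_core} that its $\rho^B$-image sits in a hanging tree off the core, and deduce that its fundamental group is a vertex group contained (up to conjugacy) in an edge group image. Your "deepest vertex" argument just makes explicit the step the paper summarises as "since $\rho^B$ is an immersion, every reduced $\DD_0$ circuit at $x$ must have length $0$."
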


\begin{proof}
Let $\BB'$ be the $[B]$-cover. By Corollary \ref{cor: core_lifting_morphism_1}, we can identify $\BB$ with $\core(\BB')$, that is, with a subgraph of groups of $\BB'$, and hence also $\BB\wtimes_{\AA}\CC$ with a subgraph of groups of $\BB'\wtimes_{\AA}\CC$. See Corollary \ref{cor: core_lifting_morphism_1}.

Let $B\,g\,C$ be a double coset that is not in $\calC(V(\BB\wtimes_{\AA}\CC))$. By Lemma~\ref{lem: double_cosets_surjection}, $B\,g\,C = \calC(x)$ for some vertex $x$ of $\BB'\wtimes_{\AA}\CC$, not in $\BB\wtimes_{\AA}\CC$. Say that $x = \mu^B_{v'}(B_{v'})\, a\, \mu^C_w(C_w)$ is a $(v',w)$-vertex. Then (with the notation of Lemma~\ref{lem: form_of_core}), there exists a vertex $v\in V(\gr{B})$ such that $v'$ is a vertex of $\gr{T}_v - \{v\}$.

By Lemma~\ref{lem: same_double_coset}, the connected component $\DD_0$ of $\BB'\wtimes_{\AA}\CC$ containing the vertex $x$ does not meet $\BB\wtimes_\AA\CC$, and hence $\rho^B(\DD_0)$ is contained in $\mathbb{T}_v - v$. Since $\gr{D}_0$ is connected, $\rho^B(\DD_0)$ is a (sub)tree of groups, and there exists a unique edge $f$ of $\mathbb T_v - \rho^B(\DD_0)$ with its origin in $\rho^B(\DD_0)$ and pointing towards $v$. Let $x\in V(\gr{D}_0)$ such that $\rho^B(x) = o(f)$. By Lemma~\ref{lem: form_of_core}, $\alpha_f$ is an isomorphism. Furthermore, for each edge $f'$ in $\rho^B(\DD_0)$ pointing towards $o(f)$, we also have that $\alpha_{f'}$ is an isomorphism. As a result, since $\rho^B$ is an immersion, every reduced $\DD_0$ circuit at $x$ must actually have length $0$. Hence, $\rho^B_*(\pi_1(\DD_0, x)) \leqslant \alpha_{f}(B'_{f})$. This implies that $(\mu^B\circ\rho^B)_*(\pi_1(\DD_0, x))$ conjugates into an edge group of $\AA$ and so we are done.
\end{proof}

\subsection{Acylindrical graphs of groups}\label{sec: acylindrical gog}

We now use the previous results to investigate the case of acylindrical graphs of groups, and to show that pullbacks of morphisms on such graphs of groups always exist.

The reader is invited to consult Appendix \ref{appendix} for the correspondence between $G$-trees and graphs of groups. Recall that a $G$-tree $\gr{T}$ is \emph{$k$-acylindrical} if every segment of length $k$ in $\gr{T}$ has trivial stabiliser. It is \emph{acylindrical} if it is $k$-acylindrical for some $k$. For graphs of groups, the equivalent definition is as follows. A graph of groups $\AA$ is \emph{$k$-acylindrical} if for every reduced $\AA$-path $p$ of length $k$ and every non-trivial element $a\in A_{t(p)}$, the $\AA$-path $p\,a\,p^{-1}$ does not reduce to an element of $A_{o(p)}$. Again, a graph of groups is \emph{acylindrical} if it is $k$-acylindrical for some~$k$. We record the fact that these two definitions are equivalent in a lemma.

\begin{lem}
\label{lem: acylindrical_characterisation}
    Let $\AA$ be a graph of groups and let $k\geqslant 0$ be an integer. The following are equivalent:
    \begin{enumerate}[(1)]
        \item The Bass--Serre tree $\gr{T}$ for $\AA$ is $k$-acylindrical.
        \item $\AA$ is $k$-acylindrical.
    \end{enumerate}
    Moreover, if $\mu\colon \BB\to \AA$ is an immersion and $\AA$ is $k$-acylindrical, then so is $\BB$.
\end{lem}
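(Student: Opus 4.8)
The plan is to treat the equivalence of (1) and (2) through the dictionary between $\AA$ and its Bass--Serre tree $\gr{T}$ (Appendix~\ref{appendix}), and to deduce the final assertion about immersions directly from the combinatorial definition of acylindricity, using that immersions preserve reduced paths. It suffices to argue one connected component at a time, so for the equivalence I assume $\gr{A}$ connected, fix a basepoint $u_0$, and write $A = \pi_1(\AA, u_0)$, so that $A$ acts on $\gr{T}$ with vertices the cosets $p\,A_u$ (where $p$ ranges over $=_\AA$-classes of $\AA$-paths from $u_0$ to $u$) and $\gr{A} = A\backslash\gr{T}$. The first input I would record is that the stabiliser of the vertex $p\,A_u$ is the conjugate $p\,A_u\,p^{-1}$, and that, since $\gr{T}$ is a tree, the pointwise stabiliser of a segment of length $k$ is exactly the intersection of the stabilisers of its two endpoints. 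The second, standard input from Bass--Serre theory is that the vertices at distance $k$ from $p\,A_u$ are precisely those of the form $p\,q\,A_{u'}$, where $q$ is a reduced $\AA$-path of length $k$ from $u$ to $u'$; thus segments of length $k$ in $\gr{T}$ correspond, up to the $A$-action, to reduced $\AA$-paths of length $k$.

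Combining these, the stabiliser of the segment from $p\,A_u$ to $p\,q\,A_{u'}$ equals $p\,(A_{o(q)}\cap q\,A_{t(q)}\,q^{-1})\,p^{-1}$, which is trivial if and only if $A_{o(q)}\cap q\,A_{t(q)}\,q^{-1}$ is trivial. Finally, I would observe that $A_{o(q)}\cap q\,A_{t(q)}\,q^{-1} = 1$ says exactly that no non-trivial $a\in A_{t(q)}$ satisfies $q\,a\,q^{-1} =_\AA a_0$ for some $a_0\in A_{o(q)}$; that is, $q\,a\,q^{-1}$ does not reduce to an element of $A_{o(q)}$. Running this equivalence over all reduced $q$ of length $k$ gives $(1) \iff (2)$. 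The main obstacle here is purely bookkeeping: matching the directions and base vertices of the tree picture with the quantification over reduced paths in the definition, and confirming that ``segment stabiliser'' is read as the pointwise stabiliser of its vertices (so that it collapses to the intersection of the two endpoint stabilisers).

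For the final assertion, suppose for contradiction that $\BB$ fails to be $k$-acylindrical: there is a reduced $\BB$-path $p$ of length $k$ and a non-trivial $b\in B_{t(p)}$ such that $p\,(b)\,p^{-1} =_\BB (b_0)$ for some $b_0\in B_{o(p)}$. Set $P = \mu(p)$. Since $\mu$ is an immersion, $P$ is a reduced $\AA$-path (Proposition~\ref{folded}), and it has length $k$ by the definition of $\mu$ on $\BB$-paths (Definition~\ref{rem: morphisms on A-paths}). Because $\mu$ preserves concatenation and the congruence $=_\AA$ (Proposition~\ref{prop: preservation of sim and equiv}), applying $\mu$ to $p\,(b)\,p^{-1} =_\BB (b_0)$ yields $P\,(\mu_{t(p)}(b))\,P^{-1} =_\AA (\mu_{o(p)}(b_0))$. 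Here $\mu_{t(p)}(b)$ is a non-trivial element of $A_{t(P)}$ (as $\mu_{t(p)}$ is a monomorphism and $b\neq 1$), while $\mu_{o(p)}(b_0)\in A_{o(P)}$. Thus $P$ is a reduced $\AA$-path of length $k$ and $\mu_{t(p)}(b)$ a non-trivial element of $A_{t(P)}$ for which $P\,(\mu_{t(p)}(b))\,P^{-1}$ reduces to an element of $A_{o(P)}$, contradicting the $k$-acylindricity of $\AA$. Hence $\BB$ is $k$-acylindrical, which completes the proof.
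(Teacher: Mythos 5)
Your proof is correct and follows essentially the same route as the paper: the equivalence is obtained by identifying segment stabilisers in the Bass--Serre tree with intersections of conjugated vertex groups via the coset description of $\gr{T}$, and the immersion statement follows from Proposition~\ref{folded} (immersions preserve reduced paths and their lengths). You spell out a few steps the paper leaves implicit (that the segment stabiliser is the intersection of the endpoint stabilisers, and the explicit transport of the bad configuration under $\mu$), but the argument is the same.
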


\begin{proof}
    We prove the contrapositive of the equivalence statement. Let $S\subset\gr{T}$ be a segment. Let $u_0\in V(\gr{A})$ be the image of a vertex at one end of $S$ and let $u$ be the image of the vertex at the other end of $S$ under the map $\gr{T}\to \gr{A}$ given by quotienting $\gr{T}$ by the action of $\pi_1(\AA, u_0)$. By construction of $\gr{T}$ (see Appendix \ref{appendix}), there is an element $p\in \pi^{\AA}[u_0, u]$ such that the segment $S$ connects the vertex $A_{u_0}$  with the vertex $p\cdot A_{u}$. Since the stabiliser of the vertex $A_{u_0}$ is precisely $A_{u_0}$ and the stabiliser of the vertex $p\cdot A_u$ is precisely $p\cdot A_u\cdot p^{-1}$, anything stabilising $S$ must lie in $A_{u_0}\cap p\cdot A_u\cdot p^{-1}$. Thus, $S$ has non-trivial stabiliser if and only if there is some element $a\in A_u$ such that $pap^{-1}$ reduces to an element of $A_{u_0}$. This implies that $\gr{T}$ is not $k$-acylindrical if and only if $\AA$ is not $k$-acylindrical.

    Since immersions of graphs of groups send reduced paths to reduced paths of the same length (Proposition~\ref{folded}), if $\AA$ is $k$-acylindrical, so is $\BB$.
\end{proof}

We note a corollary of Corollary~\ref{thm: pullbacks_exist_sometimes1}.

\begin{cor} \label{cor: acylindrical pullback}
    If $\AA$ is an acylindrical graph of groups, then, within the category of core graphs of groups with immersions, pullbacks of morphisms to $\AA$ exist.
\end{cor}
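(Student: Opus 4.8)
The plan is to deduce this directly from Corollary~\ref{thm: pullbacks_exist_sometimes1}. That result produces the pullback $(\core(\BB\wtimes_\AA\CC),\rho^B,\rho^C)$ of any immersion $\mu^B\colon\BB\to\AA$ with a fixed immersion $\mu^C\colon\CC\to\AA$ of core graphs of groups, \emph{provided} $\mu^C$ satisfies the technical hypothesis that for every cyclically reduced $\CC$-circuit $c$ of positive length, no non-trivial $a\in A_{\mu^C(o(c))}$ commutes with $\mu^C(c)$. So it suffices to prove that, when $\AA$ is acylindrical, \emph{every} immersion $\mu^C\colon\CC\to\AA$ of core graphs of groups satisfies this hypothesis: given any two such immersions $\mu^B,\mu^C$, applying the corollary with $\mu^C$ in the distinguished role then yields their pullback.

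First I would fix $k$ with $\AA$ $k$-acylindrical and let $c$ be a cyclically reduced $\CC$-circuit of positive length $n\ge 1$ based at $w$, writing $u=\mu^C(w)$ and $p=\mu^C(c)$. Since $\mu^C$ is an immersion, it preserves lengths and sends reduced paths to reduced paths (Proposition~\ref{folded}); in particular it sends the reduced path $c\,c$ to the reduced path $p\,p$, so $p$ is a cyclically reduced $\AA$-circuit of length $n$. The crucial consequence is that every power $P=p^m$ is a \emph{reduced} $\AA$-circuit at $u$, of length $mn$.

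Next, arguing by contradiction, suppose some non-trivial $a\in A_u$ satisfies $[a,p]=1$ in $\pi_1(\AA,u)$. Then $a$ commutes with every $p^m$, so, choosing $m$ with $mn\ge k$ and setting $P=p^m$, we get $P\,a\,P^{-1}=_\AA a$. I would split $P=q'\,q$ where $q$ is the suffix of $P$ of length exactly $k$, a reduced $\AA$-path from some vertex $w'$ to $u$. Because $P$ and $P^{-1}$ are reduced, the only backtracking in $P\,a\,P^{-1}$ occurs at its centre, so its reduction down to the length-$0$ path $a$ is forced to peel matched edge pairs strictly from the inside out. Hence the inner subpath $q\,a\,q^{-1}$ must itself reduce to an element $a'\in A_{w'}$; moreover $a'\neq 1$, for otherwise the surviving path $q'\,a'\,q'^{-1}$ would reduce to $1$, contradicting $a\neq 1$. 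We thus obtain a reduced $\AA$-path $q$ of length $k$ with $t(q)=u$, a non-trivial $a\in A_{t(q)}$, and $q\,a\,q^{-1}=_\AA a'\in A_{o(q)}$, which directly contradicts the $k$-acylindricity of $\AA$. Therefore $a=1$, the hypothesis of Corollary~\ref{thm: pullbacks_exist_sometimes1} holds for $\mu^C$, and the pullback exists.

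The main obstacle is the ``inside-out reduction'' step: making rigorous that $P\,a\,P^{-1}=_\AA a$ forces the length-$k$ inner subpath $q\,a\,q^{-1}$ to collapse to a \emph{non-trivial} vertex-group element of $A_{w'}$. This rests on the structure of reductions of $\AA$-paths, namely that the only reducible position in a concatenation of two reduced paths is the seam, and that each elementary reduction replaces the central element by its image under injective edge maps (so non-triviality propagates through the entire peeling). Everything else is routine: the preservation of cyclic reducedness and length by immersions (Proposition~\ref{folded}), the reducedness of powers of cyclically reduced circuits, and the final translation of the contradiction through the equivalence between $k$-acylindricity of $\AA$ and of its Bass--Serre tree recorded in Lemma~\ref{lem: acylindrical_characterisation}.
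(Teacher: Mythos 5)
Your proposal is correct and follows essentially the same route as the paper: reduce to Corollary~\ref{thm: pullbacks_exist_sometimes1} and verify its commutation hypothesis by noting that a non-trivial $a$ commuting with $\mu^C(c)$ would commute with $\mu^C(c^m)$, a reduced $\AA$-circuit of length at least $k$, contradicting $k$-acylindricity. The only difference is that you spell out the inside-out peeling argument showing that a reduced path of length at least $k$ conjugating a non-trivial vertex element into a vertex group yields a length-$k$ subpath doing the same, a step the paper leaves implicit; this is a welcome clarification rather than a divergence.
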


\begin{proof}
    Let $\mu^B \colon \BB \to \AA$ and $\mu^C\colon \CC\to \AA$ be immersions of core graphs of groups with $\AA$ acylindrical. Let $c$ be a cyclically reduced $\CC$-circuit at a vertex $w$ and let $a\in A_u$ where $u = \mu^C(c)$. If $[a, \mu^C(c)] = 1$, then $[a, \mu^C(c^n)] = 1$ for all $n\geqslant 1$. In particular, since $c$ is cyclically reduced and $\mu^C$ is an immersion, $\mu^C(c^n)$ has length at least $n$. Thus, for $n$ greater than the acylindricity constant, we see that $\mu^C(c^{-n})a\mu^C(c^n) =_{\AA} a$ and so $a = 1$. Now we may apply Corollary \ref{thm: pullbacks_exist_sometimes1} to conclude that the pullback of $\mu^B$ and $\mu^C$ exists in the category of core graphs of groups with immersions. Since $\mu^B$ and $\mu^C$ are immersions and $\rho^B$ and $\rho^C$ are immersions by Lemma \ref{lem: rhos are folded 2}, we see that $\BB$, $\CC$ and the pullback are also acylindrical by Lemma \ref{lem: acylindrical_characterisation}. Hence, since $\mu^B$ and $\mu^C$ were arbitrary, pullbacks exist in the category of core acylindrical graphs of groups with immersions.
\end{proof}

Under additional hypotheses, we obtain a refinement of Theorem~\ref{thm: double_cosets} for acylindrical graphs of groups.
We first need a definition and a technical proposition. A group $A$ is said to have the \emph{strong finitely generated intersection property} (henceforth \emph{\sfgip}) \emph{relative to a subgroup $B\leqslant A$} if for any finitely generated subgroup $C\leqslant A$, we have that $B\cap C$ is finitely generated and $B^g\cap C = 1$ for all but finitely many double cosets $B\, g\, C\subset A$.

\begin{prop}
\label{prop: acylindrical}
    Let $\AA$ be a graph of groups in which $A_{o(e)}$ has the \sfgip\ relative to $\alpha_e(A_e)$ for each edge $e\in E(\gr{A})$. Let $B\leqslant \pi_1(\AA, u_0)$ be a subgroup and let $\mu\colon \BB\to \AA$ be the $[B]$-covering. If $\BB$ is acylindrical, $\core(\BB)$ has finite underlying graph and each vertex group in $\core(\BB)$ is finitely generated, then all but finitely many vertex and edge groups of $\BB$ are trivial and all vertex and edge groups of $\BB$ are finitely generated.
\end{prop}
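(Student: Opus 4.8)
The plan is to combine the structural description of the $[B]$-cover from Lemma~\ref{lem: form_of_core} with the covering property (Definition~\ref{def: coverings}), the \sfgip\ hypothesis, and acylindricity. Write $\AA_0 = \core(\BB)$. By Lemma~\ref{lem: form_of_core}, $\BB$ is obtained from $\AA_0$ by attaching, at each vertex $v$ of $\AA_0$, a tree of groups $\gr{T}_v$ in which every edge oriented towards the core has an isomorphism as its $\alpha$-edge map. Hence, if $f$ is an edge of $\gr{T}_v$ oriented away from the core, then $\omega_f = \alpha_{f^{-1}}$ is an isomorphism onto $B_{t(f)}$ while $\alpha_f$ embeds $B_f$ into $B_{o(f)}$; in particular $B_{t(f)} \cong B_f$, and the vertex groups of $\BB$ inject into one another as one moves inward along any $\gr{T}_v$ towards $\AA_0$.

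First I would prove that every vertex and edge group of $\BB$ is finitely generated. The core vertex groups are finitely generated by hypothesis. For an edge $f$ over $e \in E(\gr A)$ incident to a core vertex $v$, the immersion condition in Definition~\ref{def: folded} identifies $\alpha_f(B_f)$ with $\mu_v(B_v)^{f_\alpha} \cap \alpha_e(A_e)$, that is, with a conjugate of $\mu_v(B_v)$ intersected with $\alpha_e(A_e)$; as $\mu_v(B_v)$ is finitely generated, the \sfgip\ of $A_{o(e)}$ relative to $\alpha_e(A_e)$ forces $B_f$ to be finitely generated. This finite generation then propagates outward along each $\gr{T}_v$, since each tree vertex group is isomorphic to the edge group pointing back to the core, which is again an intersection of the same shape involving the closer, inductively finitely generated, vertex group.

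Second, and this is the crucial point, I would use acylindricity to bound the distance from the core at which a non-trivial group can sit. Let $k$ be such that $\BB$ is $k$-acylindrical, and let $p = (f_1, \dots, f_k)$ be a geodesic (hence reduced) $\BB$-path of length $k$ leaving a core vertex $v_0$ into some $\gr{T}_{v_0}$ and ending at $v_k$. Each $\omega_{f_i}$ is an isomorphism onto $B_{v_i}$ by the first paragraph. If some $a \in B_{v_k}$ were non-trivial, I would write $a = \omega_{f_k}(x_k)$ and use the defining relation $(1, f_k, \omega_{f_k}(x_k), f_k^{-1}, 1) =_\BB (\alpha_{f_k}(x_k))$; the resulting element $\alpha_{f_k}(x_k) \in B_{v_{k-1}}$ again lies in the image of $\omega_{f_{k-1}}$ because $\omega_{f_{k-1}}$ is onto $B_{v_{k-1}}$, so the same collapse applies once more. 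Iterating shows that $p\,(a)\,p^{-1}$ reduces to an element of $B_{o(p)} = B_{v_0}$, contradicting $k$-acylindricity. Hence $B_{v_k} = 1$, and since vertex groups inject inward, every vertex group at distance at least $k$ from the core is trivial; as an edge group embeds into both endpoint groups, every edge with non-trivial group likewise lies within distance $k$ of the core.

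Finally I would add a branching bound to conclude finiteness. At a vertex $v$ with finitely generated vertex group, the covering property identifies the edges of $\Star(v)$ over a fixed $e \in \Star([v])$ with the double cosets $\mu_v(B_v)\backslash A_{[v]}/\alpha_e(A_e)$, and the second clause of the \sfgip\ guarantees that only finitely many of these give a non-trivial intersection $\mu_v(B_v)^{f_\alpha} \cap \alpha_e(A_e)$, i.e.\ a non-trivial edge group. Summing over the finitely many edges of $\Star([v])$ (using that $\gr A$ is locally finite), each vertex with non-trivial group has only finitely many incident edges, and hence finitely many neighbours, with non-trivial group. Starting from the finitely many vertices of the finite core and propagating at most $k$ steps outward then reaches only finitely many vertices carrying a non-trivial group; all other vertex and edge groups are trivial, which is the assertion. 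I expect the acylindricity step of the third paragraph to be the main obstacle: one must set up the iterated reduction of $p\,(a)\,p^{-1}$ so that each intermediate element stays in the image of the next $\omega$-map, which is exactly what the isomorphism edge maps of Lemma~\ref{lem: form_of_core} provide.
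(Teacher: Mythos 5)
Your proposal is correct and follows essentially the same route as the paper's proof: both rest on Lemma~\ref{lem: form_of_core}, the identification $\alpha_e\circ\mu_f(B_f)=\mu_v(B_v)^{f_\alpha}\cap\alpha_e(A_e)$ together with the \sfgip\ to control finite generation and the number of outgoing edges with non-trivial group, and the isomorphism edge maps in the trees $\gr{T}_v$ to collapse $p\,(a)\,p^{-1}$ and invoke $k$-acylindricity. The only difference is organisational (the paper runs a single induction on the shells $\BB_n$ before applying acylindricity, while you separate finite generation, the distance-$k$ triviality bound, and the branching count), and the local-finiteness point you flag is equally implicit in the paper's argument.
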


\begin{proof}
    Since $\core(\BB)$ has finitely generated vertex groups and since each vertex group in $\AA$ has the \sfgip\ relative to its adjacent edge groups, it follows that each edge group in $\BB_0 = \core(\BB)$ is also finitely generated. By Lemma \ref{lem: form_of_core}, for each vertex $v\in V(\gr{B}_0)$, there is a tree of groups $\mathbb{T}_v\subset \BB$ such that $\gr{T}_v\cap\gr{B}_0 = v$, $\gr{T}_v\cap \gr{T}_w = \emptyset$ for all $v\neq w\in V(\gr{B}_0)$ and such that
    \[
    \gr{B} = \gr{B}_0 \cup \bigcup_{v\in V(\gr{B}_0)}\gr{T}_v.
    \]
    Moreover, for every $v \in  V(\gr{B}_0)$ and every edge $e$ in $\gr{T}_v$ oriented towards $v$, the map $\alpha_e\colon B_e\to B_{o(e)}$ is an isomorphism.

    For each integer $n\geqslant 0$, denote by $\BB_n\subset \BB$ the subgraph of groups containing the core $\BB_0$ and all reduced $\BB$-paths of length at most $n$ leading out of a vertex in the core. We will show by induction on $n$ that each vertex and edge group in $\BB_n$ is finitely generated and that $\BB_n$ has finitely many vertices and edges with non-trivial associated group. We have already shown this for the base case $n = 0$. Now we assume that the claim holds true for some $n$ and we show that it also holds for $n+1$. By our \sfgip\ assumption, for every vertex $v\in V(\gr{B}_n)$ and for every edge $e\in E(\gr{A})$ such that $o(e) = \mu(v)$, there are finitely many double cosets $\mu_v(B_v)\, a\, \alpha_e(A_e)$ such that $\mu_v(B_v)^a\cap \alpha_e(A_e) \neq 1$. Moreover, each such intersection is finitely generated. Hence, by definition of coverings and the fact that $\alpha_e\circ\mu_f(B_f) = \mu_v(B_v)^{f_{\alpha}}\cap \alpha_e(A_e)$ for all edges $f\in E(\gr{B})$ with $o(f) = v$ and $\mu(f) = e$, it follows that each vertex in $\gr{B}_n$ has finitely many outgoing edges in $\gr{B}_{n+1}$ with non-trivial edge group. Moreover, each edge group is finitely generated. Since every vertex in $\gr{B}_{n+1} - \gr{B}_n$ has associated group isomorphic to an adjacent edge group, it follows that there are also finitely many vertices in $\gr{B}_{n+1}$ with non-trivial associated group and each such group is finitely generated, thus completing the proof of the claim.
    
    Finally, we now use the acylindricity of $\BB$ to finish the proof. Suppose that $k$ is the acylindricity constant, let $v\in V(\gr{B}_0)$ and let $w\in V(\gr{T}_v)$ be a vertex at distance $k$ from $v$. Let $p$ be a $\mathbb{T}_v$-path from $v$ to $w$ and let $b\in B_w$. Since for each $e\in E(\gr{T}_v)$ pointing towards $v$ the edge map $\alpha_e$ is an isomorphism, it follows that $p\, b\, p^{-1}$ is a $\BB$-path which reduces to an element in $B_v$. Thus, by Lemma~\ref{lem: acylindrical_characterisation} we have that $b = 1$. Hence, $B_w = 1$ and so every vertex at distance at least $k$ from $\gr{B}_0$ has trivial associated group. In particular, this implies, combined with all the above, that all but finitely many vertex and edge groups of $\BB$ are trivial.
\end{proof}

\begin{thm}
\label{thm: acylindrical}
    Let $\AA$ be a graph of groups in which $A_{o(e)}$ has the \sfgip\ relative to $\alpha_e(A_e)$ for each edge $e\in E(\gr{A})$. Let $B\leqslant \pi_1(\AA, u_0)$ be a subgroup and let $\mu\colon \BB\to \AA$ be the $[B]$-covering. If $\BB$ is acylindrical, $\core(\BB)$ has finite underlying graph and each vertex group in $\core(\BB)$ is finitely generated, then there is a finite subgraph of groups $\BB'\subseteq\BB$ with finitely generated vertex and edge groups containing $\core(\BB)$ such that the following holds. 
    
    If $\mu^C\colon (\CC, w_0)\to (\AA, u_0)$ is any immersion of graphs of groups and if $\DD$ is the union of all the connected components of $\BB'\wtimes_{\AA}\CC$ with non-trivial fundamental group, then the map $\calC_0$ from Lemma \ref{lem: same_double_coset} induces a bijection between $\pi_0(\DD)$ and the set of double cosets $\left\{B\, g\, C \mid B^g\cap C \neq 1\right\}$, where $C = \mu^C_*(\pi_1(\CC, w_0))$.
\end{thm}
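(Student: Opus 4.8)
The plan is to exploit Proposition~\ref{prop: acylindrical} to isolate a finite ``essential part'' $\BB'$ of the covering $\BB$, and then to compare the $\AA$-product over $\BB'$ with the full $\AA$-product, transferring the conclusion of Theorem~\ref{thm: double_cosets} from $\BB$ to $\BB'$. First I would construct $\BB'$: by Proposition~\ref{prop: acylindrical} all vertex and edge groups of $\BB$ are finitely generated and only finitely many are non-trivial, so, using that $\BB$ is connected and that $\core(\BB)$ has finite underlying graph, I take $\BB'$ to be a finite connected subgraph of groups of $\BB$ that contains $\core(\BB)$, contains every vertex and edge of $\BB$ carrying a non-trivial group, and contains finite paths joining these; its vertex and edge groups are finitely generated. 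Writing $\mu^{B'}=\mu|_{\BB'}$, the restriction of a covering to a subgraph of groups is an immersion (the conditions of Definition~\ref{def: folded} are local and inherited), and since $\BB'\supseteq\core(\BB)$ we have $\pi_1(\BB',v_0)=\pi_1(\BB,v_0)$, whence $\mu^{B'}_*(\pi_1(\BB',v_0))=B$. I would also record that $\BB'\wtimes_\AA\CC$ is, tautologically, a subgraph of groups of $\BB\wtimes_\AA\CC$ (a $(v,w)$-vertex or $(f,g)$-edge of the former is one of the latter precisely when $v$, $f$ lie in $\gr{B}'$), and that the double coset map $\calC$ agrees whether computed in $\BB'\wtimes_\AA\CC$ or in $\BB\wtimes_\AA\CC$, since it is independent of the choice of connecting $\BB$-paths.

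The easy half is injectivity together with well-definedness of the target. By Lemma~\ref{lem: same_double_coset} applied to the immersions $\mu^{B'}$, $\mu^C$, the map $\calC$ is constant on connected components of $\BB'\wtimes_\AA\CC$ and descends to an injection $\calC_0$ on $\pi_0$. If a component $\DD_0$ has non-trivial fundamental group at a vertex $x$, then $(\mu^C\circ\rho^C)_*$ is injective (both $\mu^C$ and, by Lemma~\ref{lem: rhos are folded 2}, $\rho^C$ are immersions), so Lemma~\ref{lem: intersection_component} forces $B^g\cap C\ne 1$, where $\calC(x)=BgC$. Hence $\calC_0$ restricts to a well-defined injection from $\pi_0(\DD)$ into $\{BgC \mid B^g\cap C\ne 1\}$.

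The substantial point is surjectivity, and here I would pass through the full covering. Fix $BgC$ with $B^g\cap C\ne 1$. Since $\mu$ is a covering, Theorem~\ref{thm: double_cosets} supplies a component $\DD_1$ of $\BB\wtimes_\AA\CC$ with $\pi_1(\DD_1)\ne 1$ and $\calC_0([\DD_1])=BgC$. I then split on whether $\core(\DD_1)$ is empty. If $\core(\DD_1)\ne\emptyset$, a cyclically reduced $\DD$-circuit of positive length based at a vertex $x$ projects under the immersion $\rho^B$ to a cyclically reduced $\BB$-circuit of positive length (Proposition~\ref{folded}), which lies in $\core(\BB)\subseteq\gr{B}'$; therefore the whole circuit, and in particular $x$, lies in $\BB'\wtimes_\AA\CC$, whose component through $x$ has non-trivial $\pi_1$ and $\calC$-value $BgC$. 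If instead $\core(\DD_1)=\emptyset$, then $\pi_1(\DD_1)\ne 1$ forces some vertex group $D_x$ to be non-trivial (were all vertex groups trivial, a non-trivial fundamental group would yield a cyclically reduced circuit of positive length); the projection $\rho^B_x(D_x)\le B_v$ is then non-trivial, so $B_v\ne 1$, so $v=\rho^B(x)\in\gr{B}'$ by the choice of $\BB'$, and again $x\in\BB'\wtimes_\AA\CC$ carries a non-trivial vertex group with $\calC(x)=BgC$. In either case $BgC$ is realised, and combined with the injectivity above this shows $\calC_0$ is the desired bijection.

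The main obstacle is precisely the surjectivity in the case $\core(\DD_1)=\emptyset$, i.e.\ the locally elliptic but non-trivial intersections; this is exactly the mismatch discussed in Proposition~\ref{prop: conjugate_into_edge_group}, where such a double coset contributes a component with no positive-length circuit and hence is invisible to the core. The crux is the observation that such a component must nonetheless carry a non-trivial vertex group, and that any non-trivial vertex group of $\BB\wtimes_\AA\CC$ projects to a non-trivial vertex group of $\BB$, which by Proposition~\ref{prop: acylindrical} is confined to the finite truncation $\BB'$. The remaining verifications --- finiteness and finite generation of $\BB'$, that $\mu^{B'}$ is an immersion, and that $\BB'\wtimes_\AA\CC$ embeds as a subgraph of groups with a compatible $\calC$ --- are routine.
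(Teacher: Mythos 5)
Your proposal is correct and follows essentially the same route as the paper: both build $\BB'$ from Proposition~\ref{prop: acylindrical} as a finite truncation containing $\core(\BB)$ and all non-trivial vertex groups, both use the dichotomy that a component with non-trivial fundamental group either carries a non-trivial vertex group (forcing its $\BB$-image into $\BB'$) or supports a cyclically reduced circuit (forcing its $\BB$-image into $\core(\BB)$), and both conclude by transferring Theorem~\ref{thm: double_cosets} along the inclusion $\BB'\wtimes_{\AA}\CC\subset\BB\wtimes_{\AA}\CC$. Your write-up is slightly more explicit about connectedness of $\BB'$ and about why the relevant component of $\BB'\wtimes_{\AA}\CC$ retains non-trivial fundamental group, but these are elaborations of the same argument.
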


\begin{proof}
    By Proposition \ref{prop: acylindrical}, there exists a finite subgraph of groups $\BB'\subseteq\BB$ containing $\core(\BB)$ with finitely generated vertex and edge groups that contains all vertices with non-trivial associated group from $\BB$. Each component of $\BB\wtimes_{\AA}\CC$ which has non-trivial fundamental group either has a vertex with a non-trivial vertex group, or supports a cyclically reduced cycle in its underlying graph. In the first instance, the component must have a vertex of the form $\mu^B_v(B_v)\, a\, \mu^C_w(C_w)$ for some $v\in V(\BB')$ as all others have trivial vertex group by definition of the vertex groups of $\BB\wtimes_{\AA}\CC$. In the second instance, the cyclically reduced cycle in the underlying graph maps to cyclically reduced cycles in the underlying graphs of $\BB$ and $\CC$. Thus, it projects to cycles in $\core(\BB)$ and $\core(\CC)$. In particular, such a component of $\BB\wtimes_{\AA}\CC$ must contain a vertex of the form $\mu^B_v(B_v)\, a\, \mu^C_w(C_w)$ for some $v\in V(\core(\BB))$. Since $\core(\BB)\subseteq \BB'$, the components of $\DD$ with non-trivial fundamental group are in bijection with the components of $\BB\wtimes_{\AA}\CC$ with non-trivial fundamental group via the inclusion $\BB'\wtimes_{\AA}\CC\subset \BB\wtimes_{\AA}\CC$. Applying Theorem~\ref{thm: double_cosets} completes the proof.
\end{proof}



\renewcommand*{\bibfont}{\small}
\printbibliography


\appendix

\section{The $\GpT$ category and the $\Gp$-tree functor}\label{sec: pi1 functor}
\label{appendix}

References for the theory of groups acting on trees, known as Bass--Serre theory, include \cite{ser80,
bas93}. We shall always assume that actions do not invert edges. This is not a real restriction as we can always subdivide edges in a tree to obtain an action that does not invert edges.

If $G$ is a group, a $G$-tree is a tree $\gr{T}$ which comes with an action of $G$. If $\gr{T}$ is a $G$-tree and $\gr{S}$ is an $H$-tree, then a $\Gp$-tree morphism between $\gr{T}$ and $\gr{S}$ is a map $\psi\colon \gr{T}\to \gr{S}$ together with a group homomorphism $\phi\colon G\to H$ such that $\psi(g\cdot x) = \phi(g)\cdot \psi(x)$ for all $g\in G$ and $x\in \gr{T}$. A morphism of $\Gp$-trees is an isomorphism (respectively, monomorphism, epimorphism) if the associated homomorphism is an isomorphism (respectively, monomorphism, epimorphism) and the associated morphism of trees is bijective (respectively, injective, surjective).

A pointed $G$-tree $(\gr{T}, x_0)$ is a $G$-tree $\gr{T}$ together with a choice of vertex $x_0\in V(\gr{T})$. A morphism between pointed $\Gp$-trees $(\gr{T}, x_0) \to (\gr{S}, y_0)$ is a $\Gp$-tree morphism that sends $x_0$ to $y_0$. The category of pointed $\Gp$-trees $\GpT^*$ is the category whose objects are the pointed $\Gp$-trees and whose morphisms are pointed $\Gp$-tree morphisms. The fundamental theorem of Bass--Serre theory states that the categories of pointed $\Gp$-trees and pointed graphs of groups are the same. We will roughly explain this correspondence in the next section. The reader is directed to \cite[Section 4]{bas93} for more details.

Two morphisms of $\Gp$-trees $(\psi_1, \varphi_1)$, $(\psi_2, \varphi_2)$ are $\approx$-equivalent, written $(\psi_1, \phi_1)\approx(\psi_2, \varphi_2)$, if there is some $h\in H$ such that $\psi_1 = h^{-1}\cdot \psi_2$ and $\varphi_1 = \gamma_h\circ\varphi_2$. In other words, if there exists a $\Gp$-tree isomorphism induced by conjugation making the following diagram commute:
\[
\begin{tikzcd}
\mathsf{T} \arrow[r, "{(\psi_1, \varphi_1)}"] \arrow[rd, "{(\psi_2, \varphi_2)}"'] & \mathsf{S} \arrow[d, "\cong", dotted] \\
                                                                                  & \mathsf{S}                           
\end{tikzcd}
\]
The category of $\Gp$-trees $\GpT$ is the category of $\Gp$-trees with $\approx$-equivalence classes of $\Gp$-tree morphisms. We will see that there is a version of the fundamental theorem of Bass--Serre theory also for the unpointed category.

\subsection{General construction of $\Gp$-trees and morphisms}

\def\GG{\mathbb{G}}

Let $(\GG, x_0)$ be a pointed graph of groups and $G = \pi_1(\GG,x_0)$. We denote by $\pi^\GG[x_0, x]$ the $=_\GG$-equivalence classes of $\GG$-paths connecting $x_0$ with $x$, so that $G = \pi^\GG[x_0,x_0]$. If $E^+(\gr{G})\subset E(\gr{G})$ is an orientation, we let $\gr{T}$ be the graph with
\begin{align*}
    V(\gr{T}) &= \bigsqcup_{x\in V(\gr{G})}\pi^{\mathbb{G}}[x_0, x]/G_x\\
    E(\gr{T}) &= \bigsqcup_{e\in E^+(\gr{G})}\pi^{\mathbb{G}}[x_0, o(e)]/\alpha_e(G_e)
\end{align*}
where adjacency here is given by inclusion of the corresponding cosets. By \cite[Theorem 1.17]{bas93}, $\gr{T}$ is a tree known as the \emph{Bass--Serre tree associated with the pointed graph of groups $(\GG,x_0)$}. There is a natural left $G$-action on $\gr{T}$, making $\gr{T}$ a $G$-tree. The construction of the Bass--Serre tree comes with a natural choice of basepoint, the vertex $\tilde{x}_0 = G_{x_0}$, making $(\gr T, \tilde{x}_0)$ a pointed $G$-tree.

Now let $\mu\colon(\mathbb{G}, x_0)\to (\mathbb{H}, y_0)$ be a (orientation preserving) morphism of pointed graphs of groups and let $(\gr S, \tilde{y}_0)$ be the (pointed) Bass--Serre tree associated with the pointed graph of groups $(\mathbb{H}, y_0)$. Then there is an induced morphism of pointed $\Gp$-trees 
\begin{align*}
    \psi\colon (\gr{T}, \tilde{x}_0)&\to (\gr{S}, \tilde{y}_0) &\text{where $\tilde{x}_0 = G_{x_0}$ and $\tilde y_0 = H_{y_0}$},\\
    g\,G_x&\mapsto\mu_*(g)H_{\mu(x)} &\text{for each $x\in V(\gr{G})$, $g\in \pi^{\mathbb{G}}[x_0, x]$},\\
    g\, \alpha_e(G_e) &\mapsto\mu_*(g)\alpha_e(H_{\mu(e)})&\text{for each $e\in E^+(\gr{G})$, $g\in \pi^{\mathbb{G}}[x_0, o(e)]$}.
\end{align*}
This is outlined in more detail in \cite[Section 4]{bas93}.

\subsection{The pointed functor}

We want to show that the construction outlined above gives rise to a well-defined map between the category of pointed graphs of groups and the category of pointed $\Gp$-trees. This is subtle since within our definition of graphs of groups and morphisms of graphs of groups, there are several pieces of data involved which are not uniquely determined by the object or morphism in $\GrGp^*$. Thus, we need to check that any $\Gp$-tree and any $\Gp$-tree morphism constructed above is independent of any choices involved.

Let $\mu\colon(\mathbb{G}, x_0)\to(\mathbb{G}', x_0')$ be an isomorphism of pointed graphs of groups. Since $\mu$ induces an isomorphism on underlying graphs, an isomorphism on each vertex and edge group and an isomorphism $\pi^{\mathbb{G}}[x_0, x] \to \pi^{\mathbb{G}'}[x_0', \mu(x)]$ for every vertex $x\in V(\gr{G})$, it is clear from the construction of the associated $\Gp$-trees that the induced morphism of $\Gp$-trees is an isomorphism.

Let $\mu_1, \mu_2\colon (\mathbb{G}, x_0)\to (\mathbb{H}, y_0)$ be morphisms of pointed graphs of groups such that $\mu_1\sim \mu_2$. By Proposition~\ref{prop: change of data}, for each vertex $x\in V(\gr{G})$ there is an element $h_x\in H_{\mu(x)}$ (with $h_{\mu(x_0)} = 1$) such that for each element $g\in \pi^{\mathbb{G}}[x_0, x]$, we have $\mu_1(g) \sim_{\mathbb{H}} \mu_2(g)\, h_x$. Hence, since $\mu_1(g)\,H_{\mu(x)} = \mu_2(g)\, h_x\, H_{\mu(x)} = \mu_2(g)\, H_{\mu(x)}$, the morphisms on Bass--Serre trees induced by $\mu_1$ and $\mu_2$ are identical. Moreover, since $h_{\mu(x_0)} = 1$, we have that $(\mu_1)_* = (\mu_2)_*$ and so the induced $\Gp$-tree morphisms are also identical.

Now denote by $F^*\colon \GrGp^*\to \GpT^*$ the map between categories which we have just shown to be well-defined. In order to check that this is a covariant functor, we need to show that $F^*(\id_{\GrGp^*}) = \id_{\GpT^*}$ and that, given any two composable morphisms of graphs of groups $\mu^1\colon (\mathbb{G}_1, x_1)\to (\mathbb{G}_2, x_2)$, $\mu^2\colon (\mathbb{G}_2, x_2)\to (\mathbb{G}_3, x_3)$, we have $F^*(\mu^2)\circ F^*(\mu^1) = F^*(\mu^2\circ \mu^1)$.
The first property is immediate from the construction, whereas the second property follows from the fact that the composition
\[
\pi^{\mathbb{G}_1}[x_1, x] \xrightarrow{\mu^1_*}\pi^{\mathbb{G}_2}[x_2, \mu^1(x)]\xrightarrow{\mu^2_*}\pi^{\mathbb{G}_3}[x_3, \mu^2(\mu^1(x))]
\]
coincides with $\pi^{\mathbb{G}_1}[x_1, x] \xrightarrow{(\mu^2\circ\mu^1)_*}\pi^{\mathbb{G}_3}[x_3, \mu^2(\mu^1(x))]$.
Thus, $F^*$ is a functor. The following theorem follows from the above and results of Bass \cite[Corollaries 4.5 \& 4.6]{bas93}.

\begin{thm}
\label{thm: pointed_functor}
    The functor $F^*\colon \GrGp^*\to \GpT^*$ is bijective and hence is an isomorphism of categories. Moreover, a pointed morphism of graphs of groups $\mu$ is an immersion precisely when $F^*(\mu)$ is a monomorphism of pointed $\Gp$-trees.
\end{thm}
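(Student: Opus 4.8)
The plan is to deduce the statement from the fundamental theorem of Bass--Serre theory (in the form established by Bass) together with the functoriality of $F^*$ that has already been verified above. I would split the argument into two independent claims: that $F^*$ is a bijection of categories, and that it detects immersions as monomorphisms. The heavy lifting for the first claim is outsourced to \cite[Corollaries 4.5 \& 4.6]{bas93}; the substance of the proof lies in the second.

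For bijectivity I would first treat objects. The inverse assignment is the classical quotient construction: given a pointed $\Gp$-tree $(\gr{T}, \tilde x_0)$ with $G$ acting without edge inversions, one forms the quotient graph $\gr{T}/G$, assigns to its vertices and edges the stabilisers of chosen lifts, uses the tree geodesics to recover the edge monomorphisms and twisting elements, and marks the image of $\tilde x_0$. By \cite[Corollary 4.5]{bas93} this assignment and $F^*$ are mutually inverse on objects, up to isomorphism in $\GrGp^*$ and $\GpT^*$. For morphisms I would show $F^*$ is bijective on each hom-set: injectivity is essentially the converse of the well-definedness computation already carried out, since if $\mu_1$ and $\mu_2$ induce the same pointed $\Gp$-tree morphism one reads off, from the equalities of cosets $\mu_1(g)H_{\mu(x)} = \mu_2(g)H_{\mu(x)}$, the parameters $h_x$ that witness $\mu_1 \sim \mu_2$ via Proposition~\ref{prop: change of data}; surjectivity holds because any $G$-equivariant tree morphism descends to a morphism of quotient graphs of groups whose image under $F^*$ is the given one. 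This is exactly the content of \cite[Corollary 4.6]{bas93}, transported into our language.

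The substantive point is the immersion characterisation. By definition, $F^*(\mu)$ is a monomorphism of pointed $\Gp$-trees precisely when (i) the induced homomorphism $\mu_*\colon \pi_1(\BB,v_0) \to \pi_1(\AA,u_0)$ is injective and (ii) the induced tree map $\psi$ is injective. If $\mu$ is an immersion, (i) is Corollary~\ref{cor: folded morphisms}~\eqref{folded implies injective}. For (ii), suppose $\psi(g_1 B_{v_1}) = \psi(g_2 B_{v_2})$ with $g_i \in \pi^\BB[v_0,v_i]$; then $[v_1]=[v_2]=:u$ and $\mu_*(g_2^{-1}g_1) \in A_u$. Representing $g_2^{-1}g_1$ by a reduced $\BB$-path $p$ (from $v_2$ to $v_1$), Proposition~\ref{folded} makes $\mu(p)$ reduced, and since it is $=_\AA$-equivalent to a length-$0$ path it must itself have length $0$ by Proposition~\ref{prop: sim vs equiv}~\eqref{eq: shortest is reduced}; as $\mu$ preserves underlying paths, $p$ then has length $0$, forcing $v_1=v_2$ and $g_2^{-1}g_1 \in B_{v_1}$, so the two vertices of $\gr{T}$ coincide. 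The identical argument on edge cosets, now with the edge stabilisers $\alpha_e(A_e)$ in place of the vertex groups, gives injectivity of $\psi$ on edges. Conversely, if $\mu$ is \emph{not} an immersion, Proposition~\ref{folded} provides a reduced $\BB$-path $p$ whose image $\mu(p)$ is not reduced; reading $p$ as a geodesic segment of $\gr{T}$ issuing from $\tilde x_0$, the backtracking in $\mu(p)$ collapses two distinct vertices along that segment, so $\psi$ is not injective. This settles the equivalence.

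The main obstacle I anticipate is bookkeeping rather than conceptual: one must match our conventions for the twisting elements, the orientation $E^+$, and the basepoint normalisation ($\tilde x_0 = G_{x_0}$, $h_{\mu(x_0)} = 1$) with those of Bass, so that the abstract correspondence of \cite[Corollaries 4.5 \& 4.6]{bas93} applies verbatim, and one must be careful that the injectivity argument for $\psi$ on edges uses the correct edge stabilisers. Beyond Proposition~\ref{folded} and Corollary~\ref{cor: folded morphisms}, no genuinely new ingredient is required.
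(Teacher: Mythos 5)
Your proposal is correct and follows essentially the same route as the paper: the bijectivity of $F^*$ is delegated to \cite[Corollaries 4.5 \& 4.6]{bas93} together with the functoriality discussion preceding the theorem, exactly as the paper does. The only difference is that you spell out the immersion/monomorphism equivalence (via Proposition~\ref{folded}, Corollary~\ref{cor: folded morphisms}, and the correspondence between reduced $\BB$-paths and geodesics in the Bass--Serre tree), whereas the paper leaves this to the same references; your argument for it is sound, including the converse via a reduced path with non-reduced image forcing $\psi$ to identify two vertices of a geodesic segment.
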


If $(\gr{T}, \tilde{x}_0)$ is a pointed $G$-tree, then we call $(\mathcal{G}, x_0) = (F^*)^{-1}(\gr{T}, \tilde{x}_0)$ the \emph{pointed quotient graph of groups}.

\subsection{The unpointed functor}

Now let $\mathbb{G}$ be a connected (unpointed) graph of groups. Let $x_0, x_1\in V(\gr{G})$ be any two vertices and consider the $\pi_1(\mathbb{G}, x_i)$-tree $(\gr{T}_i, \tilde{x}_0) = F^*(\mathbb{G}, x_i)$ for $i=0, 1$. Let $h\in \pi^{\mathbb{G}}[x_0, x_1]$ be any element. The isomorphism $\varphi\colon\pi_1(\mathbb{G}, x_0) \to \pi_1(\mathbb{G}, x_1)$ given by $\varphi(g) = h^{-1}gh$ induces a $\varphi$-equivariant isomorphism $\psi\colon \gr{T}_0\to \gr{T}_1$. Thus, as before, we may associate to $\mathbb{G}$ a $G$-tree $\gr{T}$, where $G\cong \pi_1(\mathbb{G}, x_0)$, which does not depend on the choice of basepoint. Moreover, as before, we may associate to any morphism of graphs of groups $\mathbb{G}\to \mathbb{H}$ a morphism of $\Gp$-trees and use Proposition~\ref{prop: change of data} to show that this morphism is independent of any choices made in its construction, sending $\approx$-equivalent morphisms of graphs of groups to $\approx$-equivalent morphisms of $\Gp$-trees. We thus have a commutative diagram
\[
\begin{tikzcd}
\GrGp^* \arrow[r, "F^*"] \arrow[d] & \GpT^* \arrow[d] \\
\ConGrGp \arrow[r, "F"]               & \GpT            
\end{tikzcd}
\]
where $\ConGrGp$ is the subcategory of $\GrGp$ containing only connected objects, where the vertical maps are the forgetful functors which forget the basepoint and where $F$ is the map described above. We just need to check that $F$ is a functor. The fact that $F(\id_{\GrGp}) = \id_{\GpT}$ is immediate from the construction. The fact that $F(\mu^2)\circ F(\mu^1) = F(\mu^2\circ\mu^1)$ for any pair of morphisms of graphs of groups $\mu^1, \mu^2\colon \mathbb{G}\to \mathbb{H}$ follows from Corollary \ref{cor: composition} and a diagram chase. Thus, $F$ is a functor.

The following unpointed version of Theorem \ref{thm: pointed_functor} also follows from the above and results of Bass \cite[Corollaries 4.5 \& 4.6]{bas93}.

\begin{thm}
\label{thm: unpointed_functor}
    The functor $F\colon \ConGrGp\to \GpT$ is bijective. Moreover, a morphism of graphs of groups $\mu$ is an immersion precisely when $F(\mu)$ is a monomorphism of $\Gp$-trees.
\end{thm}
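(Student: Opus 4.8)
The plan is to derive the unpointed statement from the pointed one (Theorem~\ref{thm: pointed_functor}) by transporting it along the commutative square relating $F^*$ and $F$ through the two basepoint-forgetting functors, using Proposition~\ref{prop: change of data} to reconcile $\sim$- with $\approx$-equivalence. The guiding principle throughout is that the data $F$ assigns to an object or a morphism --- the underlying tree, the $\Gp$-action, and the homomorphism $\mu_*$ --- is precisely the data assigned by $F^*$ for \emph{any} choice of basepoint; the basepoint merely fixes a normalisation, and the freedom in that normalisation is exactly the freedom encoded by the conjugations in the $\Gp$-tree $\approx$-relation and by the unconstrained parameter $a_{v_0}$ (the $A0$-moves at the basepoint) in the graph-of-groups $\approx$-relation.

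First I would establish surjectivity. On objects: given a $\Gp$-tree $\gr T$, choose any vertex as a basepoint $\tilde x_0$; by Theorem~\ref{thm: pointed_functor} there is a pointed graph of groups $(\GG, x_0) = (F^*)^{-1}(\gr T, \tilde x_0)$, and since $\gr T$ is connected so is the quotient, so $\GG$ lies in $\ConGrGp$. Forgetting the basepoint and chasing the commutative square gives $F(\GG) = \gr T$. On morphisms: given an $\approx$-class in $\mathrm{Hom}_{\GpT}(F\GG, F\mathbb{H})$, I pick a representative $(\psi, \varphi)$ and conjugate it by a suitable $h$ (an $\approx$-move on the tree side) so that it preserves chosen basepoints; the resulting pointed $\Gp$-tree morphism equals $F^*(\mu)$ for some $\mu$ by the pointed theorem, and commutativity of the square yields $F(\mu) \approx (\psi, \varphi)$.

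Next I would prove that $F$ is injective on each Hom-set, which is the technical heart. Suppose $\mu^1, \mu^2\colon \GG \to \mathbb{H}$ induce $\approx$-equivalent $\Gp$-tree morphisms, witnessed by a conjugating element $h \in \pi_1(\mathbb{H}, y_0)$. Unwinding the definition forces $\mu^1$ and $\mu^2$ to agree on the underlying graph and their induced homomorphisms to be conjugate; translating this back through $F^*$ and feeding it into the path-level characterisation of $\approx$ in Proposition~\ref{prop: change of data} produces vertex and edge parameters exhibiting $\mu^1 \approx \mu^2$, with Corollary~\ref{cor: composition} ensuring compatibility with composition. The delicate point, which I expect to be the main obstacle, is verifying that the single conjugator $h$ on the tree side corresponds exactly to the basepoint parameter $a_{v_0}$ on the graph-of-groups side, so that the two $\approx$-relations match without loss or spurious identification; this matching is precisely what Bass's Corollaries~4.5 and~4.6 \cite{bas93} supply at the level of trees. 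Once Hom-set bijectivity is in hand, injectivity on objects is automatic: a functor that is bijective on Hom-sets is fully faithful and hence reflects isomorphisms, so $F\GG \cong F\mathbb{H}$ forces $\GG \cong \mathbb{H}$ in $\ConGrGp$.

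Finally, the immersion characterisation requires no new work. Being an immersion is a basepoint-free property --- by Proposition~\ref{folded} it says reduced paths go to reduced paths --- and being a monomorphism of $\Gp$-trees is likewise basepoint-free, amounting to injectivity of the tree map together with injectivity of $\mu_*$. Since $F(\mu)$ and $F^*(\mu)$ carry the same underlying tree map and the same homomorphism for any choice of basepoint, $F(\mu)$ is a monomorphism if and only if $F^*(\mu)$ is, which by Theorem~\ref{thm: pointed_functor} holds if and only if $\mu$ is an immersion.
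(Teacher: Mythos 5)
Your proposal is correct and takes essentially the same route as the paper: the paper's proof of this theorem is a one-line deduction from the pointed functor theorem, the commutative square of basepoint-forgetting functors, and Bass's Corollaries 4.5--4.6, which is precisely the transport argument you spell out (including the key matching of the tree-side conjugator $h$ with the unconstrained basepoint parameter $a_{v_0}$ via Proposition~\ref{prop: change of data}). The only cosmetic caveat is that your object-injectivity step via ``fully faithful reflects isomorphisms'' gives injectivity only up to isomorphism, whereas the literal bijectivity on objects claimed in the statement is supplied by the quotient-graph-of-groups construction acting as an explicit two-sided inverse.
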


As before, if $\gr{T}$ is a $\Gp$-tree, then we call $F^{-1}(\gr{T})$ the \emph{quotient graph of groups}.

Before moving on, we remind the reader that in both categories $\GrGp$ and $\GpT$, there is no well-defined group morphism associated to a morphism, rather an equivalence class of group morphisms. However, any pair of morphisms $(\varphi_1, \varphi_2)$ lying in such an equivalence class is conjugate, meaning that there is some inner automorphism $\gamma_h$ of the target group such that $\varphi_1 = \gamma_h\circ \varphi_2$. This was part of the definition for morphisms in $\GpT$ and the corresponding statement for $\GrGp$ follows from Proposition~\ref{prop: change of data}.

\section{Collins' Theorem}\label{sec: collins}

Let $\AA$ be a graph of groups. Two $\AA$-circuits $p$ and $q$ in $\AA$ are said to be \emph{conjugate} if there exists an $\AA$-path $r$ from the initial vertex of $p$ to that of $q$ such that $q =_\AA r\inv pr$. In this section we characterise precisely when two $\AA$-circuits are conjugate. This result is a generalisation of a classical result, commonly known as Collins' Lemma or Collins' Theorem \cite[Theorem IV.2.5]{ls01}. The original statement of Collins' Theorem is for HNN extensions, whereas we address all fundamental groups of graphs of groups. The proof below is simply a translation of the proof in \cite{ls01} to the more general setting.

Here we use the following terminology: if $p = (p_0, e_1, p_1, \ldots, e_m, p_m)$ is an $\AA$-path, an \emph{$\AA$-prefix of $p$} (of length $j$) is an $\AA$-path of the form $(p_0, e_{1}, p_{2}, \ldots, e_j, p_j)$ where $0\leqslant j\leqslant m$; and an \emph{$\AA$-suffix of $p$} (of length $j$) is an $\AA$-path of the form $(1, e_{m-j+1}, p_{m-j+1}, \ldots, e_m, p_m)$ where $0\leqslant j\leqslant m$. The $\AA$-suffix of length $0$ is understood to be the $\AA$-path $(1)$. In this way, $p$ is equal to the product of its $\AA$-prefix of length $j$ and its $\AA$-suffix of length $m - j$.

\begin{thm}[Collins' Theorem]
\label{Collins'}
    Let $\AA$ be a graph of groups, let
    \begin{align*}
        p &= (1, e_1, p_1, \ldots, e_m, p_m)\\
        q &= (1, f_1, q_1, \ldots, f_n, q_n)
    \end{align*}
    be conjugate cyclically reduced $\AA$-circuits and let $r$ be a reduced $\AA$-path from the initial vertex of $p$ to that of $q$ such that $q =_{\AA} r^{-1}pr$. Then $n = m$ and one of the following holds:
    \begin{enumerate}[(1)]
        \item\label{eq: Collins 1} $n = m = 0$.
        \item\label{eq: Collins 2} There is an $\AA$-suffix $p'$ of $p^\ell$ for some $\ell\geqslant1$ and an element $a\in \omega_{f_m}(A_{f_m})$ such that
        \[
        r \sim_{\AA} p'^{-1}\,a
        \]
        \item\label{eq: Collins 3} There is an $\AA$-prefix $p'$ of $p^\ell$ for some $\ell\geqslant 1$ and an element $a\in \alpha_{f_1}(A_{f_1})$ such that
        \[
        r \sim_{\AA} p'\, a.
        \]
    \end{enumerate}
\end{thm}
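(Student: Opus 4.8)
The plan is to follow the classical proof of Collins' Lemma in \cite{ls01}, recast in the language of $\AA$-paths, proceeding by induction on the length $k$ of the reduced conjugating path $r$. Throughout I would freely use Proposition~\ref{prop: sim vs equiv}~\eqref{eq: for reduced, sim is =} to replace $=_\AA$ by $\sim_\AA$ between reduced paths, and Lemma~\ref{lem: equivalent A-paths} to extract the explicit edge-group elements witnessing such equivalences.

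First, a preliminary reduction disposes of the degenerate lengths. If $m = 0$, write $p = (p_0)$ with $p_0$ in a vertex group and analyse the reduced form of $r\inv\,p\,r$ by a short induction on $k$: the only possible backtracking is at the central junction, and if it does not occur then $r\inv\,p\,r$ is reduced with mutually inverse first and last edges, contradicting the cyclic reducedness of $q$ (unless $k=0$, which gives $n=0$ directly); when it does occur one peels the outermost syllable of $r$ and recurses. This forces $n = 0$, hence conclusion~\eqref{eq: Collins 1}; the case $n=0$ is symmetric. So I may assume $m, n \geq 1$.

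The heart of the argument is the main induction on $k$, with $m \geq 1$ fixed. For the base case $k = 0$ one has $r = (a_0)$ and $r\inv\,p\,r = (a_0\inv, e_1, p_1, \dots, e_m, p_m\,a_0)$, which is reduced since its interior coincides with that of $p$; comparing with $q$ via Lemma~\ref{lem: equivalent A-paths} yields $n = m$, $f_i = e_i$, and $a_0 \in \alpha_{f_1}(A_{f_1})$, which is exactly conclusion~\eqref{eq: Collins 3} with $p'$ the length-$0$ prefix of $p$. For $k \geq 1$ I would first show that cancellation occurs at exactly one of the two central junctions of $r\inv\,p\,r$: if neither cancels, then $r\inv\,p\,r$ is reduced and, as a circuit, has first edge $d_k\inv$ and last edge $d_k$, so it is not cyclically reduced, contradicting $q \sim_\AA r\inv\,p\,r$ together with the cyclic reducedness of $q$; and both cannot cancel, since left cancellation forces the first edge $d_1$ of $r$ to equal $e_1$ while right cancellation forces $d_1 = e_m\inv$, and $e_1 = e_m\inv$ would contradict the cyclic reducedness of $p$. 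I would then peel the outermost syllable $s$ of $r$, writing $r = s\,r'$ with $r'$ reduced of length $k-1$, and set $p^*$ to be the reduced form of $s\inv\,p\,s$, which is a cyclic permutation of $p$ and hence again cyclically reduced of length $m$. Since $q =_\AA r'^{-1}\,p^*\,r'$, the induction hypothesis applies to $(p^*, r')$ and expresses $r'$ (up to $\sim_\AA$) as a suffix $p''^{-1}$ or prefix $p''$ of a power $(p^*)^\ell$ times an edge-group element.

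The final, and most delicate, step is the reassembly: prepending the peeled syllable $s$ to the expression for $r'$, and using that each cyclic permutation shifts $p$ by one syllable, so that a prefix or suffix of a power of $p^*$ together with $s$ becomes a prefix or suffix of a power $p^{\ell'}$ of $p$ itself (the accumulated shifts over the induction are exactly what promote single copies to the powers $p^\ell$ appearing in the statement). Here I would keep careful track of which junction cancelled: left cancellation (eating the front edge $e_1$ of $p$) feeds into the prefix conclusion~\eqref{eq: Collins 3} with $a \in \alpha_{f_1}(A_{f_1})$, and right cancellation (eating the back edge $e_m$) feeds into the suffix conclusion~\eqref{eq: Collins 2} with $a \in \omega_{f_m}(A_{f_m})$. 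The main obstacle is precisely this bookkeeping: verifying that the peeled syllables of $r$, combined with the successive cyclic permutations of $p$, assemble into an honest prefix or suffix of a single power $p^{\ell'}$, and that the edge-group memberships are carried correctly through the reduction $s\inv\,p\,s \rightsquigarrow p^*$. I expect this matching of the accumulated data to the precise form demanded in \eqref{eq: Collins 2}--\eqref{eq: Collins 3} to be the only genuinely nontrivial part, the rest being the routine cancellation analysis enabled by Proposition~\ref{prop: sim vs equiv} and Lemma~\ref{lem: equivalent A-paths}.
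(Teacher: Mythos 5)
Your proposal follows essentially the same route as the paper's proof: induction on the length $k$ of the reduced conjugating path $r$, a cancellation analysis at the central junctions of $r^{-1}\,p\,r$, peeling off one syllable of $r$ while cyclically permuting $p$, and then reassembling the prefix/suffix data from the inductive hypothesis. One step of your cancellation analysis is, however, too quick for the graph-of-groups setting. You rule out simultaneous cancellation at both junctions by arguing that it would force $e_1 = e_m^{-1}$, ``which would contradict the cyclic reducedness of $p$''. It does not by itself: by the definition used here, an $\AA$-circuit with $e_m = e_1^{-1}$ is still cyclically reduced unless \emph{additionally} $p_m\,p_0 \in \alpha_{e_1}(A_{e_1})$. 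To obtain the contradiction you must combine the two group-element conditions that come with the two cancellations, namely $p_m\,r_0 \in \omega_{e_m}(A_{e_m})$ and $r_0^{-1} \in \alpha_{e_1}(A_{e_1}) = \omega_{e_m}(A_{e_m})$, whose product gives $p_m \in \alpha_{e_1}(A_{e_1})$; this is exactly what the paper does. A similar (also fixable) terseness occurs where you declare the reduced circuit $r^{-1}\,p\,r$ ``not cyclically reduced'' when neither junction cancels: the conclusion is true because the outer group elements $r_k$ and $r_k^{-1}$ multiply to $1$, but you then still need to check that cyclic reducedness transfers along $\sim_\AA$ between reduced circuits (via Lemma~\ref{lem: equivalent A-paths}), or else argue as the paper does by comparing lengths after assuming $m \geqslant n$. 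Neither point derails the argument, but both are places where the free-group intuition must be replaced by the edge-group conditions that are the whole content of the statement.
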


\begin{proof}
We may assume, without loss of generality, that $m\geqslant n$. If $n = 0$, an easy induction on the length of $r$ shows that $m = 0$. Now assume that $m\geqslant n>0$. We let
    \begin{align*}
        r &= (r_0, g_1, r_1, \ldots, g_k, r_k)
    \end{align*}
and we proceed by induction on $k$. 

If $k = 0$, then $q = (r_0^{-1})\, p\, (r_0)$ and Proposition~\ref{prop: sim vs equiv} shows that $n = m$ and there exist $a\in \alpha_{f_1}(A_{f_1})$ such that $a = r$ and both \eqref{eq: Collins 2} and \eqref{eq: Collins 3} hold.

Let now $k \ge 1$ and suppose that the result holds when $r$ has length strictly less than $k$. Since $k\geqslant 1$ and $m\geqslant n$, the $\AA$-path $r^{-1}\, p\, r$ must admit some reduction. In other words, either $e_m = g_1\inv$ and $p_mr_0\in \omega_{e_m}(A_{e_m})$, or $e_1 = g_1$ and $r_0^{-1}\in \alpha_{e_1}(A_{e_1})$. If both are true, then $e_1 = e_m\inv$ and $p_m = (p_mr_0)\,(r^{-1}_0) \in \alpha_{e_1}(A_{e_1})$, contradicting the assumption that $p$ is cyclically reduced. We complete the proof if the first situation occurs, as the other case is entirely analogous. Consider the following $\AA$-paths
\begin{align*}
    p' &= (1, e_m, p_m, e_1, \ldots, e_{m-1}, p_{m-1})\\
    r' &= (\alpha_{e_m}(\omega_{e_m}^{-1}(p_mr_0))\,r_1, g_2, \ldots, r_k).
\end{align*}
Then $p'$ is cyclically reduced, it has the same length as $p$, $r'$ is reduced and we have
\begin{align*}
p &=_\AA (p_m^{-1}, e_m^{-1}, 1)\, p'\, (1, e_m, p_m)\\
r &\sim_\AA (p_m^{-1}, e_m^{-1}, 1)\, r',
\end{align*}
and hence, $q =_\AA r'^{-1}\, p'\, r'$. Since $r'$ has length $k-1$, we may use the inductive hypothesis. In particular, $n = m$ and one of the following holds:
\begin{enumerate}[(i)]
    \item There is an $\AA$-suffix $p''$ of $p'^\ell$ for some $\ell\geqslant 1$ and an element $a\in \omega_{f_m}(A_{f_m})$ such that
        \[
        r' \sim_{\AA} p''^{-1}\, a
        \]
        \item There is an $\AA$-prefix $p''$ of $p'^\ell$ for some $\ell\geqslant 1$ and an element $a\in \alpha_{f_1}(A_{f_1})$ such that
        \[
        r' \sim_{\AA} p''\, a.
        \]
\end{enumerate}
In the first case, since $(p_m^{-1}, e_m^{-1}, 1)p''^{-1}$ is the inverse of an $\AA$-suffix of $p^l$ and since $r\sim_{\AA} (p_m^{-1}, e_m^{-1}, 1)r'\sim (p_m^{-1}, e_m^{-1}, 1)p''^{-1}a$, it follows that $r$ satisfies the conclusion. In the second case, if $r'$ has length one or more, then $r\sim_{\AA}(p_m^{-1}, e_m^{-1}, 1)\,p''\,a\sim_\AA (p_m^{-1}, e_m^{-1}, 1, e_m, p_m)\,p'''\,a$ for some reduced $\AA$-path $p'''$. Since we assumed that $r$ was a reduced $\AA$-path, this is a contradiction and so $r'$ has length zero. Hence, $r' \sim_{\AA} a$ and so $r \sim_{\AA} (p_m^{-1}, e_m^{-1}, 1)\,a$ satisfies the conclusion.
\end{proof}

\bigskip
\subsection*{Acknowledgements}
The first and fourth authors acknowledge support from the Spanish Agencia Estatal de Investigación through grant PID2021-126851NB-100 (AEI/FEDER, UE).
The second author acknowledges support from the grant 202450E223 (Impulso
de l\'{i}neas cient\'{i}ficas estrat\'{e}gicas de ICMAT).
The third author has been supported by the Spanish Government
grant PID2020-117281GB-I00, partly by the European Regional Develop-
ment Fund (ERDF), and the Basque Government, grant IT1483-22.
The fourth author expresses gratitude for the generous hospitality received from Harish-Chandra Research Institute, India and Universidad del Pais Vasco, Spain. We thank Francesco Fournier-Facio for pointing us to the work of Paramantzoglou \cite{pa12}.






\end{document}